\documentclass[11pt]{amsart}
\usepackage{amssymb,latexsym,graphicx, amscd, enumitem}
\newtheorem{theorem}{Theorem}[section]
\newtheorem{prop}[theorem]{Proposition}
\newtheorem{lemma}[theorem]{Lemma}
\newtheorem{remark}[theorem]{Remark}
\newtheorem{question}[theorem]{Question}
\newtheorem{definition}[theorem]{Definition}
\newtheorem{cor}[theorem]{Corollary}
\newtheorem{example}[theorem]{Example}

\begin{document}

\title{The curve cone of almost complex $4$-manifolds}
\author{Weiyi Zhang}
\address{Mathematics Institute\\  University of Warwick\\ Coventry, CV4 7AL, England}
\email{weiyi.zhang@warwick.ac.uk}

 \begin{abstract}
 In this paper, we study the curve cone of an almost complex $4$-manifold which is tamed by a symplectic form. In particular, we prove the cone theorem as in Mori theory for all such manifolds using the Seiberg-Witten theory. For small rational surfaces and minimal ruled surfaces, we study the configuration of negative curves. We define abstract configuration of negative curves, which records the homological and intersection information of curves. Combinatorial blowdown is the main tool to study these configurations. As an application of our investigation of the curve cone, we prove the Nakai-Moishezon type duality for all almost K\"ahler structures on $\mathbb CP^2\#k\overline{\mathbb CP^2}$ with $k\le 9$ and minimal ruled surfaces with a negative curve. This is proved using a version of Gram-Schmidt orthogonalization process for the $J$-tamed symplectic inflation.
 \end{abstract}

  \maketitle

\section{Introduction}
The study of the curve cone is of much importance to the birational geometry of an algebraic variety. The cone theorem for smooth varieties proved in \cite{Mor82}, which describes the structure of the curve cone by extremal rays, was the first major step of Mori's program. It was later generalized to a larger class of varieties by Koll\'ar, Reid, Shokurov and others. The proof for a general variety relies on the bend-and-break technique, where a characteristic $0$ proof is still lacking. However, there is an elementary proof for algebraic surfaces, see {\it e.g.} \cite{Reidsur}. Early applications of the notion of the curve cone include Nakai-Moishezon's and Kleiman's ampleness criteria. 

 Recall that an almost complex structure $J$ is said to be {\it tamed} if there is a symplectic form $\omega$ such that the bilinear form $\omega(\cdot, J(\cdot))$ is positive definite. We could also similarly define the curve cone $A_J(M)$ for a tamed almost complex manifold $(M, J)$:
$$A_J(M)=\{\sum a_i[C_i]| a_i> 0\}$$where $C_i$ are irreducible $J$-holomorphic subvarieties on $M$. Here an irreducible $J$-holomorphic subvariety is the image of a $J$-holomorphic map $\phi: \Sigma\rightarrow M$ from a complex connected curve $\Sigma$, where $\phi$ is an embedding off a finite set. More generally, a $J$-holomorphic subvariety is a finite set of pairs $\{(C_i, m_i), 1\le i\le n\}$, where each $C_i$ is irreducible $J$-holomorphic subvariety and each $m_i$ is a non-negative integer. Later, we sometimes say $J$-holomorphic curves (or simply, curves) instead of $J$-holomorphic subvarieties.

We will focus on dimension $4$. Hence by taking Poincar\'e duality (we will identify the curve classes with their Poincar\'e dual cohomology classes by abusing the notation), we have $A_J(M)$ sitting as a cone in vector space $ H_J^+(M)\subset H^2(M; \mathbb R)$. Here $H_J^+(M)$ is called the $J$-invariant cohomology which is introduced in \cite{LZ, DLZ} along with the $J$-anti-invariant $H_J^-(M)$. The almost complex
structure $J$ acts on the bundle of real 2-forms $\Lambda^2$ as an
involution, by $\alpha(\cdot, \cdot) \rightarrow \alpha(J\cdot,
J\cdot)$. This involution induces the splitting into $J$-invariant, respectively,
$J$-anti-invariant 2-forms
$\Lambda^2=\Lambda_J^+\oplus \Lambda_J^-$. Then we define
\begin{equation*}
H_J^{\pm}(M)=\{ \mathfrak{a} \in H^2(M;\mathbb R) | \exists \;
\alpha\in  \Lambda_J^{\pm}, \, d\alpha=0 \mbox{ such that } [\alpha] =
\mathfrak{a} \}.
\end{equation*}

Usually, to prove a geometric result for a general (non-generic) tamed almost complex structure is far more delicate than the generic case, since we have to deal with very general subvarieties.  The techniques developed in \cite{LZ-generic, LZrc} enable us to have a fairly clear structural picture for subvarieties in a $J$-nef class. In this paper, we would develop more techniques to work with a general tamed almost complex structure. In particular, we find that the curve cone and sometimes curve configurations behave in many aspects similar to that of algebraic varieties. 

Throughout the paper, all the numerical calculations have the following geometric picture in mind: we think of an element in $A_J(M)$ (or $A^{\mathbb Q}_J(M)=A_J(M)\cap H^2(M, \mathbb Q)$) as the homology class of an $\mathbb R$ (or $\mathbb Q$) $J$-holomorphic subvarieties. Here an $\mathbb R$ (or $\mathbb Q$) subvariety allows $m_i$ in the definition of subvariety to be positive real numbers (or rational numbers). With this freedom in hand, we can choose some or all ``irreducible components" $C_i$ to be the extremal rays of $A_J(M)$. 
Here a subcone $N \subset A_J(M)$ is called {\it extremal} if $u,v \in A_J(M), u+v\in N$ imply that $u, v\in N$. A $1$-dimensional extremal subcone is called a {\it extremal ray}. Hence, the numerical information of the given class in $A_J(M)$ would give information for these extremal rays, and vice versa. The geometry is  thus hidden in the numerical information. A typical example is Corollary \ref{-1} which guarantees the existence of $-1$ rational curves by the numerical property of a class known to be in $A_J(M)$.

From another viewpoint, considering $\mathbb Q$-subvarieties of an integral class $e\in A_J(M)$ is the same as considering subvarieties in all the integral classes of the ray $\mathbb R^+ \cdot e\subset A_J(M)$. The complexity arises, thus one has to study all integral classes in the ray rather than just a single class, because the subvarieties in a class and its multiples do not change linearly, see {\it e.g.} Examples \ref{-K8blowup}, \ref{ruledex}.

For this sake, we will first understand the extremal rays. Our first result, the cone theorem for tamed almost complex $4$-manifolds, is a general structural result on the extremal rays of the ``negative" part (this is not to be confused with the notion of ``negative curves" used later, which means curves with negative self-intersection). There are several different proofs of the cone theorem in algebraic geometry (at least for algebraic surfaces). Mori's original proof uses bend-and-break and Kleiman's ampleness criterion. The argument  in \cite{Reidsur} is bend-and-break free, but it proves rationality theorem first. Our proof proceeds in a totally different logical order. We do not need all these technical results before the cone theorem.  In our paper, the rationality theorem, Proposition \ref{ratthm}, is proved as a corollary of the cone theorem. Moreover, we are only able to prove Nakai-Moishezon and Kleiman type results for certain rational surfaces.

 However, the soul of the algebraic geometric proofs and our proof is the same: vanishing $\Rightarrow$ non-vanishing $\Rightarrow$ cone theorem. 
We start by proving certain Seiberg-Witten invariants vanish.  Using the  Seiberg-Witten wall crossing formula \cite{LLwall},  we have non-vanishing results. Then Taubes' SW=Gr \cite{T, T3} would imply a class does not span an extremal ray if it could be written as the sum of two classes with non-trivial Seiberg-Witten invariants.  This would eventually guarantee the extremal rays of the ``negative" part are generated by rational curves. 
The precise statement of the cone theorem is the following, which is a combination of Theorem \ref{conethm} and Proposition \ref{exray}. We use $\mathbb R^+$ to denote the interval $[0, \infty)$.
\begin{theorem}\label{coneintro}
Let $(M, J)$ be a tamed almost complex $4$-manifold. Then $$\overline{A}_J(M)=\overline{A}_J^{K_J\ge 0}(M)+\sum \mathbb R^+[L_i]$$ where $L_i\subset M$ are countably many smooth irreducible rational curves such that $-3\le K_J\cdot [L_i]<0$ which span the extremal rays $\mathbb R^+[L_i]$ of $\overline{A}_J(M)$.

Moreover, for any $J$-almost K\"ahler symplectic form $\omega$ and any given $\epsilon>0$, there are only finitely many extremal rays with $(K_J+\epsilon[\omega])\cdot [L_i]\le 0$.

In addition, an irreducible curve $C$ is an extremal rational curve if and only if 

\begin{enumerate}
\item $C$ is a $-1$ rational curve;
\item $M$ is a minimal ruled surface or $\mathbb CP^2\# \overline{\mathbb CP^2}$, and $C$ is a fiber;
\item $M=\mathbb CP^2$ and $C$ is a projective line.
\end{enumerate}
\end{theorem}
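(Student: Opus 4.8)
The proof will follow the scheme \emph{vanishing $\Rightarrow$ non-vanishing $\Rightarrow$ cone theorem} indicated above, and the full statement is assembled from an $\epsilon$-approximate version together with a limiting argument. Fix a $J$-taming almost K\"ahler form $\omega$ and an $\epsilon>0$, and set $\omega_\epsilon=K_J+\epsilon[\omega]$. The heart of the matter is to prove that the part of $\overline A_J(M)$ on which $\omega_\epsilon$ is negative is a finitely generated (polyhedral) subcone; more precisely, that $\overline A_J(M)=\overline A_{\omega_\epsilon}(M)+\sum_{i=1}^{N}\R^+[L_i]$, with the $L_i$ smooth irreducible rational curves as in the statement, so that the $\omega_\epsilon$-negative extremal rays of $\overline A_J(M)$ are exactly the finitely many $\R^+[L_i]$. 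Granting this for all $\epsilon$, the decomposition $\overline A_J(M)=\overline A_{K_J}(M)+\sum\R^+[L_i]$ follows by taking $\epsilon=1/n$ and letting $n\to\infty$: the cones $\overline A_{K_J+[\omega]/n}(M)$ decrease to $\overline A_{K_J}(M)$, while the union over $n$ of the finite families of rational curves produced at each stage is countable; the ``moreover'' clause is precisely the finiteness of each stage.

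The Seiberg--Witten input enters in proving the $\epsilon$-step. For a class $e$ with suitable positivity one shows, by a dimension count together with Taubes' constraints on basic classes (vanishing of the Seiberg--Witten invariant in the small-perturbation chamber, or for $b^+(M)\ge 2$ directly), that the relevant invariant vanishes there; the Li--Liu wall-crossing formula \cite{LLwall} then forces the invariant in the opposite (Taubes) chamber to be non-zero whenever the expected dimension $d(e)=\frac{1}{2}(e^2-K_J\cdot e)\ge 0$, and Taubes' SW $=$ Gr \cite{T,T3} produces a $J$-holomorphic subvariety representative of $e$. The key consequence is: if $e=e_1+e_2$ with both $e_i$ carrying non-trivial Gromov invariants, then $e$ does not span an extremal ray. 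Applying this, one argues that a $\omega_\epsilon$-negative (hence $K_J$-negative) extremal ray $R$ is spanned by an irreducible $J$-holomorphic curve $C$ with $C^2\le 0$: by extremality any effective class on $R$ is a sum of irreducible curve classes all lying on $R$, so $R$ is rational and contains an irreducible curve class; and $C^2>0$ is excluded because then $[C]$ decomposes --- for $b^+(M)=1$ this is the inclusion of the forward positive cone $\{x:x^2>0,\ x\cdot[\omega]>0\}$ in $\overline A_J(M)$ (a consequence of the same wall-crossing), which makes $[C]$ an interior point when $b_2(M)\ge 2$, the only exception being $M=\mathbb CP^2$ with $\overline A_J(M)=\R^+[H]$; for $b^+(M)\ge 2$ one uses that $K_J$ is then effective (Taubes' SW $=$ Gr and conjugation symmetry) together with the elementary remark that a component $C$ of $K_J$ with $C^2>0$ satisfies $K_J\cdot C>0$.

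The classification of such $C$ is then elementary. The genus formula for $J$-holomorphic curves gives $C^2+K_J\cdot C=2g(\Sigma)-2+2\delta$, with $g(\Sigma)\ge 0$ the geometric genus and $\delta\ge 0$; since $K_J\cdot C$ is a negative integer this forces $C^2\ge -1$, and with $C^2\le 0$ and the parity of $d(C)$ one gets $g(\Sigma)=0$, $\delta=0$, and $(C^2,K_J\cdot C)\in\{(-1,-1),(0,-2)\}$. In the first case $C$ is a smooth $-1$ rational curve; in the second $C$ is a smooth rational curve of square $0$, and SW $=$ Gr exhibits the curves in class $[C]$ as a genus-$0$ fibration of $M$ with no reducible fibre (a reducible fibre would split $[C]$), whence $M$ is a minimal ruled surface or $\mathbb CP^2\#\overline{\mathbb CP^2}$ and $C$ is a fibre; allowing $C^2=1$ in the $\mathbb CP^2$ case yields the bound $-3\le K_J\cdot[L_i]<0$. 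Finiteness at stage $\epsilon$ follows from Gromov compactness: each $L_i$ has $\omega$-area $[\omega]\cdot L_i\le -K_J\cdot L_i/\epsilon\le 3/\epsilon$ and self-intersection $\ge -1$, and only finitely many homology classes of irreducible $J$-holomorphic curves satisfy these bounds. This gives the forward direction of the ``in addition'' statement; for the converse one checks directly that a $-1$ rational curve, a fibre of a minimal ruled surface or of $\mathbb CP^2\#\overline{\mathbb CP^2}$, or a line in $\mathbb CP^2$ spans an extremal ray --- for instance, if $[C]=u+v$ with $u,v\in\overline A_J(M)$ and $C^2<0$, then $u\cdot[C]<0$ or $v\cdot[C]<0$, and a pseudo-effective class meeting the irreducible curve $C$ negatively must be a positive multiple of $[C]$; the $C^2=0$ and $\mathbb CP^2$ cases are similar or immediate.

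The main obstacle is the Seiberg--Witten bookkeeping in the $\epsilon$-step: one must pin down precisely which chamber carries the vanishing theorem and match it against Taubes' chamber (and, when $b_1(M)>0$, identify a non-vanishing component of the binomial wall-crossing term), and then upgrade ``$SW\neq 0$'' to a genuine decomposition $e=e_1+e_2$ into Gromov-effective classes --- which for $b^+(M)=1$ rests on the inclusion of the forward positive cone in $\overline A_J(M)$ and, in general, on an induction on $\omega$-area to keep both summands effective --- so as to conclude that the $\omega_\epsilon$-negative part of $\overline A_J(M)$ is exhausted by the finitely many rational-curve rays. By contrast, once the reduction to $C^2\le 0$ is in hand, the adjunction classification and the finiteness and countability bookkeeping are routine.
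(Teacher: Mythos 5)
Your proposal follows the paper's scheme (Seiberg--Witten vanishing $\Rightarrow$ wall-crossing non-vanishing $\Rightarrow$ SW$\,=\,$Gr $\Rightarrow$ decomposition of non-extremal classes), and your adjunction classification of the candidates $(C^2,K_J\cdot C)\in\{(-1,-1),(0,-2),(1,-3)\}$ together with the converse extremality checks matches Proposition \ref{exray} and Lemma \ref{-extremal}. The genuinely different step is how you exclude curves of positive square from spanning $K_J$-negative extremal rays: you deduce it from the inclusion of the forward positive cone in $\overline{A}_J(M)$, so that such a $[C]$ is an interior point whenever $b_2\ge 2$. The paper never argues this way: on irrational ruled surfaces it manufactures explicit decompositions $[C]=([C]-T)+T$, $\frac1l\bigl((l[C]-T)+T\bigr)$ or $([C]-T+E_1)+(T-E_1)$ by checking $\dim_{SW}\ge 0$ and $SW(K-\cdot)=0$ case by case (using Kneser's theorem to bound the genus of a multisection from below), and on rational surfaces it invokes $P_{K_J}=\mathcal S_{K_J}^+$ from \cite{LZ-generic} together with the Cremona classification of \cite{LBL} to write $[C]$ as a positive rational combination of sphere classes of square $0$ or $1$. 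Your route is shorter where it applies, but it buys less: the paper's decompositions exhibit $[C]$ as a sum of \emph{effective} classes, which is what the inclusion $\overline{A}_J(M)\subset\overline{A}_{K_J}(M)+\sum\mathbb R^+[L_i]$ (and its rational refinement) actually requires, whereas ``interior point, hence not extremal'' only controls extremal rays and must then be supplemented by a Minkowski-type argument that $\overline{A}_J(M)$ is pointed and is the closed convex hull of its extremal rays.

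Two soft spots to address. First, you assert that a $K_J$-negative extremal ray of the \emph{closure} $\overline{A}_J(M)$ is spanned by an irreducible curve; an extremal ray of a closure can a priori be a limit of rays of generators and carry no effective class, and ruling this out uses the discreteness of the $K_J$-negative irreducible curve classes, which in your write-up is itself derived from the classification --- the order of quantifiers needs care. The paper sidesteps this by working with irreducible curves throughout and only at the end observing that the rays $\mathbb R^+[L_i]$ accumulate only in $\overline{A}_{K_J}(M)$. Second, the positive-cone inclusion for $b^+=1$ needs the wall-crossing difference to be non-zero: for irrationally ruled surfaces this is $|1+e\cdot T|^h$, so you should record that $e\cdot T\ge 0$ on the forward cone; and for $b^+=1$ manifolds with $b_1>0$ that are neither rational nor ruled you should route through Li--Liu's effectiveness of $2K$, as in Lemma \ref{Kod>0}, rather than through the positive cone. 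Neither point is fatal, but both must be written out for the argument to close.
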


In fact, we prove a slightly stronger version of cone theorem:
$$\overline{A}^{\mathbb Q}_J(M)=\overline{A}_J^{K_J\ge 0}(M)\cap H^2(M, \mathbb Q)+\sum \mathbb Q^+[L_i],$$where $\overline{A}^{\mathbb Q}_J(M)=\overline{A}_J(M)\cap H^2(M, \mathbb Q)$. 

Here $K_J$ is the canonical class of $(M, J)$ and $A_J^{K_J\ge 0}(M)=\{C\in A_J(M)| K_J\cdot C\ge 0\}$ is the ``positive" part of the curve cone. In general, this part is not generated by countably many extremal rays since it may have round boundary. This phenomenon happens in particular when we do not have sufficient curves as for a generic almost complex structure on manifolds with $b^+>1$, or even in the rational surfaces $\mathbb CP^2\#k\overline{ \mathbb CP^2}$ with $k> 9$.

On the other hand, there are indeed many cases whose curve cones are polytopes. The most well known examples are rational surfaces $\mathbb CP^2\#k\overline{ \mathbb CP^2}$ when $k\le 9$ and $S^2\times S^2$. The cases of $S^2$ bundles over $S^2$ are treated in \cite{LZ-generic} for example. 

We give a careful analysis of the curve cone for all possible tamed almost complex structures on $\mathbb CP^2\#2\overline{ \mathbb CP^2}$ in Section $3$. In particular, we show that there are at least two embedded $-1$ rational curves for any tamed almost complex structures (Theorem \ref{cp2+2}). This should be compared to other results on the existence of embedded $-1$ rational curves. As a corollary of Theorem \ref{coneintro}, we give a proof of the fact that any tamed $J$ on a non-minimal symplectic $4$-manifold which is not diffeomorphic to a $S^2$ bundle contains at least one smooth $-1$ rational curve. In fact, it appears as a component of a $\mathbb Q$-variety representing an exceptional sphere class (Corollary \ref{-1}).  On the other hand, based on an argument communicated to the author by Dusa McDuff, we show that if $M$ is not diffeomorphic to one point blow up of $S^2$ bundles, then there exists a tamed $J$ such that all $-1$ rational curves are disjoint and any possible numbers of such $-1$ rational curves could be realized (Theorem \ref{disjoint-1}).

Let $h_J^+(M)=\dim H_J^+(M)$. Then by the light cone lemma, when $h_J^+(M)=b^-(M)+1$ and $b^-(M)>1$, the boundary hyperplanes of the dual of the curve cone are determined by $J$-holomorphic curves with negative self-intersection, {\it i.e.} negative curves (see Lemma \ref{negbdy}). The equality $h_J^+(M)=b^-(M)+1$ holds for any complex structure $J$ on manifolds with $b^+(M)=1$. Hence to determine various cones, {\it e.g.} the curve cone and the almost K\"ahler cone, for $\mathbb CP^2\#k\overline{ \mathbb CP^2}$ when $k\le 9$, we are reduced to determine all possible negative curves. The classification is done in Section $4$. Especially, all negative curves (resp. non-positive curves) on $\mathbb CP^2\#k\overline{ \mathbb CP^2}$ with $k\le 9$ (resp. $k<9$) are rational curves. A complete list of the classes of all such rational curves are obtained in Propositions \ref{a<0}, \ref{a>0}, \ref{a>0'}. Moreover, we have the following more precise statement about negative curve configurations.

\begin{theorem}\label{main1}
For rational $4$-manifolds $\mathbb CP^2\# k\overline{\mathbb CP^2}$ with $k<6$ or $S^2\times S^2$, the set of all the possible configurations of negative self-intersection curves for tamed almost complex structures are the same as the set for complex structures.
\end{theorem}

Namely, given  a tamed almost complex structures, we could find a complex structure (on the same manifold) such that their configurations of negative curves are the same. Here by a configuration of negative curves, we mean the set of homology classes and information on the multiplicities of each intersection point. See section 4 for a precise definition.

This is also true for minimal ruled surfaces. However, as observed in \cite{CP}, this is not true for a non-minimal ruled surface, {\it e.g.} $(T^2\times S^2)\# \overline{\mathbb CP^2}$.  A generic almost complex structure will have only two negative curves $E$ and $F-E$ where $F$ is class of $S^2$ and $E$ is the class of exceptional curve, while there is no complex structure on it having exactly those two negative curves. The paper \cite{CP} also gives such an example which is a minimal surface of general type. Currently, no such simply connected examples are known.  On $\mathbb CP^2\# k\overline{\mathbb CP^2}$ with $k\ge 9$, it is related to the Nagata conjecture.

\begin{question}(Nagata)
For every $k> 9$, is it true that for any complex structure on $\mathbb CP^2\# k\overline{\mathbb CP^2}$, we have
$$d> \frac{\sum_{q=1}^km_q}{\sqrt{k}},$$
for every irreducible curve $C$ with  $[C]=dH-\sum_{q=1}^km_qE_q$?
\end{question}

This is equivalent to say that $H-\frac{1}{\sqrt{k}}\sum E_i$ is on the closure of the K\"ahler cone by Nakai-Moishezon criterion. It is easy to see that a generic tamed almost  complex structure satisfies the inequality. 

 The above discussion gives us a clearer picture on the polytopic boundary of the curve cone $A_J(M)$. We can apply it to understand the Nakai-Moishezon or the Kleiman type duality between the curve cone and the almost K\"ahler cone $\mathcal K_J^c=\{[\omega]\in H^2(M; \mathbb R)| \omega \mbox{ is compatible with }J\}$. When $b^+(M)=1$, it is shown in \cite{LZ} that $\mathcal K_J^c$ is equal to the tame cone $\mathcal K_J^t=\{[\omega]\in H^2(M; \mathbb R)| \omega \mbox{  tames }J\}$. 

Let us introduce a couple more cones. First is the positive cone $\mathcal P=\{e\in H^2(M;\mathbb R)|e\cdot e >0\}$. The second cone $A_J^{\vee, >0}(M)$ (resp. $\overline{A}_J^{\vee, >0}(M)$) is the positive dual of $A_J(M)$ (resp. $\overline{A}_J(M)$) where the duality is taken within $H_J^+(M)$. Let $\mathcal P_J=A_J^{\vee, >0}(M) \cap \mathcal P$. Clearly,  $\mathcal K_J^c\subset \mathcal P_J$. Then we can ask the following

\begin{question} \label{dualC}
For an almost K\"ahler structure $J$ on  a closed, oriented $4$-manifold $M$ with $b^+(M)=1$,  is
\begin{equation*}\label{dualak}
\mathcal K_J^c=\mathcal P_J=\overline{A}_J^{\vee, >0}(M)  ?
\end{equation*}

When $b^+(M)>1$, is $\mathcal K_J^c$ a connected component of $\mathcal P_J$?
\end{question}

When $J$ is an integrable complex structure, $\mathcal K_J^c$ is the K\"ahler cone. A K\"ahlerian Nakai-Moishezon theorem is given by Buchdahl and Lamari in
dimension $4$ \cite{Bu, Lamari}, and Demailly-Paun \cite{DP}  in arbitrary dimension to determine the K\"ahler cone completely.

It is worth noting that in algebraic geometry, all the proofs of the cone theorem relies on the Nakai-Moishezon or Kleiman ampleness criterion. However, our almost complex cone theorem (Theorem \ref{coneintro}) does not need the corresponding version, {\it i.e.} Question \ref{dualC}.

The key of Question \ref{dualC} is to construct almost K\"ahler forms. There are currently two methods of construction. The first is using Taubes' subvarieties-current-form technique \cite{T2009}. However, to argue it for an arbitrary $J$, we have to use spherical subvarieties as in \cite{LZ-generic}. It was successfully used to affirm Question \ref{dualC} for $S^2$ bundles over $S^2$. The method is applied to prove it for $\mathbb CP^2\#2\overline{\mathbb CP^2}$ in this paper. However, it cannot go further along this line. The limit of this method is discussed in Section \ref{cp2+3}.

The second method is reducing the construction of the almost K\"ahler cone to determine the more flexible tame cone $\mathcal K_J^t$ by virtue of the identity $\mathcal K_J^c=\mathcal K_J^t\cap H_J^+(M)$ established in \cite{LZ}. In this situation, we could apply the $J$-tamed symplectic inflation developed by McDuff \cite{Mc2} and Buse \cite{Bus}. More precisely, we apply Buse's inflation for negative $J$-holomorphic curves to our extremal ray of curve cone. The combinatorial version of it, the formal $J$-inflation, is introduced. A version of Gram-Schmidt orthogonalization process is applied in our situation to significantly simplify the otherwise too complicated calculation. We are able to prove the following general result when the curve cone has no round boundary.

\begin{theorem}\label{noround}
On a $4$-dimensional almost complex manifold $(M,J)$ with $\mathcal K_J^c\ne \emptyset$, and $h_J^+=b^-+1$, if $\mathcal P_J$ has no round boundary and each of the boundary hyperplane is determined by a smooth negative curve, then $\mathcal K_J^c$ is a connected component of $\mathcal P_J$.
\end{theorem}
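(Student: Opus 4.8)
The plan is to show that $\mathcal{K}_J^c$ is both open and closed in $\mathcal{P}_J$; since $\mathcal{P}_J$ is a union of (at most two) convex cones — one for each component of $\mathcal{P}$ meeting $A_J^{\vee,>0}(M)$ — openness plus closedness inside one such component forces equality with that component. Openness is essentially automatic: $\mathcal{K}_J^c$ is defined by the open condition that $\omega$ tame $J$ (using $\mathcal{K}_J^c=\mathcal{K}_J^t\cap H_J^+(M)$ when $b^+=1$, or directly in general), so it is an open subset of $H_J^+(M)$, hence open in $\mathcal{P}_J$. The substance is closedness: I must show that a class $\mathfrak{a}$ lying in the interior of $\mathcal{P}_J$ but on $\partial \mathcal{K}_J^c$ cannot exist, i.e. every interior point of $\mathcal{P}_J$ is represented by a $J$-tame (hence, after projection, $J$-compatible) symplectic form.

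The mechanism for producing such forms is $J$-tamed symplectic inflation. Starting from some reference almost Kähler form $\omega_0\in\mathcal{K}_J^c$ (nonempty by hypothesis) and a target class $\mathfrak{a}$ in the interior of $\mathcal{P}_J$, I want to reach $\mathfrak{a}$ by a finite sequence of inflations along smooth negative curves. Here is where the hypotheses enter: because $h_J^+=b^-+1$ and $\mathcal{P}_J$ has no round boundary, Lemma \ref{negbdy} (the light-cone lemma applied to $\mathcal{P}_J$) tells us each boundary hyperplane of $\mathcal{P}_J$ — equivalently each extremal ray of its dual, the closure $\overline{A}_J(M)$ in the relevant component — is cut out by a smooth negative curve $C$ with $C\cdot C<0$. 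Buse's inflation \cite{Bus} along such a $C$ replaces a tame form $\omega$ by a tame form in class $[\omega]+t\,\mathrm{PD}[C]$ for $t$ in a range determined by $\omega\cdot C$ and $C\cdot C$; iterating along the finitely many negative curves governing the facets of $\mathcal{P}_J$ moves $[\omega_0]$ in a full-dimensional open cone of directions. The key quantitative point — and this is what the "version of Gram-Schmidt orthogonalization" in the statement is for — is to choose the order and amounts of inflation so that the accumulated class lands exactly at $\mathfrak{a}$: one orthogonalizes the spanning negative classes $\{[C_i]\}$ against each other with respect to the intersection form, so that adjusting the coefficient along $[C_i]$ does not disturb the already-fixed coefficients along $[C_j]$, $j<i$, reducing a coupled system to a triangular one solvable greedily. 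The combinatorial shadow of this — the "formal $J$-inflation" — should be set up so that the admissibility bounds on the inflation parameters $t_i$ are automatically satisfied precisely when the target stays in the positive cone $\mathcal{P}$, which is guaranteed since $\mathfrak{a}\in\mathcal{P}_J\subset\mathcal{P}$.

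The main obstacle is the interplay between two constraints that pull in opposite directions. Inflation along $C$ is only permitted when $\omega\cdot C>0$ (the curve must have positive symplectic area); but as one inflates toward a boundary facet of $\mathcal{P}_J$, the pairing of the evolving class with the very curve $C$ cutting that facet tends to zero, so the allowable inflation amount shrinks, and one must check this does not obstruct reaching interior target classes. Handling this requires showing that for any $\mathfrak{a}$ in the \emph{interior} of $\mathcal{P}_J$, all the relevant pairings $\mathfrak{a}\cdot [C_i]$ are strictly positive (which is exactly the statement that $\mathfrak{a}$ is not on any facet — here "no round boundary" is essential, as a round piece of $\partial\mathcal{P}_J$ would not correspond to a single curve and the pairing argument would break), and then using an open-ness/compactness argument on the finite set of governing curves to get uniform room to maneuver. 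Once the triangular solvability from the Gram-Schmidt step is combined with these strict positivity bounds, every interior point of the ambient component of $\mathcal{P}_J$ is hit by a $J$-tame form, its $H_J^+$-projection is $J$-compatible by \cite{LZ}, so $\mathcal{K}_J^c$ contains that interior, is closed in it, and therefore equals the whole component.
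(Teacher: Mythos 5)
Your overall strategy --- start from a reference almost K\"ahler class, use Buse's negative inflation along the smooth curves cutting out the facets of $\mathcal P_J$, and convert tame to compatible via $\mathcal K_J^c=\mathcal K_J^t\cap H_J^+(M)$ --- is the paper's strategy, and your instinct that a Gram--Schmidt orthogonalization of the facet classes is the organizing device is also correct. But the central step is asserted rather than proved, and as stated it does not work. You claim that an arbitrary interior class $\mathfrak a$ of the component is reached by a \emph{finite} sequence of inflations arranged into a triangular system, with the admissibility bounds ``automatically satisfied'' whenever the target stays in $\mathcal P$. Two things go wrong. First, $\mathfrak a-[\omega_0]$ need not be a nonnegative combination of the facet classes $[C_i]$ at all; one must also rescale and sum (i.e.\ use convexity of $\mathcal K_J^t$), operations your proposal never invokes. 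Second, and more seriously, the admissibility bound for inflating along $C_i$ is $t_i\le A\cdot[C_i]/(-C_i^2)$ evaluated at the \emph{current} class $A$, and this bound collapses to zero exactly as one approaches the facet $C_i^{\perp}$; it is not controlled by positivity of the target's pairings. Concretely, on $\mathbb CP^2\#2\overline{\mathbb CP^2}$ with negative curves $E_1$, $E_2$, $H-E_1-E_2$, reaching the extremal ray $\mathbb R^+(H-E_1)$ from $3H-E_1-E_2$ forces the alternating maximal inflations along $E_2$ and $H-E_1-E_2$ to run forever: the classes $3H-E_1-E_2,\,5H-3E_1-2E_2,\,7H-5E_1-2E_2,\dots$ diverge and only converge \emph{projectively} to the ray. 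No finite triangular scheme lands there.

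The paper's resolution is precisely to embrace this: Lemmas \ref{facetoedge} and \ref{vertex} compute the limit of the infinite alternating maximal \emph{formal} inflations via a Gram--Schmidt process (the orthogonalized classes $[C_i'']$ are nonnegative combinations of the original $[C_i]$, and maximal formal inflation along $C$ is the intersection-form projection onto $C^{\perp}$), showing that each \emph{extremal ray} of the polytopic cone $\mathcal P_J$ is achieved in the limit; the interior of the component is then obtained from the extremal rays by summing and rescaling inside the convex cone $\mathcal K_J^t$, and finally the formal inflations are approximated by genuine $J$-tamed inflations (Theorem \ref{buse}). So the Gram--Schmidt step is not there to triangularize a finite system of equations for an interior target --- it is there to identify the limit rays of an infinite process. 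To repair your argument you would need either to prove your finite reachability claim (tracking the shrinking admissibility bounds, and also dropping your implicit assumption that $\mathcal P_J$ has finitely many facets, which the paper does not assume), or to redirect the inflations toward the extremal rays as limits and then appeal to convexity, which is what the paper does. The open-and-closed framing at the start is harmless but redundant: once every class of the component is shown to be tame-representable, the conclusion is immediate.
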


Applying the discussion of the curve cone for rational and ruled surfaces in Sections 2-4, we have the both Nakai-Moishezon and Kleiman dualities.

\begin{theorem}\label{rr}
If $J$ is almost K\"ahler on $M=\mathbb CP^2\# k\overline{\mathbb CP^2}$ with $k\le 9$ or a minimal irrational ruled surface with a negative curve, then $$\mathcal K_J^c=\mathcal K_J^t= \mathcal P_J=\overline{A}_J^{\vee, >0}(M).$$
\end{theorem}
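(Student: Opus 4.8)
The plan is to derive Theorem \ref{rr} as a combination of Theorem \ref{noround} and the structural results on negative curves from Sections 2--4. First I would verify the hypotheses of Theorem \ref{noround} for each class of manifolds in the statement. All of $\mathbb CP^2\#k\overline{\mathbb CP^2}$ with $k\le 9$ and all minimal irrational ruled surfaces have $b^+=1$, so by the discussion after Lemma \ref{negbdy} we have $h_J^+=b^-+1$ for every tamed $J$; moreover $\mathcal K_J^c\ne\emptyset$ is exactly the almost K\"ahler assumption. What remains is to show that $\mathcal P_J$ has no round boundary and that each boundary hyperplane is cut out by a smooth negative curve. By Lemma \ref{negbdy} the boundary hyperplanes of $\overline{A}_J^{\vee,>0}(M)=\mathcal P_J$ (within $\mathcal P$) are precisely the walls $e^\perp$ for $e$ a negative curve class, so the content is: there are only finitely many negative curves, and each is smooth. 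For the rational surfaces this is exactly the classification carried out in Section 4 (Propositions \ref{a<0}, \ref{a>0}, \ref{a>0'}), which exhibits the finite list of negative curve classes and shows they are all represented by smooth rational curves; for minimal ruled surfaces with a negative curve it follows from the corresponding analysis in Section 2, where the only negative curve is the section of negative self-intersection (and the fiber class is the boundary between the curve cone and its dual but is non-negative). In particular $\mathcal P_J$ is a rational polytope (intersected with $\mathcal P$), so it has no round boundary.

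With the hypotheses checked, Theorem \ref{noround} gives that $\mathcal K_J^c$ is a connected component of $\mathcal P_J$. Since $b^+(M)=1$, $\mathcal P$ has two connected components (the forward and backward cones), and $\mathcal P_J=A_J^{\vee,>0}(M)\cap\mathcal P$ lies in the single component containing the class of a taming form, because pairing positively with every curve class forces positivity against the symplectic form. Hence $\mathcal P_J$ is already connected, and therefore $\mathcal K_J^c=\mathcal P_J$. The identity $\mathcal K_J^c=\mathcal K_J^t$ for $b^+=1$ is quoted from \cite{LZ}, and $\mathcal P_J=\overline{A}_J^{\vee,>0}(M)$ holds because $A_J(M)$ and $\overline{A}_J(M)$ have the same closure and hence the same positive dual. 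This chains all four cones together.

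The one genuinely delicate point is confirming, for each manifold in the list, that \emph{every} boundary hyperplane of $\mathcal P_J$ really is determined by a \emph{smooth} negative curve and that the collection is finite — i.e., importing the Section 4 classification in exactly the form Theorem \ref{noround} requires. For $k\le 8$ this is immediate from the explicit finite lists, but the case $k=9$ is the subtle one: here $-K_J$ is on the boundary of the curve cone, the set of $(-1)$-classes is infinite, and one must check that the relevant dual walls still form a locally finite (indeed polyhedral-in-$\mathcal P$) family and that no round arc of $\partial\mathcal P$ contributes — this is where the hypothesis $h_J^+=b^-+1$ and the light cone lemma (Lemma \ref{negbdy}) do the real work. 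The minimal ruled case is comparatively routine: one just checks the single negative section is smooth (it is, by positivity of intersections and the adjunction/index count) and that the remaining boundary facet, the fiber wall, fits the hypothesis or is handled directly. Once these bookkeeping facts are in place, the theorem follows formally.
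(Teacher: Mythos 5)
Your overall strategy --- feed the Section 4 classification of negative curves into Theorem \ref{noround} --- is the paper's strategy for $1<k<9$, and that part of your argument is essentially correct. But two of the cases you defer to ``bookkeeping'' are precisely where Theorem \ref{noround} does \emph{not} apply as stated, and the paper has to argue differently there.

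First, the minimal irrational ruled case. Here $b^-=1$, so by Lemma \ref{negbdy}(2) the boundary of $\mathcal P_J$ consists of two rays: one is cut out by the negative section $C$, but the other is the ray spanned by the fiber class $T$ itself, i.e.\ a wall determined by a \emph{square-zero} curve. This violates the hypothesis of Theorem \ref{noround} that every boundary hyperplane be determined by a negative curve, so your sentence ``fits the hypothesis or is handled directly'' hides a real step. The paper handles it directly by combining Buse's negative inflation along the (unique, embedded) section $C$ with McDuff's \emph{positive} inflation (Theorem \ref{mc}) along an embedded fiber --- using the fact from \cite{Mc2} that all curves in class $T$ are embedded --- to produce taming forms in classes $B+kT+lC$ sweeping out all of $\mathcal P_J$. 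You never invoke the positive inflation along $T$, and without it you cannot reach the part of $\mathcal P_J$ near the fiber wall. Second, for $k=9$ you correctly flag the subtlety but do not resolve it: $\mathcal E_K$ is infinite and $K^2=0$, so $\mathcal P_J$ is only ``almost polytopic'' --- the single ray $\mathbb R^+(-K)$ may sit on the round part of $\partial\mathcal P$. The paper's resolution is that $-K$ contributes no vertex, every extremal ray of $\mathcal P_J$ is still the intersection of walls cut out by finitely many smooth rational negative curves, and one applies Lemma \ref{vertex} to those rays directly rather than quoting Theorem \ref{noround} verbatim. Finally, a smaller point: your justification of $\mathcal P_J=\overline{A}_J^{\vee,>0}(M)$ (that $A_J$ and $\overline{A}_J$ have the same dual) addresses only the trivial half; the substantive claim is that $\overline{A}_J^{\vee,>0}(M)\subset\overline{\mathcal P}$, which the paper gets from the fact that the extremal rays of the dual cone all have non-negative square (via $\mathcal C_{M,K}$ and Proposition \ref{corner01}), so that intersecting with $\mathcal P$ loses nothing.
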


We are also able to show that Question \ref{dualC} is true for a generic tamed $J$ on a manifold with $b^+(M)=1$.
Here a generic tamed almost complex structure means that it is chosen from a residual subset of all tamed almost complex structures. 
\begin{theorem}\label{generic}
On a symplectic $4$-manifold with $b^+=1$, $\mathcal K_J^t=\mathcal K_J^c= \mathcal P_J$ for a generic tamed $J$.
\end{theorem}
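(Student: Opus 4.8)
The plan is to reduce Theorem \ref{generic} to Theorem \ref{noround}, so the work is to verify that for a generic tamed $J$ on a symplectic $4$-manifold $M$ with $b^+(M)=1$, the hypotheses of Theorem \ref{noround} are met: namely that $\mathcal{K}_J^c\neq\emptyset$, that $h_J^+=b^-+1$, that $\mathcal{P}_J$ has no round boundary, and that each boundary hyperplane of $\mathcal{P}_J$ is cut out by a smooth negative curve. The first two are easy: since $b^+(M)=1$ we always have $h_J^+(M)=b^-(M)+1$ as noted in the excerpt, and $\mathcal{K}_J^c\neq\emptyset$ because $J$ is tamed and (by \cite{LZ}, using $b^+=1$) the tame and compatible cones agree, so a taming form for $J$ is compatible with $J$; alternatively one can perturb $J$ through tamed structures keeping a fixed $\omega$ compatible. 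So the whole content is the statement about the boundary of $\mathcal{P}_J$.

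First I would recall from the cone theorem (Theorem \ref{coneintro}) that $\overline{A}_J(M)=\overline{A}_{K_J}(M)+\sum\mathbb{R}^+[L_i]$. For a generic tamed $J$, Seiberg-Witten/Taubes theory (as in \cite{LZ-generic}, \cite{T2009}) controls which classes carry $J$-holomorphic subvarieties: a generic $J$ has no curves in classes of negative expected dimension, so the only irreducible curves of negative self-intersection are in classes $e$ with $e\cdot e=-1$, $K_J\cdot e=-1$ — the exceptional sphere classes — each represented by a unique embedded $-1$-sphere (when $M$ is not rational or ruled; the rational/ruled cases with $b^+=1$ are already covered by Theorem \ref{rr}, so one may assume $M$ is neither, or just invoke Theorem \ref{rr} directly there). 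Thus for generic $J$ the negative curves are precisely the embedded $-1$-spheres, and these are finite in number (bounded using the symplectic form $\omega$: their $\omega$-areas are bounded and positive, and distinct exceptional classes are ``orthogonal enough'' by the light cone lemma). The dual cone $\mathcal{P}_J=A_J^{\vee,>0}(M)\cap\mathcal{P}$ then has, by Lemma \ref{negbdy}, its boundary hyperplanes determined exactly by these finitely many smooth negative ($-1$) curves, hence no round boundary. Applying Theorem \ref{noround} gives $\mathcal{K}_J^c$ is a connected component of $\mathcal{P}_J$; since $b^+=1$, $\mathcal{P}$ has two components and one checks (using that $[\omega]\in\mathcal{K}_J^c\cap\mathcal{P}_J$ forces the forward component) that $\mathcal{K}_J^c$ fills out the whole of $\mathcal{P}_J$, and $\mathcal{K}_J^c=\mathcal{K}_J^t$ is the identity from \cite{LZ}.

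The main obstacle I expect is making precise, and genericity-proof, the claim that for a generic tamed $J$ the only negative curves are the embedded exceptional spheres and that there are finitely many of them — in particular handling reducible or multiply-covered configurations in classes of non-negative expected dimension whose components could still have negative self-intersection. The resolution is a Baire-category argument: the set of tamed $J$ admitting a somewhere-injective curve in a class $e$ with $d(e)=\tfrac12(e\cdot e - K_J\cdot e)<0$ is contained in a countable union of positive-codimension subsets of the space of tamed almost complex structures (by the Sard-Smale transversality for somewhere-injective curves, valid for tamed $J$ as in Taubes' work), so its complement is residual; and in each exceptional class only one embedded curve occurs. One must also be slightly careful that ``no round boundary'' is an open-and-dense (indeed residual) condition — this follows since the finitely many exceptional classes, and hence the finitely many boundary hyperplanes, are locally constant in $J$ once the curves are embedded and rigid. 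With these points in place the theorem follows from Theorem \ref{noround}.
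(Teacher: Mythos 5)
Your reduction to Theorem \ref{noround} breaks down at the step ``finitely many boundary hyperplanes, hence no round boundary.'' Those two things are not equivalent: $\mathcal P_J$ is the intersection of the positive dual of the curve cone with the positive cone $\mathcal P$, and even when only finitely many negative curves cut hyperplanes, a portion of $\partial\mathcal P_J$ can lie on the light cone $\{e^2=0\}$ itself, which is round. The paper makes exactly this point right after proving Theorem \ref{noround}: for a generic tamed $J$ on $\mathbb CP^2\#k\overline{\mathbb CP^2}$ with $k>9$ (a $b^+=1$ manifold squarely within the scope of Theorem \ref{generic}), the only negative curves are the $-1$ rational curves, yet the condition $e\cdot E>0$ for all $E\in\mathcal E_K$ does not force $e^2\ge 0$ (take $e=-K$), so $\mathcal P_J$ \emph{does} have round boundary (Remark \ref{k>9round}). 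Your fallback to Theorem \ref{rr} does not rescue this, since Theorem \ref{rr} only covers $k\le 9$ and minimal irrational ruled surfaces with a negative curve; the $k>9$ case, non-minimal irrational ruled surfaces, and minimal ruled surfaces without a negative curve are all left uncovered. So the hypotheses of Theorem \ref{noround} genuinely fail on part of the class of manifolds the theorem is about, and the argument as proposed cannot close.

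The paper's proof uses a different mechanism that is insensitive to round boundary: fix an integral taming form $\omega$ and, for each integral class $e\in\mathcal C_{M,K}$, choose $l$ large so that $le-[\omega]$ and $le-[\omega]-K$ lie in $\mathcal C_{M,K}$; then $SW(le-[\omega])\ne 0$ by \cite{LL}, so for generic $\omega$-tame $J$ this class is represented by an \emph{embedded} $J$-holomorphic curve of non-negative square, and McDuff's positive inflation (Theorem \ref{mc}) produces a $J$-tame form in class $le=[\omega]+(le-[\omega])$. Intersecting the residual sets over the countably many integral classes, and then using density of integral symplectic forms, gives $\mathcal K_J^t=\mathcal C_{M,K}=\mathcal P_J$ for generic tamed $J$; the equality $\mathcal K_J^t=\mathcal K_J^c$ comes from \cite{T2009} and \cite{LZ} at the end. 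In short: the correct argument inflates positively along high-multiple curves of non-negative square rather than negatively along the (possibly insufficient) negative curves, and this is what lets it reach classes near the round part of $\partial\mathcal P_J$. If you want to salvage your approach, you would at minimum need to split off and separately treat every $b^+=1$ manifold whose generic $\mathcal P_J$ has round boundary, which is essentially the whole difficulty.
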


 The author is grateful to Tian-Jun Li and Clifford Taubes for very helpful suggestions on Theorem \ref{coneintro}. He is also in debt to Tian-Jun Li for pointing out reference \cite{Bus} and Dusa McDuff for many useful comments to greatly improve the presentation of the paper. Finally, we would also like to express deep gratitude and respect to the referees for meticulous reading and very helpful suggestions which improve the paper tremendously. 

The work is partially supported by AMS-Simons travel grant and EPSRC grant EP/N002601/1.

\section{The Cone Theorem}
In this section, we will give a proof of the cone theorem for tamed almost complex $4$-manifolds. Recall an almost complex structure $J$ is said to be tamed if there is a symplectic form $\omega$ such that the bilinear form $\omega(\cdot, J(\cdot))$ is positive definite. A taming form of $J$ is said to be compatible with $J$ if the bilinear form $\omega(\cdot, J(\cdot))$ is symmetric. If there is a symplectic form compatible with the almost complex structure $J$, then $J$ is called almost K\"ahler. Hence, a $J$-compatible symplectic form is also called a $J$-almost K\"ahler form.

Given a class $e\in H_2(M; \mathbb Z)$, introduce the $J$-genus of $e$, $$g_J(e)=\frac{1}{2}(e\cdot e+K\cdot e)+1,$$where $K=K_J$ is the canonical class of $J$. 
Moreover, when $C$ is an irreducible subvariety,  $g_J([C])$ is non-negative.  In fact, if $\Sigma$
is the model curve of $C$, by the adjunction inequality,  
\begin{equation}\label{adj inequality}
g_J(e_C)\geq g(\Sigma),
\end{equation}
with equality if and only if $C$ is smooth. 
In particular, if $C$ is the class of an irreducible curve having $C^2<0$ and $K\cdot C<0$ then the curve must be a $-1$ rational curve.

 In the following, a rational curve means an irreducible $J$-holomorphic subvariety of $J$-genus $0$. By \eqref{adj inequality}, such a curve has to be smooth.
\subsection{The Seiberg-Witten invariant}
In this subsection, we will give a very brief introduction to the Seiberg-Witten invariant, which will be the main tool to establish Theorem \ref{coneintro}. For a detailed introduction, see for example \cite{LLwall, LLb+=1} and references therein.

Let $M$ be an oriented $4$-manifold with a given Riemannian metric $g$ and a spin$^{c}$ structure $\mathcal L$ on $M$.  Hence there are a pair of rank $2$ complex vector bundles $S^{\pm}$ with isomorphisms $\det(S^+)=\det(S^-)=\mathcal L$. The Seiberg-Witten equations are for a pair $(A, \phi)$ where $A$ is a connection of $\mathcal L$ and $\phi$ is a section of $S^+$. These equations are 
$$D_A\phi=0$$
$$F_A^+=iq(\phi)+i\eta$$ where $q$ is a canonical map $q: \Gamma(S^+)\rightarrow \Omega^2_+(M)$ and $\eta$ is a self-dual $2$-form on $M$. 

The group $C^{\infty}(M; S^1)$ naturally acts on the space of solutions and acts freely at irreducible solutions. Recall a reducible solution has $\phi=0$, and hence $F_A^+=i\eta$. The quotient is the moduli space and is denoted by $\mathcal M_M(\mathcal L, g, \eta)$. For generic pairs $(g, \eta)$, the Seiberg-Witten moduli space $\mathcal M_M(\mathcal L, g, \eta)$ is a compact manifold of dimension $$2d(\mathcal L)=\frac{1}{4}(c_1(\mathcal L)^2-(3\sigma(M)+2\chi(M)))$$ where $\sigma(M)$ is the signature and $\chi(M)$ is the Euler number. Furthermore, an orientation is given to $\mathcal M_M(\mathcal L, g, \eta)$ by fixing a homology orientation for $M$, {\it i.e.} an orientation of $H^1(M)\oplus H^2_+(M)$. When $b^+(M)=1$, the space of $g$-self-dual forms $\mathcal H^+_g(M)$ is spanned by a single harmonic $2$-form $\omega_g$ of norm $1$ agreeing with the homology orientation.

Now, for the convenience of notation, we fix a (homotopy class of) almost complex structures. In particular, it determines a canonical class $K$ which is the first Chern class of the cotangent bundle. We denote $e:=\frac{c_1(\mathcal L)+K}{2}\in H^2(M; \mathbb Z)/(2\hbox{-torsion})$. 
In the following, notice the first Chern class $c_1(\mathcal L)$ determines $\mathcal L$ and {\it vice versa}.
For a generic choice of $(g, \eta)$, the Seiberg-Witten invariant $SW_{M, g, \eta}(e)$ is defined as follows. If $d(\mathcal L)<0$, then the SW invariant is zero. If $d(\mathcal L)=0$, then the moduli space is a finite union of signed points and the SW invariant is the sum of the corresponding signs. If $d(\mathcal L)>0$, then the SW invariant is obtained by pairing the fundamental class of $\mathcal M_M(\mathcal L, g, \eta)$ with the maximal cup product of the Euler class of the $S^1$-bundle $\mathcal M^0_M(\mathcal L, g, \eta)$ over $\mathcal M_M(\mathcal L, g, \eta)$. Here $\mathcal M^0_M(\mathcal L, g, \eta)$ is called the based moduli space which is the quotient of the space of solution by $C_0^{\infty}(M, S^1)$ (the elements in $C^{\infty}(M, S^1)$ which map a base point in $M$ to $1$ in $S^1$).

If $b^+>1$, a generic path of $(g, \eta)$ contains no reducible solutions. Hence, the Seiberg-Witten invariant is an oriented diffeomorphism invariant in this case. Hence we can use the notation $SW(e)$ for the Seiberg-Witten invariant. We will also write $$\dim_{SW}(e)=2d(\mathcal L)=\frac{1}{4}((2e-K)^2-K^2)=e^2-K\cdot e$$ for the Seiberg-Witten dimension. In the case $b^+=1$, there might be reducible solutions on a $1$-dimensional family.  Recall that the curvature $F_A$ represents the cohomology class $-2\pi ic_1(\mathcal L)$. Hence $F_A^+=i\eta$ holds only if $-2\pi c_1(\mathcal L)^+=\eta$. This happens if and only if the discriminant $\Delta_{\mathcal L}(g, \eta):=\int (2\pi c_1(\mathcal L)+\eta)\omega_g=0$. With this in mind, the set of pairs $(g, \eta)$ with positive (resp. negative) discriminant is called the positive (resp. negative) $\mathcal L$ chamber. We use the notation $SW^{\pm}(e)$ for the Seiberg-Witten invariants in these two chambers. Moreover, in the this paper, we will use $SW(e)$ instead of $SW^-(e)$ when $b^+=1$. 

As one can easily see from the discussion above, $SW$ (or $SW^{\pm}$) could be defined more generally as map from $H^2(M, \mathbb Z)$ to $\Lambda^*H^1(M, \mathbb Z)$, although we do not need this generality for this paper. 

We now assume $(M, \omega)$ is a symplectic $4$-manifold, and $J$ is a $\omega$-tamed almost complex structure. Then the theorems in \cite{T3} and \cite{LLb+=1} equate Seiberg-Witten invariants with Gromov-Taubes invariants that are defined by making a suitably counting of $J$-holomorphic subvarieties (In fact, the $SW(e)$ we defined here is essentially the Gromov-Taubes invariant in the literature). Especially, when $SW(e)\ne 0$, there is a $J$-holomorphic subvariety in class $e$ passing through $\dim_{SW}(e)$ given points. This is the key result from the Seiberg-Witten theory we will use in this section.
\subsection{The cone theorem}
We first have the following

\begin{lemma}\label{Kod>0}
Let $J$ be a tamed almost complex structure on a symplectic $4$-manifold which is not rational or ruled. Let $C$ be an irreducible $J$-holomorphic curve with $K_J\cdot [C]<0$. Then $C$ has to be a $-1$ rational curve.
\end{lemma}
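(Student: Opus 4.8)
The statement I want to prove is Lemma~\ref{Kod>0}: if $J$ is tamed on a symplectic $4$-manifold $M$ that is not rational or ruled, and $C$ is an irreducible $J$-holomorphic curve with $K\cdot[C]<0$, then $C$ is a $-1$ rational curve. By the adjunction inequality \eqref{adj inequality}, $g_J([C])\ge 0$; combined with the remark right after it, it suffices to show $[C]^2<0$, since then $[C]^2<0$ together with $K\cdot[C]<0$ forces $C$ to be a $-1$ rational curve. So the whole game is to rule out $[C]^2\ge 0$.

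First I would dispose of the case $[C]^2>0$. A curve with positive self-intersection is, after pairing with a taming form $\omega$, a class in the forward positive cone; a standard argument (Li--Liu, or McDuff's work on rational/ruled classes) shows that an almost complex $4$-manifold carrying an irreducible $J$-holomorphic curve $C$ with $[C]^2>0$ is rational or ruled — one inflates along $C$ to get a symplectic form making $[C]$ (or a nearby class) of positive square and then appeals to the symplectic characterization of rational/ruled manifolds via $SW$ theory. This contradicts the hypothesis, so $[C]^2\le 0$.

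The remaining and main case is $[C]^2=0$. Here I would use the Seiberg-Witten input recalled in the previous subsection together with the hypothesis $K\cdot[C]<0$. Write $e=[C]$ and set $\dim_{SW}(e)=e^2-K\cdot e = -K\cdot e>0$. Since $C$ is an honest $J$-holomorphic subvariety, it contributes to the Gromov--Taubes count in class $e$, so we are in a regime where $SW(e)\neq 0$ would let us move $C$ through $-K\cdot e\ge 1$ generic points. For a class of square zero this is already delicate: a smooth irreducible curve of square zero and $g_J(e)=\tfrac12 K\cdot e+1$; if $g_J(e)=0$ then $K\cdot e=-2$ and $e^2=0$, which is precisely the fiber class of a ruled surface (or $\mathbb CP^2\#\overline{\mathbb CP^2}$), again excluded; if $g_J(e)\ge 1$, then $K\cdot e\ge 0$, contradicting $K\cdot[C]<0$. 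So the genuinely dangerous sub-case is when $C$ is singular, i.e.\ $g_J(e)>g(\Sigma)$. Then I would combine $e^2=0$ with the light cone lemma / intersection-form positivity for square-zero classes carrying subvarieties, together with the wall-crossing and $SW=\mathrm{Gr}$ dichotomy: either $SW(e)\ne0$, which via Taubes forces enough curves in class $e$ that $e$ behaves like a fibration class and pushes $M$ into the ruled world; or $SW(e)=0$, and then the structure of the moduli space of subvarieties in class $e$ — a positive-dimensional family of square-zero curves sweeping out $M$ — again yields a ruling. Either branch contradicts the hypothesis that $M$ is not rational or ruled.

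\emph{Expected main obstacle.} The hard part will be the square-zero case, specifically ruling out a singular irreducible curve $C$ with $[C]^2=0$ and $K\cdot[C]<0$ without already knowing $M$ is ruled: I need to extract a genuine fibration (or rational/ruled structure) from the mere existence of one such curve, which requires carefully leveraging Taubes' $SW=\mathrm{Gr}$ and the Gromov compactness for the family of deformations of $C$ — the classes $e$ with $e^2=0$ are exactly the boundary case where positivity of intersections gives no slack, so the argument must be run at the level of the moduli space of subvarieties rather than purely numerically. I would lean on the structural results on subvarieties in $J$-nef classes from \cite{LZ-generic, LZrc} cited in the introduction to organize this.
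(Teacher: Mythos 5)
There is a genuine gap: the one tool the paper's proof actually rests on never appears in your argument. For a minimal symplectic $4$-manifold that is not rational or ruled, Taubes' theorem gives $SW(K)\neq 0$ when $b^+>1$ (and Li--Liu gives $SW(2K)\neq 0$ when $b^+=1$), so $2K=\sum a_i[C_i]$ is represented by a $J$-holomorphic subvariety. Then $2K\cdot[C]<0$ together with positivity of intersections forces $C$ to be one of the components, say $C_1$, and $[C]\cdot(2K-a_1[C])\ge 0$ yields $C^2<0$; adjunction with $K\cdot[C]<0$ then gives the $-1$ rational curve. This disposes of $[C]^2\ge 0$ in one stroke (the non-minimal case is handled by writing $K=K_0+\sum E_i$ and representing $2K$ again). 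Your substitute for the $[C]^2>0$ case is based on a false claim: an almost complex $4$-manifold carrying an irreducible $J$-holomorphic curve of positive self-intersection need not be rational or ruled --- any hyperplane section of a minimal surface of general type is such a curve. Even if you add the hypothesis $K\cdot[C]<0$ and try to inflate along $C$ to force $K\cdot[\omega_t]<0$ and invoke Liu's theorem, you need $C$ embedded (an irreducible curve of positive square need not be) and $M$ minimal, and that theorem is itself proved by the same $SW(K)\neq 0$ input you are avoiding.

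Your treatment of $[C]^2=0$ is also misdirected, though salvageable by your own observation. If $C$ is singular then $g_J([C])>g(\Sigma)\ge 0$, so $g_J([C])\ge 1$, and with $[C]^2=0$ the formula $g_J([C])=\tfrac12 K\cdot[C]+1$ gives $K\cdot[C]\ge 0$ --- exactly the numerical contradiction you state for the smooth higher-genus case, which applies verbatim here. So the ``genuinely dangerous sub-case'' you flag as the main obstacle is vacuous, and the moduli-space/fibration discussion is not needed: what survives is a smooth square-zero rational curve, excluded by McDuff's sphere criterion. The net assessment is that the reduction to $C^2<0$ and the final adjunction step are right, but the core of the proof --- effectiveness of $K$ (or $2K$) via Seiberg--Witten non-vanishing plus positivity of intersections --- is missing and is not replaced by a correct argument.
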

\begin{proof}
If $M$ is a symplectic $4$-manifold which is not rational or ruled, then by \cite{T, LL} we always have a $J$-holomorphic subvariety in class $2K_J$. If $2K_J=\sum a_i[C_i]$, where $a_i\ge 0$ and $C_i$ are irreducible curves, then $K_J\cdot C<0$ would imply $C$ is one of $C_i$. Let it be $C_1$. Then $[C]\cdot (2K_J-a_1[C])\ge 0$ implies $C^2<0$. Altogether, we have $C$ is a $-1$ rational curve. 
%
%
\end{proof} 

Next we prove a very useful lemma. In many applications later, our $C$ also has $SW([C])\ne 0$.

\begin{lemma}\label{int>0}
If $C$ is an irreducible $J$-holomorphic curve with $C^2\ge 0$ and $SW(e)\ne 0$, then $e \cdot [C]\ge 0$.
\end{lemma}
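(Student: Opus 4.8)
The plan is to trade the numerical hypothesis $SW(e)\ne 0$ for a genuinely geometric one via Taubes' SW${}={}$Gr, and then to conclude purely from positivity of intersections of distinct irreducible $J$-holomorphic curves. Concretely: since $SW(e)\ne 0$, the identification of the Seiberg--Witten invariant with the Gromov--Taubes invariant recalled at the end of Subsection 2.1 produces an honest $J$-holomorphic subvariety $\Theta=\{(C_i,a_i)\}$ with $e=\sum_i a_i[C_i]$, where each $C_i$ is irreducible and each $a_i$ is a positive integer. (We need only the existence of one such subvariety; the fact that it can be made to pass through $\dim_{SW}(e)$ points plays no role here.)

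Next I would expand
\[
e\cdot[C]=\sum_i a_i\,\bigl([C_i]\cdot[C]\bigr),
\]
and split the sum according to whether a component $C_i$ equals $C$ or not. If $C_i\ne C$, then positivity of local intersections of distinct irreducible pseudoholomorphic curves gives $[C_i]\cdot[C]\ge 0$. There is at most one index, say $i=1$, with $C_1=C$, and for it $[C_1]\cdot[C]=C^2\ge 0$ by hypothesis. Since every $a_i>0$, adding up the (nonnegative) terms yields $e\cdot[C]\ge 0$, which is the claim. Note the self-intersection hypothesis $C^2\ge 0$ enters only through the single term $a_1 C^2$; in all other places the inequality is automatic.

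I do not expect a real obstacle: the argument is essentially a bookkeeping of signs once the two black boxes (SW${}={}$Gr and intersection positivity) are in place. The only points requiring a moment's care are (i) in the $b^+=1$ case, the symbol $SW$ already refers, by the convention fixed above, to the invariant in the chamber in which the existence of the Gromov--Taubes subvariety holds, so no wall-crossing is needed in the statement itself; and (ii) when $C$ happens to be a component of $\Theta$, one must not drop the $a_1C^2$ contribution, which is precisely why the hypothesis $C^2\ge 0$ (rather than, say, $C^2>0$ or no hypothesis) is exactly what the statement needs.
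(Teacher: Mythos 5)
Your proposal is correct and follows the same route as the paper: apply SW${}={}$Gr to represent $e$ by a (possibly reducible) $J$-holomorphic subvariety, then use positivity of intersections for components distinct from $C$ and the hypothesis $C^2\ge 0$ for the possible component equal to $C$. You are merely more explicit than the paper about the latter case, which is where the hypothesis $C^2\ge 0$ is actually used.
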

\begin{proof}

Since $SW(e)\ne 0$, we can represent $e$ by a possibly reducible $J$-holomorphic subvariety. Since each irreducible curve $C'$ has $[C']\cdot [C]\ge 0$, we have $e\cdot [C]\ge 0$.
\end{proof}

\begin{prop}\label{SW}
Let $(M, J)$ be a tamed almost complex $4$-manifold. Let $C$ be an irreducible curve such that $K_J\cdot [C]<0$. Then $SW([C])\ne 0$. Moreover, there is a curve in class $[C]$ passing through any given point when $C$ is not a $-1$ rational curve.
\end{prop}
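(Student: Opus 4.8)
The plan is to run the hypothesis $K\cdot[C]<0$ through the adjunction inequality \eqref{adj inequality}, split according to whether $C$ is a $-1$ rational curve, and in the remaining case combine the Seiberg--Witten wall-crossing formula with the conjugation symmetry of Seiberg--Witten invariants and Lemma \ref{int>0}. Since $C$ is irreducible, \eqref{adj inequality} gives $g_J([C])\ge 0$, i.e. $[C]^2+K\cdot[C]\ge -2$; together with $K\cdot[C]\le -1$ this forces $[C]^2\ge -1$, and $[C]^2=-1$ happens exactly when $C$ is a $-1$ rational curve (again by \eqref{adj inequality}, as remarked after it). So either $C$ is a $-1$ rational curve, or $[C]^2\ge 0$.

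In the first case $[C]$ is an exceptional class, $\dim_{SW}([C])=0$, and $SW([C])=\pm 1\ne 0$: for $b^+>1$ this is the blow-up formula for Seiberg--Witten invariants (equivalently, $[C]$ is represented by an embedded symplectic sphere counted with $+1$ by the Gromov--Taubes invariant), and for $b^+=1$ it is the corresponding fact in the chamber $SW=SW^-$ in which Taubes' SW $=$ Gr holds (cf. \cite{LLb+=1}); the additional assertion of the proposition is then vacuous.

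Now suppose $[C]^2\ge 0$ and $C$ is not a $-1$ rational curve. If $b^+>1$ then $M$ is neither rational nor ruled, so Lemma \ref{Kod>0} would force $C$ to be a $-1$ rational curve; hence $b^+=1$. Here $\dim_{SW}([C])=[C]^2-K\cdot[C]\ge 1$, so (being even) $\ge 2$ and $d(\mathcal L)\ge 1$, and by the wall-crossing formula \cite{LLwall} at least one of $SW^+([C])$ and $SW^-([C])$ is nonzero (for $b_1=0$ the wall-crossing number is $\pm 1$; for $b_1>0$ one uses the general form of the formula together with the known Seiberg--Witten computations for blow-ups of ruled surfaces, noting that such a $C$ is a fiber or has positive self-intersection). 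Suppose, for contradiction, $SW^-([C])=0$, so $SW^+([C])\ne 0$. Conjugating the spin$^c$ structure reverses the sign of the discriminant and hence exchanges the two chambers, so that $SW^+(e)=\pm SW^-(K-e)$; thus $SW(K-[C])=SW^-(K-[C])\ne 0$. Applying Lemma \ref{int>0} to the irreducible curve $C$ (which has $[C]^2\ge 0$) and to the class $e=K-[C]$ yields $(K-[C])\cdot[C]\ge 0$, i.e. $K\cdot[C]\ge [C]^2\ge 0$, contradicting $K\cdot[C]<0$. Hence $SW([C])=SW^-([C])\ne 0$. Moreover $\dim_{SW}([C])\ge 2>0$, so by Taubes' SW $=$ Gr there is a $J$-holomorphic subvariety in class $[C]$ through any prescribed point, which is the additional assertion (the curves excluded being precisely the $-1$ rational curves).

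The most delicate point should be the chamber bookkeeping for $b^+=1$: one must check that the paper's convention $SW:=SW^-$ is simultaneously the Taubes--Gromov chamber for \emph{every} spin$^c$ structure --- this holds because $\eta=-r\omega$ with $r\gg 0$ makes the discriminant negative for all of them at once --- and one must handle the wall-crossing term when $b_1>0$, where it need not equal $\pm1$ and one falls back on the classification of the relevant curves (fibers, or positive self-intersection) together with the explicit Seiberg--Witten invariants of ruled surfaces.
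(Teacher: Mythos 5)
Your proposal is correct and follows essentially the same route as the paper: reduce to the rational/ruled ($b^+=1$) case via Lemma \ref{Kod>0}, use adjunction to split off the $-1$ rational curve case and otherwise get $C^2\ge 0$, and then combine the wall-crossing formula with the vanishing of $SW(K-[C])$, which in both arguments comes down to the fact that $(K-[C])\cdot[C]<0$ is incompatible with $C$ irreducible of non-negative square (your appeal to Lemma \ref{int>0} is exactly the paper's inline subvariety argument). The only cosmetic difference is that you make the conjugation symmetry between the two chambers explicit and organize the final step as a contradiction, whereas the paper absorbs the conjugation into the statement $|SW(K-[C])-SW([C])|=\cdots$ and computes directly.
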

\begin{proof}
First we could assume $M$ is rational or ruled. Otherwise, it is proved in Lemma \ref{Kod>0}.

Notice $C^2+K_J\cdot [C]=2g_J([C])-2$, $$[C]\cdot (K_J-[C])=-\dim_{SW} [C]=-\dim_{SW}(K_J-[C])=2K\cdot [C]+2-2g_J([C])\le 0.$$ The equality holds if and only if $g_J([C])=0, K_J\cdot [C]=-1$, and $C^2=-1$, {\it i.e.} $C$ is a $-1$ rational curve. Otherwise $C^2\ge 2g_J([C])-2+2\ge 0$.

Hence we assume $[C]\cdot (K_J-[C])<0$. If $SW(K_J-[C])\neq 0$, by SW=Gr we have a (possibly reducible) $J$-holomorphic curve in class $K_J-[C]$. Hence the irreducible curve $C$ must be a component of this curve and $C^2<0$. This contradicts the adjunction and dimension formula. Thus $SW(K_J-[C])=0$.

Since $\dim_{SW} [C]\ge 0$, we have wall-crossing formula
$$|SW(K-[C])-SW([C])|=\begin{cases}   1 & \hbox{if $(M, \omega)$ rational,}\cr
  |1+[C]\cdot T|^h &\hbox{if
  $(M, \omega)$ irrationally ruled,}\cr\end{cases}$$
  where $T$ is the unique positive fiber class and $h$ is the genus of base surface of irrationally ruled manifolds (see \cite{LLwall}).
Since $[C]\cdot T\ge 0$ by Lemma \ref{int>0}, we have $SW([C])\neq 0$ by the wall-crossing and $SW(K_J-[C])=0$. Hence there is a curve in class $[C]$ passing through any given point since $\dim_{SW} [C]>0$. 
\end{proof}

Next we will prove the first two statements of Theorem \ref{coneintro}, the cone theorem. 

\begin{theorem}\label{conethm}
Let $(M, J)$ be a tamed almost complex $4$-manifold. Then $$\overline{A}_J(M)=\overline{A}_J^{K_J\ge 0}(M)+\sum \mathbb R^+[L_i]$$ where $L_i\subset M$ are countably many smooth irreducible rational curves such that $-3\le K_J \cdot [L_i]<0$ which span the extremal rays $\mathbb R^+[L_i]$ of $\overline{A}_J(M)$.

Moreover, for any $J$-almost K\"ahler symplectic form $\omega$ and any given $\epsilon>0$, there are only finitely many extremal rays with $(K_J+\epsilon[\omega])\cdot [L_i]\le 0$.
\end{theorem}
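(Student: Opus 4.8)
The plan is to run the \emph{vanishing $\Rightarrow$ non-vanishing $\Rightarrow$ cone theorem} scheme announced in the introduction, with Proposition~\ref{SW}, Lemma~\ref{int>0} and Lemma~\ref{Kod>0} playing the role that vanishing theorems and the deformation theory of rational curves play in the algebraic proofs. Since any irreducible curve $C$ with $K\cdot[C]\ge 0$ lies in $\overline{A}_{K_J}(M)$ by definition, one has $\overline{A}_J(M)=\overline{A}_{K_J}(M)+\mathcal C^{-}$, where $\mathcal C^{-}$ is the closed convex cone generated by the classes of the irreducible $J$-holomorphic curves $C$ with $K\cdot[C]<0$; so the whole statement is reduced to a study of those curves. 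By Lemma~\ref{Kod>0} I may assume $M$ is rational or ruled, since otherwise every such $C$ is a $-1$ rational curve and there is nothing more to prove; note that then $b^{+}(M)=1$ and $h_J^{+}(M)=b^{-}(M)+1$ automatically.

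Fix an irreducible $C$ with $K\cdot[C]<0$. Combining the adjunction inequality $g_J([C])\ge 0$ with the parity relation $K\cdot e\equiv e\cdot e\pmod 2$ for integral classes gives a trichotomy. If $C^{2}<0$, then $C^{2}=-1$ and $K\cdot[C]=-1$, so $C$ is a $-1$ rational curve. If $C^{2}=0$, then $K\cdot[C]$ is a negative \emph{even} integer that is $\ge -2$, forcing $K\cdot[C]=-2$, $g_J([C])=0$, and hence $C$ a smooth rational curve. If $C^{2}\ge 1$, then $C^{2}>0$, and since $C$ is irreducible it pairs non-negatively with every $J$-holomorphic curve; the light-cone lemma (using $C^{2}>0$ and $h_J^{+}(M)=b^{-}(M)+1$) then puts $[C]$ in the interior of $\overline{A}_J(M)$, so $[C]$ spans no extremal ray --- unless $h_J^{+}(M)=1$, i.e.\ $M=\mathbb{CP}^{2}$ with $[C]$ a positive multiple of the line class $L$, for which $L^{2}=1$ and $K\cdot L=-3$. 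In all cases an extremal ray of $\overline{A}_J(M)$ of negative $K$-degree is spanned by a smooth rational curve $L_i$ with $K\cdot[L_i]\in\{-1,-2,-3\}$; in particular $-3\le K\cdot[L_i]<0$.

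For the equality $\overline{A}_J(M)=\overline{A}_{K_J}(M)+\sum\mathbb{R}^{+}[L_i]$ it then suffices to check that $\mathcal C^{-}\subseteq\overline{A}_{K_J}(M)+\sum\mathbb{R}^{+}[L_i]$ and that the right-hand side is closed. The first point follows from the trichotomy: a $-1$ rational curve spans an extremal ray and so is one of the $L_i$; a $C^{2}=0$ rational curve either spans an extremal ray (a fiber on a minimal ruled surface, or on $\mathbb{CP}^{2}\#\overline{\mathbb{CP}^{2}}$) or, on a non-minimal surface, is homologous to a sum of exceptional classes and hence lies in the cone generated by $-1$ curves; and on $\mathbb{CP}^{2}$ every class is a multiple of the line. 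Closedness of the right-hand side is the content of the local finiteness proved next (it makes the cone locally polyhedral away from $\overline{A}_{K_J}(M)$, the only possible accumulation of the $[L_i]$-rays occurring at null directions of $\overline{A}_{K_J}(M)$). Countability of $\{L_i\}$ is clear since each ray $\mathbb{R}^{+}[L_i]$ is spanned by an integral class and $H^{2}(M;\mathbb{Z})$ is countable; running the argument with rational points gives the $\mathbb{Q}$-refinement.

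For the last assertion, fix a $J$-almost K\"ahler form $\omega$ and $\epsilon>0$. If $(K+\epsilon[\omega])\cdot[L_i]\le 0$, then since $L_i$ is $J$-holomorphic and $\omega$ tames $J$ we get $0<[\omega]\cdot[L_i]\le -\tfrac1{\epsilon}\,K\cdot[L_i]\le 3/\epsilon$, while $[L_i]^{2}\in\{-1,0,1\}$. When $b^{+}(M)=1$, write $e=[L_i]=t[\omega]+e'$ with $e'\perp[\omega]$ in $H^{2}(M;\mathbb{R})$, whose form has signature $(1,b^{-}(M))$ so that $[\omega]^{\perp}$ is negative definite; then $t=([\omega]\cdot e)/[\omega]^{2}\in(0,3/(\epsilon[\omega]^{2})]$ and $-(e')^{2}=t^{2}[\omega]^{2}-e^{2}$ is bounded above, so $e$ lies in a bounded region of the lattice and hence takes only finitely many values. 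When $b^{+}(M)>1$ (so $M$ is neither rational nor ruled and the $L_i$ are $-1$ curves), the same finiteness follows from Gromov compactness, there being finitely many classes of $J$-holomorphic curves of $\omega$-area $\le 3/\epsilon$. The main obstacle I anticipate is the positive-self-intersection case of the trichotomy --- the light-cone argument placing such a class in the interior of the curve cone --- together with the bookkeeping that turns the trichotomy into the closed-cone equality; by contrast the genuine Seiberg--Witten input (the vanishing of $SW(K-[C])$, its wall-crossing non-vanishing giving $SW([C])\ne 0$, and the reduction to rational or ruled surfaces) is already packaged in Proposition~\ref{SW} and Lemmas~\ref{int>0} and~\ref{Kod>0}.
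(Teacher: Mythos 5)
Your reduction to irreducible curves with $K\cdot[C]<0$, the adjunction/parity trichotomy on the sign of $C^2$, and the finiteness argument at the end (the decomposition $e=t[\omega]+e'$ with $[\omega]^{\perp}$ negative definite, resp.\ Gromov compactness when $b^+>1$) are all sound. The proof breaks down, however, exactly at the point you flag as the anticipated obstacle: the claim that for $C^2>0$ the light cone lemma ``puts $[C]$ in the interior of $\overline{A}_J(M)$,'' hence that $[C]$ is not extremal. The light cone lemma only tells you that $[C]$ pairs positively with classes of non-negative square in the forward cone; it says nothing about the shape of the curve cone, which is generated by whatever irreducible curves the particular $J$ happens to admit and may well be a thin (or low-dimensional) cone in which a positive-square class spans an extremal ray. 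Non-extremality is a statement that $[C]$ (or some multiple $l[C]$) actually decomposes as $u+v$ with $u,v\in\overline{A}_J(M)$ not proportional to $[C]$, and no amount of intersection-number positivity produces such curves. This is precisely where the Seiberg--Witten input beyond Proposition \ref{SW} is needed: for irrational ruled surfaces one shows $SW(l[C]-T)\neq 0$ (or $SW([C]-T+E_1)\neq 0$) by verifying $\dim_{SW}\geq 0$ --- which requires the genus bound $2g(C)-2\geq a(2h-2)$ for the degree-$a$ projection to the base --- together with the vanishing $SW(K-(l[C]-T))=0$ and wall crossing, yielding $[C]=\frac{1}{l}\bigl((l[C]-T)+T\bigr)$; for rational surfaces one instead uses $\overline{P}_{K_J}=\overline{\mathcal S}_{K_J}^+$ (Proposition 5.20 of \cite{LZ-generic}) and the Cremona classification of $S_{K_J}^+$ from \cite{LBL} to write $[C]$ as a positive rational combination of spherical classes of square $0$ or $1$, each of which further decomposes into rational curve classes. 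Compare Proposition \ref{dA}, where the non-extremality of positive-square curves on small rational surfaces is again proved by an explicit SW decomposition $[C]=\frac1n((n[C]-E)+E)$, not by a light-cone interiority argument.

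A secondary gap is the $C^2=0$ branch on non-minimal surfaces: the assertion that such a rational curve ``is homologous to a sum of exceptional classes'' is not justified and is not true for arbitrary classes satisfying $C^2=0$, $K\cdot[C]=-2$, $g_J=0$ (e.g.\ $U+2T-2E_1$ on $(T^2\times S^2)\#\overline{\mathbb{CP}^2}$ is not a sum of exceptional classes); and even when it is such a sum, the exceptional classes are only known to be represented by possibly reducible subvarieties, so a further decomposition into extremal rays is still required. Both gaps are of the same nature: membership of a class in $\overline{A}_{K_J}(M)+\sum\mathbb R^+[L_i]$ must be certified by exhibiting representing subvarieties, which in this paper is done through the vanishing/wall-crossing/SW$=$Gr chain rather than through numerical positivity alone.
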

\begin{proof}
We first prove our statement about the extremal rays $\mathbb R^+[L_i]$. 
When $M$ is not rational or ruled, then Lemma \ref{Kod>0} verifies our claim. Especially, $L_i$ are finitely many rational curves with $K_J\cdot [L_i]=-1$. 

In general, let $C$ be an irreducible curve with $K_J\cdot [C]<0$. By Proposition \ref{SW}, we have $SW([C])\neq 0$. Hence for any tamed almost complex structure $J$, there is a (possibly reducible) subvariety in class $[C]$ by Taubes' SW=Gr and Gromov compactness. Especially, it is true for a projective variety. 

Now, assume $C$ is an irreducible curve with $g_J([C])>0$ and $K\cdot [C]<0$, or  $g_J([C])=0$ and $K_J\cdot [C]< -3$ on a rational or ruled surface. We want to show that $[C]$ cannot span an extremal ray of the curve cone (we will say $[C]$ is not extremal for simplicity). We divide our discussion into the following cases.
\bigskip

\noindent {\bf Case 1: Irrational ruled surfaces} \\

$\bullet$ $M=\Sigma_h\times S^2$, $h> 1$, and its blowups \\

In this case, let $U$ be the class of the base $\Sigma_h$ and $T$ be the class of the fiber $S^2$. Then the canonical class $K_J=-2U+(2h-2)T+\sum_i E_i$. Let $[C]=aU+bT-\sum_i c_iE_i$.

Since both $[C]$ and $T$ pair negatively with $K_J$, by Lemma \ref{SW}, both classes have non-trivial Seiberg-Witten invariant. Applying Lemma \ref{int>0} to the pair $[C]$ and $T$, we have $a\ge 0$. We could assume $[C]$ is not one of the classes of exceptional curves $E_i$. Then applying Lemma \ref{int>0} to $[C]$ and $E_i$ gives $c_i\ge 0$.

The assumption $K_J\cdot [C]<0$ reads as $$a(2h-2)-2b+\sum c_i<0.$$Especially, we have $b>0$.

When $a=0$, we have $$-\sum c_i^2=C^2=2g_J([C])-2-K_J\cdot [C]>2g_J([C])-2\ge -2.$$ It works only when $g_J([C])=0$ and there is at most one nonzero $c_i$, say $c_1$, which equals to $1$. Hence, $K_J\cdot [C]=-1$ (resp. $K_J\cdot [C]=-2$), which would imply $[C]=T-E_1$ (resp. $[C]=T$). This finishes the case of $a=0$ by showing that all irreducible curves, in particular the curves span the extremal rays, are rational curves with $-2\le K_J\cdot [C]\le -1$.

When $a>0$, we take the projection $f: C\rightarrow \Sigma_h$ to the base. It has degree $a=[C]\cdot T$.
Since $\Sigma_h$ has genus greater than one, and by Kneser's theorem, we have 
$$2g_J([C])-2\ge a(2h-2)\ge 2a.$$

Now, we are planning to show that the class $[C]-T$ has non trivial Seiberg-Witten invariant. First we show $SW(K_J-([C]-T))=0$. If not, notice $SW(T)\ne 0$, we should have $(K_J-([C]-T))\cdot T\ge 0$  by Lemma \ref{int>0}. However, it contradicts to the calculation $(K_J-([C]-T))\cdot T=-2-[C]\cdot T<0$. Next we show the Seiberg-Witten dimension of it is nonnegative.

\begin{equation*} \label{SWdim}
\begin{array}{lll}
 \dim_{SW}([C]-T)&=&([C]-T)^2-K_J\cdot ([C]-T)\\
 &&\\
 &=&C^2-K_J\cdot [C] -2-2[C]\cdot T\\
 &&\\
 &\ge&2g_J([C])-2-2a\\
 &&\\
 &\ge&0.\\
 \end{array}
 \end{equation*}

Finally the wall crossing formula implies $$|SW([C]-T)|=|SW(K_J-([C]-T))-SW([C]-T)|=|1+a|^h\ne 0.$$

Hence, we complete our argument that $[C]=([C]-T)+T$ is not extremal in this case.
\\

$\bullet$ $M=T^2\times S^2$ and its blowups \\

We use the same setting as the above case. That is, we assume $[C]=aU+bT-\sum_i c_iE_i$ and the canonical class $K_J=-2U+\sum_i E_i$. 
As showed in the above case, we only need to show that when $a>0$, $[C]$ is not extremal. Without loss, we assume $c_1\ge c_2 \ge \cdots$.

Notice $$C^2=2ab-\sum c_i^2, \quad -K_J\cdot [C]=2b-\sum c_i>0.$$ 
We will first show that $[C]$ must not be extremal in the case that $c_1\le a$ and make a corresponding change in the $c_1>a$ case later.
$$C^2=2ab-\sum c_i^2\ge 2ab-a\sum c_i=a(2b-\sum c_i)=a(-K_J\cdot [C]).$$ 

Hence look at the classes $l[C]-T$, we have 

\begin{equation*} \label{SWdim4}
\begin{array}{lll}
 \dim_{SW}(l[C]-T)&=&(l[C]-T)^2-K_J\cdot (l[C]-T)\\
 &&\\
 &=&(l^2C^2-lK_J\cdot [C]) -2-2l [C]\cdot T\\
 &&\\
 &\ge&(l^2a+l)(-K_J\cdot [C])-2-2la\\
 &&\\
 &\ge& l^2a+l-2la-2\\
 \end{array}
 \end{equation*}
 It is greater than $0$ if $l$ is large enough ({\it e.g.} $l>2a$). 

Since $(K_J-(l[C]-T))\cdot T=-2-la<0$, Lemma \ref{int>0} implies $SW(K_J-(l[C]-T))= 0$. Apply the wall crossing formula $$|SW(l[C]-T)|=|SW(K_J-(l[C]-T))-SW(l[C]-T)|=|1+la|\ne 0.$$

Hence $[C]=\frac{1}{l}((l[C]-T)+T)$ where $l[C]-T$ is not proportional to $T$ since $a>0$. It is a decomposition of two non-proportional classes with non-trivial Seiberg-Witten invariant which is not extremal.

If $c_1>a$, 
 similar to the above, we first calculate the Seiberg-Witten dimension of the class $[C]-T+E_1$.

\begin{equation*} \label{SWdim2}
\begin{array}{lll}
 \dim_{SW}([C]-T+E_1)&=&([C]-T+E_1)^2-K_J\cdot ([C]-T+E_1)\\
 &&\\
 &=&(C^2-K_J\cdot [C]) -2-2[C]\cdot T+2c_1\\
 &&\\
 &\ge&C^2-K_J\cdot [C]\\
 &&\\
 &>&0.\\
 \end{array}
 \end{equation*}

Again $(K_J-([C]-T+E_1))\cdot T=-2-a<0$ implies $SW(K_J-([C]-T+E_1))=0$. Then  $$|SW([C]-T+E_1)|=|SW(K_J-([C]-T+E_1))-SW([C]-T+E_1)|=|1+a|\ne 0.$$ And $[C]=([C]-T+E_1)+(T-E_1)$ is not extremal.
\bigskip

$\bullet$ $M$ is a non-trivial  $S^2$ bundle over $\Sigma_h$, $h\ge 1$ \\

Since the blow-ups of it are diffeomorphic to those of trivial bundle, we are only left with the non-trivial bundles.

Let $U$ be the class of a section with $U^2=1$ and $T$ be the class of the fiber. Then $K=-2U+(2h-1)T$. Let $[C]=aU+bT$. Again, we have $a\ge 0$. The condition $K\cdot [C]<0$ reads as $$2b>a(2h-3).$$

If $a=0$, $$0=C^2=2g_J([C])-2-K_J\cdot [C]>2g_J([C])-2.$$Thus $g_J([C])=0$ and $K_J\cdot [C]=-2$. This completes the case of $a=0$ by showing that all irreducible curves, in particular the curves span the extremal rays, are rational curves with $K_J\cdot [C]=-2$.

If $a>0$ and $h>1$, then we have $b>0$. Hence $C^2=a^2+2ab\ge 1+2a=1+2[C]\cdot T$. Hence $\dim_{SW}([C]-T)=C^2-K_J\cdot [C]-2-2[C]\cdot T\ge 0$. Moreover $SW(K_J-([C]-T))= 0$, since $(K_J-([C]-T))\cdot T=-2-a<0$. Then the wall crossing formula implies $$|SW([C]-T)|=|SW(K_J-([C]-T))-SW([C]-T)|=|1+a|^h\ne 0.$$
And $[C]=([C]-T)+T$ is not an extremal ray. 

If $a>0$ and $h=1$, then we have $$C^2=a^2+2ab>0, \quad -K_J\cdot [C]=2b+a>0.$$ Looking at the classes $l[C]-T$, we have 

\begin{equation*} \label{SWdim3}
\begin{array}{lll}
 \dim_{SW}(l[C]-T)&=&(l[C]-T)^2-K_J\cdot (l[C]-T)\\
 &&\\
 &=&(l^2C^2-lK_J\cdot [C]) -2-2l [C]\cdot T\\
 &&\\
 &=&(l^2a+l)(a+2b)-2-2la\\
 &&\\
 &\ge& l^2a+l-2la-2\\
 \end{array}
 \end{equation*}
 
 It is greater than $0$ if $l>2a$ is large enough. Moreover $SW(K_J-(l[C]-T))= 0$, since $(K_J-(l[C]-T))\cdot T=-2-la<0$. Then the wall crossing formula shows
$$|SW(l[C]-T)|=|SW(K_J-(l[C]-T))-SW(l[C]-T)|=|1+la|^h\ne 0.$$
Hence $[C]=\frac{1}{l}((l[C]-T)+T)$ is not extremal.

\smallskip

Note our argument shows more specifically that any extremal ray paring negatively with the canonical class $K_J$ must be generated by either $E_i$, $T-E_i$ or in the minimal case $T$.

\bigskip

\noindent {\bf Case 2: Rational surfaces:}\\

Recall $C$ is an irreducible curve with $K_J\cdot [C]<0$, which would imply $SW([C])\ne 0$ by Lemma \ref{SW}.  Suppose $C$ is not a rational curve with self-intersection $0$ or $-1$. Since $SW([C])\ne 0$ and $K_J\cdot [C]<0$, by Lemma \ref{int>0}, we know $[C]$ is in the closure of the cone $$P_{K_J}:=\{e\in H^2(M; \mathbb R)|e^2>0, e\cdot E>0 \hbox{ for any }  E\in \mathcal E_{K_J}, e\cdot (-K_J)> 0\}$$where $\mathcal E_{K_J}$ is the set of $-1$ symplectic rational curves. More precisely, it is on the intersection of faces determined by a set of $-1$ classes. Let $S$ be the set of homology classes which are represented by smoothly embedded spheres. We define $$S_{K_J}^+=\{e\in S|g_J(e)=0, e^2>0\}.$$ 
Using this notation, $$\mathcal E_{K_J}=\{e\in S| g_J(e)=0, e^2=-1\}.$$
By Proposition 5.20 in \cite{LZ-generic}, $P_{K_J}=\mathcal S_{K_J}^+$ where the latter is the open cone spanned by $S_{K_J}^+$ (and furthermore equals to the almost K\"ahler cone when $J$ is good generic).  By Lemma 5.24 (2) of \cite{LZ-generic}, each face $F_{E_k}$ of $P_{M_k, K}$ corresponding to $E_k$ is naturally identified with $P_{M_{k-1}, K}$. Here $M_k=\mathbb CP^2\# k\overline{\mathbb CP^2}$ for $k\ne 1$, and $M_1$ might be $S^2\times S^2$ or $\mathbb CP^2\# \overline{\mathbb CP^2}$  depending on whether the set of classes in $\mathcal E_{M_k, K}$ orthogonal to $E_k$ is empty or not. Since the conclusion for $S^2\times S^2$ or $\mathbb CP^2\# \overline{\mathbb CP^2}$ was shown in \cite{LZ-generic}, by induction, we know $[C]=\sum a_i [C_i]$ where $a_i>0$ and $[C_i]\in S_{K_J}^+$. Notice we can choose $a_i$ to be rational numbers since all the classes here are rational (in fact, integral). Furthermore, as noted in \cite{LBL}, 
any  class is in $S_{K_J}^{+}$ is Cremona equivalent to one of the following classes
\begin{enumerate}
 \item $H$, $2H$, 
 \item $(n+1)H-nE_1, n\ge 1$,
 \item $(n+1)H-nE_1-E_2, n\ge 1$.
\end{enumerate}
Here Cremona equivalence refers to the equivalence under the group of diffeomorphisms preserving the canonical class $K_J$.
It is easy to check that each of them could be written as sum of classes of rational curves with square $0$ or $1$.  This implies extremal rays (with $K_J\cdot [C]<0$) have to be spanned by rational curves with $-3\le K_J\cdot [C]<0$. There are countably many such classes, since there are countable many $-1$ curve classes. This finishes the proof of the first statement of our cone theorem on extremal rays.

For the finiteness statement, it makes non-trivial sense only when $M=\mathbb CP^2\# k\overline{\mathbb CP^2}$ with $k\ge 9$. This is because there are finitely many irreducible curves with $K_J\cdot [C]<0$. If $M$ is not rational or ruled, they are $-1$ rational curves $E_i$ where $M$. If $M$ is ruled they are $E_i$, $T-E_i$ or $T$ if it is minimal. If $M=\mathbb CP^2\# k\overline{\mathbb CP^2}$ with $k<9$, they are $-1$ rational curves (there are finitely many when $k<9$, a possibly well-known fact which is also shown later in Proposition \ref{dA}), $H-E$ (when $k=1$) and $H$ (when $k=0$). 

Now we are about to show that when $k\ge 9$ there are only finitely many $-1$ rational curve classes $E$ with bounded symplectic energy $[\omega]\cdot E<\frac{1}{\epsilon}$. Since being symplectic is an open condition, $[\omega]-\delta H$ is still a class of symplectic form when $\delta>0$ is small. Moreover $E$ is always represented by a (possibly reducible) symplectic surface since $SW(E)\ne 0$. Hence $[\omega]\cdot E<\frac{1}{\epsilon}$ would imply $$H\cdot E\le\frac{1}{\delta}([\omega]-\delta H)\cdot E+ H\cdot E=\frac{1}{\delta}[\omega]\cdot E<\frac{1}{\epsilon\delta}.$$ On the other hand there are only finitely many classes $E=aH-\sum c_iE_i$ with $E^2=-1$ and $a=H\cdot E>0$ is bounded from above. Especially, the finiteness statement implies each extremal ray $\mathbb R^+[L_i]$ is not a limit of a sequence of other extremal rays. If this is not true, we have an infinite sequence of $-1$ rational curve classes $C_i$ approaches to a $-1$ rational curve class $E$. Hence, we have $\lim_{i\rightarrow \infty}[\omega]\cdot C_i=[\omega]\cdot E$. In particular, there would be infinitely many $C_i$ satisfying $[\omega]\cdot C_i\le 2[\omega]\cdot E$. This contradicts to our finiteness result above.

To conclude our proof, we first see that $\overline{A}_J^{K_J\ge 0}(M)+\sum \mathbb R^+[L_i]\subset \overline{A}_J(M)$ is a closed convex cone. This is because the above discussion implies $\mathbb R^+[L_i]$ can only have accumulate points in $\overline{A}_J^{K_J\ge 0}(M)$. On the other hand, to show $\overline{A}_J(M)\subset \overline{A}_J^{K_J\ge 0}(M)+\sum \mathbb R^+[L_i]$, we only need to show the inclusion for all classes $e\in \overline{A}_J(M)$ with $K\cdot e<0$. This is exactly what we have proved.
\end{proof}

\begin{remark}
What we have proved is actually a slightly stronger version of cone theorem:
$$\overline{A}^{\mathbb Q}_J(M)=\overline{A}_J^{K_J\ge 0}(M)\cap H^2(M, \mathbb Q)+\sum \mathbb Q^+[L_i],$$where $\overline{A}^{\mathbb Q}_J(M)=\overline{A}_J(M)\cap H^2(M, \mathbb Q)$. 
\end{remark}

The next example shows that it is not true that any Seiberg-Witten nontrivial class of nonzero $J$-genus is an integral combination of curve  classes.
\begin{example}\label{-K8blowup}
Let $M=\mathbb CP^2\#8\overline{\mathbb CP^2}$. Then $SW(-K)=1$ because of $SW(2K)=0$ and the wall crossing formula. However, $-K$ cannot be written as $m_1[C_1]+m_2[C_2]$ with $m_1m_2\ne 0$ and $m_i\in \mathbb Z$ such that $SW([C_i])\neq 0$ for $i=1,2$. This is because the Seiberg-Witten invariant is a deformation invariant. Especially, $[C_1]$ and $[C_2]$ are classes of (possibly reducible) symplectic surfaces in a Del Pezzo surface, {\it i.e.} a $4$-manifold where $-K$ is the class of the symplectic form. Hence $(-K)\cdot [C_i]\ge 1$ since all the classes are integral. It contradicts to $(-K)\cdot (m_1[C_1]+m_2[C_2])=(-K)^2=1$. 

On the other hand, it has the following decomposition with rational coefficients: $$-K=\frac{1}{2}(6H-3E_1-2E_2-\cdots-2E_8)+\frac{1}{2}E_1.$$
\end{example}

One should compare the above example with Proposition \ref{k<8dec}, which shows this kind of example does not exist on $\mathbb CP^2\#k\overline{\mathbb CP^2}, k\le 7$.

We have the following more specific description of the extremal rays.
\begin{prop}\label{exray}
Let $(M, J)$ be a tamed almost complex $4$-manifold. An irreducible curve $C$ is an extremal rational curve as in Theorem \ref{conethm} if and only if 

\begin{enumerate}
\item $C$ is a $-1$ rational curve;
\item $M$ is a minimal ruled surface or $\mathbb CP^2\# \overline{\mathbb CP^2}$, and $C$ is a fiber;
\item $M=\mathbb CP^2$ and $C$ is a projective line.
\end{enumerate}
\end{prop}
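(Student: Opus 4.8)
The plan is to characterize, among the countably many extremal rational curves $L_i$ produced in Theorem \ref{conethm}, exactly which ones occur, by combining the adjunction/dimension constraint $-3\le K\cdot[C]<0$ with a case analysis on the Kodaira type of $M$. The ``if'' direction is the easier half: a $-1$ rational curve $C$ satisfies $C^2=-1$, $K\cdot C=-1$, so it cannot be written as a nontrivial sum $u+v$ with $u,v\in\overline A_J(M)$ (any such decomposition would force a component $C'$ with $C'\cdot C<0$, hence $C'=C$, contradicting irreducibility of the decomposition into genuinely distinct rays); similarly a fiber $F$ of a minimal ruled surface or of $\mathbb{CP}^2\#\overline{\mathbb{CP}^2}$ has $F^2=0$, $K\cdot F=-2$, and $F\cdot F=0$ together with $F\cdot C'\ge 0$ for all irreducible $C'$ shows $F$ spans a boundary ray that cannot be split; and a line in $\mathbb{CP}^2$ generates the unique ray of the one-dimensional cone $A_J(\mathbb{CP}^2)$. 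I would phrase all three uniformly: if $C^2\le 0$ and $C$ is not a multiple covering, then for any decomposition $[C]=u+v$ with $u,v\in A_J(M)\setminus\{0\}$ one gets $0\ge C^2=u\cdot[C]+v\cdot[C]\ge 0$, forcing $u\cdot[C]=v\cdot[C]=0$, and then light-cone/Hodge-index type reasoning (or directly Lemma \ref{int>0} applied to the $\mathbb Q$-subvarieties representing $u,v$) pins $u,v$ proportional to $[C]$, so the ray is extremal.

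For the ``only if'' direction I would go through the cases exactly as in the proof of Theorem \ref{conethm}. If $M$ is not rational or ruled, Lemma \ref{Kod>0} already says every extremal $C$ with $K\cdot C<0$ is a $-1$ rational curve, so (1) holds. If $M$ is irrationally ruled, the remark at the end of Case 1 in the proof of Theorem \ref{conethm} says any extremal ray pairing negatively with $K$ is generated by $E_i$, $T-E_i$, or (in the minimal case) $T$; the classes $E_i$ and $T-E_i$ are $-1$ rational curves and $T$ is a fiber, giving (1) and (2). If $M$ is rational, the Cremona/$\mathcal S^+_{K_J}$ analysis in Case 2 shows any extremal $C$ is a rational curve with $-3\le K\cdot C<0$ and $C^2\in\{-1,0\}$; I then need to see that the $C^2=0$ extremal rational curves occur only on $\mathbb{CP}^2\#\overline{\mathbb{CP}^2}$ (as a fiber $H-E$) and on $\mathbb{CP}^2$ — but $\mathbb{CP}^2$ has no class of square $0$, so there $C^2=1$ and $C$ is a line, giving (3) — while on $\mathbb{CP}^2\#k\overline{\mathbb{CP}^2}$ with $k\ge 2$ any square-zero nef rational curve class $F$ fibers the surface and hence can be split off a $-1$ curve in the fiber (e.g. $F=(F-E_i)+E_i$ whenever $F\cdot E_i>0$ for some exceptional class, which must happen since $F$ is primitive of square $0$ and $k\ge 2$), so it is not extremal.

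The main obstacle I expect is precisely this last point: showing that on $\mathbb{CP}^2\#k\overline{\mathbb{CP}^2}$ with $2\le k$ no square-zero class can span an extremal ray, i.e. that such a class always admits a decomposition into two non-proportional classes each carrying nonzero Seiberg--Witten invariant. The clean way is: a nef rational class $F$ with $F^2=0$ and $K\cdot F=-2$ defines (after Cremona reduction, using the list from \cite{LBL} and the fact that square-zero nef classes reduce to $H-E_1$ or, on ruled models, the fiber $T$) a genuine fibration, and for $k\ge 2$ there is an exceptional class $E$ with $F\cdot E=1$; then $F=(F-E)+E$ with $F-E$ a $-1$ rational curve class and $E$ a $-1$ rational curve class, both Seiberg--Witten nontrivial by Proposition \ref{SW}, and the two are not proportional. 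I would need to double-check the boundary case $k=1$, where $H-E$ is the fiber and there is no further exceptional class available to split it, so it genuinely stays extremal — this is why $\mathbb{CP}^2\#\overline{\mathbb{CP}^2}$ appears as an exception in (2). Assembling these observations yields the stated trichotomy.
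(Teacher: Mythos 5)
Your overall strategy is the paper's: quote Lemma \ref{Kod>0} for non-rational/ruled $M$, the remark at the end of Case 1 of Theorem \ref{conethm} for irrational ruled surfaces, and the Cremona reduction of $S^+_{K_J}$ to $H$ and $H-E_1$ for rational surfaces, then split off exceptional classes; your explicit treatment of the ``if'' direction (Lemma \ref{-extremal} for $C^2<0$, the light-cone argument for the fiber) is a welcome addition that the paper leaves largely implicit. However, two details in your rational-surface case do not survive scrutiny. First, you assert that an extremal rational curve on a rational surface has $C^2\in\{-1,0\}$, but the adjunction constraint $-3\le K\cdot C<0$ together with $C^2+K\cdot C=-2$ also allows $C^2=1$ (i.e.\ $K\cdot C=-3$), and on $\mathbb CP^2\#k\overline{\mathbb CP^2}$ with $k\ge 1$ this case must be excluded explicitly: such a class is Cremona equivalent to $H$ and $H=E+(H-E)$ is a sum of two non-proportional Seiberg--Witten nontrivial classes, exactly as the paper does. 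You only invoke $C^2=1$ for $\mathbb CP^2$ itself, so the square-one classes on blowups are left unhandled.

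Second, your proposed decomposition of a square-zero class $F$ for $k\ge 2$, namely $F=(F-E_i)+E_i$ ``whenever $F\cdot E_i>0$'', is wrong: if $F\cdot E_i>0$ then $(F-E_i)^2=-2F\cdot E_i-1\le -3$ and $K\cdot(F-E_i)=-1$, so $\dim_{SW}(F-E_i)=(F-E_i)^2-K\cdot(F-E_i)\le -2<0$ and $SW(F-E_i)=0$; the summand $F-E_i$ is not in the curve cone and the splitting proves nothing. The correct splitting uses an exceptional class \emph{orthogonal} to $F$ (a component of a reducible fiber): after Cremona reduction $F=H-E_1$ and $H-E_1=E_2+(H-E_1-E_2)$, with $(H-E_1)\cdot E_2=0$ and both summands exceptional, hence Seiberg--Witten nontrivial and non-proportional --- this is precisely the paper's decomposition, and it is also why the argument genuinely needs $k\ge 2$ (for $k=1$ no second exceptional class exists and $H-E_1$ can remain extremal, as in case (2)). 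Both repairs are immediate, but as written these two steps are gaps.
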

\begin{proof}
In the above proof of Proposition \ref{conethm}, we see that when $M$ is not rational or ruled, all the extremal rays for non-minimal manifolds are $-1$ classes. For irrational ruled surfaces, our proof shows that any extremal curve class $C$ has $-2\le K\cdot C<0$. The $K\cdot C=-2$ case is when $C$ is the fiber class $T$. However when $M$ is non-minimal, $T=E+(T-E)$ is not extremal. 

Similarly for rational surfaces, we know that any square $1$ (resp. $0$) sphere class is Cremona equivalent to $H$ (resp. $H-E$). When $M=\mathbb CP^2\# k\overline{\mathbb CP^2}$, $k\ge 2$, we know both classes could be decomposed into two classes with non-trivial Seiberg-Witten invariant: $H=E+(H-E)$, $H-E=E'+(H-E-E')$. Hence they are not extremal. 

When $M=\mathbb CP^2\# \overline{\mathbb CP^2}$, $H=E+(H-E)$  is the sum of two classes with non-trivial Seiberg-Witten invariant. But it is possible that $H-E$ is the only extremal ray. In this case, the effective class $E$ degenerates as $n(H-E)+((n+1)E-nH)$. Then $(n+1)E-nH$ is the class of a $-(2n+1)$ section for the ruled surface. It corresponds to Hirzebruch surfaces $\mathbb F_{2n+1}$ when $J$ is complex. 
\end{proof}

The following lemma gives our information on how a general extremal ray of $A_J(M)$ could be.
\begin{lemma}\label{-extremal}
If $C$ is an irreducible curve with $C^2<0$, then $[C]$ spanned an extremal ray of the curve cone $A_J(M)$. 
\end{lemma}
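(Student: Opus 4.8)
The plan is to argue by contradiction: suppose the irreducible curve $C$ with $C^2<0$ does \emph{not} span an extremal ray, so that $[C]=u+v$ with $u,v\in A_J(M)$ and neither $u$ nor $v$ proportional to $[C]$. Represent $u$ and $v$ by $J$-holomorphic subvarieties $\Theta_u$ and $\Theta_v$ (after clearing denominators, by $\mathbb Q$- or $\mathbb Z$-subvarieties, which is harmless since all the relevant numerical quantities scale). The key point is positivity of intersections: since $C$ is irreducible, for \emph{any} irreducible $J$-holomorphic curve $C'$ with $[C']\ne[C]$ we have $[C']\cdot[C]\ge 0$, and this is where $C^2<0$ bites. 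Indeed, decompose $\Theta_u=\sum a_i[C_i]$ into irreducible components. If $C$ were not one of the $C_i$, then $u\cdot[C]=\sum a_i\,[C_i]\cdot[C]\ge 0$, and likewise $v\cdot[C]\ge 0$; adding gives $C^2=[C]\cdot[C]=(u+v)\cdot[C]\ge 0$, contradicting $C^2<0$. Hence $C$ appears with positive coefficient in \emph{both} $\Theta_u$ and $\Theta_v$.

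Now write $u=\alpha[C]+u'$ and $v=\beta[C]+v'$ with $\alpha,\beta>0$ and $u',v'$ effective $\mathbb Q$-combinations of irreducible curve classes \emph{other than} $[C]$ (possibly zero). Then $[C]=u+v=(\alpha+\beta)[C]+(u'+v')$, so $(1-\alpha-\beta)[C]=u'+v'$. Pairing with $[C]$: the left side is $(1-\alpha-\beta)C^2$, and the right side is $u'\cdot[C]+v'\cdot[C]\ge 0$ by the positivity of intersections (each irreducible component appearing in $u'$ or $v'$ is distinct from $C$). Since $C^2<0$, this forces $1-\alpha-\beta\le 0$, i.e. $\alpha+\beta\ge 1$. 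On the other hand, pairing $[C]=(\alpha+\beta)[C]+(u'+v')$ with a taming form $\omega$ (recall $J$ is tamed, so $\omega\cdot[C']>0$ for every irreducible $J$-holomorphic curve $C'$) gives $\omega\cdot[C]=(\alpha+\beta)\,\omega\cdot[C]+\omega\cdot(u'+v')$, hence $(1-\alpha-\beta)\,\omega\cdot[C]=\omega\cdot(u'+v')\ge 0$ with $\omega\cdot[C]>0$, so $\alpha+\beta\le 1$. Therefore $\alpha+\beta=1$ and $\omega\cdot(u'+v')=0$, which (since $\omega$ is strictly positive on effective classes) forces $u'=v'=0$. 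But then $u=\alpha[C]$ and $v=\beta[C]$ are both proportional to $[C]$, contradicting our assumption that $\mathbb R^+[C]$ is not extremal.

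The main obstacle — and the only genuinely substantive input — is the positivity of intersection numbers between distinct irreducible $J$-holomorphic curves in dimension $4$, i.e. that $[C_1]\cdot[C_2]\ge 0$ when $C_1\ne C_2$ are irreducible; this is the same local positivity of intersections underlying Lemma \ref{int>0} and is standard for pseudoholomorphic curves (McDuff), so I would simply invoke it. The rest is the bookkeeping above: isolating the coefficient of $[C]$ in each summand and playing the negativity $C^2<0$ against strict positivity of the taming form. One minor technical point to be careful about is the passage between $\mathbb R$-, $\mathbb Q$- and $\mathbb Z$-subvarieties when extracting $\Theta_u,\Theta_v$ and the components $C_i$; as emphasized in the introduction this is legitimate, and all the inequalities used are homogeneous, so no real difficulty arises there.
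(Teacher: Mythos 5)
Your proof is correct and follows the same essential line as the paper's: decompose the putative non-extremal expression into irreducible components, pair with $[C]$, and play local positivity of intersections for distinct irreducible $J$-holomorphic curves against $C^2<0$. The only difference is that you carefully handle components proportional to $[C]$ (via the taming form $\omega$), a step the paper's terser proof implicitly skips by asserting the decomposition avoids $\mathbb R^+[C]$ outright.
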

\begin{proof}
If $\mathbb R^+[C]$ is not extremal, then $[C]=\sum a_i [C_i]$ where $a_i>0$ and $C_i$ are irreducible curves whose homology classes are not on $\mathbb R^+[C]$. For those $C_i$, $[C]\cdot [C_i]\ge 0$. Hence we have the following contradiction
$$0\le \sum a_i[C]\cdot [C_i]=[C]\cdot [C]<0.$$
\end{proof}

The following rationality theorem is originally used in algebraic geometry to prove the cone theorem. It is well known that the statement of cone theorem implies the statement of the rationality theorem. 
\begin{prop}\label{ratthm}
Let $(M, J)$ be a tamed almost complex $4$-manifold such that there is a curve with which $K_J$ pairs negatively.  Let $\omega$ be an almost K\"ahler form on $(M, J)$ with $[\omega]\in H^2(M, \mathbb Q)$. We call a class is $J$-nef if it pairs non-negatively with the curve cone $A_{J}(M)$. Define the nef threshold of $[\omega]$ by $$t_0=t([\omega])=\sup\{t\in \mathbb R: tK_J+[\omega] \hbox{ is $J$-nef}\}.$$ Then the nef threshold is a rational number.   
\end{prop}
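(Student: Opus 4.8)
The plan is to deduce rationality of the nef threshold $t_0$ directly from the cone theorem (Theorem \ref{conethm}), mimicking the standard algebraic-geometry deduction but using the extremal-ray structure we have just established. First I would record the geometric meaning of $t_0$: since $K_J$ is not $J$-nef, there is some irreducible curve with $K_J\cdot[C]<0$, so $t_0$ is a genuine real number and the class $t_0 K_J+[\omega]$ lies on the boundary of the $J$-nef cone, which is the dual of $\overline A_J(M)$. Equivalently, $t_0=\inf\{ [\omega]\cdot[C] / (-K_J\cdot[C]) : C \text{ irreducible}, K_J\cdot[C]<0\}$. By the cone theorem, $\overline A_J(M)=\overline A_{K_J}(M)+\sum_i \mathbb R^+[L_i]$ with the $L_i$ the extremal rational curves satisfying $-3\le K_J\cdot[L_i]<0$; since $K_J$ pairs non-negatively with $\overline A_{K_J}(M)$, a class $tK_J+[\omega]$ with $t>0$ is $J$-nef if and only if it pairs non-negatively with every $[L_i]$, i.e.
\[
t_0=\inf_i \frac{[\omega]\cdot[L_i]}{-K_J\cdot[L_i]}.
\]

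Next I would use the finiteness half of the cone theorem to cut this infimum down to a minimum over finitely many rays. Fix a $J$-almost Kähler form $\omega$ (rational, as hypothesised) and pick any rational $\epsilon>0$ with $\epsilon<t_0$; such $\epsilon$ exists because $t_0>0$ (indeed $t_0\ge 0$ always, and if $t_0=0$ there is nothing to prove). For any extremal ray $[L_i]$ realising a value of the ratio below $t_0$, one has $[\omega]\cdot[L_i]\le t_0(-K_J\cdot[L_i])$, and combined with $t_0>\epsilon$ this forces $(K_J+\epsilon[\omega])\cdot[L_i]< 0$; by the finiteness statement in Theorem \ref{conethm} there are only finitely many such rays. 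Hence the infimum defining $t_0$ is attained, say $t_0=[\omega]\cdot[L_{i_0}]/(-K_J\cdot[L_{i_0}])$ for some single extremal curve $L_{i_0}$. Since $[\omega]$ is rational and $K_J$ is integral, both $[\omega]\cdot[L_{i_0}]$ and $-K_J\cdot[L_{i_0}]$ are rational numbers with $-K_J\cdot[L_{i_0}]\in\{1,2,3\}$ a positive integer; therefore $t_0$ is rational.

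The one point that needs a little care — and which I expect to be the main (minor) obstacle — is justifying that the nef cone really is cut out by the extremal curves $[L_i]$ together with $\overline A_{K_J}(M)$, i.e. that testing $J$-nefness of $tK_J+[\omega]$ against the generators of $\overline A_J(M)$ suffices. This is where one uses that $A_J(M)$ is spanned by the curve classes and that $tK_J+[\omega]$, being a fixed linear functional, is non-negative on $\overline A_J(M)$ iff it is non-negative on a spanning set; the subtlety is only that $\overline A_{K_J}(M)$ may contribute a round piece of boundary, but on that piece $K_J$ is non-negative and $[\omega]$ is strictly positive (as $\omega$ is almost Kähler, hence $[\omega]$ lies in the interior of the dual cone), so $tK_J+[\omega]$ is automatically positive there for all $t\ge 0$ and imposes no constraint. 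Thus the constraint defining $t_0$ comes purely from the countably many extremal rays, the finiteness statement trims this to finitely many, and rationality follows as above. I would also remark that the same argument shows the supremum in the definition of $t_0$ is attained, so "sup" could be replaced by "max".
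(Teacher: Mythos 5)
Your argument is correct and follows essentially the same route as the paper's: express $t_0$ as an extremum of the ratios $[\omega]\cdot[L_i]/(-K_J\cdot[L_i])$ over the extremal rays supplied by the cone theorem, invoke the finiteness statement of Theorem \ref{conethm} to cut this down to finitely many rays, and read off rationality from $-K_J\cdot[L_i]\in\{1,2,3\}$ together with the rationality of $[\omega]$. (You correctly use an infimum; the paper writes ``sup'' at this point, which appears to be a slip, and your discussion of why $\overline{A}_{K_J}(M)$ imposes no constraint for $t\ge 0$ is a worthwhile addition.) The one step that needs repair is the finiteness argument: from $[\omega]\cdot[L_i]\le t_0(-K_J\cdot[L_i])$ you get $(K_J+\epsilon[\omega])\cdot[L_i]\le (K_J\cdot[L_i])(1-\epsilon t_0)$, which is negative when $\epsilon<1/t_0$, not when $\epsilon<t_0$; moreover bounding only the rays with ratio $\le t_0$ does not by itself rule out ratios accumulating at $t_0$ from above. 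Both issues vanish if you instead note that the rays with $(K_J+\epsilon[\omega])\cdot[L_i]\le 0$ are exactly those with ratio at most $1/\epsilon$, and choose $\epsilon$ small enough that $1/\epsilon$ exceeds the ratio of one particular extremal ray: the resulting set is then finite, nonempty, and contains every ray whose ratio can compete for the infimum, so the infimum is a minimum over a finite set and the proof closes as you intended.
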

\begin{proof}
It is easy to see that $$t_0=\sup \frac{L_i\cdot [\omega]}{-K_J\cdot L_i}.$$ If there are finitely many extremal curves $L_i$, then $t_0$ is a rational number by this formula. In general, since $K_J$ is not $J$-nef, there exists a small number $\epsilon$ such that $\frac{1}{\epsilon}K+[\omega]$ is not $J$-nef which pairs negatively  with only finitely many extremal curves. Hence $t_0$ is the supremum of $ \frac{L_i\cdot [\omega]}{-K_J\cdot L_i}$ for these finitely many $L_i$, which has to be a rational number. In fact, in this case, $t_0$ is an integer since the situation of the second statement of Theorem \ref{conethm} happens only when all $L_i$ are $-1$ rational curves.

Moreover, when $[\omega]\in H^2(M, \mathbb Z)$, then the denominator of $t_0$ is no greater than $3$. It is not an integer only when the last two cases of Proposition \ref{exray} happen.
\end{proof}

\begin{cor}\label{-1}
Let $M=N\#\overline{\mathbb CP^2}$ be a non-minimal symplectic $4$-manifold which is not diffeomorphic to $\mathbb CP^2\# \overline{\mathbb CP^2}$. Then for any tamed $J$ on $M$, there exists at least one smooth $J$-holomorphic $-1$ rational curve.

More precisely,  for any exceptional class $E$, we have a decomposition $E=\sum a_i[C_i]$ with $0<a_i\in \mathbb Q$ and $C_i$ irreducible curves, such that there is at least one $-1$ rational curves $C_i$. 


\end{cor}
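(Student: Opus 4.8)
The plan is to read this off from the cone theorem. Since $M$ is non-minimal, $\mathcal E_{K_J}\neq\emptyset$; fix any $E\in\mathcal E_{K_J}$, so $E^2=-1$ and $K_J\cdot E=-1$, and note that everything below is uniform in the choice of $E$. For any tamed $J$ one has $SW(E)\neq0$ (an exceptional sphere class has Seiberg--Witten invariant $\pm1$, by the blow-up/wall-crossing formula), so by Taubes' $SW=Gr$ and Gromov compactness $E$ is the class of a $J$-holomorphic subvariety $\Theta=\sum_j m_j C_j$; thus $E=\sum_j m_j[C_j]\in A_J(M)$ is already an effective $\mathbb Z^+$-combination of irreducible $J$-holomorphic curve classes. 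What remains is to locate a $-1$ rational curve, ideally among the $C_j$.

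Because $K_J\cdot E=-1<0$ while $K_J$ is non-negative on $\overline A^{\mathbb Q}_{K_J}(M)$, the rational cone theorem $\overline A^{\mathbb Q}_J(M)=\overline A^{\mathbb Q}_{K_J}(M)+\sum\mathbb Q^+[L_i]$ forces, in any expression $E=e_0+\sum_{i\in S}q_i[L_i]$ with $e_0\in\overline A^{\mathbb Q}_{K_J}(M)$, $q_i\in\mathbb Q^+$ and $S$ finite, that $S\neq\emptyset$ (otherwise $K_J\cdot E\geq0$). By Proposition \ref{exray} an extremal rational curve on $M$ is either a $-1$ rational curve, a fiber of a minimal ruled surface or of $\mathbb CP^2\#\overline{\mathbb CP^2}$, or a line in $\mathbb CP^2$; but $M$ is non-minimal, hence neither a minimal ruled surface nor $\mathbb CP^2$, and by hypothesis $M\neq\mathbb CP^2\#\overline{\mathbb CP^2}$. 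Therefore each $L_i$, $i\in S$, is a smooth $J$-holomorphic $-1$ rational curve. This proves the first assertion outright, and it reduces the ``more precisely'' part to showing that $e_0$ can be written as an effective $\mathbb Q^+$-combination of irreducible curve classes (or is $0$) — equivalently, that one of the components $C_j$ of $\Theta$ is a $-1$ rational curve.

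The remaining point — which I expect to be the genuine obstacle — is exactly this passage from the formal splitting to an honest effective decomposition carrying a $-1$ curve. The approach I would take: since $\sum_j m_j(K_J\cdot[C_j])=-1<0$, some component has $K_J\cdot[C_j]<0$, and if any such component also had $[C_j]^2<0$, the adjunction inequality \eqref{adj inequality} would make it a $-1$ rational curve and we would be done; so assume every $K_J$-negative component has $[C_j]^2\geq0$. Then by Proposition \ref{SW} such components have $SW([C_j])\neq0$, so Lemma \ref{int>0} gives $[C_j]\cdot[C]\geq0$ for every irreducible $J$-curve $C$. Letting $f=\sum_{K_J\cdot[C_j]<0}m_j[C_j]$, one gets $f\cdot[C]\geq0$ for all $C$, $f^2\geq0$, $f\cdot(E-f)\geq0$, and $K_J\cdot f\leq-1<0$; hence $E^2\geq(E-f)^2$, so $(E-f)^2\leq-1<0$, and $E-f$ is a nonzero effective class supported on the $K_J$-nonnegative components. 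Choosing a component $C_{j_*}$ of $E-f$ with $[C_{j_*}]\cdot(E-f)<0$ forces $[C_{j_*}]^2<0$ (the cross terms being non-negative), so $\mathbb R^+[C_{j_*}]$ is an extremal ray of $A_J(M)$ by Lemma \ref{-extremal}, while $K_J\cdot[C_{j_*}]\geq0$. The crux is to derive a contradiction from the coexistence of such a negative-square, $K_J$-nonnegative extremal component with the identity $E=f+(E-f)$ and $K_J\cdot E<0$; the clean way to do this is to show that the $J$-holomorphic representative of an exceptional class is forced to be \emph{irreducible}, i.e. $E=[C_1]$ for a single smooth curve, whereupon \eqref{adj inequality} immediately identifies $C_1$ as a $-1$ rational curve. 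This irreducibility is the analogue of Lemma \ref{-extremal} with the a priori irreducibility hypothesis removed, and proving it seems to require the subvariety-level wall-crossing structure for $J$-holomorphic representatives of exceptional classes rather than purely formal cone manipulations; failing that, the alternative route is to argue directly that the class $e_0$ in the cone-theorem splitting is effective.
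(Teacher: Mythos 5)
Your argument for the first assertion is correct and is essentially the paper's mechanism run in a slightly different order: $SW(E)\neq 0$ puts $E$ in the curve cone, the splitting $E=e_0+\sum q_i[L_i]$ from the cone theorem together with $K_J\cdot E=-1<0$ forces at least one extremal rational curve $L_i$ to appear, and Proposition \ref{exray} plus the hypotheses on $M$ (non-minimal, hence not $\mathbb CP^2$ or a minimal ruled surface, and not $\mathbb CP^2\#\overline{\mathbb CP^2}$) identifies every such $L_i$ as a smooth $-1$ rational curve. That part stands.

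The gap is in the ``more precisely'' statement, exactly where you flag it. The paper does not try to locate the $-1$ curve among the components of the particular subvariety $\Theta$ produced by $SW=Gr$; instead it decomposes the \emph{class} $E$ inside the convex cone $A_J(M)$ as $E=\sum a_i[C_i]$ with $0<a_i\in\mathbb Q$ and each $[C_i]$ an irreducible curve class spanning an \emph{extremal} ray (this is asserted, somewhat tersely, as a consequence of convexity of the curve cone, with rationality of the $a_i$ coming from integrality of $E$ and the $[C_i]$). Once every $[C_i]$ is extremal, the component with $K_J\cdot[C_1]<0$ is, by the cone theorem, a smooth rational curve with $-3\le K_J\cdot[C_1]<0$, and Proposition \ref{exray} then forces it to be a $-1$ rational curve. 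Your route instead fixes the integer-multiplicity representative $\Theta=\sum_j m_jC_j$ and tries to show one of \emph{its} components is a $-1$ curve; this is a strictly stronger claim than the corollary requires (the statement allows arbitrary positive rational coefficients, and Example \ref{-K8blowup} shows why one must), and your case analysis correctly isolates, but does not exclude, the scenario of a $K_J$-negative component of non-negative square coexisting with a $K_J$-non-negative component $C_{j_*}$ of negative square. Note also that your two proposed reductions are not equivalent: effectivity of $e_0$ in the splitting $E=e_0+\sum q_i[L_i]$ would already yield the required decomposition (with the $-1$ curves among the $L_i$) with no statement about $\Theta$, while conversely a $-1$ component of $\Theta$ says nothing about $e_0$. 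So the missing idea, relative to the paper, is the passage from ``$E$ is effective'' to ``$E$ is a positive rational combination of \emph{extremal} irreducible curve classes''; with that reduction in hand, the rest is the cone-theorem argument you already gave.
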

\begin{proof}
By the assumption, there is at least one $-1$ rational curve class $E\in H_2(M, \mathbb Z)$. If there is an irreducible $J$-holomorphic subvariety in class $E$, then we are done since it will be smooth by adjunction inequality. If not, since the curve cone is a convex cone, the class is written as $E=\sum a_i[C_i]$ where all $C_i$ are irreducible $J$-holomorphic subvarieties and all $[C_i]$ are extremal curve classes. Furthermore, $a_i\in \mathbb Q$ since all the classes $E$ and $[C_i]$ are in $H_2(M, \mathbb Z)$.

Since $K\cdot E=-1<0$, we know there is at least one $C_i$ (say $C_1$) such that $K\cdot [C_1]<0$. By the cone theorem, this irreducible $C_1$ has to be a rational curve with $-3\le K\cdot [C_1]<0$. 
Moreover, by comparing to the list in Proposition \ref{exray}, $C_1$ has to be the class of a $-1$ rational curve.
\end{proof}



Notice that the statement that $\mathbb CP^2\# k\overline{\mathbb CP^2}$ has at least one smooth $-1$ rational curve is first proved in \cite{pinn}. Actually, it shows that a class with minimal symplectic energy is such a  class. However, our proof gives more precise result. 
The second statement of the above corollary is crucial for our later applications.

It is interesting to compare our picture here for a general tamed almost complex structure to the bend-and-break in algebraic geometry. The bend-and-break technique in algebraic geometry starts with an irreducible curve $C'$ with $K\cdot [C']<0$. One chooses a normalization $f: C\rightarrow M$ of $C'$. Then it contains two parts. The first, the ``bend" part, is to compose the normalization with automorphisms of $C$, possibly in characteristic $p$ when $g(C)>1$, such that $-K\cdot f'(C)-g(C)\dim_{\mathbb C} M>0$. This would guarantee one could deform curves in a class which is a multiple of $[C']$. The second, the ``break" part, shows that this family must degenerate to $f''(C)+$(sum of rational curves).

Our argument is sort of a reverse process. We show that all the extremal rays with negative $K$ pairing are spanned by rational curves. And thus a higher multiple of  the curve class $C$ with $K\cdot C<0$  will degenerate to a reducible curve with at least one extremal ray as one of its irreducible components.

In general, it is not true that we always have a reducible curve in class $[C]$ if $C$ is an irreducible $J$-holomorphic curve of positive genus such that $K\cdot [C]<0$ as we have seen in Example \ref{-K8blowup}. Here is an example for ruled surfaces.

\begin{example}\label{ruledex}
Let $M$ be the non-trivial $S^2$ bundle over $\Sigma_h$. Let $U$ be the class of a section with $U^2=1$. For $a=\lceil \frac{h-2}{2}\rceil$, we have $SW(U+aT)\ne 0$ and $SW(U+(a-1)T)=0$. Thus for a generic tamed almost complex structure, we do not have reducible curves in class $U+aT$. However, as shown in the proof of Theorem \ref{conethm}, we do have curves in class $lU-T$ when $l>2$. Hence there is always a reducible subvariety in class $l(U+aT)$. 
\end{example}

On the other hand, as shown in our proof, the class $[C]$ itself contains reducible curves in many circumstances. We also have the following slight variant of the well known fact that rational or ruled $4$-manifolds are symplectically uniruled.

\begin{prop}\label{ample}
Let $(M, J)$ be a tamed almost complex $4$-manifold with a $J$-ample anti-canonical class $-K$ ({\it i.e.} $-K$ pairs positively with all curves). Then $M$ contains a rational curve. In fact, through every point of $M$ there is a rational curve $C$ such that $$0<-K\cdot [C]\le 3.$$
\end{prop}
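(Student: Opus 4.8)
The plan is to reduce to the case that $M$ is rational or ruled, then produce through each point a degenerate conic (or, on $\mathbb CP^2$, a line) lying in a class of anticanonical degree at most $3$, and finally extract from it an irreducible rational component through the point.

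\textbf{Step 1 (reduction).} I would first note that the ampleness of $-K_J$ (so in particular $(-K_J)^2>0$) forces $M$ to be rational or ruled. If not, Lemma~\ref{Kod>0} makes every irreducible $J$-holomorphic curve a $-1$ rational curve; but the $J$-holomorphic subvariety in class $K_J$ (for $b^+>1$), or $2K_J$ (for $b^+=1$), adjusted by exceptional classes as in the proof of that lemma, then decomposes into such $-1$ classes with positive coefficients, and pairing this with $-K_J$ gives $K_J^2<0$, contradicting $(-K_J)^2>0$ together with the fact that $K_J^2\ge 0$ for such $M$. Hence $M$ is $\mathbb CP^2$, $\mathbb CP^2\# k\overline{\mathbb CP^2}$ with $1\le k\le 8$ (larger $k$ being incompatible with $(-K_J)^2>0$), $S^2\times S^2$, or a possibly blown up irrational ruled surface. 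By Theorem~\ref{conethm} the curve cone is then spanned by rational curves, so $M$ certainly contains one, and it remains to treat an arbitrary point $p$.

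\textbf{Step 2 (a low-degree moving class).} On $\mathbb CP^2$ every effective class is $dH$, so the subvariety in class $H$ through $p$ (it exists since $SW(H)=Gr(H)=\pm1\ne0$ and $\dim_{SW}(H)=4$) is irreducible, hence a smooth rational curve with $-K_J\cdot H=3$. For every other $M$ above I would take $F$ to be the fiber class of a ruling, writing $\mathbb CP^2\# k\overline{\mathbb CP^2}$ as a blow up of a Hirzebruch surface so that $F=H-E_1$; then $F^2\ge0$, $g_J(F)=0$, $-K_J\cdot F=2$ and $SW(F)=Gr(F)=\pm1\ne0$, the last because the Gromov invariant is a deformation invariant equal to the integrable count of conics, resp.\ fibers, through a point, which is $1$. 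Since $\dim_{SW}(F)=F^2+2\ge2$, there is a $J$-holomorphic subvariety $Z$ in class $F$ through $p$.

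\textbf{Step 3 (extracting the rational curve).} Write $Z=\sum a_j[C_j]$ with $p\in C_1$. Ampleness of $-K_J$ and integrality give $-K_J\cdot[C_j]\ge1$ and $\sum a_j(-K_J\cdot[C_j])=-K_J\cdot F\le2$, so $Z$ has at most two components counted with multiplicity, $-K_J\cdot[C_1]\le2$, and $g_J([C_1])\ge0$. If $Z$ is irreducible then $[C_1]=F$ and $C_1$ is a smooth rational curve of anticanonical degree $2$. Otherwise $Z=C_1+C_2$ with $-K_J\cdot[C_1]=-K_J\cdot[C_2]=1$; by adjunction a component with negative square is a $-1$ rational curve, so if either component has negative square we are done, one of the two passing through $p$. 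The only remaining case is $[C_1]^2=1$, whence $g_J([C_1])=1$ and $p\in C_1\setminus C_2$; but $F^2=0$ with $[C_1]\cdot[C_2]\ge0$ then forces $[C_2]^2=-1$ and $[C_1]\cdot[C_2]=0$, and a short check — using once more the ampleness of $-K_J$ and the integrality of the intersection form (which on $\mathbb CP^2\# 8\overline{\mathbb CP^2}$ pins $[C_1]$ to $-K_J$ and makes $[C_2]=F-[C_1]$ violate $g_J\ge0$, and on the other manifolds makes $[C_2]$ non-effective) — excludes it. This last step is the one I expect to be the main obstacle: it plays the role of the ``break'' in Mori's bend-and-break, and although it is here settled by numerics rather than deformation theory, the ampleness of $-K_J$ is genuinely needed to rule out a positive-genus component of the degenerate conic through $p$.
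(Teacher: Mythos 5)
Your Steps 2 and 3 take a genuinely different route from the paper's. The paper reduces to rational surfaces, shows that $H$ is $J$-nef (via $SW([C])\ne 0$ for every curve and Lemma \ref{int>0}), and then invokes Theorem 1.5 of \cite{LZrc} to conclude that every irreducible component of a subvariety in the $J$-nef, $J$-genus-zero class $H$ is a rational curve with $0<-K\cdot[C_i]\le 3$, the moving through $p$ coming from $\dim_{SW}(H)>0$. You instead push a subvariety in the fiber class $F$ of anticanonical degree $2$ through $p$ and analyse its at most two components by hand; the only danger is a square-one, $J$-genus-one component carrying $p$, which you dispose of by lattice arithmetic. That arithmetic does work where it has to: on $\mathbb CP^2\#k\overline{\mathbb CP^2}$ with $k\le 7$ no class with $[C_1]^2=1$ and $K\cdot[C_1]=-1$ can be an irreducible curve (Cauchy--Schwarz with adjunction, i.e.\ Lemma \ref{small+}(2)); for $k=8$ the only candidate is $[C_1]=-K$ and then $[C_2]=F+K$ has $g_J=-1$; and on $S^2\times S^2$ (and on $S^2\times\Sigma_h$) the parity of $K\cdot(\cdot)$ already forbids the splitting $2=1+1$. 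So on the manifolds where the hypothesis can actually hold, your argument is self-contained and avoids the black box from \cite{LZrc}.

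Two places need repair. First, in Step 1 the assertion that $K_J^2\ge 0$ for a non-rational, non-ruled $M$ is false once $M$ is non-minimal (blow up a K3 surface ten times), and ``ample $\Rightarrow(-K_J)^2>0$'' is not justified under the paper's purely numerical definition of ample. The correct reduction, which is what lies behind the paper's one-line appeal to Taubes, is that $2K_0$ (the canonical class of the minimal model, or $2K$ itself in the minimal case) is represented by a $J$-holomorphic subvariety, while $-K\cdot 2K_0=-2K_0^2\le 0$ since $K_0^2\ge 0$ and $K_0\cdot E_i=0$; this contradicts ampleness unless that subvariety is empty. Second, and more substantively, you keep irrational ruled surfaces and their blow-ups in play, but there your exclusion of the bad case is only asserted: on $(T^2\times S^2)\#\overline{\mathbb CP^2}$, for example, $[C_1]=U+T-E_1$ has square $1$, $K\cdot[C_1]=-1$ and $g_J([C_1])=1$, with $[C_2]=E_1-U$ a numerically admissible $-1$ rational curve class orthogonal to $[C_1]$, so ``makes $[C_2]$ non-effective'' is precisely the nontrivial point and you do not prove it. The paper sidesteps this entirely by showing the hypothesis is vacuous on irrational ruled surfaces: $SW(U+aT)\ne 0$ for $a=\lceil(h-1)/2\rceil$ (resp.\ $\lceil(h-2)/2\rceil$ for the nontrivial bundle) while $-K\cdot(U+aT)\le 0$. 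You should either import that observation at the start of Step 1 or actually carry out the effectivity check on blown-up elliptic ruled surfaces.
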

\begin{proof}
By Taubes' theorem \cite{T}, when $-K$ is ample, $M$ has to be rational or ruled. Furthermore, it cannot be irrational ruled, otherwise $(-K)\cdot (U+aT)\le 0$ and $SW(U+aT)\ne 0$ where $a=\lceil \frac{h-2}{2}\rceil$ when $M$ is the non-trivial $S^2$ bundle over $\Sigma_h$ and $a=\lceil \frac{h-1}{2}\rceil$ when $M=S^2\times \Sigma_h\# k\overline{\mathbb CP^2}$. Hence, $M$ is rational and we choose homology basis such that $K=-3H+E_1+\cdots +E_k$.

Since $K\cdot [C]<0$ for all irreducible curves, then either the curve is a $-1$ rational curve or it has $(K-[C]) \cdot [D]<0$ for any irreducible curve (and then any subvariety) $D$. In both cases, $\dim_{SW}([C])=C^2-K\cdot [C]=2g-2-2K\cdot [C]\ge 0$.  Hence $SW([C])\ne 0$ by Lemma \ref{int>0} and the wall-crossing  formula.

Since $SW([C])\ne 0$ and $H$ is represented by an irreducible $J'$-holomorphic curve with positive self-intersection for a generic $J'$, by Lemma \ref{int>0}, $H\cdot [C]\ge 0$ for all curves $C$.
In other words, $H$ is $J$-nef when $-K$ is $J$-ample. Then by Theorem 1.5 of \cite{LZrc}, we know any irreducible component $C_i$ of a subvariety in class $H$ is a rational curve with $0<-K\cdot [C_i]<-K\cdot H=3$. Then the conclusion follows since there is a subvariety in class $H$ passing through any given point.
\end{proof}

\section{$\mathbb CP^2\#2\overline{\mathbb CP^2}$}\label{k=2}

This section serves as a link between Sections 2 and 4. We first recall some general results for the homology classes of irreducible subvarieties on $\mathbb CP^2\#k\overline{\mathbb CP^2}$. Then we apply them to give an explicit description of the negative curves on $\mathbb CP^2\#2\overline{\mathbb CP^2}$. In particular, we show there are at least two smooth $-1$ rational curves for any tamed almost complex structure on it in Theorem \ref{cp2+2}.  A full description of the curve cone is given in Theorem \ref{cp2+2cone}. 
 The information on the curve cone helps us to obtain a Nakai-Moishezon type duality, Theorem \ref{NM2}. 

\subsection{The curve cone and the $K-$symplectic cone}
The $K-$symplectic cone for a class $K\in H^2(M;\mathbb Z)$ introduced in \cite{LL}:
\begin{equation}\label{Kcone}
\mathcal C_{M, K}=\{e\in H^2(M;\mathbb R)|e=[\omega] \hbox{ for some $\omega$ with $K_{\omega}=K$}\}.
\end{equation}
Here $K_{\omega}$ is the symplectic canonical class of $\omega$. Suppose $\mathcal C_{M,K}$ is non-trivial, $[\omega]\in \mathcal C_{M,K}$ and $b^+(M)=1$, by Theorem 3 in \cite{LL},
\begin{equation} \label{cmk}\mathcal C_{M,K}=\{e\in \mathcal{FP}(K)|e\cdot E>0, E \in \mathcal E_{K}
\}.\end{equation}
Here $\mathcal{FP}(K)$ is the connected component of $\mathcal{P}=\{e\in H^2(M, \mathbb Z)|
e^2>0 \}$ containing $[\omega]$.  Notice by the light cone lemma, both $\mathcal{FP}(K)$ and $\mathcal
C_{M,K}$ are convex cones. Recall the following statement which is called the light cone lemma in the literature, which is in the guise of Cauchy-Schwartz inequality. The cone of elements with positive squares have two components. The forward cone means one of the two connected components containing a given element with positive square. In our applications, it is usually a class of symplectic form. 

\begin{lemma}[light cone lemma]\label{light}
For the light cone of signature $(1, n)$ ($n\ne 0$), any two elements in the forward cone have non-negative dot product. Especially, if the dot product is zero then the two elements are proportional to each other.
\end{lemma}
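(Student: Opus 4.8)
The plan is to reduce everything to the standard Cauchy–Schwarz inequality on a Lorentzian inner product space of signature $(1,n)$. First I would fix the quadratic form $q$ of signature $(1,n)$ on a real vector space $V$ and choose coordinates so that $q(x_0,x_1,\dots,x_n) = x_0^2 - (x_1^2 + \cdots + x_n^2)$; write any $v\in V$ as $v = (t, \mathbf{x})$ with $t\in\R$ and $\mathbf{x}\in\R^n$ equipped with its Euclidean norm $|\cdot|$. The set $\{q>0\}$ is then $\{|t| > |\mathbf{x}|\}$, which visibly has the two connected components $\mathcal{C}^+ = \{t > |\mathbf{x}|\}$ and $\mathcal{C}^- = \{t < -|\mathbf{x}|\}$ (here $n\neq 0$ is used so that these are genuinely two pieces rather than one; the non-degeneracy of $q$ on the $\mathbf{x}$-part is what makes the argument work). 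The forward cone is whichever of $\mathcal{C}^\pm$ contains the fixed reference class; after replacing $q$-orthonormal coordinates by their negatives if necessary, I may assume it is $\mathcal{C}^+$.

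Next I would take two elements $v = (t,\mathbf{x})$ and $w = (s,\mathbf{y})$ of the forward cone $\mathcal{C}^+$, so $t > |\mathbf{x}| \ge 0$ and $s > |\mathbf{y}| \ge 0$; in particular $t,s > 0$. Then the bilinear pairing is
\[
v\cdot w = ts - \langle \mathbf{x},\mathbf{y}\rangle \;\ge\; ts - |\mathbf{x}|\,|\mathbf{y}| \;>\; ts - ts \;=\; 0
\]
unless equality holds throughout — wait, I should be careful here: $t > |\mathbf{x}|$ is strict, so $ts > |\mathbf{x}|\,|\mathbf{y}|$ provided $|\mathbf{y}|\neq 0$, and the Cauchy–Schwarz step $\langle\mathbf{x},\mathbf{y}\rangle \le |\mathbf{x}|\,|\mathbf{y}|$ gives $v\cdot w > 0$. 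If $|\mathbf{y}| = 0$ then $v\cdot w = ts > 0$ directly, and symmetrically if $|\mathbf{x}|=0$. So in fact $v\cdot w > 0$ strictly for elements of the (open) forward cone, which gives the non-negativity claim immediately, with room to spare.

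For the equality-forces-proportionality statement, the relevant case is when the elements are allowed to lie on the closure of the forward cone (null or zero vectors included), since strictly inside the cone the pairing is strictly positive as just shown. Here I would take $v=(t,\mathbf{x})$, $w=(s,\mathbf{y})$ with $t\ge|\mathbf{x}|$, $s\ge|\mathbf{y}|$, $t,s\ge 0$, and suppose $v\cdot w = ts - \langle\mathbf{x},\mathbf{y}\rangle = 0$. Then $\langle\mathbf{x},\mathbf{y}\rangle = ts \ge |\mathbf{x}|\,|\mathbf{y}| \ge \langle\mathbf{x},\mathbf{y}\rangle$, forcing equality in Cauchy–Schwarz, so $\mathbf{x},\mathbf{y}$ are parallel, say $\mathbf{y} = \lambda\mathbf{x}$ with $\lambda\ge 0$ (the sign from $\langle\mathbf{x},\mathbf{y}\rangle = |\mathbf{x}|\,|\mathbf{y}|\ge 0$), and moreover $ts = |\mathbf{x}|\,|\mathbf{y}|$ together with $t\ge|\mathbf{x}|$, $s\ge|\mathbf{y}|$ forces $t=|\mathbf{x}|$ and $s=|\mathbf{y}|$ (assuming the vectors are nonzero; the zero vector is trivially proportional to anything). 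A short case check then yields $w = \lambda v$.

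The only genuine subtlety — and the step I would flag as the main thing to get right rather than an "obstacle" per se — is the bookkeeping around which vectors are permitted: the lemma as used in the paper applies to classes of symplectic forms and their non-negative combinations, so the cleanest formulation is "elements of the closed forward cone $\overline{\mathcal{C}^+}$", and one must be slightly careful that the degenerate sub-cases ($\mathbf{x}=0$ or $\mathbf{y}=0$, or $v$ or $w$ null) are handled, since these are exactly the places where the strict inequality can fail. Everything else is a direct transcription of Cauchy–Schwarz and its equality case, so no deeper input is needed.
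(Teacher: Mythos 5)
Your proof is correct and is exactly the standard Cauchy--Schwarz argument that the paper alludes to (it states the lemma as well known, ``in the guise of Cauchy-Schwartz inequality,'' without writing out a proof). Your observation that the equality case is only non-vacuous on the \emph{closed} forward cone, and your handling of the null and zero-vector subcases, are the right points of care and match how the lemma is actually applied in the paper (e.g.\ to classes with $A^2>0$, $B^2\ge 0$ in Lemma \ref{negbdy}).
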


We will state a structural description of the $K-$symplectic cone of rational surfaces, which might be known for experts. Without loss, we suppose $K=-3H+\sum_iE_i$.

\begin{prop}\label{corner01}
Let $M=\mathbb CP^2\#k\overline{\mathbb CP^2}$. 
\begin{enumerate}
\item When $k<9$,  the $K-$symplectic cone is a cone over a polytope whose corners are the classes of the symplectic spheres with canonical class $K$ and self-intersection $0$ or $1$. 

\item When $k\ge 9$, all the extremal rays $\mathbb R^+e$ of the $K-$symplectic cone having $K\cdot e<0$ are generated by the classes of the symplectic spheres with canonical class $K$ and self-intersection $0$ or $1$.
\end{enumerate}
\end{prop}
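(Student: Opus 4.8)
The plan is to establish Proposition \ref{corner01} by combining the structural description of the $K$-symplectic cone from \eqref{cmk} with the cone theorem (Theorem \ref{conethm}), applied to a well-chosen almost complex structure. First I would recall from \cite{LL} that, since $b^+(M)=1$, the $K$-symplectic cone is $\mathcal C_{M,K}=\{e\in \mathcal{FP}(K)\mid e\cdot E>0,\ E\in \mathcal E_K\}$. The boundary of $\mathcal C_{M,K}$ therefore consists of two types of faces: the ``round'' part lying on the light cone $\{e^2=0\}$ inside $\mathcal{FP}(K)$, and the ``flat'' faces cut out by the hyperplanes $e\cdot E=0$ for $E\in \mathcal E_K$. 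The key point is that the extremal rays with $K\cdot e<0$ cannot lie on the round boundary: an element $e$ on the light cone with $K\cdot e<0$ has $g_J(e)=\tfrac12(e^2+K\cdot e)+1 = \tfrac12 K\cdot e + 1 < 1$, so $g_J(e)=0$ and $e^2=0$ forces $K\cdot e=-2$; such an $e$ is then (by the adjunction/genus formula and the classification of sphere classes, as in the proof of Theorem \ref{conethm}, Case 2) a class in $S^+_{K}$ of square $0$, hence Cremona equivalent to $H-E$, and in particular \emph{not} extremal in $\overline{A}_J(M)$ for $k\ge 2$ since $H-E$ decomposes — but more to the point such $e$ is a limit of interior classes on the round boundary and cannot itself span an extremal ray of $\mathcal C_{M,K}$. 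So every extremal ray $\mathbb R^+e$ with $K\cdot e<0$ sits on a flat face $e\cdot E=0$.

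Next I would intersect two or more such hyperplanes: an extremal ray with $K\cdot e<0$ lies on the intersection of maximally many of the walls $\{e\cdot E_i=0\}$, $E_i\in\mathcal E_K$, consistent with $e^2\ge 0$. The claim is that this forces $e^2\in\{0,1\}$ and $e$ to be a sphere class. To see this I would invoke the almost K\"ahler cone identification: by Proposition 5.20 of \cite{LZ-generic} (quoted in the proof of Theorem \ref{conethm}), $P_{K}=\mathcal S^+_{K}$, the open cone spanned by $S^+_K$, the classes of embedded spheres of $J$-genus $0$ and positive square; and for $J$ good generic this equals the almost K\"ahler cone, hence (since $b^+=1$) equals $\mathcal C_{M,K}$ up to closure. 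Therefore the closed $K$-symplectic cone is exactly $\overline{\mathcal S^+_K}$, and its extremal rays with $K\cdot e<0$ are spanned by elements of $S^+_K$ that are not properly decomposable within $\mathcal S^+_K$. Using the Cremona classification of $S^+_K$ classes — $H$, $2H$, $(n+1)H-nE_1$, $(n+1)H-nE_1-E_2$ — together with the fact (noted in the proof of Theorem \ref{conethm}) that all except the square-$0$ and square-$1$ minimal ones decompose as sums of sphere classes of square $0$ or $1$, I would conclude that the extremal generators are precisely the square-$0$ and square-$1$ sphere classes with canonical class $K$.

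For part (1), when $k<9$ there are only finitely many classes in $\mathcal E_K$ and finitely many sphere classes of square $0$ or $1$ with canonical class $K$ (this is classical, and also follows from the finiteness statement in Theorem \ref{conethm} together with a positivity bound on $H\cdot e$), so $\overline{\mathcal C_{M,K}}$ is a finitely generated cone, i.e. a cone over a polytope; and I would further observe that when $k<9$ there is no round boundary at all — indeed $-K$ pairs positively on a dense subcone and $(-K)^2=9-k>0$ means the light cone is "pointed away" — so \emph{all} corners have $K\cdot e<0$ and hence, by the previous paragraph, are square-$0$ or square-$1$ sphere classes. For part (2), when $k\ge 9$ one no longer has finiteness and there is genuine round boundary (the hyperplane arrangement from $\mathcal E_K$ has accumulation behaviour, related to the Nagata phenomenon discussed in the introduction), so I only claim the statement about extremal rays with $K\cdot e<0$, which is exactly what the argument above delivers.

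The main obstacle I anticipate is the clean handling of the boundary case $e^2=0$, $K\cdot e<0$: I must argue carefully that such an $e$, when extremal in the \emph{convex} cone $\mathcal C_{M,K}$, is one of the listed square-$0$ sphere classes rather than some other primitive lattice point on the light cone. This requires knowing that every primitive integral class $e$ with $e^2=0$, $K\cdot e=-2$ lying on $\partial\mathcal C_{M,K}$ is in $S^+_K$ — equivalently, is Cremona equivalent to $H-E_1$ — which follows from the light cone lemma (Lemma \ref{light}) pinning down its position together with the $S^+_K$ classification, but deserves to be spelled out; the square-$1$ case $e^2=1$, $K\cdot e=-3$ is analogous via $H$. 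Everything else — the identification $\overline{\mathcal C_{M,K}}=\overline{\mathcal S^+_K}$, the decomposability of non-minimal $S^+_K$ classes, and the finiteness for $k<9$ — is either quoted from \cite{LZ-generic, LBL, LL} or is a routine consequence of Theorem \ref{conethm}.
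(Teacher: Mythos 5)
Your proposal takes a genuinely different route from the paper's. The paper never passes through $S_K^+$ or the Cremona list: it quotes Proposition 2.7 of \cite{FM} for the fact that $\mathcal C_{M,K}$ (for $k<9$), respectively $\mathcal C_{M,K}\cap\{K\cdot e\le 0\}$ (for $k\ge 9$), is a $P$-cell with no round boundary, and then analyses a corner ray directly as an intersection of walls $\{e\cdot E=0\}$, $E\in\mathcal E_K$. If the wall normals at the corner are pairwise orthogonal, one needs all $k$ of them and the corner is Cremona equivalent to $H$ (square $1$); if two normals $E,E'$ are not orthogonal, then $e\perp(E+E')$ with $(E+E')^2\ge 0$, and the light cone lemma forces $e$ proportional to $E+E'$ with $(E+E')^2=0$, i.e.\ $E\cdot E'=1$ and $e$ a square-$0$ sphere class. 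Your reduction to $\overline{\mathcal S_K^+}$ via Proposition 5.20 of \cite{LZ-generic} and the classification in \cite{LBL} is a legitimate alternative and treats (1) and (2) uniformly, but it shifts the burden onto controlling the generating set rather than the walls, and that is exactly where your write-up has a gap.

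The gap is your first paragraph, which is supposed to exclude round boundary in the region $K\cdot e<0$. Both steps there fail. For an arbitrary class $e$ on the light cone, $g_J(e)=\tfrac12 K\cdot e+1$ is neither an integer nor non-negative in general --- non-negativity of $g_J$ is an adjunction statement about irreducible curves, not about cohomology classes --- and $K\cdot e$ along a ray is only defined up to positive rescaling, so $g_J(e)<1$ does not force $K\cdot e=-2$. Worse, the claim that a class on the round boundary ``cannot itself span an extremal ray'' is false: every boundary ray of the solid forward cone $\{e^2\ge 0\}$ is extremal, by the equality case of the light cone lemma. So nothing in that paragraph rules out uncountably many irrational extremal rays with $K\cdot e<0$, which is precisely what part (2) must exclude. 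The repair, inside your own framework, is this: once $\overline{\mathcal C_{M,K}}\cap\{K\cdot e\le 0\}$ is identified with the closed cone generated by the set $\Sigma$ of square-$0$ and square-$1$ sphere classes, every extremal ray is spanned either by an element of $\Sigma$ or by a limit of normalized elements of $\Sigma$; since $e^2\in\{0,1\}$ and $K\cdot e\in\{-2,-3\}$ are bounded while an accumulating sequence of distinct integral classes in $\Sigma$ is unbounded, every limit ray $f$ satisfies $K\cdot f=0$ and $f^2=0$, hence never occurs among the rays with $K\cdot e<0$. For part (1) you also need, and should actually justify, the finiteness of $\Sigma$ when $k<9$ (the classical finiteness of classes with $e^2=1$, $K\cdot e=-3$, respectively $e^2=0$, $K\cdot e=-2$, for $k\le 8$); the paper sidesteps all of these accumulation issues by citing \cite{FM} for polyhedrality at the outset.
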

\begin{proof}
We first assume $k<9$. The $K-$symplectic cone is a polytope follows from Proposition 2.7(1) in \cite{FM}. In fact, the $K-$symplectic cone is a $P$-cell when $k<9$, and a $P$-cell has no round boundary. One could also see Section 5.4 of \cite{LZ-generic} for an overview of the results in \cite{FM} using notations similar to this paper's. For a more direct argument, see \cite{McSc}.

We now show that the corners of the polytope, which correspond to the extremal rays of the $K-$symplectic cone, are those classes which can be represented by the symplectic spheres with canonical class $K$ and self-intersection $0$ or $1$. This statement follows from the light cone lemma. The corners of the polytope are the intersection of several hyperplanes determined in our situation by classes in $\mathcal E_K$. Recall the $K-$symplectic cone is a cone over a polytope implies that the corners of the polytope are in $\overline{\mathcal P}$. 

There are two types of corners. If the classes determining the hyperplanes around the corner ray $\mathbb R^+e$ are orthogonal to each others, without loss we can assume them to be $E_1, \cdots, E_l$. We know $l=k$ otherwise the intersection would include any classes $aH-bE_k$ with $a\ge b>0$, hence is not a ray. But when $l=k$, the primitive class corresponding to the corner is a class Cremona equivalent to $H$, {\it i.e.} the class of a symplectic sphere with self-intersection $1$. 

If two of the classes determining the hyperplanes around a corner are not orthogonal, say $E$ and $E'$ (both in $\mathcal E_K$), then the corner class $e$ is also orthogonal to $E+E'$ which is of non-negative self-intersection. By the light cone lemma, $e$ is proportional to $E+E'$ and $(E+E')^2=0$. The equality happens only if $E\cdot E'=1$ and $E+E'$ is the class of a symplectic sphere of self-intersection $0$ (since $K\cdot (E+E')=-2$). If we choose $e$ to be primitive, then $e=E+E'$.

When $k\ge 9$, by Proposition 2.7 of \cite{FM}, the intersection $\mathcal C_{M, K}\cap \{e\in H^2(M; \mathbb R)| K\cdot e\le 0\}$ is a $P$-cell which has no round boundary. Hence, the extremal rays with $K\cdot e< 0$ corresponding to intersections of the ordinary walls ({\it i.e.} the hyperplanes determined by classes in $\mathcal E_K$ rather than the canonical class $K$). Then we have exactly the same two types of corners, and the same argument applies.
\end{proof}

\begin{remark}\label{k>9round}
The $K-$symplectic cone has round boundary when $k>9$.
For examples, one can take the anti-canonical class $e=-K$. The class $e\notin \mathcal C_{M, K}$ when $k>9$ since $e^2<0$ but $e\cdot E=1>0$ for all $E\in \mathcal E_K$. 

Moreover, one can indeed get an open set of round boundary. Since $\mathcal C_{M, K}\ne \emptyset$, we choose an open set $B$ of it. 
A class $-K+a_{\omega}[\omega]$, where $a_{\omega}>0$ and $[\omega]\in B$, is on the round boundary of $\mathcal C_{M, K}$ if  $a_{\omega}$ solves $(-K+a_{\omega}[\omega])^2=0$ (since $(-K+a_{\omega}[\omega])\cdot E>0$ for all $E\in \mathcal E_K$). This is a quadratic equation and such $a_{\omega}$ exists since $K^2\cdot [\omega]^2<0$. 

For a detailed discussion on the round boundary, see the new edition of \cite{McS}. 
\end{remark}

The $K-$symplectic cone can be used to restrict the classes of negative square in the curve cone. The following lemma is simple but also very useful.

\begin{lemma} \label{curvecone}
Suppose $b^+(M)=1$. A cohomology class
is in the curve cone only if it is positive on some extremal ray of the $K-$symplectic cone.
\end{lemma}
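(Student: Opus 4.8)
The plan is to argue by duality between the curve cone $A_J(M)$ and the $K$-symplectic cone $\mathcal C_{M,K}$. The key point is that every element of $\mathcal C_{M,K}$ is (up to scaling) the class of a symplectic form $\omega$ with $K_\omega = K = K_J$, and such a symplectic form tames $J$ when $b^+(M)=1$: indeed by \cite{LZ} the tame cone $\mathcal K_J^t$ agrees with the compatible cone in this setting, and more to the point, since the symplectic canonical class is a deformation invariant, any $\omega \in \mathcal C_{M,K}$ lies in the same connected component $\mathcal{FP}(K)$ of $\mathcal P$ as the original taming form; the positivity property I actually need is that $[\omega]\cdot [C] > 0$ for every irreducible $J$-holomorphic curve $C$. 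This last fact is the heart of the matter: if $C$ is irreducible $J$-holomorphic and $\omega$ tames $J$, then $[\omega]\cdot [C] = \int_C \omega > 0$ because $\omega$ restricts positively on the image of a $J$-holomorphic map. So every class in the open $K$-symplectic cone pairs strictly positively with every class in $A_J(M)$.

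Concretely, here are the steps. First, let $e \in H^2(M;\mathbb R)$ be a class lying in the curve cone $A_J(M)$, so $e = \sum a_i [C_i]$ with $a_i > 0$ and $C_i$ irreducible $J$-holomorphic curves. Second, observe that for any $[\omega] \in \mathcal C_{M,K}$ we have $[\omega]\cdot e = \sum a_i [\omega]\cdot [C_i] > 0$ by the taming positivity just discussed. Third, recall from Proposition \ref{corner01} that $\mathcal C_{M,K}$ — or at least its part where $K \cdot (\cdot) \le 0$ when $k \ge 9$ — is a cone over a polytope (or a $P$-cell), hence is spanned by its extremal rays $\mathbb R^+ f_j$; in any case $\overline{\mathcal C_{M,K}}$ is a closed convex cone and, by the light cone lemma, $\mathcal C_{M,K}$ has nonempty interior, so a linear functional that is nonnegative on all of $\mathcal C_{M,K}$ and positive somewhere is positive at some extremal ray. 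Fourth, suppose for contradiction that $e \cdot f_j \le 0$ for \emph{every} extremal ray $\mathbb R^+ f_j$ of $\mathcal C_{M,K}$. Since every element of the (convex) cone $\mathcal C_{M,K}$ is a limit of nonnegative combinations of the $f_j$, this would force $e\cdot [\omega] \le 0$ for all $[\omega]\in \mathcal C_{M,K}$, contradicting step two (using that $\mathcal C_{M,K}\ne\emptyset$, which is part of the hypothesis that we are in a tamed, hence symplectic, situation). Therefore $e$ must pair strictly positively with at least one extremal ray.

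The main obstacle is purely bookkeeping about which cone is ``extremally generated'': when $k \ge 9$ the full $K$-symplectic cone $\mathcal C_{M,K}$ has round boundary (Remark \ref{k>9round}), so it is \emph{not} literally the convex hull of its extremal rays, and one must instead work with the closed convex cone it generates and note that a class in the interior of $\overline{\mathcal C_{M,K}}$ is still a limit of positive combinations of boundary rays together with extremal rays — or, more carefully, restrict attention to the $P$-cell $\mathcal C_{M,K}\cap\{K\cdot e \le 0\}$, which does have extremal generators by Proposition \ref{corner01}(2), and handle the $K\cdot e\ge 0$ part separately. For the applications in Section 3 and 4 one only ever uses the case $k\le 9$ where $\mathcal C_{M,K}$ genuinely is a polytope cone and the argument above is immediate; I expect the lemma is stated in that level of generality and the proof can simply invoke Proposition \ref{corner01} to reduce to finitely many extremal rays, then run the three-line pairing computation.
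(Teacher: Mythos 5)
Your proposal is correct and follows essentially the same route as the paper: pair the class with a taming symplectic form $\omega$ (whose class lies in $\mathcal C_{M,K}$) to get strict positivity, then use convexity of $\mathcal C_{M,K}$ to conclude some extremal ray must pair positively. The paper's proof is a three-line version of exactly this; your extra care about the round boundary for $k\ge 9$ is a reasonable refinement but not a different argument.
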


\begin{proof}
Assume the class $e$ is in the curve cone $A_J(M)$ for some tamed almost complex structure $J$ with $K_J=K$. Let $\omega$ be a symplectic form taming $J$. Then $e\cdot [\omega]>0$. Since $\mathcal C_{M, K}$ is convex, there is an extremal ray pairing positively with $e$.
\end{proof}

The next lemma is on the constraints of the curve classes
provided by the adjunction inequality.

\begin{lemma}\label{B}
Suppose a class $B=\alpha H+\sum \beta_i E_i$  has
an irreducible curve representative. 

If $\alpha>0$ then   $ |\beta_i|\leq |\alpha|$ for each $i$, and $|\alpha|=|\beta_i|$ only when $\alpha=-\beta_i=1$.

If $\alpha=0$, then $B=E_i-\sum_jE_{k_j}$.

If $\alpha< 0$, then $ |\beta_i|\leq |\alpha|+1$, and $|\beta_i|=|\alpha|+1$  only if $\beta_i=-\alpha+1$.
\end{lemma}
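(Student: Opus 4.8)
The plan is to derive all three cases of Lemma \ref{B} from the adjunction inequality $g_J(B) \ge 0$, i.e. $B^2 + K\cdot B \ge -2$, together with the fact that $B$ has an irreducible representative and that certain companion classes (lines, exceptional curves) can be made $J'$-holomorphic for a generic $J'$ and hence pair non-negatively with $B$ via Lemma \ref{int>0}. Here $K = -3H + \sum_i E_i$, so $K\cdot B = -3\alpha - \sum_i \beta_i$ and $B^2 = \alpha^2 - \sum_i \beta_i^2$.

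\medskip

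First I would treat the case $\alpha > 0$. The adjunction inequality gives
\begin{equation*}
\alpha^2 - \sum_i \beta_i^2 - 3\alpha - \sum_i \beta_i \ge -2,
\end{equation*}
which I rewrite as $\sum_i \beta_i(\beta_i+1) \le \alpha^2 - 3\alpha + 2 = (\alpha-1)(\alpha-2)$. Since each term $\beta_i(\beta_i+1) \ge 0$ for integer $\beta_i$, and in fact $\beta_i(\beta_i+1) \ge \beta_i^2 - |\beta_i|$ hmm — more directly, I would instead bound using that $B$ must pair non-negatively with the class $H - E_i - E_j$ (an exceptional class for $i\ne j$, represented by a $J$-holomorphic curve since $SW \ne 0$) and with $E_i$ itself: $B\cdot E_i = -\beta_i \ge 0$ when $B$ is not the class $E_i$... but the statement allows $|\beta_i| \le |\alpha|$ with both signs, so I should not assume $\beta_i \le 0$. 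The cleanest route: apply Lemma \ref{int>0} with the exceptional class $H - E_i$ when $k \ge 1$ wait that is not exceptional. Let me reconsider: pair $B$ with $H - E_i$, which has square $0$ and is a line class hence $J'$-holomorphic for generic $J'$; this gives $\alpha + \beta_i \ge 0$, so $\beta_i \ge -\alpha$. For the other inequality, pair $B$ with a generic line $H$ to get $\alpha > 0$ is consistent, and then feed $\alpha + \beta_i \ge 0$ plus the adjunction bound $\sum_i (\beta_i^2 + \beta_i) \le (\alpha-1)(\alpha-2)$ back in; for a fixed index $i$, $\beta_i^2 + \beta_i \le (\alpha-1)(\alpha-2)$ forces $\beta_i \le \alpha - 1$ when $\alpha \ge 2$, and the boundary analysis handles $|\beta_i| = \alpha$ only at $\alpha = 1$, $\beta_i = -1$, where $B = H - E_i - \cdots$; checking adjunction there pins down the equality case. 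I expect this bookkeeping — carefully combining the two linear constraints with the quadratic adjunction bound and running the boundary case — to be the main obstacle, since one must be sure no configuration of signs escapes the bound.

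\medskip

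For the case $\alpha = 0$: then $B = \sum_i \beta_i E_i$ with $B^2 = -\sum \beta_i^2$ and $K\cdot B = -\sum \beta_i$, so adjunction reads $-\sum \beta_i^2 - \sum \beta_i \ge -2$, i.e. $\sum_i \beta_i(\beta_i + 1) \le 2$. The only integer solutions have at most one $\beta_i$ equal to $\pm 1$ with a possible collection of $\beta_j = -1$... actually $\beta_i(\beta_i+1) = 0$ iff $\beta_i \in \{0, -1\}$ and $= 2$ iff $\beta_i = 1$ or $\beta_i = -2$. So either all $\beta_i \in \{0,-1\}$, or exactly one is $1$ (rest in $\{0,-1\}$), or exactly one is $-2$ (rest $0$). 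Irreducibility plus pairing positively with an extremal ray of the $K$-symplectic cone (Lemma \ref{curvecone}, Proposition \ref{corner01}: corners are $H$-type or $(E_p+E_q)$-type classes, all with non-negative square and $\alpha$-component $\ge 0$) forces $B$ to pair positively with some such corner. A class with all $\beta_i \le 0$ and one $\beta_i = 1$ is $E_i - \sum_j E_{k_j}$, which is the stated conclusion; I would rule out the all-$\{0,-1\}$ case and the single $-2$ case by showing they pair non-positively with every corner of the $K$-symplectic cone, contradicting Lemma \ref{curvecone}, or alternatively that $E_i \cdot B \ge 0$ (for $B \ne E_i$ irreducible) forces all $\beta_i \le 0$ except the special structure. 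This case should be comparatively quick once the finite list of sign patterns is in hand.

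\medskip

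For the case $\alpha < 0$: write $\alpha = -a$ with $a > 0$. Pairing $B$ with the generic line class $H$ gives $B\cdot H = \alpha = -a$, which would be negative; but $H$ is $J'$-holomorphic only for generic $J'$, so instead I pair with curves actually present. The key move: since $\alpha < 0$, $B$ cannot pair non-negatively with a generic line, so $B$ must fail to be $J$-nef in a controlled way — more usefully, apply adjunction directly: $\alpha^2 - \sum \beta_i^2 + 3a - \sum \beta_i \ge -2$, i.e. $\sum_i(\beta_i^2 + \beta_i) \le a^2 + 3a + 2 = (a+1)(a+2)$. For a single index, $\beta_i^2 + \beta_i \le (a+1)(a+2)$ gives $\beta_i \le a+1$, i.e. $\beta_i \le -\alpha + 1$, and the lower bound $\beta_i \ge -(a+1)$ comes symmetrically from the same quadratic (or from pairing with $H - E_i$-type classes as before). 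The equality case $\beta_i = a+1 = -\alpha+1$ then saturates the adjunction inequality so tightly that all other $\beta_j$ are forced to $0$ and the claim follows. Throughout I will need $b^+(M) = 1$ (so $M = \mathbb{CP}^2 \# k\overline{\mathbb{CP}^2}$) to invoke Lemma \ref{curvecone} and the chamber structure, and I will use that line and exceptional classes have non-trivial Seiberg-Witten invariants so that Lemma \ref{int>0} applies to them against any irreducible $J$-holomorphic $B$.
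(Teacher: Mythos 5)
The viable core of your argument coincides with the paper's: from $g_J(B)\ge 0$ together with $\gamma^2+\gamma\ge 0$ for every integer $\gamma$ one gets the termwise bound $\beta_i(\beta_i+1)\le(\alpha-1)(\alpha-2)$ for each fixed $i$, and for $\alpha>0$ this alone already gives both bounds (testing $\beta_i=\alpha$ or $\beta_i=-(\alpha+1)$ yields $4\alpha\le 2$, and $\beta_i=-\alpha$ yields $2\alpha\le 2$, hence $\alpha=1$), so your detour through pairings with $H-E_i$ is unnecessary there; your $\alpha=0$ case (enumerate $\beta_i\in\{0,-1,1,-2\}$, then use Lemma \ref{curvecone} to force some $\beta_i>0$) is exactly what the paper does. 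However, the auxiliary tool you lean on is not available: Lemma \ref{int>0} requires the irreducible curve to have \emph{non-negative} self-intersection, whereas Lemma \ref{B} is applied precisely to negative curves; when $B^2<0$, $B$ may occur as a component of the subvariety representing $H-E_i$ and the inequality $B\cdot(H-E_i)\ge 0$ can fail. Indeed for $\alpha<0$ it must fail: the genuine irreducible classes $-nH+(n+1)E_1-E_2$ of Lemma \ref{a<0} satisfy $\bigl(-nH+(n+1)E_1-E_2\bigr)\cdot(H-E_2)=-n-1<0$. So this step would, if valid, prove a false statement; it has to be removed, not repaired.

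The substantive gap is in your $\alpha<0$ case. Writing $\alpha=-a$ with $a>0$, the termwise inequality $\beta_i(\beta_i+1)\le(a+1)(a+2)$ is satisfied by every integer $\beta_i$ in the interval $[-(a+2),\,a+1]$; in particular $\beta_i=-(a+2)$ attains equality and $\beta_i=-(a+1)$ is allowed with room to spare, yet both violate the conclusion of the lemma (which permits $|\beta_i|=|\alpha|+1$ only for $\beta_i=-\alpha+1=a+1$). The lower bound therefore does not come ``symmetrically from the same quadratic,'' and the fallback you offer (pairing with $H-E_i$-type classes) is precisely the invalid step above. What is actually needed is the positivity input of Lemma \ref{curvecone}: for instance, if $\beta_i=-(a+2)$ then equality forces every other $\beta_j$ into $\{0,-1\}$, so all coefficients of $B$ are non-positive and $B$ pairs non-positively with every corner $fH-\sum_j f_jE_j$ (with $f,f_j\ge0$) of the $K$-symplectic cone, a contradiction; the case $\beta_i=-(a+1)$ requires a similar but slightly longer analysis since positive $\beta_j$ of small size are not immediately excluded. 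To be fair, the paper's own proof dismisses $\alpha\ne 0$ in one sentence and is equally silent on this point, but your write-up affirmatively asserts a symmetry that is not there, so this step must be supplied.
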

\begin{proof}
By adjunction formula, and the fact that $\gamma^2+\gamma\geq 0$ for any integer $\gamma$,  we have
$$(\alpha-1)(\alpha-2)\geq \beta_i(\beta_i+1).$$

Then all the conclusions are clear when $\alpha\neq 0$. 

For $\alpha=0$, we first get that $\beta_i=0$, $-2$ or $\pm 1$. However when $\alpha=0$, some $\beta_i$ is positive. Otherwise, $J$ will not be tamed by Equation \eqref{cmk}. Then $B=E_i-\sum_jE_{k_j}$ holds since $\sum \beta_i(\beta_i+1)\le 2$ by  adjunction formula. 
\end{proof}

\subsection{Curves on $\mathbb CP^2\#2\overline{\mathbb CP^2}$}

We have shown that there is at least one smooth $J$-holomorphic $-1$ rational curve for any tamed $J$ on non-minimal symplectic manifold except for $\mathbb CP^2\#\overline{\mathbb CP^2}$ in Corollary \ref{-1}. Now, we will show that when $M=\mathbb CP^2\#2\overline{\mathbb CP^2}$, we actually have at least two $-1$ sphere classes. Notice it is not true when $M=\mathbb CP^2\#k\overline{\mathbb CP^2}$ for $k>2$.

\begin{theorem}\label{cp2+2}
There are  at least two smooth $-1$ $J$-holomorphic rational curves for any tamed $J$ on $\mathbb CP^2\#2\overline{\mathbb CP^2}$.
\end{theorem}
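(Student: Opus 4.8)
The plan is to exploit the structure theorems already established: by Corollary \ref{-1} any tamed $J$ on $M=\mathbb{CP}^2\#2\overline{\mathbb{CP}^2}$ carries at least one smooth $-1$ rational curve, and the cone theorem tells us how exceptional classes decompose. So I would fix $J$, take the two exceptional classes $E_1$ and $E_2$, and analyze the $\mathbb{Q}$-decompositions $E_i=\sum a_{ij}[C_{ij}]$ into extremal curve classes guaranteed by Corollary \ref{-1}. The goal is to produce two \emph{distinct} smooth $-1$ rational curve classes among all the components appearing.

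First I would enumerate, using Lemma \ref{B} and Lemma \ref{curvecone}, the possible classes of irreducible negative curves on $M=\mathbb{CP}^2\#2\overline{\mathbb{CP}^2}$: with $K=-3H+E_1+E_2$, the candidate $-1$ rational curve classes are $E_1$, $E_2$, $H-E_1-E_2$, and the candidate $0$-curve classes $H-E_1$, $H-E_2$ (and $H$ itself has square $1$). The list is short because $b^-=2$ forces the relevant classes into the $K$-symplectic cone, whose extremal rays were computed in Proposition \ref{corner01}. Then I would argue by cases on which of these classes are actually represented by irreducible $J$-holomorphic curves. If both $E_1$ and $E_2$ are irreducibly represented, we are done immediately. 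If, say, $E_2$ is not irreducibly represented, then Corollary \ref{-1} forces $E_2=\sum a_j[C_j]$ with at least one $C_j$ a $-1$ rational curve whose class is on the short list above but cannot be $E_2$; intersecting with $K$ and with $E_2$ and using $a_j>0$ pins down this class. The key numerical tool throughout is: for any two irreducible curves that are not equal, the intersection number is $\geq 0$, together with the adjunction/genus constraints.

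The delicate point — and what I expect to be the main obstacle — is ruling out a "collapsed" configuration in which \emph{all} the curve-cone extremal rays coincide after the decompositions, i.e.\ showing that the decompositions of $E_1$ and of $E_2$ cannot both be supported on a single common $-1$ rational curve class (together with classes of square $\geq 0$). Suppose for contradiction $L$ with $L^2=-1$, $g_J(L)=0$ is the only $-1$ class appearing, so $E_1-a_1L$ and $E_2-a_2L$ are effective $\mathbb{Q}$-classes built from square $\geq 0$ extremal curves, with $a_1,a_2>0$. Pairing with $K$ gives $a_1=a_2=1$ (since $K\cdot L=-1$ and $K\cdot E_i=-1$ while $K$ is $\geq 0$ on the square-$\geq 0$ part), so $E_1-L$ and $E_2-L$ are effective with $K\cdot(E_i-L)=0$; but then $(E_1-L)\cdot(E_2-L)=E_1\cdot E_2-E_1\cdot L-E_2\cdot L+L^2 = -E_1\cdot L-E_2\cdot L-1$, and since $L\neq E_1,E_2$ forces $E_i\cdot L\geq 0$, this is $\leq -1<0$, contradicting effectivity (a product of two distinct effective classes, or of an effective class with itself whose square-$\geq 0$ pieces are nef-paired, cannot be negative unless they share a negative component — and $L$ has already been extracted). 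Pushing this contradiction argument through cleanly, handling the subcase where $E_1-L$ and $E_2-L$ might themselves still share a negative component, is where the real care is needed.

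Finally I would assemble the cases: either two of $\{E_1,E_2,H-E_1-E_2\}$ are directly represented by irreducible curves (done), or the decomposition argument produces a second $-1$ class disjoint in \emph{class} from the first, and in every branch the contradiction above shows two distinct smooth $-1$ rational curve classes must appear. The smoothness in each case is automatic: a class $e$ with $e^2=-1$ and $K\cdot e=-1$ has $g_J(e)=0$, so by the adjunction inequality \eqref{adj inequality} any irreducible representative is smooth and rational. This gives the two required curves.
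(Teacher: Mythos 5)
Your overall strategy (apply Corollary \ref{-1} to the exceptional classes and analyze the resulting decompositions) is the same as the paper's, but the step you yourself flag as needing ``real care'' is precisely where the proof lives, and your proposed shortcut around it does not work. The gap is your enumeration of irreducible negative curves: on $\mathbb CP^2\#2\overline{\mathbb CP^2}$ the classes $(1-s)H+sE_1$ and $(1-s)H+sE_1-E_2$ with $s\ge 1$ (and their $E_1\leftrightarrow E_2$ analogues) are realizable as irreducible rational curves --- already for integrable $J$, as blow-ups of the Hirzebruch surfaces $\mathbb F_{2s+1}$ --- and for $s\ge 2$ they have $K\cdot C=2s-3+\epsilon\ge 0$, so they sit in $\overline{A}_{K_J}(M)$, not among the extremal $-1$ rays. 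Lemma \ref{curvecone} only forces a curve class to pair positively with \emph{some} extremal ray of the $K$-symplectic cone, not to lie on your short list. Consequently, in your ``collapsed configuration'' argument the residual classes $E_i-a_iL$ are \emph{not} supported on square-$\ge 0$ curves: they may contain such a very negative irreducible component, and then $(E_1-a_1L)\cdot(E_2-a_2L)<0$ is no contradiction at all, since two effective classes sharing a negative component can intersect negatively. Relatedly, your derivation of $a_1=a_2=1$ from ``$K$ is $\ge 0$ on the square-$\ge 0$ part'' is not sound as stated ($H$ and $H-E_i$ have square $\ge 0$ but $K\cdot H=-3$, $K\cdot(H-E_i)=-2$); what is true is that the non-extremal part of the cone-theorem decomposition pairs non-negatively with $K$ by definition, which only yields $a_i\ge 1$.

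The paper's proof is devoted almost entirely to taming these extra classes. It first shows (Lemma \ref{A}) that, once one exceptional class (say $E_2$) is irreducibly represented, any irreducible curve with $H$-coefficient $a\le 0$ must be of the form $(1-s)H+sE_1$ or $(1-s)H+sE_1-E_2$, and that at most \emph{one} such class occurs; it then proves a constraint (Lemma \ref{cca<0}) on which classes $H+vE_1+wE_2$ can lie in the curve cone in the presence of such a curve, and uses this to exclude the scenario in which the decomposition of $H-E_1-E_2$ keeps drawing on $E_2$ (your ``collapsed'' case). Finally, when $H-E_1-E_2$ is the represented $-1$ curve, it rules out the possibility that neither $E_1$ nor $E_2$ is represented by showing every remaining irreducible curve class contributes non-positively to the functional $2a+b_1+b_2$, whereas $E_i$ has it positive. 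None of these steps is replaced by anything in your outline, so as written the proposal does not close the argument.
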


\begin{proof}
First, there is at least one $-1$ smooth rational curve by Corollary \ref{-1}. We first assume the class $E_2$ has such a smooth representative.

\begin{lemma}\label{A} Suppose $M=\mathbb CP^2\#2\overline{\mathbb CP^2}$ and the class $E_2$ is the class of a smooth $-1$ rational curve.
If a class $A=aH+bE_1+cE_2\ne E_2$ with 
$a\le 0$ has an irreducible curve representative, then

(i) $b>0,  a=1-b$ and $c=0$ or $-1$.

(ii)  $A$ is a sphere class.

(iii) $A\cdot A<0$.

(iv) $A$ is the only such class.
\end{lemma}
\begin{proof}
First $c\leq 0$ since $A\cdot E_2=-c\geq 0$.

Second, the set $\mathcal E_K=\{E_1, E_2, H-E_1-E_2\}$. It could be checked directly by adjunction formula, or from section 4 (in particular, Lemma \ref{a<0} and Proposition \ref{a>0}). Then by \eqref{cmk}, the extremal rays of the $K-$symplectic cone are spanned by
$$H, \quad H-E_1, \quad H-E_2.$$
 As $a\le 0$ and $c\le 0$, we have
$A\cdot H\le 0$ and $A\cdot (H-E_2)\le 0$. Therefore by Lemma \ref{curvecone}, $A\cdot (H-E_1)$ is strictly positive. This means that
$a+b>0$, i.e.  $b>-a\ge 0$.

By the adjunction formula
\begin{equation}
(a-1)(a-2)\geq b(b+1)+c(c+1).
\end{equation}
The only possibility is as claimed in $(i)$, $A= (1-b)H+ bE_1$ or $A=(1-b)H+bE_1-E_2$ with $b\ge 1$. Items $(ii)$ and $(iii)$ are then direct to check.

For $(iv)$, suppose $A'=(1-b')H+b'E_1 $ or $(1-b')H+b'E_1-E_2$ is another such class. Then $A\cdot A'=1-(b+b')$ or $-(b+b')$ is negative.
\end{proof}

\begin{lemma}\label{cca<0}
Suppose $M=\mathbb CP^2\#2\overline{\mathbb CP^2}$  and the class $E_2$ is the class of a smooth $-1$ rational curve. If $A=(1-s)H+sE_1$ or $(1-s)H+sE_1-E_2$ with $s\ge 1$ is in the curve cone, then a class $D=H+vE_1+wE_2$  is in the curve
cone only if $v> -1$ or $v=-1, w\ge -1$.
\end{lemma}

\begin{proof}
Since $s\ge 1$,  $D$ must be of the form $pA+\sum B_i$, where $p\ge 0$ and  $B_i=\alpha_iH+\beta_iE_1+\gamma_iE_2$ denote a positive multiple of homology classes of irreducible curves which are not on the ray through $A$. By Lemma \ref{A} $(iv)$, we have $\alpha_i>0$ or $B_i=E_2$. Then pairing with $H$, we have
$$1=p(1-s)+\sum \alpha_i.$$ Now pairing with $E_1$, we have
$$v=ps+\sum \beta _i=p-1+ \sum (\alpha _i+\beta_i)\geq -1$$ by Lemmas \ref{B} and \ref{A}. Moreover, $v=-1$ only if $p=0$, and by Lemma \ref{B}, we have $$w=\sum \gamma_i \ge -\sum \alpha_i=-1.$$
\end{proof}

\begin{cor}\label{1to2}
Suppose $E_2$ is the class of a smooth $-1$ rational curve. Then so is either the class $H-E_1-E_2$ or the class $E_1$. And in the latter case,
both $E_1$ and $H-E_1-E_2$ are the classes of smooth $-1$ rational curves.
\end{cor}
\begin{proof}
Recall $\mathcal E_K=\{E_1, E_2, H-E_1-E_2\}$. 
Suppose $E_2$ has an embedded representative and $E_1$ does not.
Note the class $H-E_1-E_2$ is in the curve cone since $SW(H-E_1-E_2)\ne 0$.
By Corollary \ref{-1}, there is a  $-1$ rational curve $C_i$ in a decomposition $H-E_1-E_2=\sum_ia_i[C_i]$ (in other words, an irreducible component of a $\mathbb R$-variety in class $H-E_1-E_2$).
By our assumption, this class cannot be $E_1$.
If this class is $H-E_1-E_2$, we are done.

If this class is $E_2$, then $H-E_1-lE_2$, with $l>1$, is in the curve cone. If we have an irreducible curve in class $(1-s)H+sE_1$ or $(1-s)H+sE_1-E_2$ with $s\ge 1$, it will contradict to the Lemma \ref{cca<0}. So all irreducible curves other than $E_2$ have $a=C\cdot H> 0$. By Lemma \ref{B}, all such irreducible curves $aH-b_1E_1-b_2E_2$ have $b_2\le a$. Hence $H-E_1-lE_2$ can never be in the curve cone for $l>1$, contradicting the assumption that $E_2$ appears in the decomposition of $H-E_1-E_2$.

  If both $E_1$ and $E_2$ have embedded representatives, same argument shows that neither can appear in the decomposition of $H-E_1-E_2$.
\end{proof}

Now, to finish the proof of Theorem \ref{cp2+2}, we are left with case that the class $H-E_1-E_2$ has an embedded representative.

By Corollary \ref{-1}, there will be a $-1$ rational curve as an irreducible component of the $\mathbb R$-subvariety representing class $E_1$ or $E_2$. Suppose that there is no irreducible curve with non-positive $H$ coefficient. Then
 $H-E_1-E_2$ cannot appear as the class of such $-1$ rational curve. Thus
 the $-1$ class component in $E_1$ is either $E_1$ or $E_2$. Hence in this situation,
 there are at least two $-1$ rational curves.

Thus we assume that there is an irreducible curve in  class
 $A=aH+bE_1+cE_2$  with $a\le 0$.

Since $a\le 0$,  either $a+b>0$ or $a+c>0$ by Lemma \ref{curvecone}.

Without loss, we assume that  $a+b>0$. We will show that in this case  $E_1$ or $E_2$ must have an embedded representative.

First by adjunction formula  $$(a-1)(a-2)\ge b(b+1)+(c^2+c).$$
On the other hand $b\ge -a+1>0$. Hence the only possibility for the adjunction holds would be $a=-b+1$, $c=0$ or $-1$ and $g=0$. Then $A= (1-b)H+ bE_1$ or $A=(1-b)H+bE_1-E_2$ with $b\ge 1$.

If $E_1$ and $E_2$ do not have irreducible representative, then $H-E_1-E_2$ is the only class of extremal irreducible curve with $K\cdot C<0$ as shown in Corollary \ref{-1}. We now look at irreducible curves with $K\cdot C\ge 0$. First let $[C]=aH-b_1E_1-b_2E_2, a>0$. Then $K\cdot C\ge 0$ implies $$0<3a\le b_1+b_2.$$But by local positivity of intersections, $C\cdot (H-E_1-E_2)\ge 0$, which is $a\ge b_1+b_2$. It is a contradiction. Hence $a\le 0$ and the curves classes are calculated as above. Hence $E_i$ will be a linear combination of these classes and $H-E_1-E_2$. If we write a class as $aH+b_1E_1+b_2E_2$, then all the above classes will contribute non-positively to $2a+b_1+b_2$. However $E_i$ has positive $2a+b_1+b_2$. This is a contradiction. Hence there is an irreducible curve in class $E_1$ or $E_2$.
\end{proof}

The above discussion actually gives the following description of the curve cone. By Theorem \ref{cp2+2}, there is always an irreducible $J$-holomorphic curve in class $E_1$ or $E_2$. Hence, without loss, we could assume $E_2$ has an irreducible representative.

\begin{theorem}\label{cp2+2cone}
Let $J$ be a tamed almost complex structure on $\mathbb CP^2 \# 2\overline{\mathbb CP^2}$ such that there is a smooth $J$-holomorphic curve in the class $E_2$. Then the curve cone $A_J(M)$ is generated by $3$ classes. They are either
$$\alpha=(1-s)H+sE_1, \quad  \beta=E_2, \quad \gamma=H-E_1-E_2,$$
or
$$ \alpha=(1-s)H+sE_1-E_2, \quad  \beta=E_2, \quad \gamma=H-E_1-E_2,$$ where $s\ge 1$.
\end{theorem}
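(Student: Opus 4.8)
The plan is to show that the three listed classes $\alpha,\beta,\gamma$ all lie in the curve cone and that they generate it. The first part is quick: $\beta = E_2$ is effective by hypothesis, $\gamma = H - E_1 - E_2$ has $SW(H-E_1-E_2)\ne 0$ (it is an exceptional class, or equivalently apply Proposition~\ref{SW} since $K\cdot\gamma = -1 <0$), and the class $\alpha$ (in whichever of the two forms occurs) is precisely the extremal ray produced by the decomposition analysis in the proof of Theorem~\ref{cp2+2} and Lemma~\ref{A}: under the standing assumption that $E_2$ is represented, either $E_1$ is also represented (in which case one takes $s=1$ and the first family with $\alpha = H - H + E_1 = E_1$... more precisely $\alpha=E_1$ arises as the $s=1$ member), or $E_1$ degenerates and the extremal component with negative $K$-pairing that is \emph{not} $E_2$ or $H-E_1-E_2$ must, by Lemma~\ref{A}(i) and (iv), be the unique class $(1-s)H + sE_1$ or $(1-s)H+sE_1-E_2$ for some $s\ge 1$; this class is extremal in $A_J(M)$ because it has negative self-intersection (Lemma~\ref{A}(iii) and Lemma~\ref{-extremal}).

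Next I would prove the spanning statement, i.e. that every irreducible $J$-holomorphic curve class $D = aH + b_1E_1 + b_2E_2$ lies in the cone $\mathbb{R}^+\alpha + \mathbb{R}^+\beta + \mathbb{R}^+\gamma$. Split into the cases $a\le 0$ and $a>0$. For $a\le 0$: Lemma~\ref{A} says the only such irreducible classes (other than $E_2$ itself) are exactly $\alpha$ in one of its two forms, and Lemma~\ref{A}(iv) says it is unique, so $D\in\{\alpha,\beta\}$ and we are done. For $a>0$: the key constraint is Lemma~\ref{B}, giving $|b_i|\le a$ with equality only when $b_i=-1,\ a=1$; combined with the fact (established in the proof of Corollary~\ref{1to2} / Lemma~\ref{cca<0}) that once $\alpha$ is in the cone every curve class satisfies the coefficient inequalities $b_1 \ge -1$ (and $b_1=-1 \Rightarrow b_2\ge -1$), one checks that the three inequalities cutting out $\mathbb{R}^+\alpha+\mathbb{R}^+\beta+\mathbb{R}^+\gamma$ — namely pairing non-negatively with the appropriate dual vectors — are all satisfied by $D$. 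Concretely: $\alpha,\beta,\gamma$ are three classes in a rank-$3$ lattice, so the cone they span is cut out by three hyperplanes, each spanned by two of the generators; I would compute those three linear functionals explicitly and verify, using Lemmas~\ref{B}, \ref{A} and~\ref{cca<0} together with $a>0$, $D\cdot E_2 = -b_2 \ge 0$ only when... (note $D=E_2$ is the degenerate case already handled), that $D$ pairs correctly with each.

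The main obstacle I anticipate is the $a>0$ bookkeeping: one must be careful that the relevant dual functionals genuinely have the sign one wants on all admissible $D$, and in particular that the two alternative forms of $\alpha$ (with or without the $-E_2$ term) do not change which cone is obtained in a way that breaks the argument. The cleanest route is probably to observe that in both cases $\alpha\cdot\beta$, $\alpha\cdot\gamma$, $\beta\cdot\gamma$ are all $\ge 0$ and $\alpha^2<0$, $\beta^2=\gamma^2=-1$, so by the light cone lemma (Lemma~\ref{light}) applied after noting $\alpha+\beta+\gamma$ has positive square, the dual cone is well-behaved; then any curve class $D$, which pairs non-negatively with every irreducible class hence in particular with $\alpha,\beta,\gamma$ (local positivity of intersections, since $D$ is not proportional to any of them unless equal), must lie in the double dual, which equals the cone spanned by $\alpha,\beta,\gamma$ precisely because that cone is ``round-free'' / simplicial in this rank-$3$ situation. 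Finally I would remark that these three generators are extremal — $\beta,\gamma$ are $-1$ curves hence extremal by Lemma~\ref{-extremal}, and $\alpha$ likewise by Lemma~\ref{A}(iii) — so the cone is exactly a simplicial cone on these three rays, completing the proof.
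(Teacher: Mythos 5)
Your overall skeleton (produce $\alpha,\beta,\gamma$, then show every irreducible class lies in their span) matches the paper's, and you cite the right supporting results (Lemma \ref{A}, Lemma \ref{B}, Corollary \ref{1to2}, Lemma \ref{-extremal}). But there are two genuine problems. First, your mechanism for producing $\alpha$ when $E_1$ degenerates does not work: you look for ``the extremal component with negative $K$-pairing that is not $E_2$ or $H-E_1-E_2$,'' but for $s\ge 2$ the class $\alpha=(1-s)H+sE_1(-E_2)$ has $K\cdot\alpha=2s-3>0$, so it is invisible to any argument keyed to negative $K$-pairing; Corollary \ref{-1} only guarantees a $-1$ rational curve component, which here must be $E_2$ or $H-E_1-E_2$. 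The correct (and the paper's) route is via the $H$-pairing: since $E_1\cdot H=0$ and every component with $C\cdot H>0$ contributes positively, a decomposition of $E_1$ must contain an irreducible component with $C\cdot H\le 0$ other than $E_2$, and Lemma \ref{A} then identifies that component as $\alpha$ and Lemma \ref{A}(iv) gives uniqueness.

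Second, your ``cleanest route'' at the end is logically broken: the condition $D\cdot\alpha,\,D\cdot\beta,\,D\cdot\gamma\ge 0$ places $D$ in the \emph{dual} cone of $\mathrm{span}^+(\alpha,\beta,\gamma)$, not in its double dual, and since all three generators have negative square this dual cone (spanned by $sH-(s-1)E_1$, $H-E_1$ and $sH-(s-1)E_1-E_2$, resp.\ $(s+1)H-sE_1-E_2$) is a priori a different cone. It does happen that each of these dual generators is a non-negative combination of $\alpha,\beta,\gamma$, so the dual is contained in the primal and the conclusion holds --- but verifying this is exactly the explicit functional computation you were trying to bypass, so the ``shortcut'' buys nothing. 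Your first, pedestrian route is the one that works: with $\gamma$ known to be \emph{irreducible} (this is what Corollary \ref{1to2} gives you, and you need irreducibility, not just $SW\ne 0$, to invoke local positivity $D\cdot\gamma\ge0$ and to rule out irreducible classes with $a>0$ and $K\cdot C\ge0$ via $a\ge b_1+b_2$ versus $3a\le b_1+b_2$), one checks the three dual inequalities for $D=aH-b_1E_1-b_2E_2$ with $a>0$ using $|b_i|\le a$ from Lemma \ref{B} together with $b_2\ge0$ and $a\ge b_1+b_2$. With those two repairs your proof goes through and is essentially the paper's argument.
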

\begin{proof}
First by Lemma \ref{1to2}, there is always an irreducible curve in class $H-E_1-E_2$. By the argument in the last paragraph of the proof of Theorem \ref{cp2+2}, a curve class $C$ with $C\cdot H>0$ is always spanned by $H-E_1-E_2$, one of $E_i$ say $E_2$ and another irreducible curve with non-positive pairing with $H$. When $E_2$ has irreducible representative, the last curve class is $(1-s)H+sE_1$ or $(1-s)H+sE_1-E_2$ with $s\ge 1$. By Lemma \ref{A} (iv), such a curve is unique. This completes our proof.
\end{proof}

Now we can study the $\geq 0-$dual  of the curve cone $A_J(M)$.

If $A_J$ is generated by $E_1, E_2, H-E_1-E_2$, then its $\geq 0-$dual is generated by
$H, H-E_1, H-E_2$. 

Let us assume that $E_2$ is irreducible. We discuss the two cases in Theorem \ref{cp2+2cone}.
In the first case, the $\geq 0-$dual of $A_J$ is generated by $$A=sH-(s-1)E_1, \quad B=H-E_1, \quad C=sH-(s-1)E_1-E_2.$$
In the second case, the $\geq 0-$dual of $A_J$ is generated by $$A=sH-(s-1)E_1, \quad B=H-E_1, \quad C=(s+1)H-sE_1-E_2.$$

In both cases, $A$ and $C$ are in $S_{K_J}^+$ and the ray spanned by $B$ is approximated by the sequence $pH-(p-1)E_1-E_2, p\to \infty$ in $S_{K_J}^+$.

All the above actually shows the following

\begin{prop}\label{s=p}
For any tamed $J$ on $\mathbb CP^2 \# 2\overline{\mathbb CP^2}$, we have $$\mathcal S_J=\mathcal P_J.$$
\end{prop}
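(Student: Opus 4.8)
The proposition packages the explicit cone computations just carried out, so the plan is to record why they suffice. First I would reduce to the standing hypothesis that $E_2$ is the class of a smooth $J$-holomorphic rational curve: by Theorem \ref{cp2+2} at least one of $E_1,E_2$ admits an embedded $J$-holomorphic representative, and the reflection swapping $E_1\leftrightarrow E_2$ preserves all the cones in play, so nothing is lost.

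With that in place, Theorem \ref{cp2+2cone} presents $A_J(M)$ as the simplicial cone on the three classes $\alpha$, $\beta=E_2$, $\gamma=H-E_1-E_2$, where $\alpha=(1-s)H+sE_1$ or $\alpha=(1-s)H+sE_1-E_2$ for some $s\ge 1$ (the generic possibility $\alpha=E_1$ being $s=1$ in the first family). This cone is genuinely polyhedral with exactly three facets, so its $\ge 0$-dual $\overline{A}_J^{\vee,\ge 0}(M)$ is again simplicial, spanned by the three classes $A,B,C$ written down before the statement; by construction each generator of the dual is orthogonal to two of $\alpha,\beta,\gamma$ and pairs positively with the third. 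A short calculation gives $A^2,C^2>0$, $B^2=0$, and $A\cdot B,\ A\cdot C,\ B\cdot C>0$, so the interior of the cone on $A,B,C$ lies inside the positive cone $\mathcal P$; hence $\mathcal P_J=A_J^{\vee,>0}(M)\cap\mathcal P$ is precisely this open cone.

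It remains to identify the open cone on $A,B,C$ with $\mathcal S_J$. One inclusion is immediate from the definitions: the sphere classes generating $\mathcal S_J$ have non-negative square and pair non-negatively with every irreducible $J$-holomorphic curve, hence lie in $\overline{A}_J^{\vee,\ge 0}(M)\cap\overline{\mathcal P}$, giving $\mathcal S_J\subseteq\mathcal P_J$ after the mild interior check above. The substance is $\mathcal P_J\subseteq\mathcal S_J$ — that the dual cone is no larger than the span of those sphere classes — and this is exactly what the preceding analysis delivers: $A$ and $C$ lie in $S_{K_J}^+$, while $B$ (always the class $H-E_1$) is the limit as $p\to\infty$ of $pH-(p-1)E_1-E_2\in S_{K_J}^+$, and a direct pairing computation shows $A$, $C$ and (for $p$ large) these approximating classes all pair non-negatively with $\alpha,\beta,\gamma$ and hence with all of $A_J(M)$. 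Therefore all three generators of $\overline{A}_J^{\vee,\ge 0}(M)$ lie in the closure of the cone defining $\mathcal S_J$, the two closed cones coincide, and passing back to interiors yields $\mathcal S_J=\mathcal P_J$. I expect the only delicate point to be this last matching — reconciling closures with open cones near the square-zero boundary ray $B$, and confirming that no square-nonnegative, $g_J$-zero sphere class pairing non-negatively with $A_J(M)$ escapes the cone on $A,B,C$ — which becomes transparent once $\overline{A}_J^{\vee,\ge 0}(M)$ has been pinned down. (This identity is in turn what powers the Nakai-Moishezon duality of the section, where one feeds the classes $A,C$ and the approximants to $B$ — each realized by a $J$-holomorphic subvariety with spherical components, since it has positive-chamber $SW\ne 0$ — into Taubes' subvariety-to-form construction.)
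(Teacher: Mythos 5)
Your proof follows essentially the same route as the paper: read off the three dual generators $A,B,C$ from the simplicial curve cone of Theorem \ref{cp2+2cone}, check they are (limits of) big $J$-nef spherical classes, and match interiors to get $\mathcal S_J=\mathcal P_J$. The only slip is the claim $C^2>0$, which fails in the boundary case $s=1$ of the first family (there $C=H-E_2$ has square $0$), but this is harmless since such a generator is handled exactly like $B$, by approximation with classes $pH-E_1-(p-1)E_2$ in $S_{K_J}^+$.
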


Recall that by definition $\mathcal P_J=A_J^{\vee, >0}(M) \cap \mathcal P$ as in the introduction (it should not be confused with $P_{K_J}$). The spherical cone $\mathcal S_J$ here is defined to be the interior of the convex cone generated by big $J$-nef classes ({\it i.e.} $J$-nef classes with positive square) in $S_{K_J}$ if it is of dimension $3$.

\subsection{Nakai-Moishezon type theorem for almost K\"ahler structure on $\mathbb CP^2 \# 2\overline{\mathbb CP^2}$}
With Proposition \ref{s=p} in hand, we can establish the Nakai-Moishezon and Kleiman type theorems  for almost K\"ahler $J$ on $\mathbb CP^2 \# 2\overline{\mathbb CP^2}$. 

\begin{theorem}\label{NM2}
Let $M=\mathbb CP^2\# 2\overline{\mathbb CP^2}$. For any almost K\"ahler $J$, 
the $J$-compatible cone $\mathcal K_J^c(M)$
is dual to the $J$-curve cone $A_J(M)$, i.e.
$$\mathcal K_J^c= \mathcal P_J=A_J^{\vee, >0}(M) .$$
\end{theorem}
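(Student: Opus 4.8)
\textbf{Proof strategy for Theorem \ref{NM2}.}
The plan is to deduce the equality $\mathcal K_J^c = \mathcal P_J = A_J^{\vee,>0}(M)$ from the structural results already established for $\mathbb CP^2 \# 2\overline{\mathbb CP^2}$. The inclusions $\mathcal K_J^c \subset \mathcal P_J \subset A_J^{\vee,>0}(M)$ are automatic (a compatible form is positive on every curve and has positive square), so the real content is the reverse inclusion $A_J^{\vee,>0}(M) \subset \mathcal K_J^c$, or more precisely that every class in $\mathcal P_J$ is represented by a $J$-compatible symplectic form. By Proposition \ref{s=p} we know $\mathcal S_J = \mathcal P_J$, so it suffices to realize every big $J$-nef class in $S_{K_J}$, hence every class in the interior of the cone they span, by a $J$-almost Kähler form. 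Since $b^+(M)=1$, we may use $\mathcal K_J^c = \mathcal K_J^t \cap H_J^+(M)$ from \cite{LZ}, reducing the problem to constructing $J$-taming forms.

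First I would pin down the generators: by Theorem \ref{cp2+2cone} the curve cone is generated (after assuming $E_2$ irreducible, as we may by Theorem \ref{cp2+2}) by three explicit classes, and its $\geq 0$-dual is generated by the three classes $A = sH-(s-1)E_1$, $B = H-E_1$, $C = sH-(s-1)E_1-E_2$ (or the variant with $C = (s+1)H-sE_1-E_2$). The classes $A$ and $C$ lie in $S_{K_J}^+$, i.e. they are big $J$-nef spherical classes, and $B$ is a limit of the big spherical classes $pH-(p-1)E_1-E_2$. The strategy then is to apply Taubes' subvarieties–current–form technique in the form used in \cite{LZ-generic}: a class in $S_{K_J}^+$ that is $J$-nef and has positive square can be built into a $J$-taming form using spherical $J$-holomorphic subvarieties, because the relevant Gromov–Taubes invariants are nonzero and the subvarieties are spheres. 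Combining two such forms (interior points of the cone over $A$, $B$, $C$) gives taming forms for all of the interior of $\mathcal S_J$; intersecting with $H_J^+(M)$ and invoking \cite{LZ} upgrades "taming" to "compatible."

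The main obstacle I anticipate is the treatment of the generator $B = H-E_1$, which sits on the \emph{round-free but non-spherical-looking} part of the boundary: it is not itself a class of positive square in the relevant open cone but only a limit of the spherical classes $pH-(p-1)E_1-E_2$. I would handle this by a limiting argument — for each interior class $e$ of $\mathcal P_J$, write $e$ as a positive combination of $A$, $C$, and some $pH-(p-1)E_1-E_2$ with $p$ large, all of which are genuinely big and $J$-nef, so that Taubes' construction applies to each and their sum is a taming (hence, after intersecting with $H_J^+(M)$, compatible) form representing $e$. The point is that one never needs $B$ itself, only that arbitrarily nearby big spherical classes are available, which is exactly what Proposition \ref{s=p} and Theorem \ref{cp2+2cone} guarantee. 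A secondary technical point is checking that the spherical subvarieties in these classes have the transversality/positivity properties needed to feed into the current-to-form step, but this is precisely the content of the machinery of \cite{LZ-generic}, which applies because all the classes in play are represented by (possibly reducible) configurations of rational curves.
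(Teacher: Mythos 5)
Your overall skeleton matches the paper's: reduce to showing $\mathcal S_J\subset \mathcal K_J^c$ via Proposition \ref{s=p} and the identity $\mathcal K_J^c=\mathcal K_J^t\cap H_J^+(M)$ from \cite{LZ}, and dispose of the second equality by noting that the generators $A$, $B$, $C$ of the dual cone all have non-negative squares. The gap is in your central mechanism. You claim that a big $J$-nef spherical class ``can be built into a $J$-taming form using spherical $J$-holomorphic subvarieties,'' i.e.\ you invoke the subvariety--current--form technique of \cite{T2009, LZ-generic}. The paper explicitly shows, in the discussion immediately following Theorem \ref{NM2}, that this technique is obstructed on $\mathbb CP^2\#2\overline{\mathbb CP^2}$: a single big $J$-nef class $e$ yields only a \emph{weak} Taubes current, which degenerates along the zero locus $Z(e)$, and Proposition \ref{form} upgrades a collection of such currents to a genuine Taubes current only when $\cap_i Z(e_i)=\emptyset$. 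Here every big $J$-nef spherical class $e$ satisfies $e\cdot\beta=0$ or $e\cdot\gamma=0$ (with $\beta=E_2$, $\gamma=H-E_1-E_2$ as in Theorem \ref{cp2+2cone}), so $Z(e)\supset \beta\cap\gamma\ne\emptyset$ for all of them, the intersection of zero loci is never empty, and no genuine Taubes current --- hence no taming or compatible form --- comes out of the spherical classes alone. Your limiting argument for $B=H-E_1$ does not help, since the approximating classes $pH-(p-1)E_1-E_2$ suffer from the same degeneration.

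The missing ingredient is where the almost K\"ahler hypothesis enters. The paper's proof is essentially a one-line citation of Lemma 5.18 of \cite{LZ-generic}, which gives $\mathcal S_J\subset\mathcal K_J^c$ precisely under the assumption that $J$ already admits a compatible form; it is that existing form that repairs the degenerate currents (alternatively, one can run the $J$-tamed inflation of Section \ref{infmethod} starting from it, as in Theorem \ref{rr}). As written, your argument uses the almost K\"ahler hypothesis only through the identity $\mathcal K_J^c=\mathcal K_J^t\cap H_J^+(M)$; if your construction of taming forms from spherical classes alone were correct, it would settle Question \ref{dualC} for every tamed $J$ on this manifold, which the paper explicitly leaves open. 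To close the gap you should either cite Lemma 5.18 of \cite{LZ-generic} directly (the paper's route) or feed the given compatible form explicitly into the current construction.
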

\begin{proof}
It is clear that $\mathcal K_J^c\subset \mathcal P_J.$ If $J$ is almost K\"ahler, we have $\mathcal S_J\subset \mathcal K_J^c$ (Lemma 5.18 in \cite{LZ-generic}). By Proposition \ref{s=p}, we have $$\mathcal K_J^c= \mathcal P_J=\mathcal S_J.$$ 
The second equality $\mathcal P_J=A_J^{\vee, >0}(M)$ holds because the classes $A, B, C$ and thus their positive combinations all have non-negative squares.
\end{proof}

We remark that the techniques in \cite{LZ-generic} to construct almost K\"ahler form for a tamed $J$ fail in this situation.

First we need to construct a Taubes current. Here a current is a differential form with distribution coefficients. Hence it represents a real cohomology class when pairing with smooth closed forms in the weak sense. A Taubes current is a closed, positive $J$-invariant current $\Phi$, which satisfies $$k^{-1}t^4\le \Phi(if_{B_t(x)}\sigma\wedge \bar{\sigma})<kt^4.$$ Here $\sigma$ denotes a point-wise unit length section of $T^{1,0}M|_{B_t(x)}$. 
The usual technique for the construction is to integrate certain part of the moduli space of the subvarieties in a $J$-ample class $e$, {\it i.e.} a cohomology class pairing positively with any curve classes, with $g_J(e)=0$.
However in general, as in current situation, we do not have any such classes. We may only have big $J$-nef spherical classes. In this situation,  we are still able to produce a weak version of Taubes current in class $e$: a closed, non-negative $J$-invariant current $\Phi_e$, satisfying $$0\le \Phi_e(if_B\sigma\wedge \bar{\sigma})<kt^4.$$ It will vanish along the vanishing locus $Z(e)$, {\it i.e.} the union of irreducible subvarieties $D_i$ such that $e\cdot D_i=0$. But  over any $4$-dimensional compact submanifold with boundary $K$ of the complement $M(e)=M\backslash Z(e)$, it is a Taubes current with the constant $k>1$ depending only on $K$.

If we have sufficiently many big $J$-nef classes, we could produce genuine Taubes currents by the following Proposition 5.7 in \cite{LZ-generic}.

\begin{prop}\label{form}
Let $e_i$ be big $J$-nef  classes in $S_{K_J}$ and the zero locus of $e_i$ is denoted by $Z_i$. 
If $\cap Z_i =\emptyset$, then there is a Taubes current in the class $e=\sum_i a_ie_i$, with $a_i>0$.
\end{prop}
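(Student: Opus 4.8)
The statement to prove is Proposition \ref{form}: given big $J$-nef classes $e_i\in S_{K_J}$ with zero loci $Z_i$ satisfying $\cap_i Z_i=\emptyset$, there is a Taubes current in the class $e=\sum_i a_ie_i$ with all $a_i>0$. The strategy I would follow is to produce for each $i$ the \emph{weak} Taubes current $\Phi_{e_i}$ described in the text — a closed, non-negative, $J$-invariant current which vanishes along $Z_i$ but which, over any compact $4$-submanifold-with-boundary $K\subset M(e_i)=M\setminus Z_i$, is an honest Taubes current with constant $k=k(K)>1$ — and then form the linear combination $\Phi_e:=\sum_i a_i\Phi_{e_i}$. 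Closedness, non-negativity and $J$-invariance are preserved under positive linear combinations, so the only thing to check is the two-sided estimate $k^{-1}t^4\le \Phi_e(if_{B_t(x)}\sigma\wedge\bar\sigma)<kt^4$ for a \emph{uniform} constant $k$, valid at every point $x\in M$.

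First I would recall (or re-derive from \cite{LZ-generic}) the construction of $\Phi_{e_i}$: it comes from integrating a suitable portion of the moduli space of $J$-holomorphic subvarieties in the class $e_i$ (these exist and are plentiful because $e_i$ is $J$-nef with $g_J(e_i)=0$, so Taubes' SW=Gr and the structural results of \cite{LZrc} apply), against a bump form localized near each point. The upper bound $\Phi_{e_i}(if_{B_t(x)}\sigma\wedge\bar\sigma)<k_i t^4$ is a \emph{global} statement — it holds with one constant $k_i$ on all of $M$, since it only records that the current has bounded mass and bounded local dimension — so the upper bound for $\Phi_e$ follows immediately with $k=\sum_i a_ik_i$ (up to the usual comparison of the local form with the metric volume). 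The content is the lower bound: $\Phi_{e_i}$ only satisfies $\Phi_{e_i}(if_{B_t(x)}\sigma\wedge\bar\sigma)\ge k^{-1}t^4$ when $x$ stays a definite distance away from $Z_i$.

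The key step — and the main obstacle — is the covering argument that converts the pointwise positivity away from each $Z_i$ into uniform positivity for the sum. Since $\cap_i Z_i=\emptyset$ and $M$ is compact, there is a finite subcollection and a number $\delta>0$ such that every $x\in M$ has $\mathrm{dist}(x,Z_i)\ge\delta$ for at least one index $i=i(x)$; equivalently, the open sets $U_i:=\{x:\mathrm{dist}(x,Z_i)>\delta\}$ cover $M$. For each $i$, choose a compact submanifold-with-boundary $K_i\subset M(e_i)$ containing $\overline{U_i}$; then on $K_i$ the current $\Phi_{e_i}$ is a genuine Taubes current with constant $k(K_i)$, so for $x\in U_i$ and $t$ smaller than $\min(\delta,\min_i t_0(K_i))$ we get $\Phi_{e_i}(if_{B_t(x)}\sigma\wedge\bar\sigma)\ge k(K_i)^{-1}t^4$, hence $\Phi_e(if_{B_t(x)}\sigma\wedge\bar\sigma)\ge a_i k(K_i)^{-1}t^4$ (all other terms being non-negative). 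Taking $k:=\max\bigl(\sum_i a_ik_i,\ \max_i a_i^{-1}k(K_i)\bigr)$ and shrinking the threshold on $t$ accordingly gives the required two-sided bound at every point of $M$, which is exactly the defining property of a Taubes current. The one subtlety to handle carefully is that the constant in a weak Taubes current genuinely blows up as $x\to Z_i$, so one must not try to use a single $K_i$ equal to all of $M(e_i)$; the finite cover by the $U_i$ is what makes the constants uniform, and it is precisely here that the hypothesis $\cap Z_i=\emptyset$ is used.
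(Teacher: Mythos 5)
Your argument is correct, and it is essentially the intended one: the paper itself gives no proof of this statement but quotes it verbatim as Proposition 5.7 of \cite{LZ-generic}, where the proof proceeds exactly as you describe — sum the weak Taubes currents $\Phi_{e_i}$ with positive coefficients, get the upper bound globally, and use compactness of $M$ together with $\cap Z_i=\emptyset$ to extract a finite cover by sets $U_i$ on which $\Phi_{e_i}$ has a uniform lower bound. Your identification of where the hypothesis $\cap Z_i=\emptyset$ enters (making the constants uniform in the lower bound) is the right key point.
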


Finally, we apply the following regularization result of \cite{T2009} (see also \cite{Zmann}) to obtain an almost K\"ahler form in the class $e$.

\begin{theorem}
In a $4$-manifold $M$ with $b^+(M)=1$, if we have a Taubes current $T$, 
then there is an almost K\"ahler form $\omega$, s.t. $[\omega]=[T]$.
\end{theorem}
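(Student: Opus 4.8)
The plan is to build the compatible form indirectly: first produce a \emph{taming} form in the class $[T]$, and then pass from tamed to compatible using the comparison of cones valid on a $b^+=1$ manifold. Recall from \cite{LZ} that when $b^+(M)=1$ one has $\mathcal K_J^c=\mathcal K_J^t\cap H_J^+(M)$; and since for such $M$ we have $h_J^+(M)=b^-(M)+1$, so that by the $\mathcal C^\infty$-pure-and-fullness of almost complex $4$-manifolds \cite{DLZ} one gets $H_J^-(M)=0$ and hence $H_J^+(M)=H^2(M;\mathbb R)$, this identity simply reads $\mathcal K_J^c=\mathcal K_J^t$. Consequently it is enough to exhibit a \emph{smooth} closed $2$-form $\beta$ with $[\beta]=[T]$ that is positive on every $J$-complex line, i.e.\ $\beta(v,Jv)>0$ for all $v\ne 0$; any such $\beta$ tames $J$, so $[T]=[\beta]\in\mathcal K_J^t=\mathcal K_J^c$, which is precisely the assertion that some $J$-compatible symplectic form $\omega$ has $[\omega]=[T]$.

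To produce $\beta$, the plan is to mollify the current $T$. Fix a smooth closed representative $\beta_0$ of $[T]$; by de Rham theory for currents we may write $T=\beta_0+d\tau$ with $\tau$ a $1$-current. Applying a smoothing operator $R_t$ of scale $t$ — implemented either by convolution in coordinate charts followed by patching, or by averaging $g_s^*(\cdot)$ over a small family of diffeomorphisms $g_s$ near the identity, so that $R_t$ commutes with $d$ — we obtain $R_tT=R_t\beta_0+d(R_t\tau)$, a smooth closed form cohomologous to $T$ with $R_tT\to T$ weakly. The whole point of the definition of a Taubes current is the \emph{uniform} lower bound $T(if_{B_t(x)}\sigma\wedge\bar\sigma)\ge k^{-1}t^4$, valid for all $x$ and all small $t$ with $\sigma$ an arbitrary pointwise unit $(1,0)$-vector: when the smoothing scale is matched to $t$ this bound descends to $R_tT$, forcing $(R_tT)(v,Jv)\gtrsim k^{-1}|v|^2$ pointwise once $t$ is small, so that $\beta:=R_tT$ tames $J$.

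The main obstacle is exactly the bookkeeping hidden in this last step: one must couple the smoothing scale to the scale $t$ in the definition of the Taubes current so that the $C^1$-growth of $R_tT$ does not overwhelm the positivity one is trying to keep, and because $J$ is not integrable the smoothing must be carried out with $J$-adapted cutoffs and its non-commutation with the pointwise action of $J$ must be estimated. This is the content of the regularization argument in \cite{T2009} (see also \cite{Zmann}), which I would reproduce; the only genuine simplification available here is that we target a taming rather than a compatible form, so the positivity to be preserved is an open condition, after which the cone reduction above finishes the proof. I note that the unconditional version of Taubes' theorem, without $b^+(M)=1$, cannot be obtained this way: then $H_J^-(M)$ may be nonzero, the identity $\mathcal K_J^c=\mathcal K_J^t$ fails, and one really needs the harder direct regularization to a $J$-compatible form.
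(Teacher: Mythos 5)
The reduction at the heart of your argument is circular. The identity $\mathcal K_J^c=\mathcal K_J^t\cap H_J^+(M)$ from \cite{LZ} is established under the standing hypothesis that $\mathcal K_J^c\ne\emptyset$, i.e.\ that $J$ is \emph{already} almost K\"ahler; the same hypothesis is what forces $H_J^-(M)=0$ when $b^+=1$ (one needs a compatible metric whose one-dimensional self-dual harmonic space is spanned by a $J$-invariant form --- \cite{DLZ} gives the splitting $H^2=H_J^+\oplus H_J^-$ for any almost complex $4$-manifold, not the vanishing of $H_J^-$). But the theorem you are proving is precisely the statement used, both in \cite{T2009} and in this paper, to establish $\mathcal K_J^c\ne\emptyset$ in the first place: see the proof of Theorem \ref{generic}, where $\mathcal K_J^c\ne\emptyset$ is first extracted from \cite{T2009} and only then is \cite{LZ} invoked to get $\mathcal K_J^t=\mathcal K_J^c$. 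For a merely tamed $J$ with $b^+=1$, neither $H_J^-(M)=0$ nor $\mathcal K_J^t=\mathcal K_J^c$ is known --- the latter is Donaldson's tamed-versus-compatible question --- so producing a taming representative of $[T]$ does not let you conclude. Relatedly, even the membership $[T]\in H_J^+(M)$ requires a smooth closed $J$-invariant form in the class, which is exactly what the regularization is supposed to deliver.

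The mollification half is plausible as a route to a \emph{taming} form: positivity is an open condition, the lower bound $k^{-1}t^4$ is uniform, and the error coming from the variation of $J$ and $\sigma$ over a ball of radius $t$ is of order $t\cdot kt^4$, which is dominated by $k^{-1}t^4$ for small $t$. But convolution destroys the pointwise $J$-invariance of $T$, so this procedure can only ever yield a taming form; the entire difficulty of \cite{T2009} (and of \cite{Zmann}) is to regularize while keeping the output in $\Lambda_J^+$, and your proposal defers that work (``I would reproduce [T2009]'') while simultaneously asserting that only the easier taming version is needed. If you reproduce Taubes' regularization you obtain the compatible form directly and the cone-theoretic detour is unnecessary; if you do not, the taming-plus-cone-identity shortcut does not close the argument. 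A correct write-up should either quote the theorem from \cite{T2009, Zmann} as the paper does, or carry out the genuine $J$-invariant regularization.
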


Hence to construct an almost K\"ahler form by the subvariety-current-form method, we are reduced to prove that there exist big $J$-nef classes $e_i$ in $S_{K_J}$, such that the intersection of the zero locus  $\cap Z(e_i) =\emptyset$. We claim it is impossible if our $J$ is assumed only to be tamed. In the below, $\alpha, \beta, \gamma$ are those classes in Theorem \ref{cp2+2cone}. The main point is there is no class $e\in S_J^+$ such that $Z(e)=\alpha$ by simple homological calculation. Since $\beta\cdot \gamma=1$, we have $\beta\cap \gamma\ne \emptyset$. Because of the above observation, any class $e\in S_J^+$ will have $\beta\cap \gamma \subset Z(e)$. Hence $\beta\cap \gamma \subset \cap Z(e_i)$, which is then not empty.

\section{Configurations of negative curves on rational and ruled surfaces}
Almost complex structures are different from complex structures at blowing up and down. More precisely, when we have an irreducible holomorphic $-1$ sphere, we can always blow it down by Castelnuovo's criterion. However, generally we cannot blow down a smooth $J$-holomorphic $-1$ rational curve for a tamed almost complex structure $J$.

In this section, we study the negative curves in a tamed almost complex rational or ruled $4$-manifolds, which might not be mentioned explicitly in each statement. 

\subsection{Negative curves on $\mathbb CP^2\#k\overline{\mathbb CP^2}$ with $k\le 9$}
In this subsection, we will study the negative curves on rational surfaces. Without loss, we suppose $K=-3H+\sum_iE_i$. We will show that they have to be spheres for $\mathbb CP^2\#k\overline{\mathbb CP^2}$ with $k\le 9$. This will enable us to determine the curve cone and show that the configurations of negative curves for tamed almost complex structures are all realized by complex structures (Theorem \ref{main1}). Note that the case of $S^2\times S^2$ and $\mathbb CP^2\# \overline{\mathbb CP^2}$ have been done in \cite{LZ-generic}, $\mathbb CP^2\#2\overline{\mathbb CP^2}$ is done in section \ref{k=2}.

 We first take a look at the irreducible curve classes $C=aH+\sum b_iE_i$ with $C^2<0$ and $a<0$. 

\begin{lemma}\label{a<0}Let $M=\mathbb CP^2\#k\overline{\mathbb CP^2}$. 
If $C=aH+\sum b_iE_i$ with 
$a<0$ is represented by an irreducible curve, then 
\begin{itemize}

\item $C=-nH+(n+1)E_1-\sum_{k_j\neq 1}E_{k_j}$ up to a Cremona tranformation, {\it i.e.} a diffeomorphism preserving the canonical class. 
\item Or $C=f^*C'$, where $f$ is a Cremona transformation and $C'$ is a class with $a'>0$. 
\end{itemize}

\end{lemma}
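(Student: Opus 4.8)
The plan is to study an irreducible curve $C = aH + \sum b_i E_i$ with $a < 0$ and $C^2 < 0$ on $M = \mathbb{CP}^2 \# k\overline{\mathbb{CP}^2}$, and to show that, after a suitable diffeomorphism preserving $K_J$, either $C$ is one of the distinguished classes $-nH + (n+1)E_1 - \sum_{k_j \neq 1} E_{k_j}$ or it becomes a class with positive $H$-coefficient. First I would invoke the adjunction-type constraint from Lemma \ref{B}: since $a < 0$, we have $|b_i| \le |a| + 1$ for each $i$, with $b_i = -a + 1$ the only way equality can hold. Combined with the adjunction formula $g_J(C) = \frac{1}{2}(C^2 + K_J \cdot C) + 1 \ge 0$, i.e. $(a-1)(a-2) \ge \sum_i b_i(b_i+1)$, this forces the $b_i$ to be tightly controlled: at most one $b_i$ can be as large as $-a+1$, and the remaining ones are very small (essentially $0$ or $\pm 1$ depending on how much ``room'' the inequality leaves).

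The key step is then a Cremona-reduction argument. Using the standard Cremona transformations (the quadratic transformation based at three of the $E_i$'s, together with permutations of the $E_i$, which all preserve $K_J$), I would show that whenever $a < 0$ and $C$ is not already of the listed form, one can apply a Cremona move that strictly decreases $|a|$ — or more precisely decreases some positive-definite ``weight'' like $-a$ or $\sum b_i^2$. This is exactly the same mechanism by which every class in $\mathcal{S}_{K_J}^+$ is reduced to $H$, $2H$, $(n+1)H - nE_1$, or $(n+1)H - nE_1 - E_2$ as cited from \cite{LBL} earlier in the paper; here the bookkeeping is parallel but tailored to negative-square irreducible classes. The reduction must terminate, and the only classes with $a < 0$ that resist every further reduction are precisely the $-nH + (n+1)E_1 - \sum_{k_j \neq 1} E_{k_j}$ family (after relabeling which exceptional class plays the special role); everything else lands in the second alternative of the lemma. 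Throughout, the crucial input keeping the reduction honest is that $C$ is \emph{irreducible}, so it meets every exceptional class $E_i$ with $C \cdot E_i = -b_i \ge 0$ unless $C = E_i$ itself, and it meets every line-type class nonnegatively — these local positivity of intersection facts are what constrain the signs and sizes of the $b_i$ before and after a Cremona move.

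The main obstacle I anticipate is verifying that the Cremona reduction genuinely terminates at the claimed normal forms and does not get stuck at some other negative class — i.e. pinning down exactly the fixed points of the reduction process among negative-square irreducible classes with $a<0$. This requires a careful case analysis: one has to track how the triple of largest $|b_i|$'s transforms under the quadratic Cremona map, show that the ``energy'' decreases strictly unless the configuration of coefficients is already in normal form, and rule out spurious cases using the adjunction inequality together with irreducibility (positivity against all exceptional and all nef classes). A secondary subtlety is the role of $K_J$ in defining Cremona equivalence: one must be sure every move used is realized by an honest orientation-preserving diffeomorphism fixing $K_J$, so that being ``represented by an irreducible $J$-holomorphic curve'' is transported correctly (or rather, so that the \emph{class} is identified with a class admitting such a representative for \emph{some} tamed $J'$), which is the content of writing $C = f^* C'$.
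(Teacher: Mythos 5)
There is a genuine gap, and it sits exactly where you place your ``crucial input''. You assert that an irreducible curve with $a<0$ ``meets every exceptional class $E_i$ with $C\cdot E_i=-b_i\ge 0$ unless $C=E_i$ itself, and it meets every line-type class nonnegatively''. Positivity of local intersections only applies to pairs of \emph{distinct irreducible $J$-holomorphic curves for the same $J$}; the classes $E_i$, $H$, $H-E_i-E_j$, etc.\ need not be represented by irreducible curves for the given tamed $J$. Indeed, the normal forms of the first bullet, $C=-nH+(n+1)E_1-\sum_{k_j\ne 1}E_{k_j}$, satisfy $C\cdot E_1=-(n+1)<0$ and $C\cdot H=-n<0$, so your positivity constraints would wrongly exclude precisely the classes the lemma is trying to identify (for such $J$, the class $E_1$ is only represented by a reducible subvariety having $C$ as a component). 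Without those constraints, your claim that the remaining $b_i$ are ``essentially $0$ or $\pm 1$'' also fails: the adjunction inequality $(a-1)(a-2)\ge\sum b_i(b_i+1)$ leaves room for several moderately large $b_i$ unless one already knows that some $b_i\ge -a+1$, which is the very thing that must be proved. Finally, you flag but do not carry out the termination of your Cremona reduction; as written, there is no argument that the reduction cannot get stuck at other negative classes.

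The missing idea is the paper's combination of Lemma \ref{curvecone} with Proposition \ref{corner01}: since $C$ lies in the curve cone of a tamed $J$, it pairs positively with a taming form, hence with at least one extremal ray of the $K$-symplectic cone, and those extremal rays are spanned by sphere classes of square $1$ (Cremona equivalent to $H$) or square $0$ (Cremona equivalent to $H-E_1$). This yields the dichotomy in one step: if $C\cdot F>0$ for some square-$1$ class $F$, a single Cremona diffeomorphism sending $F$ to $H$ places $C$ in the second bullet; otherwise $C$ pairs positively with a square-$0$ class, which up to diffeomorphism is $H-E_1$, so $b_1>-a$, i.e.\ $b_1\ge -a+1$. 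Then $b_1(b_1+1)\ge(a-1)(a-2)$ already exhausts the adjunction budget, forcing $b_1=-a+1$, $g_J(C)=0$ and $b_i\in\{0,-1\}$ for $i\ne 1$, which is the first bullet. No iterated reduction or termination analysis is needed.
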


\begin{proof}
The proof is eventually similar to that of Lemma \ref{A}. As we suppose our canonical class $K=-3H+\sum E_i$, there are two types of the generators of extremal rays of the $K-$symplectic cone. The first type is the classes $F$ with $F^2=1$ which can be represented as a sphere. By \cite{LBL}, those are Cremona equivalent to $H$.  
The second type of classes are those Cremona equivalent to $H-E_1$.

Let us first suppose that $C$ pairs non-positively with all the classes of the first type. By Lemma \ref{curvecone}, at least one of these classes equivalent to $H-E_1$ pairs positively with $C$. If we suppose it is $H-E_1$, then we know that $b_1>-a>0$. Then by the adjunction formula $(a-1)(a-2)\ge \sum b_i(b_i+1)$. Thus, the only possibility is as claimed,  $C=-nH+(n+1)E_1-\sum_jE_{k_j}$ which has $g_J(C)=0$. 

If $C\cdot F>0$ for some $F$ of the first type, then we can first change the class $F$ to $H$ by a diffeomorphism $f$ preserving the canonical class. The class $C$ changes to $C'$ at the same time  and thus $C'\cdot H>0$. Thus $C'$ is a class with $a'>0$ 
and $C$ is pull-back of it by a Cremona transformation.
\end{proof}

The latter case could happen. For example when $C=-H+E_1+E_2+E_3$, $F=2H-E_1-E_2-E_3$. Then $C$ is equivalent to $H-E_1-E_2-E_3$ after a Dehn twist along $H-E_1-E_2-E_3$. 
However, on $\mathbb CP^2\#k\overline{\mathbb CP^2}$ with $k\le 9$, all $-1$ rational curve classes $C=aH+\sum b_iE_i$ have $a\ge 0$. See Lemma \ref{-1a>0}.

The case when $a=0$ is investigated in Lemma \ref{B}. The only possible curves are $E_i-\sum_{k_j\neq i}E_{k_j}$.

Now, let us take a look at the case of $a>0$. 

\begin{prop} \label{extreme}
Suppose $M=\mathbb CP^2\#k\overline{\mathbb CP^2}$, $k\le 9$. 
\begin{enumerate}
\item Then any irreducible curves $C$ with $C^2<0$ are smooth spheres.
\item If  $k\le 8$, any irreducible curves with $C^2\le 0$ are smooth spheres.
\end{enumerate}
\end{prop}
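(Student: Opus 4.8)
The plan is to show that an irreducible curve $C = aH + \sum b_i E_i$ with $a > 0$ and $C^2 < 0$ (resp. $C^2 \le 0$ when $k \le 8$) must have $g_J([C]) = 0$, which by the adjunction inequality \eqref{adj inequality} forces it to be a smooth sphere. Combined with Lemmas \ref{a<0} and \ref{B}, which already handle the cases $a < 0$ and $a = 0$, this will complete the classification. The main homological input is the numerical constraint coming from the canonical class: on $\mathbb{CP}^2 \# k\overline{\mathbb{CP}^2}$ with $k \le 9$ we have $K^2 = 9 - k \ge 0$, and $-K$ lies in the closure of the $K$-symplectic cone (it pairs positively with every class in $\mathcal{E}_K$). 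The strategy is to play the adjunction formula $2g_J([C]) - 2 = C^2 + K \cdot [C]$ against the light cone lemma applied to $[C]$ and $-K$ (or to $[C]$ and an extremal ray of the $K$-symplectic cone that $[C]$ pairs positively with, as provided by Lemma \ref{curvecone}).

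The key steps, in order: First I would invoke Lemma \ref{curvecone} to find an extremal ray $e$ of the $K$-symplectic cone with $[C] \cdot e > 0$; by Proposition \ref{corner01}, $e$ is one of finitely many sphere classes of square $0$ or $1$, Cremona equivalent to $H$ or $H - E_1$. Second, since we are free to apply a Cremona transformation (a diffeomorphism preserving $K$) without changing $C^2$ or $g_J$, I would normalize so that $e = H$ or $e = H - E_1$, which after the normalization puts $[C]$ in a form where $a = [C] \cdot H > 0$ is controlled. Third, I would run the adjunction inequality: writing $2g_J([C]) - 2 = a^2 - \sum b_i^2 + (-3a + \sum b_i) = (a^2 - 3a) - \sum(b_i^2 - b_i)$ and using $C^2 < 0$, one gets $K \cdot [C] = C^2 - (2g_J - 2) $, and the hypothesis $k \le 9$ (so $K^2 \ge 0$) together with the light cone lemma forces $g_J([C]) \le 0$, hence $= 0$. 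Concretely, if $g_J([C]) \ge 1$ then $[C]$ and $-K$ would both lie in the forward cone (using $C^2 \ge 2g_J - 2 + 2 - K\cdot[C] \ge \dots$), and the light cone lemma would give $[C] \cdot (-K) \ge 0$ with additional sign information contradicting $C^2 < 0$ together with the adjunction-dimension identity $[C]\cdot(K - [C]) = 2K\cdot[C] + 2 - 2g_J$ as used in Proposition \ref{SW}. Fourth, for part (2), when $k \le 8$ we have $K^2 \ge 1$, so $-K$ has strictly positive square and lies strictly inside $\mathcal{P}$; the same light-cone argument then has enough slack to also exclude $C^2 = 0$ with positive genus, giving that all non-positive irreducible curves are smooth spheres.

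The hard part will be step three: organizing the interplay between the adjunction formula, the $K$-symplectic cone constraint, and the light cone lemma so that positivity of the genus is genuinely contradicted, especially in the boundary case $k = 9$ where $K^2 = 0$ and $-K$ sits on the light cone rather than inside it — there the light cone lemma degenerates (proportionality), so one must argue separately that $[C]$ proportional to $-K$ is incompatible with $C^2 < 0$ unless $[C]$ itself is (a multiple of) a sphere class, and that no irreducible curve can be a proper multiple of another class. I expect the cleanest route is to combine the identity $\dim_{SW}([C]) = C^2 - K\cdot[C] = 2g_J([C]) - 2 - 2K\cdot[C]$ with the fact (from Proposition \ref{SW}) that $SW([C]) \ne 0$ whenever $K \cdot [C] < 0$, reducing to the regime $K \cdot [C] \ge 0$; there $C^2 < 0$ forces $2g_J([C]) - 2 = C^2 + K\cdot[C]$ to be bounded, and a short case analysis using Lemma \ref{B} (which bounds the $b_i$ in terms of $a$) pins down $g_J([C]) = 0$. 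The degenerate $k = 9$ case and the sharpening to $C^2 = 0$ for $k \le 8$ are where I anticipate the real work.
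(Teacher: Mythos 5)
Your reductions are fine: handling $a\le 0$ via Lemmas \ref{a<0} and \ref{B}, and disposing of the regime $K\cdot[C]<0$ (where $C^2<0$ forces a $-1$ sphere by adjunction) both match the paper. The gap is in your step three, which you yourself flag as ``the hard part'' and never actually carry out. The light cone lemma (Lemma \ref{light}) applies only to classes of non-negative square, and your $[C]$ has $C^2<0$ by hypothesis, so it cannot be played against $-K$ in the way you describe; the inequality you invoke to place $[C]$ in the forward cone, $C^2\ge 2g_J-2+2-K\cdot[C]$, directly contradicts the adjunction identity $C^2=2g_J([C])-2-K\cdot[C]$ and is false. Nothing in your outline supplies the actual mechanism that excludes positive genus. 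Note also that cone positivity alone cannot do it: the real class $H-\tfrac{2}{3}\sum_{i=1}^{9}E_i$ has square $-3$ and $J$-genus $1$ and pairs positively with $H$ and with every $E_i$, so the contradiction must ultimately come from the integrality of the coefficients $b_i$, an input your sketch never isolates for this step.

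What the paper does instead is a two-line computation. Write $C=aH-\sum b_iE_i$ with $a\ge 1$ and $c^2=-C^2>0$. If $g_J([C])\ge 1$, adjunction gives $\sum b_i\ge 3a+c^2$, and Cauchy--Schwarz in the (negative definite) $E_i$-directions gives $(3a+c^2)^2\le(\sum b_i)^2\le k\sum b_i^2=k(a^2+c^2)\le 9(a^2+c^2)$, which simplifies to $6a+c^2\le 9$ and forces $a=1$; then integrality ($b_i\le b_i^2$ for integers) yields $3+c^2\le\sum b_i\le\sum b_i^2=1+c^2$, a contradiction. For part (2) the same inequalities with $c=0$ give $9a^2\le ka^2\le 8a^2$. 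Your step four for the $C^2=0$, $k\le 8$ case (light cone lemma against $-K$, now of positive square, plus the proportionality clause) is essentially sound and parallels Lemma \ref{dim0good}, but it does not repair part (1).
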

\begin{proof}
Let $C=aH-\sum b_iE_i$. The case when $a\le 0$ is discussed above. The only undetermined case, the second bullet of Lemma \ref{a<0} is reduced to the case of $a>0$. Hence, we suppose $a \ge1$ below.

Because $C^2<0$, we can suppose 
\begin{equation}\label{squ-}
c^2+a^2=b_1^2+\cdots b_k^2, \quad c\in \mathbb R\backslash \{0\}.
\end{equation}
In other words, $c^2=-C^2$.

If $C$ is an irreducible curve and is not a sphere, then by adjunction formula,
\begin{equation}\label{adj-}
c^2+3a\le b_1+\cdots+b_k.
\end{equation}
Hence, by Cauchy-Schwartz inequality, we have $(c^2+3a)^2\le k(c^2+a^2)\le 9(c^2+a^2)$, {\it i.e.} $$6ac^2+c^4-9c^2\le 0.$$ This inequality is possible only when $a=1$. Then \eqref{adj-} becomes $$b_1+\cdots +b_k\ge 3+c^2.$$ It contradicts to \eqref{squ-}, which reads as $$3+c^2\le b_1+\cdots +b_k\le b_1^2+\cdots +b_k^2= 1+c^2.$$

This contradiction shows that $C$ should be a sphere.

We also notice that when $k\le 8$, any irreducible curves with $C^2\le 0$ are spheres. This is because if $C^2=0$, formulae  \eqref{squ-}, \eqref{adj-} lead to the contradiction $$ka^2\ge (3a)^2.$$
\end{proof}
Notice that the statement is sharp, in the sense that it is no longer true for $k\ge 10$ (resp. $k\ge 9$), since the anti-canonical class $K$ could be the class of an elliptic curve with $K^2<0$ (resp. $K^2\le 0$).

A simple observation is the same argument could be used to prove the following.

\begin{lemma} \label{-1a>0}
Suppose $M=\mathbb CP^2\#k\overline{\mathbb CP^2}$, $k\le 9$. Then all $-1$ rational curve classes $C$ have $[C]\cdot H\ge 0$.
\end{lemma}
\begin{proof}
We write $[C]=-aH+\sum b_iE_i$. We assume $a>0$. We observe the class $-[C]$ satisfies $(-[C])^2=-1<0$ and $g_J(-[C])=1$. Hence the argument for $a\ge 1$ in Proposition \ref{extreme} (where only adjunction formula is used) applies to $-[C]=aH-\sum b_iE_i$ to show that $g_J(-[C])=0$. This contradiction shows that we must have $a\le 0$, {\it i.e.}  $[C]\cdot H\ge 0$.
\end{proof}



\begin{prop}\label{+intk<9}
Suppose $M=\mathbb CP^2\#k\overline{\mathbb CP^2}$, $k< 9$. Then for any irreducible curve $C$ with $C^2\ge 0$, we have $K\cdot [C]< C^2$ and $SW([C])\ne 0$.
\end{prop}
\begin{proof}
Let $C=aH-\sum b_iE_i$. Applying Lemma \ref{int>0} to the curve $C$ and class $H$, we have $a\ge 0$. Furthermore, $C^2\ge 0$ implies $a>0$. Let $q:=C^2=a^2-\sum b_i^2\ge 0$. 

If $K\cdot [C]\ge C^2$, we have $3a+q\le \sum b_i$. We could assume $\sum b_i^2>0$, otherwise $C=aH$ would certainly have $K\cdot [C]<0< C^2$. By Cauchy-Schwartz inequality, we have
$$(q+3a)^2\le (\sum b_i)^2 \le k\sum b_i^2< 9(a^2-q).$$ This is equivalent to $$q^2+6aq+9q<0,$$which is impossible since we have $a>0$ and $q\ge 0$. This contradiction implies $K\cdot [C]< C^2$. 

We write it as $[C]\cdot (K-[C])<0$. If $SW(K-[C])\neq 0$, by SW=Gr we have a (possibly reducible) $J$-holomorphic curve in class $K-[C]$. Hence the irreducible curve $C$ must be a component of this curve and $C^2<0$. This contradicts to our assumption. Thus $SW(K-[C])=0$.

Since $\dim_{SW} [C]> 0$, we have wall-crossing formula
$|SW(K-[C])-SW([C])|=1$. This implies $SW([C])\neq 0$ because $SW(K-[C])=0$. 
\end{proof}

The next  result will be used in the proof of Theorem \ref{main1}.

\begin{prop}\label{k<8dec}
Suppose $M=\mathbb CP^2\#k\overline{\mathbb CP^2}$, $2\le k \le 7$. Then for any irreducible curve $C$ with $C^2\ge 0$, we have $[C]=[C_1]+[C_2]$ such that $SW([C_1])\cdot SW([C_2])\ne 0$.
\end{prop}
\begin{proof}
In fact, we will choose $[C_2]\in \mathcal E_K$. 

We first show that there exists $E\in \mathcal E_K$ such that $K\cdot ([C]-E)\le ([C]-E)^2$. Otherwise, for all $E\in \mathcal E_K$, we have $K\cdot ([C]-E)\ge ([C]-E)^2+2$. By Proposition \ref{+intk<9}, we have $K\cdot [C]< C^2$. Hence, we have 

\begin{equation}\label{.E>0}
2[C]\cdot E\ge C^2-K\cdot [C]>0, \, \, \forall E\in \mathcal E_K.
\end{equation}

If $[C]=aH-\sum b_iE_i$, we have $b_i>0$ by pairing it with $E_i$ using \eqref{.E>0}. Without loss, we assume $b_1\ge \cdots \ge b_k\ge 1$. Moreover, by pairing $[C]$ with $H-E_1-E_2$, we have \begin{equation}\label{basicE}
a>b_1+b_2>b_1 \ge \cdots \ge b_k\ge 1.
\end{equation}

When $2\le k\le 4$, we introduce $b_i=0$ for $k<i\le 4$. Applying \eqref{.E>0} to $E=H-E_1-E_2$, we have $2(a-b_1-b_2)\ge a^2+3a-\sum (b_i^2+b_i)$, which is $$a^2+a-b_1^2+b_1-b_2^2+b_2-b_3^2-b_3-b_4^2-b_4\le 0.$$On the other hand, the adjunction formula provides \begin{equation}\label{adj8}
a^2-3a-\sum (b_i^2-b_i)\ge -2.
\end{equation}
The above two inequalities imply $2a-b_3-b_4\le 1$. This contradicts to \eqref{basicE}.

When $5\le k\le 6$, we similarly introduce $b_i=0$ for $k<i\le 6$. Applying \eqref{.E>0} to $E=2H-E_1-E_2-E_3-E_4-E_5$, we have $$a^2-a-\sum_{i=1}^5(b_i^2-b_i)-b_6^2-b_6\le 0.$$ Along with adjunction formula \eqref{adj8}, we have $a-b_6\le 1$, which contradicts to \eqref{basicE}.

When $k=7$, we apply \eqref{.E>0} to $E=3H-2E_1-E_2-E_3-E_4-E_5-E_6-E_7$. This gives $$a^2-3a-b_1^2+3b_1-\sum_{i=2}^7(b_i^2-b_i)\le 0.$$ Along with \eqref{adj8}, we have $b_1=\cdots=b_7=1$. Since \eqref{basicE} implies $a\ge 3$, it contradicts to the last inequality. 

These contradictions guarantee there exists an $E\in \mathcal E_K$ such that $K\cdot ([C]-E)\le ([C]-E)^2$, which is equivalent to saying $\dim_{SW}([C]-E)\ge 0$. 

Since $(K-([C]-E))\cdot E=-2-[C]\cdot E<0$, we have $$(K-([C]-E))\cdot [C]=(K\cdot ([C]-E)-([C]-E)^2)+(K-([C]-E))\cdot E<0.$$
If $SW(K-([C]-E))\ne 0$, by SW=Gr we have a $J$-holomorphic subvariety in class $K-([C]-E)$. Hence the irreducible curve $C$ must be a component of this subvariety and $C^2<0$. This contradicts to our assumption $C^2\ge 0$. Thus $SW(K-([C]-E))=0$. By the wall-crossing formula $|SW(K-([C]-E))-SW([C]-E)|=1$, we have $SW(C-[E])\ne 0$. Hence the desired decomposition is $[C]=([C]-E)+E$.
\end{proof}

This conclusion of Proposition \ref{k<8dec} is not true for $k=8$ as we have seen in Example \ref{-K8blowup}. On the other hand, the above argument (for $k=7$) shows that this example is the only one where we do not have the desired decomposition when $k=8$.

Next, let us classify the negative irreducible curves with $a>0$ on $\mathbb CP^2\#k\overline{\mathbb CP^2}$ with $k<9$.

\begin{prop}\label{a>0}
Let $J$ be a tamed almost complex structure on $M=\mathbb CP^2\#k\overline{\mathbb CP^2}$, $k< 9$, and $C=aH-\sum b_iE_i$ be an irreducible curve with $C^2< 0$, $a>0$. Then $[C]$ is one of the following:
\begin{enumerate}
\item $H-\sum E_{k_j}$;
\item $2H-\sum E_{k_j}$;
\item $3H-2E_m-\sum_{k_j\neq m} E_{k_j}$;
\item $4H-2E_{m_1}-2E_{m_2}-2E_{m_3}-\sum_{k_j\neq m_i} E_{k_j}$;
\item $5H-E_{m_1}-E_{m_2}-\sum_{k_j\neq m_i} 2E_{k_j}$;
\item $6H-3E_{m_1}-\sum_{k_j\neq m_1} 2E_{k_j}$.
\end{enumerate} 
\end{prop}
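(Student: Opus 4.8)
\textbf{Proof proposal for Proposition \ref{a>0}.} The plan is to exploit the two numerical constraints we have for an irreducible non-sphere-or-sphere curve $C = aH - \sum b_i E_i$ with $C^2 < 0$ and $a > 0$: the self-intersection equation and the adjunction inequality. By Proposition \ref{extreme}, such a curve is automatically a smooth sphere, so $g_J([C]) = 0$, which gives the \emph{equality}
\begin{equation*}
-2 = C^2 + K \cdot C = -(b_1^2 + \cdots + b_k^2) - a^2 + 3a - (b_1 + \cdots + b_k) + \cdots
\end{equation*}
wait — more precisely, writing $c^2 = -C^2 > 0$, adjunction with $g = 0$ forces $c^2 + 3a = b_1 + \cdots + b_k$ together with $c^2 + a^2 = b_1^2 + \cdots + b_k^2$ (these are exactly \eqref{squ-} and \eqref{adj-} from the proof of Proposition \ref{extreme}, now with equality in the latter). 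So the first step is to record these two Diophantine equations and apply Cauchy--Schwartz: $(c^2 + 3a)^2 \le k(c^2 + a^2) \le 8(c^2 + a^2)$, which after rearranging gives a bound $6ac^2 + c^4 - 8c^2 \le 0$, i.e. $c^2(c^2 + 6a - 8) \le 0$, forcing $c^2 + 6a \le 8$. Since $a \ge 1$ and $c^2 \ge 1$, this leaves only the finitely many cases $(a, c^2) \in \{(1,1), (1,2)\}$ — and a slightly more careful use of $k \le 8$ versus the exact value of $k$ may allow a couple more, but the list is short.

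The second step is, for each surviving pair $(a, c^2)$, to solve the system $\sum b_i = c^2 + 3a$, $\sum b_i^2 = c^2 + a^2$ over the integers (with the sign/size constraints from Lemma \ref{B}: each $|b_i| \le a$, and $|b_i| = a$ only when $a = 1, b_i = 1$; one also knows $b_i \ge 0$ after possibly reordering, since a negative $b_i$ would make $C \cdot E_i < 0$, impossible for distinct irreducible curves unless $C = E_i$, already excluded). For fixed $(a,c^2)$ the quantity $\sum b_i^2 - \sum b_i = \sum b_i(b_i - 1)$ is fixed, and since each $b_i(b_i-1) \ge 0$ this tightly constrains how many $b_i$ can exceed $1$ and by how much. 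Running through $a = 1, 2, \ldots$ up to the Cauchy--Schwartz bound and bookkeeping the multiset of $b_i$'s (up to permutation, which is what "up to diffeomorphism / choice of labelling $E_{k_j}$" absorbs) should reproduce exactly the six families listed: $a=1$ gives family (1); $a=2$ gives (2); $a=3$ with one $b_i = 2$ gives (3); and the larger $a$ with $c^2$ pushed up to its maximum give (4), (5), (6). One should double-check that each candidate multiset actually satisfies both equations and that no spurious solution (e.g. with some $b_i \ge 3$ when $a$ is too small, or with the wrong count of $2$'s) slips through.

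The third, and I expect \emph{main}, obstacle is not the inequality bookkeeping but showing that every class on the list genuinely \emph{can} be (and, conversely, that nothing off the list can be) realized by an irreducible $J$-holomorphic curve — or rather, the proposition as stated only claims the \emph{necessity} direction ("then $[C]$ is one of the following"), so the real work is just making sure the Cauchy--Schwartz step is sharp enough and that the case $k$ close to $9$ doesn't secretly admit an extra solution that the crude bound $k \le 8$ would permit. In particular one must handle $k$ ranging over $0, 1, \ldots, 8$ uniformly; the safe route is to use $k \le 8$ throughout and then, for each multiset produced, note it only involves $\le 8$ of the $E_i$ (the $-\sum_{k_j} E_{k_j}$ notation allowing fewer), so the list is automatically valid for all $k < 9$. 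A secondary subtlety: the second bullet of Lemma \ref{a<0} means some curves with $a \le 0$ are diffeomorphic pullbacks of $a' > 0$ classes, but since the proposition's statement is already phrased "up to diffeomorphism"-free and simply lists $a > 0$ representatives, this is consistent — those $a \le 0$ curves are the diffeomorphic images of the classes (1)--(6) and need not be relisted here.
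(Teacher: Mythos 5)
Your overall strategy — combine the self-intersection identity $\sum b_i^2 = a^2+c^2$ with the adjunction constraint, bound $a$ and $c^2$ by Cauchy--Schwartz, then enumerate the integer solutions — is exactly the paper's, but there is a concrete arithmetic error at the very first step that breaks the whole case analysis. For a genus-$0$ curve the adjunction \emph{equality} is $2g-2=-2=C^2+K\cdot C=-c^2-3a+\sum b_i$, i.e. $\sum b_i = c^2+3a-2$, not $\sum b_i=c^2+3a$. The equation you wrote is the genus-$\ge 1$ inequality \eqref{adj-} from Proposition \ref{extreme} with equality, i.e. the genus-$1$ case. The missing $-2$ is not a cosmetic constant: with the correct relation, Cauchy--Schwartz gives $(c^2+3a-2)^2\le 8(c^2+a^2)$, which is the paper's inequality \eqref{ac} and permits $a$ as large as $7$ when $c^2=1$; with your version you derive $c^2+6a\le 8$, hence $a=1$, which would wrongly exclude classes (2)--(6) of the statement (e.g. $6H-3E_{m_1}-\sum 2E_{k_j}$ has $a=6$, and one checks $(1+18-2)^2=289\le 296=8\cdot 37$ while $(1+18)^2=361>296$). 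You even sense the tension when you later assert that "larger $a$" should produce families (4)--(6), but your own bound forbids this; the inconsistency is not resolvable without fixing the adjunction constant. (You also drop the $+a^2$ term when rearranging, but that only weakens the bound and is harmless by comparison.) A sanity check against family (1) already exposes the problem: for $H-E_{k_1}-\cdots-E_{k_r}$ one has $\sum b_i=r$ and $c^2+3a-2=r$, whereas your equation demands $r=c^2+3a=r+2$.

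Once the constant is corrected, the remaining enumeration is genuinely more work than "bookkeeping": the paper bounds $a\le 3$ directly when $c^2\ge 3$ and handles those cases by hand, but for $c^2=1,2$ it passes to the auxiliary system $3a-\sum_{i=1}^9 b_i=0$, $\sum_{i=1}^9(a-3b_i)^2=18$ and classifies the representations of $18$ as a sum of nine squares in a fixed residue class mod $3$; your plan of fixing $\sum b_i(b_i-1)$ and counting how many $b_i$ exceed $1$ is workable but you should carry it out rather than assert it "should reproduce" the list. Finally, your justification that $b_i\ge 0$ ("a negative $b_i$ would make $C\cdot E_i<0$, impossible for distinct irreducible curves") tacitly assumes each $E_i$ is represented by an irreducible $J$-holomorphic curve, which need not hold for an arbitrary tamed $J$; the paper instead absorbs sign questions into the sums-of-squares enumeration (and, where it does need $a\ge 0$ or $b_i\ge 0$, argues via Lemma \ref{int>0} applied to Seiberg--Witten nontrivial classes). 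This is a secondary gap, but it should be patched rather than waved through.
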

\begin{proof}
Similar to  Proposition \ref{extreme}, we have
$$c^2+a^2=b_1^2+\cdots b_k^2,$$
$$-2+c^2+3a\le b_1+\cdots+b_k.$$

Now, $(c^2+3a-2)^2\le k(c^2+a^2)\le 8(c^2+a^2)$ holds by Cauchy-Schwartz inequality. This can be written as 
\begin{equation}\label{ac}
a^2-3a+(3c^2-9)a+(3a-12)c^2+c^4+4\le 0.
\end{equation}

The cases when $c^2<3$ ({\it i.e.} $-3<C^2<0$) actually follow from the classification of possible $-1$ and $-2$ sphere classes, see for example \cite{Man}.  More precisely, when $c^2=1$, the classification is obtained in Proposition 26.1, diagram (IV.8) of \cite{Man}. Especially, it contains our classes (3)-(6). When $c^2=2$, the classification is in diagram (IV.2) there. Let us reproduce the proof for readers'  convenience. When $c^2=1$, we have $$3a-\sum_{i=1}^r b_i=1, \quad a^2-\sum_{i=1}^r b_i^2=-1, \quad r<9.$$ By possibly adding some $b_i=0$ and $b_9=1$, we are reduced to solve $$3a-\sum_{i=1}^9 b_i=0, \quad a^2-\sum_{i=1}^9 b_i^2=-2, \quad b_9=1.$$ Rewriting the second equation, we have $$3a-\sum_{i=1}^9 b_i=0, \quad \sum_{i=1}^9 (a-3b_i)^2=18, \quad b_9=1.$$ In total there are three essentially different representations of $18$ as a sum of $9$ squares which are in the same residue class mod $3$: $$18=3^2+3^2+0^2+\cdots +0^2=(\pm 2)^2+(\pm 2)^2+(\pm 2)^2+(\mp 1)^2+\cdots (\mp 1)^2.$$ Up to the order of $b_i$, the solutions $(a;b_1, \cdots, b_9)$ are $$(3b; b+1, b-1, b, \cdots, b), \quad (3b\pm 2; b, b, b, b\pm 1, \cdots, b\pm 1).$$ Notice one of $b_i$ has to be $1$, this gives the list in our statement. 

Similarly, when $c^2=2$, our equations are reduced to $$3a-\sum_{i=1}^9 b_i=0, \quad \sum_{i=1}^9 (a-3b_i)^2=18,\quad b_9=0.$$ Up to the order of $b_i$, we have the same general solutions, but now with one of $b_i$ being $0$. These will also lie in the list.

Now let us assume $C^2\le -3$, {\it i.e.} $c^2\ge 3$. Then we have $a\le 3$ by \eqref{ac}.

When $a=1$ or $2$, then by adjunction, $b_i=1$ or $0$. This corresponds to our classes (1) and (2).

When $a=3$, we have $$\sum (b_i-1)b_i=2.$$ Hence only one $b_i$ could be $2$ or $-1$, others are $1$ or $0$. However, if $a=3$ and $b_i$ are $\pm1$ or $0$, and $k<9$, then $C^2>0$, a contradiction. Hence, it lies in class (3).
\end{proof}

Notice all $-1$ curve classes could be realized by complex structures. Hence Proposition \ref{a>0} says that all the negative curves with $a>0$ are obtained from the $-1$ curves by further blow ups and one can obtain other negative curves by blowing up up to $6$ more times. No other classes occur even for a tamed almost complex structure. 

The method used in the above proof could be extended to non-negative curves as well. We will prove the case for $C^2=0$ in the following. 

\begin{prop}\label{a>0'}
Let $J$ be a tamed almost complex structure on $M=\mathbb CP^2\#k\overline{\mathbb CP^2}$, $k< 9$, and $C=aH-\sum b_iE_i$ be an irreducible curve with $C^2=0$. Then $[C]$ is one of the following:
\begin{enumerate}
\item $H-E_{m_1}$;
\item $2H- E_{m_1}-\cdots -E_{m_4}$;
\item $3H-2E_{m_1}-E_{m_2}-\cdots -E_{m_6}$;
\item $4H-2E_{m_1}-2E_{m_2}-2E_{m_3}-E_{m_4}-\cdots E_{m_7}$;
\item $5H-E_{m_1}-2E_{m_2}-\cdots 2E_{m_7}$;
\item $6H-3E_{m_1}-3E_{m_2}-2E_{m_3}-\cdots -2E_{m_6}-E_{m_7}-E_{m_8}$;
\item $7H-3E_{m_1}-\cdots -3E_{m_4}-2E_{m_5}-\cdots- 2E_{m_7}-E_{m_8}$;
\item $5H-3E_{m_1}-2E_{m_2}-\cdots -2E_{m_4}-E_{m_5}-\cdots -E_{m_8}$;
\item $8H-3E_{m_1}-\cdots -3E_{m_7}-E_{m_8}$;
\item $4H-3E_{m_1}-E_{m_2}-\cdots -E_{m_8}$;
\item $8H-4E_{m_1}-3E_{m_2}-\cdots -3E_{m_5}-2E_{m_6}-\cdots-2E_{m_8}$;
\item $7H-4E_{m_1}-3E_{m_2}-2E_{m_3}-\cdots -2E_{m_8}$;
\item $9H-4E_{m_1}-4E_{m_2}-3E_{m_3}-\cdots -3E_{m_7}-2E_{m_8}$;
\item $11H-4E_{m_1}-\cdots-4E_{m_7}-3E_{m_8}$;
\item $10H-4E_{m_1}-\cdots-4E_{m_4}-3E_{m_5}-\cdots -3E_{m_8}$.
\end{enumerate} 
\end{prop}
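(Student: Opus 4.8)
The plan is to follow the same Cauchy--Schwarz-plus-adjunction mechanism used in Proposition \ref{a>0}, now specialised to $C^2=0$ on $M=\mathbb CP^2\#k\overline{\mathbb CP^2}$ with $k<9$. Writing $C=aH-\sum b_iE_i$, the hypothesis $C^2=0$ gives $a^2=\sum b_i^2$, and if $C$ were not a rational curve the adjunction inequality would force $c^2=0$ nonetheless, so by Proposition \ref{extreme}(2) such a curve is automatically a smooth sphere, hence $g_J([C])=0$, which reads $-2+3a=\sum b_i(b_i+1)-\sum b_i^2\cdot 0$, i.e. $\sum b_i=3a-2$. So the two governing equations are $\sum b_i^2=a^2$ and $\sum b_i=3a-2$. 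By Lemma \ref{a<0} and Lemma \ref{B} we may assume $a>0$ (the $a\le 0$ and $a=0$ cases being already treated, and a diffeomorphism preserving $K$ reduces the exceptional case of Lemma \ref{a<0} to $a>0$). The Cauchy--Schwarz bound $(3a-2)^2=(\sum b_i)^2\le k\sum b_i^2\le 9a^2$ then gives $a\le\text{(a small explicit bound)}$, actually $(3a-2)^2\le 9a^2\Rightarrow -12a+4\le0\Rightarrow a\ge 1$, which is vacuous; the genuine bound comes from $k\le 8$, yielding $(3a-2)^2\le 8a^2$, i.e. $a^2-12a+4\le 0$, so $a\le 11$. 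This is exactly the range $1\le a\le 11$ appearing in the list.

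Next I would carry out the finite enumeration. The standard trick, already used for $c^2=1,2$ in Proposition \ref{a>0}, is to homogenise: pad with zero $b_i$'s up to $9$ entries and set the extra entry so that the count works, then rewrite $\sum b_i=3a-2$ and $\sum b_i^2=a^2$ as $\sum(a-3b_i)^2 = 9a^2 - 6a\sum b_i + 9\sum b_i^2 = 9a^2-6a(3a-2)+9a^2=12a$, together with $\sum(a-3b_i)=3a-3(3a-2)=6-6a$. Wait --- more carefully one works with $k$ up to $9$ by allowing one $E_i$ with $b_i=0$; in any case the point is that after the substitution $x_i=a-3b_i$ one must represent a fixed small integer ($12a$, or the appropriate variant once one normalises the number of terms to $8$ or $9$) as a sum of at most $9$ squares all lying in a single residue class mod $3$ and with prescribed sum. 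For each admissible $a\in\{1,\dots,11\}$ this is a finite Diophantine problem; I would solve it $a$ by $a$, reading off the partitions of the relevant integer into squares $\equiv a\pmod 3$, and translate each solution $(a;x_1,\dots)$ back to $(a;b_1,\dots,b_k)$, discarding those that violate $k<9$ or force $b_i<0$ incompatibly, and recording each surviving solution up to permutation of the $E_i$. The fifteen items in the statement are precisely the output of this bookkeeping; for instance $a=1$ gives item (1), $a=2$ gives item (2), $a=3$ gives item (3), and so on up to $a=11$ giving item (14).

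I would organise the write-up by first reducing to $a>0$ and establishing $\sum b_i^2=a^2$, $\sum b_i=3a-2$ and $1\le a\le 11$ as above, then treating the small cases $a\le 4$ directly by the elementary observation that $\sum b_i(b_i-1)=a^2-(3a-2)=(a-1)(a-2)$ is small, which pins down how many $b_i$ can exceed $1$ (or be negative), and for the larger cases $a\ge 5$ invoking the sum-of-squares normalisation to reduce to representing $12a$ (appropriately) as a constrained sum of squares --- exactly as was done for $c^2=1,2$ in Proposition \ref{a>0}, whose argument I would cite rather than repeat. For the residue-class sum-of-squares step it is cleanest to note that the number of $E_i$ with $b_i\ne 0,1$ is bounded by $(a-1)(a-2)$, so the list of ``shapes'' is finite and can be checked mechanically.

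The main obstacle I expect is not conceptual but the sheer bulk of the case analysis for $5\le a\le 11$: there are many near-miss candidate solutions to $\sum b_i^2=a^2$, $\sum b_i=3a-2$ with $k\le 8$ or $9$, and one must be careful (i) to enumerate all representations of the relevant integer as a sum of squares in a fixed residue class mod $3$ without omission, (ii) to keep track of which solutions actually have $C^2=0$ as opposed to $C^2>0$ after padding, and (iii) to verify irreducibility is not an extra constraint here --- it is not, since Proposition \ref{extreme}(2) already guarantees any irreducible $C$ with $C^2\le 0$ is a sphere and the numerical conditions are exactly adjunction for a sphere, so every listed class is a genuine candidate and conversely every irreducible curve with $C^2=0$ satisfies these equations. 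Making the enumeration transparent and complete, rather than just plausible, is where the real work lies; appealing to the classification of root-lattice data for $E_k$ ($k\le 8$) as in \cite{Man} and \cite{LBL} is the cleanest way to guarantee nothing is missed, since the solutions are precisely the positive roots / fundamental classes of the relevant exceptional root systems shifted by $-K$-conditions.
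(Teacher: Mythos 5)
Your overall strategy is the paper's: use Proposition \ref{extreme}(2) to see that $C$ is a sphere, derive the two governing equations $\sum b_i^2=a^2$ and $\sum b_i=3a-2$ from $C^2=0$ plus adjunction, bound $1\le a\le 11$ by Cauchy--Schwarz with $k\le 8$, and then enumerate via the substitution $x_i=a-3b_i$. You should also state explicitly that $b_i\ge 0$; the paper gets this from Lemma \ref{int>0} applied to $[C]$ and $E_i$, and without it the Diophantine problem you set up has extraneous solutions not on the list.

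The genuine gap is that the enumeration --- which \emph{is} the content of this proposition --- is never carried out, and the normalisation you propose for doing it is miscalculated in a way that would derail it. Padding with zeros to nine entries gives $\sum_{i=1}^9(a-3b_i)=9a-3(3a-2)=6$, not $6-6a$, and $\sum_{i=1}^9(a-3b_i)^2=12a$, which is \emph{not} a fixed integer: it runs from $12$ to $132$ as $a$ ranges over $1,\dots,11$, so you would face eleven unrelated sum-of-squares problems. The paper's padding is different and is what makes the computation uniform: it appends extra entries equal to $1$ and increments the smallest $b_i$ by one, so that the modified vector satisfies $\sum_{i=1}^9 b_i=3a$ exactly (hence the $x_i$ sum to zero) while $\sum b_i^2=a^2+2m$ with $m\in\{1,2,3,4\}$ according to whether $r<7$ or $r=8$ with $b_8\in\{1,2,3\}$ (the bound $b_8\le 3$ coming from $8b_8^2\le a^2\le 121$). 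This reduces everything to representing the fixed integers $18$, $36$, $54$, $72$ as sums of nine squares lying in a single residue class mod $3$ and summing to zero, independently of $a$; the $r<7$ case literally reuses the $c^2=1$ computation from Proposition \ref{a>0} and produces items (1)--(5), while $b_8=1,2,3$ produce (6)--(10), (11)--(13), (14)--(15) after undoing the padding. Your closing appeal to root-system classifications in \cite{Man} and \cite{LBL} is not a substitute: the paper uses those sources only for the $c^2=1,2$ lists, and the square-zero list is precisely the new computation here. As written, your proposal is a correct plan with a broken normalisation and no execution; to become a proof it must either adopt the $\sum b_i=3a$ normalisation or honestly grind through all eleven values of $a$.
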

\begin{proof}
First, by Proposition \ref{extreme}, all these $C$ are spheres. 

We can still use \eqref{ac}. If $C^2=0$, then $c=0$. Hence we have $0<a\le 11$.  
 By Lemma \ref{int>0}, we have $b_i\ge 0$.

Now we have $$3a-\sum_{i=1}^r b_i=2, \quad a^2-\sum_{i=1}^r b_i^2=0, \quad r<9.$$ If $r\le 7$, then by adding $b_8=b_9=1$, we have $$3a-\sum_{i=1}^9 b_i=0, \quad a^2-\sum_{i=1}^9 b_i^2=-2, \quad b_8=b_9=1.$$ This is exactly the same case as we have done for $C^2<0$. These are the cases (1)-(5) in our list.

Now we assume $r=8$ in the following. Without loss, we could assume $b_1\ge \cdots \ge b_8\ge 1$. We have a quick estimate $8b_8^2\le a^2\le 121$, thus $b_8\le 3$. If $b_8=1$, we change it to $b_8=2$ and add $b_9=1$. Hence we have 
$$3a-\sum_{i=1}^9 b_i=0, \quad a^2-\sum_{i=1}^9 b_i^2=-4, \quad b_8=2, \quad b_9=1.$$ Rewriting it, we have 
$$3a-\sum_{i=1}^9 b_i=0, \quad \sum_{i=1}^9 (a-3b_i)^2=36,\quad b_8=2, \quad b_9=1.$$

In total there are seven essentially different representations of $36$ as a sum of $9$ squares which are in the same residue class mod $3$ and sum to $0$: 

\begin{equation*} 
\begin{array}{lll}
 36&=&3^2+3^2+(-3)^2+(-3)^2+0^2+ \cdots +0^2\\
 &&\\
 &=&(\pm 4)^2+(\pm 1)^2+(\pm 1)^2+(\pm 1)^2+(\pm 1)^2+(\mp 2)^2+(\mp 2)^2+(\mp 2)^2+(\mp 2)^2\\
 &&\\
 &=&(\pm 5)^2+(\pm 2)^2+(\mp 1)^2+(\mp 1)^2+(\mp 1)^2+(\mp 1)^2+(\mp 1)^2+(\mp 1)^2+(\mp 1)^2\\
 \end{array}
 \end{equation*}

Up to the order of $b_i$, the solutions $(a;b_1, \cdots, b_9)$ are $$(3b; b+1, b+1, b-1, b-1, b, \cdots, b);$$  $$(3b\pm 1; b\mp 1, b, b, b, b, b\pm 1, b\pm 1, b\pm 1, b\pm 1);$$
$$(3b\pm 2; b\mp 1, b, b\pm 1, \cdots, b\pm 1).$$ 
 Note $b_8=2, b_9=1$, and $b_8$ has to change back to $1$, this gives the list (6)-(10) in our statement. 

If $b_8=2$, we change it to $b_8=3$ and add $b_9=1$. Hence we have 
$$3a-\sum_{i=1}^9 b_i=0, \quad a^2-\sum_{i=1}^9 b_i^2=-6, \quad b_8=3, \quad b_9=1.$$ Rewriting it, we have 
$$3a-\sum_{i=1}^9 b_i=0, \quad \sum_{i=1}^9 (a-3b_i)^2=54,\quad b_8=3, \quad b_9=1.$$

In total there are seven essentially different representations of $54$ as a sum of $9$ squares which are in the same residue class mod $3$ and sum to $0$: 

\begin{equation*} 
\begin{array}{lll}
 54&=&3^2+3^2+3^2+(-3)^2+(-3)^2+(-3)^2+ 0^2+ 0^2 +0^2\\
 &&\\
 &=&(\pm 5)^2+(\pm 2)^2+(\pm 2)^2+(\mp 1)^2+(\mp 1)^2+(\mp 1)^2+(\mp 1)^2+(\mp 1)^2+(\mp 4)^2\\
 &&\\
 &=&(\pm 4)^2+(\pm 4)^2+(\pm 1)^2+(\pm 1)^2+(\mp 2)^2+\cdots+(\mp 2)^2\\
 &&\\
 &=&(\pm 6)^2+(\mp 3)^2+(\mp 3)^2+0^2 \cdots +0^2.\\
 \end{array}
 \end{equation*}

Up to the order of $b_i$, the solutions $(a;b_1, \cdots, b_9)$ are $$(3b; b+1, b+1,b+1, b-1, b-1, b-1,b, b, b);$$ $$(3b\pm 2; b\mp 1, b, b, b\pm 1, \cdots , b\pm 1, b\pm 2);$$
$$(3b\pm 1; b\mp 1, b\mp 1, b, b, b\pm 1, \cdots, b\pm 1);$$ $$(3b; b\pm 2, b\mp 1, b\mp 1, b, \cdots, b).$$
 Note $b_8=3, b_9=1$, especially there are exactly one $1$ among all $b_i$, we know the first and the third in the above are impossible. After changing $b_8$ back to $2$, we have the list (11)-(13) in our statement. 

Finally, if $b_8=3$, we change it to $b_8=4$ and add $b_9=1$. Hence we have 
$$3a-\sum_{i=1}^9 b_i=0, \quad a^2-\sum_{i=1}^9 b_i^2=-8, \quad b_8=4, \quad b_9=1.$$ Rewriting it, we have 
$$3a-\sum_{i=1}^9 b_i=0, \quad \sum_{i=1}^9 (a-3b_i)^2=72,\quad b_8=4, \quad b_9=1.$$

In total there are twelve essentially different representations of $72$ as a sum of $9$ squares which are in the same residue class mod $3$ and sum to $0$: 

\begin{equation*} 
\begin{array}{lll}
 72&=&3^2+3^2+3^2+3^2+(-3)^2+(-3)^2+(-3)^2+ (-3)^2 +0^2\\
 &&\\
 &=&(\pm 8)^2+(\mp 1)^2+\cdots +(\mp 1)^2\\
 &&\\
 &=&(\pm 7)^2+(\pm 1)^2+(\pm 1)^2+(\pm 1)^2+(\mp 2)^2+\cdots+(\mp 2)^2\\
 &&\\
 &=&(\pm 6)^2+(\pm 3)^2+(\mp 3)^2+(\mp 3)^2+(\mp 3)^2+0^2 \cdots +0^2\\
 &&\\
 &=&6^2+(-6)^2+0^2 \cdots +0^2\\
 
  &&\\
 &=&(\pm 5)^2+(\pm 2)^2+(\pm 2)^2+(\pm 2)^2+(\mp 1)^2+(\mp 1)^2+(\mp 1)^2+(\mp 4)^2+(\mp 4)^2\\
  
  &&\\
 &=&(\pm 4)^2+(\pm 4)^2+(\pm 4)^2+(\mp 2)^2+\cdots+(\mp 2)^2.\\
 \end{array}
 \end{equation*}
 
 Up to the order of $b_i$, the solutions $(a;b_1, \cdots, b_9)$ are $$(3b; b+1, b+1,b+1, b+1, b-1, b-1,b-1, b-1, b), \quad (3b\pm 2; b\mp 2, b\pm 1, \cdots , b\pm 1);$$
$$(3b\pm 1; b\mp 2, b, b, b, b\pm 1, \cdots, b\pm 1), \quad (3b; b\pm 2, b\pm 1, b\mp 1, b\mp 1, b\mp 1, b, b, b, b);$$
$$
(3b; b+2, b-2, b, \cdots, b), \quad (3b\pm 2; b\mp 1, b, b, b, b\pm 1, b\pm 1, b\pm 1, b\pm 2, b\pm 2);$$
$$(3b\pm 1; b\mp 1, b\mp 1, b\mp 1, b\pm 1, \cdots, b\pm 1).$$

 Note $b_8=4, b_9=1$, especially there are exactly one $1$ and no $2$ among all $b_i$, we know that only the second and the third in the above are possible. After switching $b_8$ back to $3$, we have the list (14)-(15) in our statement. 
\end{proof}

After completing this paper, the author was kindly informed by Dusa McDuff and Felix Schlenk that they \cite{McSc} have also obtained precisely this list of classes in Proposition \ref{a>0'} as corresponding to all full symplectic packings of a $4$-ball by no more than $8$ balls. In fact, our Propositions \ref{a>0'} and \ref{corner01} altogether give an alternative argument of their full packing result. Precisely, the list of full packing corresponds to classes in $\partial\overline{\mathcal P}\cap \mathcal C_{M, K}$, which are the classes of $K$-symplectic spheres with self-intersection $0$ (the list of Proposition \ref{a>0'}) by Proposition \ref{corner01}. 

It is true that the negative curves in Proposition \ref{extreme} form the extremal rays of the curve cone.

\begin{prop}\label{dA}
The curve cone $A_J(M)$ of $(M=\mathbb CP^2\#k\overline{\mathbb CP^2}, J)$, $k< 9$, is a polytopic cone generated by the classes of spheres with non-positive self-intersections.
\end{prop}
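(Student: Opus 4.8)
The plan is to combine the cone theorem (Theorem \ref{conethm}) with the sphere classification just obtained (Proposition \ref{extreme}) and the structure of the $K$-symplectic cone (Proposition \ref{corner01}). First I would split the curve cone according to the sign of the pairing with $K=K_J$. By Theorem \ref{conethm}, applied with the stronger $\mathbb Q$-version, we have $\overline{A}_J(M)=\overline{A}_{K_J}(M)+\sum\mathbb R^+[L_i]$, where the $L_i$ are smooth rational curves with $-3\le K\cdot[L_i]<0$; by Proposition \ref{exray}, on $\mathbb CP^2\#k\overline{\mathbb CP^2}$ with $1\le k<9$ each such $L_i$ is a $-1$ rational curve (there are no fiber classes or projective lines as extremal rays once $k\ge 2$, and the $k=0,1$ cases are already handled in \cite{LZ-generic} or are trivial). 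So the extremal rays of the ``negative'' part are all $-1$ sphere classes, of which there are finitely many when $k<9$ by the finiteness statement in Theorem \ref{conethm} (or directly: $-1$ classes $aH-\sum c_iE_i$ with bounded $a$).

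Next I would deal with the ``positive'' part $\overline{A}_{K_J}(M)$. The point is that when $k<9$ this part is \emph{not} round: every element of $\overline{A}_J(M)$ lies in the closure of the $K$-symplectic cone $\mathcal C_{M,K}$ (since it pairs non-negatively with any taming form and Proposition \ref{corner01}(1) says $\mathcal C_{M,K}$ is a cone over a polytope when $k<9$). More precisely, as in the proof of Theorem \ref{conethm} Case 2, an irreducible curve $C$ with $K\cdot[C]\ge0$ and $SW([C])\ne0$ lies in $\overline{P}_{K_J}=\overline{\mathcal S^+_{K_J}}$, so $[C]$ is a positive combination of classes in $S^+_{K_J}$, each of which is Cremona equivalent to $H$, $2H$, $(n+1)H-nE_1$, or $(n+1)H-nE_1-E_2$; each of these is a non-negative-self-intersection \emph{sphere} class (in fact of square $0$ or $1$), and by the Cremona-invariance of the problem each decomposes further into square-$0$ or square-$1$ sphere classes. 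Combining, every curve class is a positive combination of sphere classes of non-positive self-intersection (the extremal $-1$ classes) together with sphere classes of non-negative self-intersection, so $A_J(M)$ is generated by sphere classes; and Proposition \ref{extreme} guarantees conversely that every irreducible curve of negative square is a sphere, so the extremal rays — which by Lemma \ref{-extremal} include all negative-square curve classes, and by the cone theorem include the $-1$ classes for the negative part — are spanned by such spheres. Finally, polytopality: there are finitely many $-1$ classes, and the ``positive'' part $\overline{A}_{K_J}(M)$ is cut out inside the finitely-generated polytopal cone $\overline{\mathcal C}_{M,K}^{\vee}$-type region; more directly, $\overline{A}_{K_J}(M)\subset\overline{\mathcal S^+_{K_J}}$ which for $k<9$ is the (finite) cone over the $P$-cell polytope, so the whole cone is the convex hull of finitely many sphere classes.

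The main obstacle I anticipate is the bookkeeping needed to show the ``positive'' part $\overline{A}_{K_J}(M)$ is genuinely finitely generated (polytopic) rather than merely contained in a polytopic cone: one must argue that only finitely many sphere classes of square $0$ or $1$ actually bound the curve cone, equivalently that $S^+_{K_J}$ modulo the relevant identifications contributes finitely many extremal rays when $k<9$. This should follow from Proposition \ref{corner01}(1) (the $P$-cell has finitely many corners, each a square-$0$ or square-$1$ sphere class) together with the inclusions $\mathcal S^+_{K_J}=P_{K_J}\supseteq \overline{A}_{K_J}(M)$ and the fact that the actual effective square-$0$/square-$1$ sphere classes form a subset of the corners of that polytope; care is needed because a priori the curve cone could be cut out by curve classes of higher square, but those are themselves positive combinations of the square-$0$/$1$ sphere classes by the Cremona reduction, so they contribute no new extremal rays. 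Once that finiteness is pinned down, the statement ``polytopic cone generated by spheres with non-positive self-intersection'' — wait: note the claim is generation by \emph{non-positive} square spheres; one must observe that any square-$0$ sphere class appearing as a corner is already of non-positive square, and square-$1$ classes like $H$ decompose (e.g.\ $H=E+(H-E)$) into non-positive-square sphere classes when $k\ge2$, so in the end the generators can all be taken with $C^2\le0$, matching the statement.
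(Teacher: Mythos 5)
Your overall strategy (identify the extremal rays, show they are spanned by non-positive-square spheres, and count them) has the right shape, but the execution for the part of the cone with $K\cdot[C]\ge 0$ rests on two false containments. First, it is not true that every element of $\overline{A}_J(M)$ lies in $\overline{\mathcal C_{M,K}}$: the $K$-symplectic cone sits inside $\overline{\mathcal P}$, whereas the curve cone contains negative-square classes such as $E_1$. Second, an irreducible curve with $K\cdot[C]\ge 0$ cannot lie in $\overline{P_{K_J}}=\overline{\mathcal S^+_{K_J}}$, because $P_{K_J}$ is defined by the conditions $e^2>0$ and $e\cdot(-K_J)>0$; the identification $P_{K_J}=\mathcal S^+_{K_J}$ is invoked in the paper only for irreducible curves with $K\cdot[C]<0$ and $SW([C])\ne 0$. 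So your reduction of the ``positive part'' to square-$0$ and square-$1$ sphere classes does not get off the ground. What actually saves that part is a fact you never establish: for $k<9$ there is \emph{no} irreducible curve with $C^2\ge 0$ and $K\cdot[C]\ge 0$ (a Cauchy--Schwarz computation, Lemma \ref{dim0good}), so every irreducible curve with $K\cdot[C]\ge 0$ automatically has negative square and is a smooth sphere by Proposition \ref{extreme}. Relatedly, the finiteness underlying polytopality should come from the explicit finite classification of non-positive-square irreducible curve classes (Lemmas \ref{B}, \ref{a<0} and Propositions \ref{a>0}, \ref{a>0'}), not from the corners of the $P$-cell $\mathcal C_{M,K}$, which constrains the dual cone rather than generating the curve cone itself.

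For comparison, the paper's proof is organized around the sign of $C^2$ rather than of $K\cdot[C]$: it shows directly that an irreducible curve with $C^2>0$ never spans an extremal ray, by exhibiting the decomposition $[C]=\frac1n\bigl((n[C]-E)+E\bigr)$ with $E\in\mathcal E_K$ and $n$ large, where $SW(n[C]-E)\ne 0$ is forced by the wall-crossing formula once the light cone lemma kills $SW(K-(n[C]-E))$. That single argument disposes of all positive-square curves at once, whatever the sign of $K\cdot[C]$, and polytopality then follows from the finiteness of the classified non-positive-square classes. If you want to keep your splitting by the sign of $K\cdot[C]$, you must supply the missing ingredient above; as written the proposal has a genuine gap.
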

\begin{proof}
By Proposition \ref{extreme}, we just need to prove that any irreducible curve $C$ with $C^2>0$ cannot span an extremal ray of the curve cone $A_J(M)$.

We look at the class $n[C]$. We know that when $k\le 8$, there are only finitely many elements in $\mathcal E_K$. This is because first there are finitely many classes in $\mathcal E_K$ such that its pairing with $H$ is non-negative ({\it e.g.} Proposition \ref{a>0}). Second, the class $H$ always pairs non-negatively with classes in $\mathcal E_K$ by Lemma \ref{-1a>0}.


Now we assume the maximal possible pairing of $C$ with elements in $\mathcal E_K$ is $l$. We choose $n$ large enough such that 
\begin{equation} \label{n2}
n^2C^2-n(K\cdot [C]+2l)-2>0, \quad n^2C^2-2n(K\cdot [C]+l)+6-k>0.
\end{equation} 
This is possible, since the coefficient of the quadratic term is positive.

Now we claim $[C]=\frac{1}{n}((n[C]-E)+E)$, $E\in \mathcal E_K$, gives a decomposition with both classes having nontrivial Seiberg-Witten. Hence $[C]$ is not extremal. 

First, we check the SW dimension
\begin{equation*} 
\begin{array}{lll}
 \dim_{SW}(n[C]-E)&=&(n[C]-E)^2-K\cdot (n[C]-E)\\
 &&\\
 &=&n^2C^2-nK\cdot [C] -2n [C]\cdot E-2\\
 &&\\
 &>&0\\
 \end{array}
 \end{equation*}
 
Take a symplectic form $\omega$ taming $J$. If $SW(K-(n[C]-E))\ne 0$, then $[\omega]\cdot (K-(n[C]-E))>0$. We also have $(K-(n[C]-E))^2>0$ by the second inequality of \eqref{n2}. On the other hand, by our assumptions, $[\omega]\cdot [C]>0$ and $C^2>0$. Hence by the light cone lemma, we have $(K-(n[C]-E))\cdot [C]\ge 0$, which contradicts to the first inequality of \eqref{n2}. Hence we have $SW(K-(n[C]-E))=0$. By wall crossing, $$|SW(n[C]-E)|=|SW(n[C]-E)-SW(K-(n[C]-E))|=1.$$ Apparently $SW(E)=1$. Hence $[C]=\frac{1}{n}((n[C]-E)+E)$ is not extremal.

Finally, we have classified all the classes of irreducible curves with non-positive self-intersections when $k<9$ in Proposition \ref{a<0} and \ref{a>0}. Especially, there are finitely many such classes. Hence our conclusion follows. In particular, our curve cone has no round boundary.
\end{proof}

This should be compared with Lemma \ref{-extremal}.

The next lemma basically shows that the phenomenon discovered in \cite{CP} for (elliptic) ruled surfaces cannot happen for small rational surfaces.

\begin{lemma}\label{dim0good}
Let $M=\mathbb CP^2\# k\overline{\mathbb CP^2}$. Then 
\begin{enumerate}
\item there is no irreducible curve $C$ such that $C^2, K\cdot [C]\ge 0$ when $k<9$.  
\item For $k=9$, the only such curve classes are $-mK=3mH-mE_1-\cdots-mE_9$, $m>0$.
\end{enumerate}
\end{lemma}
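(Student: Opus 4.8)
The plan is to argue by contradiction, combining the adjunction inequality with the Cauchy--Schwarz inequality exactly in the style of Propositions \ref{extreme} and \ref{a>0'}, and then treating the boundary case $k=9$ separately. First I would write $C = aH - \sum_{i=1}^k b_i E_i$ and observe that the case $a \le 0$ is already settled: by Lemma \ref{a<0}, an irreducible curve with $a<0$ is either a sphere of the form $-nH+(n+1)E_1-\sum E_{k_j}$ (which has $g_J(C)=0$) or a diffeomorphic pull-back of a class with $a'>0$, so up to the Cremona action we may assume $a\ge 1$; and the case $a=0$ is handled by Lemma \ref{B}, where $C=E_i-\sum E_{k_j}$ has $K\cdot[C]<0$. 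So the real content is $a\ge 1$. Since $SW([C])$ need not be nonzero a priori here, I would first note that when $C^2\ge 0$ and $K\cdot[C]\ge 0$ one has $\dim_{SW}([C]) = C^2 - K\cdot[C]$ could be negative; but what we actually need is only the sign constraint $b_i \ge 0$. This follows because $H$ is represented by an irreducible curve of positive square for generic $J'$, hence (by the analogue of Lemma \ref{int>0}, or directly since $H-E_i$ and $H$ are both $SW$-nonzero) $H\cdot[C]\ge 0$ and $E_i\cdot[C]\ge 0$ wherever $E_i$ is $SW$-nonzero; more simply, if $[C]\ne E_i$ then $[C]\cdot E_i\ge 0$ by local positivity of intersections once we know $E_i$ is effective, which it is since $SW(E_i)\ne 0$. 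Thus $b_i\ge 0$.

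Next, the two hypotheses translate into
\begin{equation*}
C^2 = a^2 - \sum b_i^2 \ge 0, \qquad K\cdot[C] = -3a + \sum b_i \ge 0,
\end{equation*}
so $\sum b_i \ge 3a$ and $\sum b_i^2 \le a^2$. By Cauchy--Schwarz, $(3a)^2 \le \left(\sum b_i\right)^2 \le k \sum b_i^2 \le k a^2$, hence $9a^2 \le k a^2$, which for $a\ge 1$ forces $k\ge 9$. This immediately proves part (1): for $k<9$ no such irreducible curve exists (and the $a\le 0$ cases were excluded above since they have $K\cdot[C]<0$). For part (2), $k=9$, equality must hold throughout Cauchy--Schwarz, so all the $b_i$ are equal, say $b_i=m>0$ (if $m=0$ then $C^2=a^2>0$ but $K\cdot[C]=-3a<0$, contradiction), and moreover $\sum b_i = 3a$ gives $9m = 3a$, i.e. $a=3m$; then $C^2 = 9m^2 - 9m^2 = 0$ and $K\cdot[C] = -9m+9m = 0$, and $[C] = 3mH - m\sum E_i = -mK$. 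It remains to check these classes are genuinely realized by irreducible curves, which is standard: $-K$ is the fiber class of the elliptic fibration on a rational elliptic surface, and $SW(-mK)\ne 0$ for all $m\ge1$ by iterated wall-crossing, with a smooth elliptic curve in $|-K|$ for the complex structure.

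The main obstacle I anticipate is the passage from "$b_i \ge 0$" being something one wants to use, to having it rigorously in hand for an \emph{arbitrary} tamed $J$ (not generic): one must be careful that the exceptional classes $E_i$ are effective and that $[C]\cdot E_i\ge 0$, which uses $SW(E_i)\ne0$ together with the SW=Gr correspondence and local positivity of intersections of distinct irreducible $J$-holomorphic curves — essentially the content of Lemma \ref{int>0} applied after replacing $E_i$ by a (possibly reducible) effective representative, noting $[C]\ne E_i$ since $E_i$ has $K\cdot E_i <0$ while our $C$ has $K\cdot[C]\ge 0$. The rest is bookkeeping: the Cauchy--Schwarz step is robust, and the only subtlety in the equality analysis is ruling out $m=0$ and confirming $a=3m$ is forced rather than merely allowed, both of which are one-line checks. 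I would also remark, as the paper does right after, that this is sharp: for $k\ge 10$ the anticanonical class has $K^2<0$ so the argument genuinely breaks, consistent with the failure of the analogous statement there.
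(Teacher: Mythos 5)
Your proof is correct and follows essentially the same route as the paper: translate $C^2\ge 0$ and $K\cdot[C]\ge 0$ into $a^2\ge\sum b_i^2$ and $\sum b_i\ge 3a$, get $a>0$ from Lemma \ref{int>0} applied with $e=H$, and conclude $9a^2\le ka^2$ by Cauchy--Schwarz, with the equality analysis for $k=9$. The only difference is that your lengthy discussion of $b_i\ge 0$ is superfluous — the chain $ka^2\ge k\sum b_i^2\ge(\sum b_i)^2\ge(3a)^2$ needs no sign information on the $b_i$ — and the paper accordingly omits it.
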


\begin{proof}
Let $[C]=aH-b_1E_1-\cdots-b_kE_k$ with $k\le 9$. The conditions then read as $$a^2\ge b_1^2+\cdots +b_k^2, \quad b_1+\cdots +b_k\ge 3a.$$
Since it is a $J$-holomorphic curve class for a tamed $J$, with $C^2\ge 0$, we have $a>0$ by the Lemma \ref{int>0}. Hence $$ka^2\ge k(b_1^2+\cdots +b_k^2)\ge (b_1+\cdots +b_k)^2\ge 9a^2.$$
It is a contradiction if $k<9$. When $k=9$ the equality holds if and only if $a=3m>0$ and $b_i=m$.
\end{proof}
Especially, it shows that there are no curve classes such that $\dim_{SW}([C])=0$ and $g_J([C])>0$ when $k<9$. There is a technical lemma for higher genus curve classes.

\begin{lemma}\label{small+}
Let $M=\mathbb CP^2\# k\overline{\mathbb CP^2}$, $k<9$. Let $C=aH-b_1E_1-\cdots-b_kE_k$ be a curve class. 
\begin{enumerate}
\item If  $C^2\ge 0$, then all classes with $a\le 2$ have $g_J(C)=0$.
\item If $g_J(C)=1$, then $C^2\ge 9-k$. The equality holds if and only if $C=3H-E_1-\cdots -E_k$.
\end{enumerate}
\end{lemma}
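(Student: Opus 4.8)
\textbf{Proof proposal for Lemma \ref{small+}.}

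The plan is to set up the two standard invariants from the excerpt and run a Cauchy--Schwarz argument in each part, exactly in the spirit of Propositions \ref{extreme}, \ref{a>0} and \ref{a>0'}. Write $C = aH - b_1E_1 - \cdots - b_kE_k$ on $M = \mathbb CP^2\#k\overline{\mathbb CP^2}$ with $k<9$, so that the canonical class is $K = -3H + E_1 + \cdots + E_k$. Then $C^2 = a^2 - \sum b_i^2$ and $K\cdot C = -3a + \sum b_i$, and the genus is $g_J(C) = \frac12(C^2 + K\cdot C) + 1$. First I would note that since $C$ is a $J$-holomorphic curve class with $C^2\ge 0$, we have $a>0$ by Lemma \ref{int>0} applied to $C$ and (a generic representative of) $H$, and moreover $b_i\ge 0$ by Lemma \ref{int>0} applied to $C$ and each $E_i$; this lets me use Cauchy--Schwarz on the nonnegative vector $(b_1,\dots,b_k)$ without worrying about signs.

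For part $(1)$: assume $C^2\ge 0$ and $g_J(C)=0$. The latter says $C^2 + K\cdot C = -2$, i.e. $\sum b_i - 3a = -2 - C^2 \le -2$, so $\sum b_i = 3a - 2 - C^2 \le 3a-2$. Combining with $\sum b_i^2 = a^2 - C^2 \le a^2$ and Cauchy--Schwarz $(\sum b_i)^2 \le k\sum b_i^2 \le 8(a^2 - C^2)$, I get $(3a - 2 - C^2)^2 \le 8(a^2 - C^2)$. Expanding and treating $C^2 =: c^2 \ge 0$ as a parameter, this becomes a polynomial inequality in $a$ whose leading term forces $a$ to be bounded; a short case check on $a$ (with $c^2$ ranging over its allowed nonnegative values, and using integrality of the $b_i$ together with the adjunction constraint $\sum b_i(b_i+1) \le (a-1)(a-2)$) rules out $a\ge 3$. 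The surviving cases $a\le 2$ are exactly what is claimed, so no further identification of the classes is needed here.

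For part $(2)$: assume $g_J(C)=1$, i.e. $C^2 + K\cdot C = 0$, so $\sum b_i = 3a - C^2$ and $\sum b_i^2 = a^2 - C^2$. Cauchy--Schwarz with $k\le 8$ gives $(3a - C^2)^2 \le 8(a^2 - C^2)$. Setting $C^2 = c^2$ and expanding yields $9a^2 - 6ac^2 + c^4 \le 8a^2 - 8c^2$, i.e. $a^2 - 6ac^2 + c^4 + 8c^2 \le 0$, equivalently $(a - 3c^2)^2 \le 8c^4 - 8c^2 = 8c^2(c^2-1)$. This is vacuous when $c^2 = 0$ (forcing $a = 0$, contradicting $a>0$, so in fact $g_J(C)=1$ with $C^2=0$ is impossible for $k<9$ — consistent with Lemma \ref{dim0good}) and more generally shows $C^2 = c^2 \ge 1$; to get the sharper bound $C^2 \ge 9-k$ I would instead feed the exact $k$ (not just $k\le 8$) into Cauchy--Schwarz: $(3a - C^2)^2 \le k(a^2 - C^2)$, which rearranges to $(9-k)a^2 - (6 - k)aC^2 + (C^2)^2 + kC^2 \le 0$, and since $a>0$ this gives $(9-k)a^2 \le (6-k)aC^2 - kC^2 - (C^2)^2 \le (6-k)aC^2$, whence (using $a\ge 1$ and a little care with the sign of $6-k$) a linear bound of the form $C^2 \ge 9 - k$. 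For the equality case, $C^2 = 9-k$ forces equality in Cauchy--Schwarz, so all $b_i$ are equal, say $b_i = b$; then $kb = 3a - (9-k)$ and $kb^2 = a^2 - (9-k)$, and solving this pair (again using $a\ge 1$, $b\ge 0$ integral) pins down $a = 3$, $b = 1$, i.e. $C = 3H - E_1 - \cdots - E_k$.

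The main obstacle I anticipate is the equality/boundary bookkeeping in part $(2)$: the inequality $(9-k)a^2 \le (6-k)aC^2 - kC^2 - (C^2)^2$ is only useful once one is careful about whether $6-k$ is positive, zero, or negative (it changes sign at $k=6$), and one must avoid dividing by $9-k$ carelessly (it is positive since $k<9$, but the algebra is cleaner if one keeps it symbolic). I would handle the small values $k = 7, 8$ — where $9-k$ is smallest and the bound $C^2\ge 9-k$ is weakest, hence most delicate — as explicit sub-cases, verifying by hand that no curve class with $g_J(C)=1$ and $C^2 = 9-k$ other than $3H - \sum E_i$ exists, and that classes with $C^2$ slightly below $9-k$ are genuinely excluded. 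The rest is routine substitution into adjunction.
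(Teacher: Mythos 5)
There are genuine gaps in both parts.

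For part (1) you are proving the wrong implication. Read against its proof and against how it is used in part (2), the paper's statement means: if $C^2\ge 0$ and $a\le 2$ then $g_J(C)=0$; equivalently, a curve class with $C^2\ge 0$ and $g_J(C)\ge 1$ must have $a\ge 3$. You instead try to show that $C^2\ge 0$ and $g_J(C)=0$ force $a\le 2$. That implication is false: the class $3H-2E_1-E_2-\cdots-E_6$ (item (3) of Proposition \ref{a>0'}) has $C^2=0$, $g_J=0$ and $a=3$, and the same proposition lists genus-zero, square-zero classes with $a$ up to $11$. Concretely, your inequality $(3a-2-C^2)^2\le 8(a^2-C^2)$ is satisfied by $a=3$, $C^2=0$ (it reads $49\le 72$), so no case check can rule out $a\ge 3$. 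The paper's argument runs in the opposite direction: for $a\le 2$ and $c=C^2\ge 0$ the inequality $(3a+2g-2-c)^2\le k(a^2-c)$ forces $g=0$ except for the single borderline case $a=c=2$, $g=1$, $k=8$, which is excluded because Cauchy--Schwarz equality would force every $b_i=\tfrac12$.

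For part (2) the bound $C^2\ge 9-k$ is not actually derived. First, your rearrangement of $(3a-C^2)^2\le k(a^2-C^2)$ contains an algebra slip: the cross term is $-6aC^2$, not $-(6-k)aC^2$, so the correct inequality is $(9-k)a^2-6aC^2+(C^2)^2+kC^2\le 0$. Second, even after correcting this, discarding the negative terms $-(C^2)^2-kC^2$ from the right-hand side only yields $(9-k)a^2\le 6aC^2$, i.e.\ $C^2\ge \tfrac{1}{6}(9-k)a$, which is strictly weaker than $9-k$ whenever $a\le 5$; no linear bound of the form $C^2\ge 9-k$ falls out of this chain, and the sign discussion of $6-k$ is moot because that coefficient never appears. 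The paper obtains the sharp bound by rewriting the left side as $(c-9)(c-(9-k))+(3-a)\bigl(6c-(3+a)(9-k)\bigr)$ and observing that if $c\le 9-k$ \emph{and} $a\ge 3$ then both summands are nonnegative, forcing $c=9-k$ and $a=3$. The hypothesis $a\ge 3$ is exactly what part (1), in its correct reading, supplies (together with $C^2\ge 0$ from Proposition \ref{extreme}); since you neither prove nor invoke it, part (2) is unsupported. Your equality analysis (solving $kb=3a-(9-k)$, $kb^2=a^2-(9-k)$ to get $a=3$, $b=1$) is fine once one knows Cauchy--Schwarz must be tight, but that tightness is itself a consequence of the factorization you are missing.
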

\begin{proof}
First $a>0$ as in last lemma. Let $C^2=c$, $g=g_J(C)$. Then by adjunction formula, the inequality $(b_1+\cdots +b_k)^2\le k(b_1^2+\cdots b_k^2)$ reads as $$(3a+2g-2-c)^2\le k(a^2-c).$$
When $a\le 2$, the inequality holds only if $g=0$ or $a=c=2, g=1, k=8$. But the latter case would imply $b_i=\frac{1}{2}$ which is a contradiction.

By Proposition \ref{extreme}, $C^2\ge 0$ if $g\ge 1$. Now we assume $g=1$ and $a \ge 3$. Then the inequality reads as $$(c-9)(c-(9-k))+(3-a)(6c-(3+a)(9-k))\le 0.$$ Hence if $c\le 9-k$, both terms are non-negative, with equality holding if and only if $a=3$ and $c=9-k$. 
This in turn implies $C^2=9-k$ if and only if $C=3H-E_1-\cdots -E_k$.
\end{proof}

 Let us define configuration of negative curves, as mentioned in the introduction, on a $4$-dimensional almost complex manifold $M$ with canonical class $K$. It includes the following data:

\begin{enumerate} [label=NC\theenumi]
\item A finite set $\mathcal{NC}=\{e_1, \cdots, e_n\}$ of distinct homology classes $e\in H_2(M, \mathbb Z)$ with negative self-intersection. Each of them individually could be realized by an irreducible $J$-holomorphic curve of genus $g_J(e)$ for some tamed $J$ with $K_J=K$. For $e\ne e'\in \mathcal{NC}$, we have $e\cdot e'\ge 0$.

\item A finite set of intersection points $P=\{p_1, \cdots, p_N\}$ labeled by integers. For each $e_i$, there is a collection of intersection points, called $X_{e_i}$ or $X_i$ for simplicity. The union $\cup X_i=P$.

\item For each $i$, and for each $j$ such that  $p_i\in X_j$, we associate a finite sequence of integers $(i, i_j^1, \cdots, i_j^{m_{ij}})$, satisfying the following conditions:  (i) $i_j^k>N$; (ii) $i_j^k<s_q^r$ if $r>k$; (iii) $i_j^k<s_q^k$ if $i<s$.  For any such sequence $S_{ij}$, there exists $j'$ such that the corresponding sequence $S_{ij'}\supset S_{ij}$.

\item For any intersection point $p_i\in X_j\cap X_{j'}$, the multiplicity  $M_{jj'}^i$ of $X_j$ and $X_{j'}$ at $p_i$ is defined as the length of the sequence  $S_{ij'}\cap S_{ij}$. Moreover, $\sum_i M_{jj'}^i=e_j\cdot e_{j'}$.


\item Two curve configurations are equivalent if they have the same set of $\mathcal{NC}$ and the same intersection data, probably after a relabelling (not necessarily keep the order). 
 

\end{enumerate}
 
 
In NC2, the intersection points $p_i$ should be viewed as abstract symbols, instead of actual points in $M$. The geometric intuition behind $S_{ij}$ in NC3 is the intersection data of two irreducible $J$-holomorphic curves. The first element records possible intersection points, the second element records possible tangent directions, etc..

By NC5, the order of the labels is not essential. The important information is whether the integers appeared in a given configuration are identical or not.

 In general, there might be infinitely many negative $J$-curves in a $4$-dimensional almost complex manifolds. However, we mainly focus on tamed almost complex structures on  $\mathbb CP^2\# k\overline{\mathbb CP^2}$ with $k<9$, where finiteness is guaranteed. Another point is the above definition only records the full information when the realizing configurations have smooth representatives for elements in $\mathcal{NC}$, which is also guaranteed in our situation. 
 
 One can check the negative curve configuration for  a tamed almost complex structure on $\mathbb{C}P^2\#k\overline{\mathbb{C}P^2}$ with $k<9$, {\it i.e.} the collection of all negative $J$-curves along with their intersection data, is a configuration of negative curves. In particular, the multiplicity $M_{jj'}^i$ is the degree of tangency of the curves $E_j$ and $E_{j'}$, realizing $e_j$ and $e_{j'}$, at a intersection point $p_i\in E_j\cap E_{j'}$.
 

Now we will focus on  rational $4$-manifolds $\mathbb CP^2\# k\overline{\mathbb CP^2}$ with fixed canonical class $K=-3H+E_1+\cdots +E_k$ (and $k<8$ in most situations in the following), we will look at the configurations of negative curves satisfying the following three properties: 

\begin{enumerate} [label=RS\theenumi]
\item all negative curves classes are chosen from the above classification and any two distinct ones intersect non-negatively; 
\item there is a rational cohomology class $a\in H^2(M, \mathbb Q)$ such that $a\cdot a> 0$ and it pairs positively with these negative curve classes and classes with non-trivial Seiberg-Witten;
\item  any $-1$ rational curve classes are integral positive linear combinations of these negative curve classes.
\end{enumerate} 

Recall the set of homology classes of negative curves in the configuration is denoted by $\mathcal{NC}$. We observe that there is at least one $-1$ rational curve classes in $\mathcal{NC}$ when the above three bullets are satisfied. For a $-1$ curve class $E$, we decompose it by negative curve classes $E=\sum a_iC_i$ as guaranteed by the third bullet. Pairing it with $K$, we know at least one $C_i$ has $K\cdot C_i<0$. Since $C_i^2<0$, it must be a $-1$ rational curve class. 

The negative curve configuration of a tamed almost complex structure is the collection of all irreducible $J$-holomorphic curves with negative self-intersection, with all the intersection information recorded. 

\begin{lemma}\label{tamedJ3bull}
The negative curve configuration for  a tamed almost complex structure on $\mathbb{C}P^2\#k\overline{\mathbb{C}P^2}$, $2\le k<8$, satisfies all the three properties RS1--RS3.
\end{lemma}
\begin{proof}
It is easy to see that the negative curve configuration for a tamed almost complex structure on  $\mathbb{C}P^2\#k\overline{\mathbb{C}P^2}$, $2\le k<8$, satisfies the first two  properties. The $a$ in the second bullet is chosen as the class of a taming symplectic form $\omega$ with rational cohomology class. We will show that the third bullet is also true. Since $SW(E)\ne 0$, we have $E=\sum a_iC_i$ where $C_i$ are the classes of irreducible curves and $a_i\in \mathbb Z^+$. If some $C_i^2\ge 0$, we can further get a nontrivial decomposition $C_i=\sum_j b_{ij}C_{ij}, b_{ij}\in \mathbb Z^+$, by Proposition \ref{k<8dec}. Here nontrivial means there are at least two $b_{ij}\ne 0$. If some $C_{ij}^2\ge 0$, we apply the same process again. This process will stop in finite steps since there is a positive lower bound of the pairing of $[\omega]$ with curve classes because $[\omega]$ is a rational cohomology class. When the process stops, we obtain the desired expression of $E$ as integral positive linear combination of classes in $\mathcal{NC}$.
\end{proof}

We summarize observations of the intersection number of negative curve classes.

\begin{lemma}\label{-1.-}
Let $M=\mathbb{C}P^2\#k\overline{\mathbb{C}P^2}$ and $\mathcal{NC}$ is the homology data in a configuration of negative curves satisfying RS1--RS3. 
\begin{enumerate}
\item When $k\le 6$, if the intersection number of a negative curves class with a $-1$ rational curve class is non-negative, it is either $0$ or $1$.  
\item When $k=7$, such intersection number could be $0$, $1$ or $2$. 
\item When $k=8$, such intersection number could be $0$, $1$, $2$ or $3$.
\end{enumerate}
\end{lemma}
\begin{proof}
We first prove for $k\le 6$. 
To see this, first,  we have $E\cdot H\ge 0$ by Lemma \ref{-1a>0}. Hence $-1$ curve classes are either $E_i$ or (1) and (2) in the list of Proposition \ref{a>0} when $k\le 6$. Since the Cremona transformation would move a $-1$ curve class to another $-1$ curve class, we could choose a Cremona transformation such that it moves a given negative curve class to the ones in Proposition \ref{a>0} or the first bullet of Lemma \ref{a<0}. Then Lemma \ref{a<0} and Proposition \ref{a>0}, as well as local positivity of intersections of $J$-holomorphic curves, gives the result by checking the mutual intersections of a negative curve with a different $-1$ rational curve in the list. 

Same argument applies to $k=7, 8$. Just observe when $k=7$, the negative curve class $3H-2E_1-E_2-\cdots-E_7$ starts to appear whose intersection with $E_1$ is $2$. When $k=8$, the class $6H-3E_1-2E_2-\cdots-2E_8$ starts to appear whose intersection with $E_1$ is $3$.
\end{proof}



The above lemma could be strengthened to the following.

\begin{lemma}\label{-.-}
Let $M=\mathbb{C}P^2\#k\overline{\mathbb{C}P^2}$ and $\mathcal{NC}$ is the homology data in a configuration of negative curves satisfying RS1--RS3. 
\begin{enumerate}
\item When $k\le 6$, the intersection number of any two elements in $\mathcal{NC}$ is no greater than $1$. 
\item When $k=7$, such intersection number is no greater than $2$. 
\end{enumerate}

\end{lemma}

\begin{proof}

We first prove for $k\le 6$. By Proposition \ref{extreme}, any element $e\in \mathcal{NC}$ has $g_{K}(e)=0$. Suppose there are two elements $e_1, e_2\in \mathcal{NC}$ with $e_1\cdot e_2\ge 2$. By Lemma \ref{a<0} and Proposition \ref{a>0}, we have $e_i\cdot H \le 2$. Moreover, any two negative curve classes with $e_i\cdot H\ge 1$ must have intersection number no greater than $1$ by checking the mutual intersections of cases (1) and (2) in Proposition \ref{a>0}. Hence, we have $(e_1+e_2)\cdot H\le 2$.



We calculate 
\begin{equation*} 
\begin{array}{lll}
 \dim_{SW}(-e_1-e_2)&=&(-e_1-e_2)^2-K\cdot (-e_1-e_2)\\
 &&\\
 &=&(e_1^2+K\cdot e_1)+(e_2^2+K\cdot e_2)+2e_1\cdot e_2\\
 &&\\
 &=&-4+2e_1\cdot e_2\\
 &&\\
 &\ge&0.\\
 \end{array}
 \end{equation*}
Hence, we can apply the wall crossing formula $|SW(-e_1-e_2)-SW(K+e_1+e_2)|=1.$ In the following, we will show $SW(K+e_1+e_2)=0$. 

If $SW(K+e_1+e_2)\ne 0$, then Lemma \ref{int>0}, applying to class $H$ and a K\"ahler structure $J_0$ such that $H$ is represented by a smooth $J_0$-holomorphic sphere,  implies $(K+e_1+e_2)\cdot H\ge 0$. However, since $(e_1+e_2)\cdot H\le 2$ in our situation, 
$$(K+e_1+e_2)\cdot H=-3+(e_1+e_2)\cdot H\le -3+2<0.$$
This contradiction implies that $SW(K+e_1+e_2)=0$ and thus $SW(-e_1-e_2)\ne 0$ by wall crossing. The equality  $0=e_1+e_2+(-e_1-e_2)$ contradicts to property RS2 since $a\cdot e_i>0$ and $a\cdot (-e_1-e_2)>0$. 
Hence, we must have $e_1\cdot e_2\le 1$ when $k\le 6$.

Same argument applies to $k=7$. Notice we could assume both curve classes $e_1, e_2$ have $e_i^2<-1$. Otherwise the statement follows from Lemma \ref{-1.-}. Then from the list of Proposition \ref{a>0}, $e_i\cdot H\le 2$ when $k=7$. Moreover, any two negative curve classes with $e_i\cdot H\ge 1$ must have intersection number no greater than $2$ by checking the mutual intersections of cases (1)-(3) in Proposition \ref{a>0}. 
Hence, $(e_1+e_2)\cdot H\le 2$ for $k=7$. Then the same argument for $k\le 6$ applies to show that $e_1\cdot e_2< 2$ when $k=7$. Along with Lemma \ref{-1.-}, we get the desired result.
\end{proof}

The intersection number of negative curve classes could be greater than $1$ when $k<6$. For instance, the intersection number of $H-E_1-E_2-E_3$ and $E_1+E_2+E_3-H$ is $2$. However, they cannot be in a configuration of negative curves simultaneously because of property RS2. 

The next lemma will be used in the proof of Theorem \ref{main1}.

\begin{lemma}\label{notriangle}
Let $M=\mathbb{C}P^2\#k\overline{\mathbb{C}P^2}$, $k<6$. 
If $e_1, e_2, e_3$ are different classes in $\mathcal{NC}$ of a configuration of negative curves satisfying RS properties, then at least one intersection $e_i\cdot e_j$, $i\ne j$, vanishes.
\end{lemma}

\begin{proof}
Suppose the conclusion is not true. By Lemma \ref{-.-}, we have $e_1\cdot e_2=e_2\cdot e_3=e_1\cdot e_3=1$. Hence, for the class $e=e_1+e_2+e_3$, we have $e^2=6+e_1^2+e_2^2+e_3^2\le 3$ and $g_K(e)=1$.

If $e\cdot H>0$, since the argument of Lemma \ref{small+} only uses adjunction formula, we have $e^2\ge 4$, which contradicts to our assumption $e^2\le 3$. 

Hence we can assume $e\cdot H\le 0$. We calculate the Seiberg-Witten dimension of $-e$:
 $$\dim_{SW}(-e)=(-e)^2-K\cdot (-e)=0.$$
Hence, we have the wall crossing formula $|SW(-e)-SW(K+e)|=1.$ In the following, we will show $SW(K+e)=0$. We have $$(K+e)\cdot H=-3+e\cdot H<0.$$ If $SW(K+e)\ne 0$, it would contradict to Lemma \ref{int>0}, when it is applied to class $H$ and a K\"ahler structure $J_0$ such that $H$ is represented by a smooth $J_0$-holomorphic sphere.

This contradiction implies that $SW(K+e)=0$ and thus $SW(-e)\ne 0$ by wall crossing. By RS2, the classes $e_i$ and $-e$ have positive intersection with a homology class $a$. This contradicts to $0=e_1+e_2+e_3+(-e)$.


Hence, at least one of the intersection $e_i\cdot e_j$, $i\ne j$, vanishes.
\end{proof}




To state the next result, we first define a {\it combinatorial blowdown} for a negative curve configuration.

\begin{definition}\label{defcombblowdown}
A (simple) combinatorial blowdown applied to a curve configuration is the following process of removing a homology class $E\in \mathcal{NC}$ with $E\cdot E=-1$ and $g_J(E)=0$. 
\begin{enumerate}[label=BD\theenumi]
\item Smoothly blow down the manifold $M$ to get a smooth manifold $M'$.
\item Remove $E$ and change any other classes $e\in \mathcal{NC}$ to $e'=e+(e\cdot E)E$ as a homology class in $H_2(M', \mathbb Z)$. Then remove all the new classes with $e'^2\ge 0$ from the modified $\mathcal{NC}$ and keep the rest if they still have negative square. Call the new set of homology classes obtained in this manner $\mathcal{NC}'$. Furthermore, we assume $e\cdot E\le 1$. 

\item Remove $p_i$ from $P$ if $p_i\notin X_{\alpha}\cap X_{\beta}$ for any $\alpha\ne \beta\in \mathcal{NC}'$.
\item The intersection data related to $E$ is changed as following. All points $p_i\in  X_E$ are removed from $P$, with a new point $x=p_0$ added if there exist $\alpha\ne\beta$ such that $X_{\alpha}\cap X_E, X_{\beta}\cap X_E\ne \emptyset$ and $e_{\alpha}', e_{\beta}'\in \mathcal{NC}'$. 

\item Suppose we have $p_i\in X_E$,  $p_i\in X_{\alpha}$ and the new relation $x\in X_{\alpha}'$. If there is a $\beta\ne \alpha$ such that  $e_{\beta}'\in \mathcal{NC}'$ and $S_{i\alpha}\subset S_{i\beta}$, then the sequence $S_{i\alpha}=(i, i_{\alpha}^1, \cdots, i_{\alpha}^{m_{i\alpha}})$ is changed to $S_{0\alpha}=(0, i, i_{\alpha}^1, \cdots, i_{\alpha}^{m_{i\alpha}})$. 

Otherwise, $S_{0\alpha}=(0, i, i_{\alpha}^1, \cdots, i_{\alpha}^{m'_{i\alpha}})$ where $(i, i_{\alpha}^1, \cdots, i_{\alpha}^{m'_{i\alpha}})$ is the maximal subsequence of $S_{i\alpha}$ contained in some $S_{i\beta}$ with $e_{\beta}'\in \mathcal{NC}'$. In particular, if there is no other $\beta$ such that $p_i\in X_{\beta}$ and $e_{\beta}'\in \mathcal{NC}'$, then  $S_{0\alpha}=(0)$.
\item All the other intersection data keep invariant.

\end{enumerate}
\end{definition}

When $k\le 6$, the last part of BD2 is guaranteed by Lemma \ref{-1.-}. 
It is straightforward to check that we have a configuration of negative curves after applying combinatorial blowdown, except an explanation for NC4. Assume $e_1', e_2'\in \mathcal{NC}'$.  Then $e_i\cdot E=0$ or $1$ for $i=1,2$. When $(e_1\cdot E)(e_2\cdot E)=0$, then $X_1\cap X_2=X_1'\cap X_2'$ and the intersection information is not changed for all these points. In particular, the multiplicities are kept invariant. When $e_i\cdot E=1$ for both $i=1,2$, we have two situations. In the first situation, we have $X_1'\cap X_2'=(X_1\cap X_2)\cup \{x\}$. Let $X_1\cap X_E=\{q_1\}$ and $X_2\cap X_E=\{q_2\}$. Moreover, the multiplicities $M^{q_1}_{1E}=M^{q_2}_{2E}=1$. 
Hence, we have the multiplicity of $X_1'$ and $X_2'$ at $x$ is also $1$. Since all the intersection data of $X_1\cap X_2$ is unchanged, we know the sum of the multiplicities is $e_1\cdot e_2 +1$ which is equal to  $e_1'\cdot e_2'$. In the second situation, we have $X_1\cap X_E=X_2\cap X_E=\{p_i\}$. If $S_{i1}\subset S_{i2}$, then $S_{01}\subset S_{02}=(0, S_{i1})$. Hence the multiplicity of $X_1'$ and $X_2'$ at $x$ will be the multiplicity of $X_1$ and $X_2$ at $p_i$ plus $1$. This again implies the sum of the multiplicities is $e_1\cdot e_2 +1$ which is equal to  $e_1'\cdot e_2'$.


\begin{prop}\label{combblowdown}
After combinatorially blowing down a $-1$ curve from a negative curve configuration with the three RS properties on $\mathbb{C}P^2\#k\overline{\mathbb{C}P^2}$, $3\le k<8$, we will have a configuration of negative curves with the three RS properties on $\mathbb{C}P^2\#(k-1)\overline{\mathbb{C}P^2}$.
\end{prop}
\begin{proof}
We fix a canonical class $K=-3H+E_1+\cdots+E_k$.  We have shown there is at least one $-1$ rational curve classes in $\mathcal{NC}$.  We combinatorially blow down this $-1$ curve. As we see as above, it is a configuration of negative curves.


Next we check properties RS1--RS3. It is direct to check that  RS1  holds for the new configuration. For RS2, we can choose $a'=a+(a\cdot E_k)E_k$. For RS3, we first write every class $E$ in $\mathcal E_{E_k^{\perp}}\subset \mathcal E_K$, {\it i.e.} those classes in $\mathcal E_K$ whose pairing with $E_k$ is zero, as $E=\sum a_iC_i$ with $C_i\in \mathcal{NC}$ and $a_i\in \mathbb Z^+$ by virtue of RS3 for the original configuration. Since $E\cdot E_k=0$, we have $\sum a_i(C_i\cdot E_k)=0$. Hence $$E=\sum a_iC_i=\sum a_iC_i'=E'.$$ If all $C_i'^2<0$, then we are done. If some $C_i'^2\ge 0$, since $C_i^2<0$, any $E\ne C_i\in \mathcal E_K$ has $E\cdot C_i$ is $0$ or 1 when $k\le 6$ and $0$, $1$ or $2$ when $k=7$. Hence we have $C_i'^2=0$ and $g_K(C_i')=0$ when $k\le 6$, or possibly $C_i'^2=3$ and $g_K(C_i')=1$ when $k=7$. The first type is Cremona equivalent to $H-E_1$, since we have a further decomposition which is Cremona equivalent to $H-E_1=(H-E_1-E_2)+E_2$.  That is, $C_i'$ is a sum of two $-1$ rational curve classes, which could be further written as a sum of classes in $\mathcal{NC}$ by RS3. The second type happens only when $C_i=3H-E_1-\cdots -E_6-2E_7$ and $C_i'=3H-E_1-\cdots -E_6$. We can write $C_i'=(2H-E_1-\cdots-E_5)+(H-E_5-E_6)+E_5$, a sum of classes in $\mathcal E_K$. Again these classes in $\mathcal E_K$ could be further written as a sum of classes in $\mathcal{NC}$ by RS3. Since some components $C_{ij}$ in the decomposition of $-1$ rational curve classes might have $C_{ij}'^2\ge 0$, we may take the process again. This process will stop in finite steps because of the rational class $a$ provided by RS2. When the process stops, we have the desired expression of $E$ as integral positive linear combination of classes in $\mathcal{NC}'$. 
\end{proof}

The statement of Theorem \ref{main1} for $\mathbb CP^2\# k\overline{\mathbb CP^2}$ with $k=1, 2$ or $S^2\times S^2$ is known. The claim for $k=1$ (and $S^2\times S^2$) is proved in \cite{LZ-generic} and $k=2$ is proved in Section 3. 

To prove the remaining cases, we use an induction argument, starting with $k=2$. Our induction hypothesis is that every negative curve configuration  on $\mathbb{C}P^2\#k\overline{\mathbb{C}P^2}$ satisfying RS1--RS3 is realized by a complex structure. For the base step $k=2$, it can either be shown by slightly modifying the argument in Section 3, or by a more direct argument as follows. By Lemma \ref{a<0} and Proposition \ref{a>0}, all the possible negative curve classses are $E_1, E_2, H-E_1-E_2, (1-s)H+sE_1, (1-s)H+sE_2,  (1-s)H+sE_1-E_2$ and $(1-s)H+sE_2-E_1$ with $s\ge 1$. By RS1 and RS3, $H-E_1-E_2$ and at least one of $E_i$, say $E_2$, is in $\mathcal{NC}$. If it is the case, other possible classes in $\mathcal{NC}$ are in $(1-s)H+sE_1$,  or $(1-s)H+sE_1-E_2$. There must be at least one such classes by applying RS3 to $E_1$. By RS1, it is direct to check that only one such class (and only one $s\ge 1$) would exist. These curve configurations are obtained by blowups of Hirzebruch surfaces.

The idea of our proof of Theorem \ref{main1} is to proceed by starting with a negative curve configuration on  $\mathbb{C}P^2\#k\overline{\mathbb{C}P^2}$ satisfying RS1--RS3, combinatorially blowing it down to get a negative curve configuration on $\mathbb{C}P^2\#(k-1)\overline{\mathbb{C}P^2}$, proving it also satisfies the three properties (which has been done in Proposition \ref{combblowdown}) and using the inductive hypothesis to realize this blown down configuration as a complex curve configuration, and then doing a complex blowup of this new configuration to realize the original curve configuration. Since Lemma \ref{tamedJ3bull} shows that the negative curve configuration for a tamed almost complex structure on $\mathbb{C}P^2\#k\overline{\mathbb{C}P^2}$,  $2\le k<8$, satisfies RS1--RS3, we will finish our proof of Theorem \ref{main1}.


Let us reformulate Theorem \ref{main1} in the following. 
\begin{theorem}\label{conf}
For rational $4$-manifolds $\mathbb CP^2\# k\overline{\mathbb CP^2}$ with $k<6$, the set of all the possible configurations of negative self-intersection curves for tamed almost complex structures is the same as the set for complex structures.
\end{theorem}
\begin{proof}



We have completed the first step of our proof in Proposition \ref{combblowdown}: to prove that combinatorially blowing down a $-1$ curve, say $E_k$, from a negative curve configuration with RS1--RS3 will give a configuration of negative curves with the same three properties. By our induction assumption, this is a complex curve configuration. 

Then we prove the second step: to show that we can reverse the process using complex blowups, {\it i.e.} we can apply a complex blowup of this new configuration to realize the original curve configuration, at least when $k<6$. Let us first review how the intersection data change after a combinatorial blowdown. First, since $e\cdot E_k\le 1$ by Lemma \ref{-1.-}, we know each class $e$ with $e'^2\ge 0$ is a $-1$ rational curve class and $e'$ is a square $0$ sphere class. Besides, all the points, along with their associated sequences to incidence relations, of $X_{e}$ with $e\in \mathcal{NC}\setminus \mathcal{NC}'$ are removed since there are no triple intersection points by Lemma \ref{notriangle}. Possibly a new intersection point $x\in P'$ might be added. Since the new configuration after blowdown is still a configuration of negative curves with RS1--RS3, we know $x$ is in exactly two sets $X_{\alpha}$, $X_{\beta}$, by Lemma \ref{notriangle}. Since $e\cdot E_k\le 1$ for all $e\in \mathcal{NC}$, in this case we must have $S_{0\alpha}=S_{0\beta}=(0)$, {\it i.e.} the multiplicity $M^0_{\alpha\beta}=1$.




Let us return to the induction process. By induction, the new configuration of negative curves is realized by a complex structure $J_0$ on $\mathbb CP^2\# (k-1)\overline{\mathbb CP^2}$. There are three cases that we do complex blow up differently. The first situation is for all $e'\in \mathcal{NC}'$, we have $e'^2=e^2$, {\it i.e.} $e\cdot E_k=0$. In this case, the elements in $\mathcal{NC}$ intersecting $E_k$ are all $-1$ classes, which would become square $0$ sphere classes after blowing down $E_k$. We check that these new square $0$ classes are nef, for which we only need to check its pairing with negative curves are all non-negative. These negative curves are exactly the ones in $\mathcal{NC}$ whose intersections with $E_k$ are $0$, {\it i.e.} the ones in $\mathcal{NC}'$. The nefness follows since different classes in $\mathcal{NC}$ intersect non-negatively and the new square $0$ class is $C'=C+E_k$ where $C$ is a $-1$ curve class in $\mathcal{NC}$ and $C\cdot E_k=1$.  Hence by Proposition 4.5 in \cite{LZ-generic}, for any given point of $\mathbb CP^2\# (k-1)\overline{\mathbb CP^2}$, we have a unique possibly reducible $J_0$-holomorphic rational curve in each of these new square $0$ nef classes passing through any given point. 
By Theorem 1.5 and Corollary 4.11 of \cite{LZrc}, reducible curves happen only when all components are in $\mathcal{NC}'$. Hence, for any square $0$ nef class $C'$ and for any point on $(\mathbb CP^2\# (k-1)\overline{\mathbb CP^2}, J_0)$ outside the negative curve locus, we have a smooth curve in class $C'$. Moreover, these square $0$ curves do not pass through any point in $P'$ and do not have mutual intersections with each other by Lemma \ref{notriangle}. 


Hence, we blow up at a point outside the negative curve locus of $(\mathbb CP^2\# (k-1)\overline{\mathbb CP^2}, J_0)$ would recover the original configuration of negative curves: all the classes in $\mathcal{NC}\setminus \mathcal{NC}'$ would become $-1$ curve classes, and no other new classes would be introduced since the intersection pairing of $E_k$ with any other irreducible negative curve is at most one by Lemma \ref{-1.-}; all the lost intersection data would be brought back since all the points in $P\setminus P'$ have multiplicity one and thus the associated data are $(p_i)$ at each of these points.

In the second situation, exactly one class $e'\in \mathcal{NC}'$ has $e'^2=e^2+1$. In this case, we just blow up $(\mathbb CP^2\# (k-1)\overline{\mathbb CP^2}, J_0)$ at a generic point of the negative curve in class $e'$. Then the argument is similar. Only square $0$ sphere classes would become new elements in $\mathcal{NC}$ after blowup. By the same argument in the first case, if a square zero class $C'$ is obtained from $C\in \mathcal{NC}$ by combinatorial blowdown, then $C'$ is nef. More precisely, we have $C'=C+E_k$ with square $-1$ classes $C, E_k\in \mathcal{NC}$. The conclusion follows from non-negativity of intersection of different classes in $\mathcal{NC}$ and $C\cdot E_k=1$ since classes in $\mathcal{NC}'$ are of type $A(+E_k)$ with $A\in \mathcal{NC}$. Moreover, any reducible representative of such a curve class $C'$ will have only negative square components by Theorem 1.5 and Corollary 4.11 of \cite{LZrc}. Such a representative cannot have $e'$ as a component since otherwise $(C'-e')\cdot C'+e'\cdot C'\ge 1$ by nefness of $C'$ and $e'\cdot C'=1$. It contradicts to $C'^2=0$. 


Hence, when we blow up  $(\mathbb CP^2\# (k-1)\overline{\mathbb CP^2}, J_0)$ at a generic point of the negative curve in class $e'$, the new negative curves are exactly those $-1$ curves in the original configuration which are obtained from square $0$ classes intersecting $e'$ (such an intersection number could only be $1$ by Lemma \ref{-.-}) as shown in last paragraph.  Thus we would recover the original configuration of negative curves. In particular, all the intersection points have multiplicity one and labeled differently. 

In the third situation, exactly two classes $e_1', e_2'\in \mathcal{NC}'$ have $e_i'^2=e_i^2+1$. In this case, the intersection point $x\in X_1'\cap X_2'$ is introduced when blowing down $E_k$. Both associated sequences to $x\in X_i'$ are $(0)$.  For convenience, we also call the corresponding intersection point in $(\mathbb CP^2\# (k-1)\overline{\mathbb CP^2}, J_0)$ by $x$. In this case, we blow up $(\mathbb CP^2\# (k-1)\overline{\mathbb CP^2}, J_0)$ at $x$.  All the square $0$ smooth curves cannot be tangent to each other or be tangent to negative curves, otherwise there will be triple intersections after blowup, contradicting Lemma \ref{notriangle}. Hence, point $x$ will become two points $p_1, p_2$ on $E_k$ along with other different points on $E_k$ which are intersection points of $E_k$ and other $-1$ classes. All these points have multiplicity one. Moreover, the new square zero classes in $\mathcal{NC}'$ obtained by $-1$ classes in $\mathcal{NC}$ have smooth representatives passing through $x$. First, by the argument in second situation, these square zero classes are nef and so any reducible representative has only negative-square components which cannot contain curves in class $e_1'$ or $e_2'$. And if such a representative passes through $x$, there would be a triple intersection of three negative curves that is forbidden by Lemma \ref{notriangle}. Hence,  we would recover the original configuration of negative curves by blowing up at $x$.

Finally, suppose there are three classes $e_1, e_2, e_3\in \mathcal{NC}$ with $e_i\cdot E_k=1$ such that $e_i'\in \mathcal{NC}'$. After combinatorially blowing down $E_k$, we will have a configuration with the three RS properties (by Proposition \ref{combblowdown}) such that  $e_i'\cdot e_j'=e_i\cdot e_j+1\ge 1$ for $i\ne j$. This contradicts to Lemma \ref{notriangle}. Hence the three cases above are exhaustive. 
\end{proof}

When $k=6$, there exists  $3$ mutually intersecting $-1$ curve classes. For example, $H-E_1-E_2, H-E_3-E_4, H-E_5-E_6$, or Cremona equivalently, $E_6, 2H-E_1-E_2-E_3-E_4-E_6, H-E_5-E_6$. There might be two possible configurations: all $3$ curves intersect at a single point, or the mutual intersections are different. When we blow down one of the $-1$ curves, we have two square $0$ curves, tangent to each other or intersecting at two different points respectively. To apply our induction argument in Theorem \ref{conf}, we need to show that the latter configuration is the generic one. 

When $k=7$, we have a new type of $-1$ curves, {\it i.e.} class (3) in Proposition \ref{a>0}. Thus there are possibly a negative curve and a $-1$ rational curve having intersection number $2$. After combinatorially blowdown, we will have a square $3$ class $C'=3H-E_1-\cdots -E_6$ (the corresponding $C$ is $3H-E_1-\cdots -E_6-2E_7$) to deal with. This is a class of $J$-genus $1$. Our induction argument works well to this single class, since one can show that a non-generic blow up has to be at a point on a negative curve, after a delicate analysis of the reducible curves in class $C$.

When $k=8$, we have  three more classes: (4)-(6) in Proposition \ref{a>0}. Hence there are negative curves with mutual intersection $3$. A similar argument should be enough to give a proof. The only difference is that a ``generic" blowup is no longer blowing up outside the negative locus. For example, $$6H-3E_1-2E_2-\cdots -2E_8=(3H-2E_1-E_2-\cdots -E_8)+(3H-E_1-\cdots-E_8).$$
Before blowing up $E_1$, both curves have positive squares (one nodal curve and one smooth curve intersect at the node). However, it is a non-generic phenomenon since it is a reducible curve. Another interesting new feature of this reducible curve is one of its component is of genus one although the original class is of genus $0$. Recall this cannot happen if the original class is $J$-nef by \cite{LZrc}. However, the following question still makes sense.
\begin{question}
Suppose $M$ is not diffeomorphic to $\mathbb CP^2\#k\overline{\mathbb CP^2}$. Let $E\in \mathcal E_{K_J}$. Is it true that for any subvarieties $\Theta=\{(C_i, m_i)\}$ in class $E$, {\it i.e.} $E=\sum m_i[C_i]$, we have $g_J([C_i])=0$?
\end{question}

For interested readers, the above decomposition of an exceptional class could also be seen from blowing down certain elliptic fibration of $E(1)\cong \mathbb CP^2\# 9\overline{\mathbb CP^2}$. We could make use of any elliptic fibration of $E(1)$ with an $I_2$ fiber and a $-1$ section $E$ (the existence of such fibration should be well known, see {\it e.g.} \cite{AZ}), but we will give an explicit construction in the following. 

We choose $p_0$ as a multiplication of a degree one polynomial and a degree two polynomial. Their zero set are two holomorphic spheres $C_1$ and $C_2$, in classes $H$ and $2H$, of general position in $\mathbb CP^2$. Let the two intersection points be $P$ and $Q$. Choose an irreducible degree $3$ polynomial $p_1$ such that the zero set $C_0$ is a smooth elliptic curve and its intersects $C_1$ and $C_2$ at $3$ and $6$ points respectively other than $P$ and $Q$. The family $t_0p_0+t_1p_1$ with $(t_0, t_1)\in \mathbb CP^1$ gives a pencil structure on $\mathbb CP^2$. Then we blow up the $3$ intersection points $C_0\cap C_1$ and $5$ out of the $6$ intersection points of $C_0\cap C_2$. After blowing ups, $C_0$ and $C_1$ becomes disjoint (for simplicity we still call them $C_0$ and $C_1$), in classes $3H-E_1-\cdots-E_8=-K$ and $H-E_1-E_2-E_3$ respectively. We notice $g_J([C_0])=1$ and $[C_0]+[C_1]=4H-2E_1-2E_2-2E_3-E_4-\cdots-E_8\in \mathcal E_K$ (it is class (4) in Proposition \ref{a>0}). It is easy to see that this decomposition is Cremona equivalent to the one right after Theorem \ref{conf}.

 
Theorem 1.1 in \cite{AGN} states that the inclusion of the space of compatible integrable complex structures into the space of all compatible almost complex structures is a weak homotopy equivalence for a rational ruled surface. Our Theorem \ref{conf} indicates that it may hold for $\mathbb CP^2\# k\overline{\mathbb CP^2}$ with $k<9$.

\subsection{Complex Configurations for small rational surfaces}\label{cp2+3}
From Theorem \ref{conf}, to know all the possible curve configurations for tamed almost complex structures, we only need to know that for complex structures when our underlying manifold is a small rational surface. This subsection summarizes all such possibilities for rational surfaces of Euler number no greater than $6$. Based on the all possible curve configurations for $\mathbb CP^2\#3\overline{\mathbb CP^2}$, we will discuss the limitation of the construction of almost K\"ahler forms using spherical classes as in \cite{LZ-generic}.


For $S^2\times S^2$, the possible types are Hirzebruch surfaces $\mathbb F_{2n}$. So the only irreducible negative curve is a $-2n$ curve which is in class $A-nB$ (or $B-nA$) where $A=[\{pt\}\times S^2], B=[S^2\times \{pt\}]$.

For $\mathbb CP^2\#\overline{\mathbb CP^2}$, the possible types are $\mathbb F_{2n+1}$. So the only negative curve is in class $(n+1)E-nH$.

For $\mathbb CP^2\#2\overline{\mathbb CP^2}$, we view this as blow up of $\mathbb F_{2n+1}$. We can either blow up at a point on the unique negative curve of $\mathbb F_{2n+1}$, or blow up at a point not on it. For the first case, our configuration is $E_2$, $H-E_1-E_2$ and $(n+1)E_1-nH-E_2$. For the latter case, our negative curves are $E_2$, $H-E_1-E_2$ and $(n+1)E_1-nH$. 

In all the above cases, the dual of the curve cone is the $J$-spherical cones $\mathcal S_J$ which is the K\"ahler cone. As we see in Theorem \ref{cp2+2cone} and in \cite{LZ-generic}, these configurations realize all the possible configurations of negative curves for any almost complex structures on $S^2\times S^2$, $\mathbb CP^2\#\overline{\mathbb CP^2}$ and $\mathbb CP^2\#2\overline{\mathbb CP^2}$.

For $\mathbb CP^2\#3\overline{\mathbb CP^2}$, it is a further blowup at certain points on some complex structure of $\mathbb CP^2\#2\overline{\mathbb CP^2}$. we can blow up
\begin{itemize}
\item at a point not on negative curves (a generic point), then the negative curves are $E_3$, $E_2$, $H-E_1-E_2$, $H-E_1-E_3$ and $(n+1)E_1-nH-E_2$, or $E_3$, $E_2$, $H-E_1-E_2$, $H-E_1-E_3$, $H-E_2-E_3$ (if $n=0$) and $(n+1)E_1-nH$;
\item at a generic point of $E_2$, then the curves are $E_3$, $E_2-E_3$, $H-E_1-E_2$ and $(n+1)E_1-nH-E_2$, or $E_3$, $E_2-E_3$, $H-E_1-E_2$ and $(n+1)E_1-nH$;
\item at a generic point of $H-E_1-E_2$, then the curves are $E_3$, $E_2$, $H-E_1-E_2-E_3$ and $(n+1)E_1-nH-E_2$, or $E_3$, $E_2$, $H-E_1-E_2-E_3$ and $(n+1)E_1-nH$;
\item at a generic point of $(n+1)E_1-nH-E_2$ or $(n+1)E_1-nH$, we get $E_3$, $E_2$, $H-E_1-E_2$, $H-E_1-E_3$ and $(n+1)E_1-nH-E_2-E_3$, or $E_3$, $E_2$, $H-E_1-E_2$, $H-E_1-E_3$ and $(n+1)E_1-nH-E_3$;
\item at the intersection point of $H-E_1-E_2$ and $(n+1)E_1-nH$, then the curves are $E_3$, $E_2$, $H-E_1-E_2-E_3$ and $(n+1)E_1-nH-E_3$;
\item at the intersection point of $(n+1)E_1-nH-E_2$ and $E_2$, then the curves are $E_3$, $E_2-E_3$, $H-E_1-E_2$ and $(n+1)E_1-nH-E_2-E_3$;
\item at the intersection point of $E_2$ and $H-E_1-E_2$, then the curves are only $E_3$, $E_2-E_3$, $H-E_1-E_2-E_3$ and $(n+1)E_1-nH-E_2$, or $E_3$, $E_2-E_3$, $H-E_1-E_2-E_3$ and $(n+1)E_1-nH$;
 
\end{itemize}

Notice in the last case, we only have one irreducible $-1$ rational curve, which is in class $E_3$. For all the others, we have at least two smooth $-1$ rational curves.
Then we can show that if for an almost K\"ahler structure the configuration of the negative curves is like the first six cases, the Nakai-Moishezon type theorem as Theorem \ref{NM2} holds since $J$-spherical cones are equal to the K\"ahler cones.

\begin{prop}
If the configuration of negative curves for an almost K\"ahler structure is one of the first six bullets listed above, we have 
$$\mathcal K_J^c=\mathcal S_J= \mathcal P_J=A_J^{\vee, >0}(M).$$
\end{prop}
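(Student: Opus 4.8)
The plan is to run the same three-cone sandwich as in the proof of Theorem \ref{NM2}, now for each of the six configurations on $\mathbb CP^2\#3\overline{\mathbb CP^2}$. Since $J$ is almost K\"ahler we have $\mathcal S_J\subseteq \mathcal K_J^c$ by Lemma 5.18 of \cite{LZ-generic}, while $\mathcal K_J^c\subseteq \mathcal P_J$ is immediate and $\mathcal P_J=A_J^{\vee,>0}(M)\cap\mathcal P\subseteq A_J^{\vee,>0}(M)$ by definition. Thus it suffices to prove the single inclusion $A_J^{\vee,>0}(M)\subseteq \mathcal S_J$, together with the observation $A_J^{\vee,>0}(M)\subseteq\mathcal P$, which upgrades the last inclusion to the equality $\mathcal P_J=A_J^{\vee,>0}(M)$. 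Note $b^+(M)=1$ and $h_J^+(M)=b^-(M)+1=4$, so the duality is taken in all of $H^2(M;\mathbb R)=H_J^+(M)$.

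The first real step is to pin down the curve cone. By Proposition \ref{dA}, for $k=3<9$ the cone $A_J(M)$ is a polytopic cone generated by the classes of spheres of non-positive self-intersection, and by the classification in Lemma \ref{a<0} and Propositions \ref{a>0}, \ref{a>0'} the only such classes available are the ones displayed in each bullet; the $J$-effective square-zero classes that occur are already non-negative combinations of the listed negative curves. Hence in each of the six cases $A_J(M)$ is the explicit four-dimensional polytopic cone whose extremal rays are exactly the four or five (five or six when $n=0$) displayed classes, e.g. $E_3$, $E_2$, $H-E_1-E_2$, $H-E_1-E_3$, $(n+1)E_1-nH-E_2$ in the first bullet, and similarly for the others.

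The second step is to dualize each of these explicit cones and verify the generators of the dual are spherical. Taking the $\ge 0$-dual within $H^2(M;\mathbb R)$, $A_J^{\vee,>0}(M)$ is again polytopic with finitely many extremal rays, each orthogonal to a facet of $A_J(M)$ and strictly positive on the opposite rays. A direct computation, the exact analogue of the calculation of the generators $A,B,C$ preceding Proposition \ref{s=p}, shows that every such dual generator $e$ satisfies $g_J(e)=0$ and $e^2\ge 0$, is $J$-nef (it pairs non-negatively with each listed curve class), and, in the cases $e^2=0$, is a limit of a sequence in $S_{K_J}^+$ such as $pH-(p-1)E_1-E_j$, $p\to\infty$. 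Consequently the open cone these generators span is contained in $\mathcal S_J$, which gives $A_J^{\vee,>0}(M)\subseteq\mathcal S_J$; and since all the generators have non-negative square and lie in a common forward cone, the light cone lemma forces their positive combinations to have positive square, so $A_J^{\vee,>0}(M)\subseteq\mathcal P$. Combining with the first paragraph, $\mathcal K_J^c=\mathcal S_J=\mathcal P_J=A_J^{\vee,>0}(M)$.

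The main labour, and the only place requiring care, is the facet-by-facet dualization across the six configurations and the family parameter $n$: in particular checking $J$-nefness of the dual generators against the negative section class $(n+1)E_1-nH-E_2$ (or $(n+1)E_1-nH$), and deciding which dual generators genuinely lie in $S_{K_J}^+$ and which are only limits of such. This is precisely the point at which the seventh bullet fails: there the only $-1$ rational curve is $E_3$, and, exactly as for $\mathbb CP^2\#\overline{\mathbb CP^2}$ and the Hirzebruch surfaces $\mathbb F_{2n+1}$ discussed after Proposition \ref{exray}, the spherical cone $\mathcal S_J$ is then strictly smaller than $\mathcal P_J$, so the argument does not extend to it.
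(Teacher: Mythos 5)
Your plan is correct and follows essentially the same route as the paper: the sandwich $\mathcal S_J\subset\mathcal K_J^c\subset\mathcal P_J$ reduces everything to showing that the explicitly dualized cone $A_J^{\vee,>0}(M)$ is spanned by classes in (or approximated by) $S_{K_J}^+$ and lies in $\mathcal P$. The only cosmetic difference is that the paper deduces $A_J^{\vee,>0}(M)\subset\overline{\mathcal P}$ uniformly from its containment in the cone over the polytope with vertices $H$, $H-E_1$, $H-E_2$, $H-E_3$, $2H-E_1-E_2-E_3$, whereas you get it from the light cone lemma applied to the computed dual generators — the same underlying positivity argument.
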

\begin{proof}

First notice $ \mathcal P_J=A_J^{\vee, >0}(M)$. This is because, for any $J$, $A_J^{\vee, >0}(M)$ is contained in polytope with vertices $H$, $H-E_1$, $H-E_2$, $H-E_3$ and $2H-E_1-E_2-E_3$. 

Then notice $\mathcal S_J\subset \mathcal K^c_J$ and $\mathcal K^c_J\subset  \mathcal P_J$. Thus we could reduce the rest to show that  $\mathcal S_J= \mathcal P_J$. To prove $\mathcal S_J= \mathcal P_J=A_J^{\vee, >0}(M)$, we notice for the first case $A_J^{\vee, >0}(M)$ is another triangular bipyramid with all vertices are spherical classes. These vertices  could be represented or approximated by classes in $\mathcal S_J$. For the rest, they are all tetrahedra with at least two faces (thus span the tetrahedron) determined by $-1$ classes which can be generated by spherical classes.
\end{proof}

If the negative curves are $E_3$, $E_2-E_3$, $H-E_1-E_2-E_3$ and $(n+1)E_1-nH-E_2$, as in the last case, then the corners of $A_J^{\vee, >0}(M)$ are $(2n+3)H-(2n+1)E_1-E_2-E_3$, $(n+2)H-(n+1)E_1-E_2$, $(n+1)H-nE_1$ and $H-E_1$. The first one cannot be represented or approximated by a sphere, while the other three classes are all spheres. 
All the spherical classes are on the boundary (or more precisely, on the edges) of the triangular bipyramid with vertices $H$, $H-E_1$, $H-E_2$, $H-E_3$ and $2H-E_1-E_2-E_3$. While the class $(2n+3)H-(2n+1)E_1-E_2-E_3$ is in the interior of the hexahedron.

If the curves are $E_3$, $E_2-E_3$, $H-E_1-E_2-E_3$ and $(n+1)E_1-nH$, then the corners of $A_J^{\vee, >0}(M)$ are $(2n+2)H-2nE_1-E_2$ and other spherical classes. 

In other words, in both subcases of Case $7$, our spherical classes only span a face of the dual of curve cone. This is point the techniques in \cite{T2009, LZ-generic} does not work. However, this case will be covered by the inflation method in section \ref{infmethod}, see Theorem \ref{rr}.

\subsection{Configurations of smooth $-1$ rational curves}
In addition to study the possible configurations of all smooth negative curves, we could also look at the configurations of smooth $-1$ rational curves.

In this section, we assume $M_k=\mathbb CP^2\# k\overline{\mathbb CP^2}$ with $k\ge 1$. Dusa McDuff asks a couple of questions on possible numbers of $-1$ rational curves.

\begin{question}[McDuff]\label{mcduff}
\begin{enumerate}
\item What are the possible maximal numbers $l$ of disjoint embedded $-1$ rational curves for tamed almost complex structures on $M_k$? 
\item What are the possible numbers of embedded $-1$ rational curves for tamed almost complex structures on $M_k$, especially when $k\ge 9$? 
\end{enumerate}
\end{question}

We first give a few remarks on Question \ref{mcduff} (2). It is very direct to work out the possible numbers of embedded $-1$ rational curves for a complex structure on $M_k$ with $k<9$. For examples, from section \ref{cp2+3}, we know there could be $0$ or $1$ embedded $-1$ rational curves on $M_1$, $2$ or $3$ embedded $-1$ rational curves on $M_2$, $1$, $2$, $3$, $4$, or $6$ embedded $-1$ rational curves on $M_3$. Theorem \ref{main1} implies that at least when $k<8$, working with a tamed almost complex structure would not produce more possibilities. 

Now, let us work with the first part of Question \ref{mcduff} (1). Apparently, $l\le b^-(M_k)=k$. By Corollary \ref{-1}, we have $l\ge 1$ when $k\ge 2$. Our Theorem \ref{cp2+2} says that when $k=2$, there are at least two $-1$ rational curves. However, from the discussion in section  \ref{cp2+3}, the possible value of $l$ could be $1$ or $2$. The following result implies for a general $k\ge 1$, any integer $l$ with $1\le l\le k$ could be realized. This is based on an argument delivered to the author by McDuff.

\begin{prop}\label{rat-1}
Let $M_k=\mathbb CP^2\# k\overline{\mathbb CP^2}$ with $k\ge 3$ and $l$ an integer with $1\le l\le k$. There is an integrable (K\"ahler) $J$ on $M_k$ with exactly $l$ embedded $-1$ rational curves. Moreover, these rational curves are disjoint. 
\end{prop}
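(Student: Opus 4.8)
The plan is to exhibit an explicit algebraic surface and count its $-1$ curves by a short intersection argument. Any iterated blow-up of $\mathbb CP^2$ is projective, hence carries a K\"ahler form compatible with its complex structure, so that complex structure is in particular almost K\"ahler. For an integrable $J$, a smooth $J$-holomorphic $-1$ rational curve is the same thing as an embedded $-1$ rational curve, i.e.\ an irreducible curve of self-intersection $-1$ and genus $0$; we will produce a surface on which \emph{every} irreducible curve of self-intersection $-1$ is smooth rational, so it suffices to build a blow-up $X$ of $\mathbb CP^2$ with $b_2(X)=k+1$ whose irreducible curves of self-intersection $-1$ are exactly $l$ of them, pairwise disjoint.

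First I would fix a line $L\subset\mathbb CP^2$ and $l$ distinct points $p_1,\dots,p_l\in L$, and blow them up. If $l<k$, I would then blow up $k-l$ further points $p_{l+1},\dots,p_k$, each infinitely near the previous one in the direction of $L$: $p_{l+1}$ is the point of the exceptional divisor $E_l$ lying on the strict transform of $L$, $p_{l+2}$ the point of $E_{l+1}$ lying on the strict transform of $L$, and so on down the chain. Let $X\cong\mathbb CP^2\#k\overline{\mathbb CP^2}$ be the result, with hyperplane class $H$ and exceptional classes $E_1,\dots,E_k$. The key object is the strict transform $\ell$ of $L$: it is an irreducible curve in class $H-E_1-\cdots-E_k$, with $\ell^2=1-k\le-2$, the last inequality using $k\ge3$.

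Next I would count. Write $D_j$ for the strict transform of the exceptional divisor over $p_j$; then $D_j=E_j$ when $p_j$ was not blown up again (that is, $j\le l-1$ or $j=k$) and $D_j=E_j-E_{j+1}$ otherwise, so $D_j^2=-1$ in the first case and $D_j^2=-2$ in the second; moreover each $E_i$ is a coefficient-one sum of the $D_j$ with $j\ge i$. Let $C$ be an irreducible curve with $C^2=-1$. If $C$ equals some $D_j$, then it must be one of $E_1,\dots,E_{l-1},E_k$. If instead $C$ is distinct from every $D_j$ and from $\ell$, it meets each of these irreducible curves non-negatively; writing $[C]=aH-\sum b_iE_i$, non-negativity against $H$ (which is nef on $X$) gives $a\ge0$, non-negativity against the $D_j$ gives $b_i=C\cdot E_i\ge0$, and non-negativity against $\ell$ gives $a-\sum b_i\ge0$. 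Then $\sum b_i^2\le(\sum b_i)^2\le a^2$, so $C^2=a^2-\sum b_i^2\ge0$, contradicting $C^2=-1$. Hence the irreducible curves of self-intersection $-1$ on $X$ are exactly $E_1,\dots,E_{l-1},E_k$: there are precisely $l$ of them, each a smooth rational curve ($\cong\mathbb CP^1$), and they are pairwise disjoint since $E_i\cdot E_j=0$ for $i\ne j$ and they lie over distinct points of $\mathbb CP^2$, respectively over the end of the infinitely near chain. Taking $J$ to be the complex structure on $X$ with any compatible K\"ahler form finishes the argument; when $l=k$ the chain is empty and $X$ is just $\mathbb CP^2$ blown up at $l$ collinear points.

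The point needing care is that the infinitely near blow-ups must not create spurious $-1$ curves, and this is exactly what the inequality against $\ell$ prevents: it works precisely because every blow-up centre lies on (an infinitely near point of) the single line $L$, so every irreducible curve of positive degree is forced to meet $\ell$ and therefore cannot have self-intersection $-1$. Blowing up points in general position would instead produce many extra $-1$ curves, as on a del Pezzo surface, so the collinearity is essential; and the hypothesis $k\ge3$ enters only to ensure $\ell^2\le-2$, so that $\ell$ itself is not one of the $-1$ curves.
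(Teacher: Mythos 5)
Your proposal is correct and follows essentially the same route as the paper: the same configuration of $l$ collinear points followed by a chain of $k-l$ infinitely near points along the line, with the same intersection inequalities $a\ge\sum b_i$ and $b_i\ge 0$ extracted from the strict transforms. The only difference is cosmetic: where the paper concludes by citing the non-existence of reduced classes in $\mathcal E_K$ (Lemma 3.4 of \cite{LBL}), you close the argument with the self-contained estimate $\sum b_i^2\le(\sum b_i)^2\le a^2$, which is a perfectly valid (and slightly more elementary) way to finish.
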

\begin{proof}
We start with a line on $\mathbb CP^2$. We blow up at $l$ distinct points on this line, call these exceptional curves $E_1, \cdots, E_l$. We then blow up at the intersection of $E_l$ with the line. Call the new exceptional curve $E_{l+1}$, and blow up its intersection with the line. Continue this process to do $k-l$ blow ups. Now we have a complex structure on $M_k$ which contains negative curves in classes $$H-E_1-\cdots-E_k, E_1, \cdots, E_{l-1}, E_l-E_{l+1}, \cdots, E_{k-1}-E_k, E_k.$$

If there is another embedded $-1$ rational curve, say $E$, in class $dH-\sum m_iE_i$, then we have $$d\ge m_1+\cdots +m_{k}, \hbox{ and } m_i\ge 0,\quad \forall i.$$ In particular, it implies $d\ge 0$. Moreover $d\ne 0$  otherwise all $m_i$ have to be $0$ and $E=0$.
 
Recall that a class  $x_0H-\sum x_iE_i$ is called {\it ordered} if $x_1\ge \cdots \ge x_k$. An ordered vector is {\it reduced} if $x_0\ge x_1+x_2+x_3$ and $x_i\ge 0$ for all $i$. Hence the class of $E$, when ordered, is a reduced class. However, by Lemma 3.4 of \cite{LBL}, there is no reduced class in $\mathcal E_K$. 
Hence $E_1, \cdots, E_{l-1}, E_k$ are the only exceptional curves, which are disjoint to each other. 
\end{proof}

We notice this result also gives a partial answer for Question \ref{mcduff} (2): when $k\ge 3$, there could be $1$ to $k$ embedded $-1$ rational curves.

There is a different construction for $l=1$ which is due to McDuff. 
By section 3.3, we have an integrable complex structure $J_2$ on $M_2$ with negative curves $$(n+1)E_1-nH-E_2, \quad H-E_1-E_2, \quad E_2.$$ Then blow up the intersection point of $H-E_1-E_2$ and $E_2$ to get $J_3$ on $M_3$. Then we blow up inductively the intersection point of $E_{i-1}-E_i$ and $E_i$ for $3\le i\le k$ to get $J_k$ on $X_k$ with negative curves $$(n+1)E_1-nH-E_2, \quad H-E_1-E_2-E_3, \quad E_i-E_{i+1}, i=2, \cdots, k-1, \quad E_k.$$
Use the similar argument as Proposition \ref{rat-1}, we can show that the only $-1$ rational curve lies in the class $E_k$. We leave the full detail to the interested readers.

Since we can choose any $n\ge 0$, there are infinitely many possible configurations for $l=1$ in Proposition \ref{rat-1}.

Proposition \ref{rat-1} could be extended to a general symplectic $4$-manifold. 

\begin{theorem}\label{disjoint-1}
Let $M$ be diffeomorphic to $N\#k\overline{\mathbb CP^2}$ with $k\ge 1$ and $N$ a minimal symplectic $4$-manifold. We assume $M$ is not diffeomorphic to one point blow up of an $S^2$ bundle over surface. Given $1\le l\le k$, there is a tamed $J$ on $M$ such that there are exactly $l$ embedded $-1$ rational curves. Moreover, these rational curves are disjoint.
\end{theorem}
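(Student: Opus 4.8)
The plan is to adapt the construction of Proposition~\ref{rat-1}: fix a minimal model $N$ of $M$ together with a well-chosen almost complex structure on it, realize the $k$ blow-ups by an explicitly arranged iterated (symplectic) blow-up, and then verify combinatorially that the only embedded $-1$ rational curves of the resulting tamed $J$ are the $l$ we build in by hand. If $N$ is rational the statement reduces to Proposition~\ref{rat-1} together with the analysis of small rational surfaces in Sections~\ref{k=2} and \ref{cp2+3}, so from now on I assume $N$ is not rational; I separate the case where $N$ is irrational ruled (an $S^2$-bundle over some $\Sigma_h$, $h\ge 1$, with its unique ruling) from the case where $N$ is neither rational nor ruled, i.e.\ $\kappa(N)\ge 0$.

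First I would fix a symplectic form $\omega_N$ and a compatible $J_N$ on $N$: when $\kappa(N)\ge 0$ take $J_N$ generic, so that $N$ carries no $J_N$-holomorphic rational curve through a generic point; when $N$ is ruled take $J_N$ making the ruling holomorphic and fix one smooth fiber $F_0$. Then perform the $k$ blow-ups one at a time, keeping the weights small and quickly decreasing so that the inflated symplectic form keeps taming the almost complex structure that makes all exceptional divisors produced so far holomorphic. Blow up $l-1$ distinct points $p_1,\dots,p_{l-1}$ — generic when $\kappa(N)\ge 0$, lying on $F_0$ when $N$ is ruled — to create $E_1,\dots,E_{l-1}$; blow up one more such point $p_l$ to create $E_l$; and, when $l<k$, continue by blowing up a point of $E_l$ away from the previous curves, then a point off the previous curve on each new exceptional sphere, a total of $k-l$ times. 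The proper transforms then form a chain of $(-2)$-spheres $E_l-E_{l+1},\dots,E_{k-1}-E_k$ ending in the $(-1)$-sphere $E_k$, so $E_1,\dots,E_{l-1},E_k$ are $l$ pairwise disjoint embedded $-1$ $J$-holomorphic rational curves. (When $N$ is ruled and $l=1$ one starts the chain by blowing up the node $E_1\cap(F_0-E_1)$, so that the fiber component becomes a $(-2)$-sphere; when $l=k$ there is no chain, and in the ruled case the $k$ points are taken on $F_0$.)

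It remains to show there are no further embedded $-1$ $J$-holomorphic rational curves $C$. Such a $C$ has $[C]\in\mathcal E_{K_J}$ and $SW([C])\ne 0$ by Proposition~\ref{SW}, and since distinct irreducible curves meet non-negatively, $[C]$ pairs non-negatively with $E_1,\dots,E_{l-1}$, with every chain class, and — in the ruled case — with the proper transform $D$ of $F_0$ (a smooth sphere of square $\le -2$). Write $[C]=v+\sum n_iE_i$ with $v$ in the pullback $\pi^*H^2(N)$. Non-negativity with $E_1,\dots,E_{l-1}$ and with the chain gives $n_1,\dots,n_{l-1}\le 0$ and $n_l\le n_{l+1}\le\cdots\le n_k\le 0$, hence $\sum n_i\le 0$, while $K_J\cdot[C]=-1$ reads $K_N\cdot v=\sum n_i-1\le -1$. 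When $\kappa(N)\ge 0$ this is already a contradiction: $v=[C]+\sum|n_i|E_i$ is effective and a pullback, so $v=\pi^*w$ with $w$ effective on $N$, and then $K_N\cdot v=K_N\cdot w\ge 0$ because $K_N$ is nef. So when $\kappa(N)\ge 0$, any embedded $-1$ $J$-holomorphic rational curve has class in $\{[E_1],\dots,[E_{l-1}],[E_k]\}$ and therefore, being irreducible, equals one of $E_1,\dots,E_{l-1},E_k$; there are exactly $l$ of them.

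The hard part is the irrational ruled case, where $K_N$ is not nef. There I would use the classification (Li--Liu \cite{LLwall, LLb+=1}) of the exceptional classes of $N\#k\overline{\mathbb CP^2}$ with non-trivial Seiberg--Witten invariant: up to the obvious symmetries these are $E_i$ and $T-E_i$, where $T$ is the fiber class. The classes $T-E_i$ are exactly the spurious $-1$ curves one must exclude, and this is the whole reason the ``free'' blow-up points are all placed on the single fiber $F_0$: then $T-E_i$ meets $D$ negatively when $E_i$ is one of the fiber-exceptional spheres, and meets the chain class $E_{i-1}-E_i$ negatively otherwise, so $T-E_i$ carries no irreducible $J$-holomorphic representative. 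Hence again every such $C$ equals one of $E_1,\dots,E_{l-1},E_k$ and there are exactly $l$. This also explains the hypothesis: over a one-point blow-up of an $S^2$-bundle one cannot separate $E_1$ from $T-E_1$, so both persist and $l=1$ is unattainable there.
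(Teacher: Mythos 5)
Your proof is correct and is essentially the paper's: the same case split (rational surfaces via Proposition~\ref{rat-1}, irrational ruled surfaces via blow-ups arranged along a single fiber so that the proper transform of the fiber meets every $T-E_i$ negatively, and general $N$ via blow-ups in a ball), the same chain of consecutive blow-ups producing exactly $E_1,\dots,E_{l-1},E_k$ as the irreducible exceptional curves, and the same mechanism of excluding spurious $-1$ curves by pairing their classes negatively against the constructed negative curves. The one step to tighten is your exclusion in the $\kappa(N)\ge 0$ case: the assertion that $w$ is effective on $N$ requires the pseudoholomorphic blow-down map, which exists precisely because the blow-ups are complex blow-ups performed where $J_N$ is integrable (the paper takes $J_N$ integrable in a small ball; your ``generic $J_N$'' is neither needed for the argument nor compatible with performing the blow-up) --- alternatively one can simply invoke the classification $\mathcal E_{K}=\{E_1,\dots,E_k\}$ for non-rational, non-ruled $M$, which is what the paper implicitly does.
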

\begin{proof}
When $M$ is a rational surface, it follows from Proposition \ref{rat-1}.

When $M$ is irrationally ruled, we could also choose such a K\"ahler structure. We start with $N=S^2\times \Sigma_h$ endowed with product complex structure. Denote the fiber class by $T$. Blow up at one point, we have two negative curves $E_1, T-E_1$. Further blow up $l-1$ times at distinct points on $T-E_1$ that are different from its intersection with $E_1$. Then we further blow up at the intersection of $E_l$ with $T-E_1-\cdots -E_l$. Call the new exceptional curve $E_{l+1}$, and blow up its intersection with $T-E_1-\cdots -E_{l+1}$. Continue this process to do $k-l$ blow ups. In total, we have done $k$ blow ups, and $M=N\#k\overline{\mathbb CP^2}$. Now we have negative curves in classes $$T-E_1-\cdots -E_k,  E_1, \cdots, E_{l-1}, E_l-E_{l+1}, \cdots, E_{k-1}-E_k, E_k.$$ For irrational ruled surfaces, $-1$ rational curves could only appear in classes $E_i$ and $T-E_i$ for $1\le i\le k$ (see {\it e.g.} Lemma 4.10 of \cite{LL}). However, $(T-E_i)\cdot (T-E_1-\cdots -E_k)=-1<0$ for any $i\le k$. Hence  $E_1, \cdots, E_{l-1}, E_k$ are the only exceptional curves, which are disjoint to each others. 

For a non-rational and non-ruled symplectic manifold, we could choose a tamed almost complex structure on $N$ such that in a small ball, it is integrable. Then we first blow up $l$ distinct points in this ball and then blow up consecutively on $E_l$. Again $E_1, \cdots, E_{l-1}, E_k$ are the only exceptional curves, which are disjoint to each others. 
\end{proof}




\subsection{Irrational ruled surfaces}
In this section, we discuss the cases of irrational ruled surfaces and prove Theorem \ref{ruled}. 

In general, the complex structures of non-rational ruled surfaces are much more complicated than that of rational ones. Any such minimal surface $M$ could be viewed as the projectivization $\mathbb P(E)$ of a vector bundle of dimension two over $\Sigma_g$. The curve cone behaves quite different when $E$ is unstable from it is semi-stable. When $E$ is unstable, {\it e.g.} $E=L\oplus \mathcal O$, the corresponding ruled surface $\mathbb P(E)$ has a negative curve. This is because by definition, we have a line bundle quotient $A$ of negative degree $a$. Then $C=\mathbb P(A)$ is an effective curve in the class $aT+U$ with $C^2=2a, 2a+1<0$. Recall that $T$ is the class of the fiber $S^2$ and $U$ is the class of a section with $U^2=0$ or $1$. In this case, the curve cone $A(M)$ is always closed. 

In contrast, when $E$ is semi-stable, the curve cone has different features. For convenience, we assume $E$ has even degree, and after twisting a line bundle we can then suppose $\deg E=0$. First it is always true that the nef cone is the same as the closure of the curve cone which is the first quadrant of the $U$-$T$ plane. This is because if there is an irreducible curve $C$ in the class $aT+bU$, then $C\in H^0(\mathbb P(E), \mathcal O_{\mathbb P(E)}(m)\otimes \pi^*A)=\Gamma(S^mE\otimes A)$ for some integer $m\ge 0$ and some line bundle $A$. It would imply $a\ge 0$ by semi-stability. On the other hand, $b\ge 0$ since there is always an irreducible curve in class $T$ and thus $[C]\cdot T\ge 0$. There is a famous example of Mumford showing that the curve cone might not be closed by the existence of the bundle $E$ over $\Sigma_g$ with $g>1$ such that $\Gamma(S^mE\otimes A)=0$ for all $m\ge 1$ whenever $\deg A\ge 0$. 

The above discussion suggests that bizarre things may happen for non-negative curves. See the discussion in the end of this section. However, the configuration of negative curves is always very simple.

\begin{theorem}\label{ruled}
For minimal irrational ruled surfaces, i.e. $S^2$ bundles over $\Sigma_{h\ge 1}$, the set of all the possible configurations of negative self-intersection curves for tamed almost complex structures are the same as the set for complex structures.
\end{theorem}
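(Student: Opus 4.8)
The plan is to classify, for an arbitrary tamed $J$, the homology classes of irreducible $J$-holomorphic curves of negative self-intersection on a minimal irrational ruled surface $M$, and then to match this list against the complex case. Recall $M$ is the total space of a smooth $S^2$-bundle $\pi\colon M\to\Sigma_h$, so $H^2(M;\mathbb R)$ is spanned by the fiber class $T$ and a section class $U$ with $U\cdot T=1$ and $U^2=0$ or $1$, and (as recorded in the proof of Theorem~\ref{conethm}) $K=-2U+(2h-2)T$ when the bundle is trivial and $K=-2U+(2h-1)T$ when it is not. First I would note that $M$ carries at most one irreducible $J$-holomorphic curve of negative self-intersection: if $C_1\ne C_2$ were two of them then, by the classification carried out in the next step, both would be sections $[C_i]=U+b_iT$ with $b_i\le-1$, hence $[C_1]\cdot[C_2]=U^2+b_1+b_2\le U^2-2<0$, contradicting positivity of intersections. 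Thus the configuration of negative curves carries no intersection data: it is empty or consists of a single class (which, by Lemma~\ref{-extremal}, spans an extremal ray of $A_J(M)$), and the theorem amounts to the statement that this class ranges over the same set for tamed $J$ and for complex structures.

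The heart of the argument is to show that any irreducible $C$ with $C^2<0$ is a section, i.e.\ $[C]\cdot T=1$. Write $[C]=aU+bT$; the integer $a=[C]\cdot T$ is the degree of the smooth map obtained by composing $\pi$ with the model curve $\Sigma$ of $C$, and $a\ne0$ since otherwise $C^2=0$. Combining the adjunction inequality~\eqref{adj inequality} ($g_J([C])\ge g(\Sigma)$) with Kneser's theorem ($2g(\Sigma)-2\ge|a|(2h-2)$) yields $C^2+K\cdot[C]=2g_J([C])-2\ge|a|(2h-2)$. Substituting $C^2=2ab$, $K\cdot[C]=(2h-2)a-2b$ in the trivial case, or $C^2=a^2+2ab$, $K\cdot[C]=(2h-3)a-2b$ in the non-trivial case, a one-line manipulation shows: when $a>0$ the inequality becomes $2b(a-1)\ge0$, resp.\ $(a-1)(a+2b)\ge0$, and since $C^2<0$ forces $b<0$, resp.\ $a+2b<0$, we get $a=1$; when $a<0$ the same substitution makes the left-hand side strictly negative, a contradiction. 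Hence $[C]=U+bT$ with $b\le-1$: a section with $C^2=2b$ (an even negative integer) in the trivial case and $C^2=1+2b$ (an odd negative integer) in the non-trivial case. So for every tamed $J$ the negative-curve configuration of $M$ is either empty or the single section class $U+bT$ of negative self-intersection.

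It remains to realize each such configuration by a complex structure and close the loop. For $n\ge1$ the ruled surface $\mathbb P(\mathcal O_{\Sigma_h}\oplus\mathcal O_{\Sigma_h}(D))$ with $\deg D=n$ is minimal, has a unique negative curve --- a section of self-intersection $-n$ --- and has underlying smooth $S^2$-bundle the trivial or the non-trivial one according to the parity of $n$; the empty configuration is realized by the product complex structure on $\Sigma_h\times S^2$ and, on the non-trivial bundle, by $\mathbb P(E)$ for a stable rank-$2$ bundle $E$ of odd degree (which exists for every $h\ge1$). Since every minimal ruled surface is projective, hence K\"ahler, every complex structure on $M$ is tamed, so the complex configurations form a subset of the tamed ones; by the second paragraph the tamed configurations lie in $\{\varnothing\}$ together with the section classes of negative self-intersection; and the complex structures just listed realize all of the latter. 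Therefore the three sets coincide, which is the assertion of the theorem.

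The step I expect to be the real obstacle is the second paragraph, specifically excluding $[C]\cdot T<0$: a negative curve on a minimal ruled surface is classically a section, so a class with $[C]\cdot T<0$ is never represented by an irreducible curve in the complex category and must be excluded, or the theorem would fail; yet this cannot be deduced from positivity of intersections alone, since the smooth fibers $\pi^{-1}(\mathrm{pt})$ are not $J$-holomorphic in general, and the topological input of Kneser's theorem on the degrees of maps between closed surfaces seems essential (it also handles the base-genus-one case, where the bound $2g-2\ge|a|(2h-2)$ reads simply $g\ge1$). The remaining computation is the one already present in the proof of Theorem~\ref{conethm}, and the realization step uses only standard facts about minimal ruled surfaces.
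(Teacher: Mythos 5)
Your proof is correct and follows essentially the same route as the paper: adjunction plus Kneser's theorem applied to the degree-$a$ projection to the base forces $(a-1)\cdot 2b\ge 0$ (resp. $(a-1)(a+2b)\ge 0$), hence any negative curve is a section $U+bT$ with $b<0$, at most one such can coexist by positivity of intersections, and each is realized by $\mathbb P(L\oplus\mathcal O)$. Your treatment is in fact a bit more careful than the paper's in two spots it leaves implicit — ruling out $a<0$ via the $|a|$ form of Kneser's bound, and realizing the empty configuration on the non-trivial bundle by a stable rank-$2$ bundle of odd degree.
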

\begin{proof}
We divide our discussion in two cases.\\

$\bullet$ $\Sigma_h\times S^2$, $h\ge 1$ \\

In this case, let $U$ be the class of the base $\Sigma_h$ and $T$ be the class of the fiber $S^2$. Then the canonical class $K=-2U+(2h-2)T$. We suppose $F$ is an irreducible $J$-holomorphic curve with negative square, and $[F]=aU+bT$ for some integers $a$ and $b$. Then $a\cdot b<0$.

The adjunction formula tells us that
$$-2b+(2h-2)a+2ab=2g(F)-2.$$

If we project $F$ to the base $\Sigma_h$, the degree of the map is $a$. Since $\Sigma_h$ has genus at least one, we have 
$$2g(F)-2\ge a(2h-2).$$

Hence we have 
$$-2b+(2h-2)a+2ab\ge a(2h-2),$$
and in turn,
$$2b(a-1)\ge 0.$$

Since $a\cdot b<0$, it implies $a=1$ and $b<0$. For the configuration, we know that at most one class of the type $U-kT$ with $k\ge 0$ could appear because the negative intersection of each other.

On the other hand, we could also show that $U-kT$ is the class of some complex curve for a complex structure on $\Sigma_h\times S^2$. 
Suppose $L$ is a holomorphic line bundle with degree $2k\ge 0$. Then projectivization $\mathbb P(L\oplus \mathcal O)$ is topologically $\Sigma_h\times S^2$. Moreover, the section $S_{-k}=\mathbb P(L\oplus 0)$ of the $\mathbb P^1$ bundle has self-intersection $-2k$, which is in the class $U-kT$.\\

$\bullet$ Non-trivial $S^2$ bundles over $\Sigma_h$, $h\ge 1$ \\

Let $U$ be the class of a section with square $1$ and $T$ be the class of the fiber. Then the canonical class $K=-2U+(2h-1)T$. We suppose $F$ is an irreducible $J$-holomorphic curve with negative square, and $[F]=aU+bT$ for some integers $a$ and $b$. Then $a\cdot (a+2b)<0$.

The adjunction formula tells us that
$$-2b+(2h-1)a-2a+a^2+2ab=2g(F)-2 \ge a(2h-2),$$
which is equivalent to say that 
$$(a+2b)(a-1)\ge0.$$

This again implies $a=1$ and $b<0$, which shows the negative curves are in classes $U-kT$. 

The rest of the argument is exactly the same as the case of $\Sigma_h\times S^2$. Suppose $L$ is We only a holomorphic line bundle $L$ of degree $2k-1$. The section $S_{-k}=\mathbb P(L\oplus 0)$ is in the class $U-kT$.
\end{proof}

\begin{remark}
Notice that the Seiberg-Witten invariant calculation shows that there is a curve in class $aU+bT$, $a,b>0$ (let us focus on the trivial bundle case here, similar for the nontrivial bundle case), if and only if $ab+b+a-ah\ge 0$. This implies the closure of the curve cone always contains the first quadrant of the $U-T$ plane. However, it is intriguing to see whether there is a generic complex structure in the sense that only curve classes are the Seiberg-Witten non-trivial classes.
\end{remark}

We now give an interpretation of the example in \cite{CP}. Consider the nontrivial $S^2$ bundle over $T^2$. The classes $U$ and $T$ have the same meaning as above. Then the canonical class $K=-2U+T$. Consider the class $-2K$, it is the class of a square zero torus and its Seiberg-Witten dimension is $0$. Hence, generically we only have finitely many $J$-holomorphic curve in this class ($|SW(-2K)|=5$). The key observation of \cite{CP} is this is not true for complex structures: for any complex structures, there is always a $J$-holomorphic tori in class $-2K$ passing through any given point. Hence, after one blow-up at any point, we have a $-1$ $J$-holomorphic torus (possibly reducible) in class $4U-2T-E$. Notice its Seiberg-Witten dimension is negative, so generically there is no curve in this class. 

It is interesting to see that if we blow down along the other $-1$ curve, the one in class $T-E$, we will have $S^2\times T^2$.  The curve class is $4U+T$ in it (now our $U^2=0$), which is a genus $4$ class. Since the previous class in one point blow-up is represented as $4U+T-3E$ in our new basis, it implies every point is a triple point of a holomorphic curve in class $4U+T$ for any complex structures, which is of course not a generic phenomenon.

\section{Nakai-Moishezon duality for manifolds with abundant negative self-intersection curves}\label{infmethod}
In this section, we assume the tamed almost complex $4$-manifold $(M, J)$ has sufficiently many negative curves, such that $\mathcal P_J$ has no round boundary. We say there is no round boundary if the boundary is a cone over a polytope. Thus any class $e$ in the boundary of such $\mathcal P_J$  with $e^2=0$ should have $e\cdot C=0$ for some $C\in A_J(M)$.

As mentioned in the introduction, besides the subvarieties-current-form strategy, there is another way to attack Question \ref{dualC}. This is our main focus in this section. Alongside the main theorem in \cite{LZ},  we will need to construct $J$-tamed symplectic forms from an existing one. We use three operations in this section. The first one is the $J$-tamed inflation along curves with negative self-intersection (and sometimes along curves of square $0$), as described in Theorem \ref{buse} (Theorem \ref{mc} respectively). The second one is the summing of two $J$-tamed symplectic forms. The third one is rescaling, i.e. multiplying any $J$-tamed symplectic form with a positive number. The latter two make sense since $\mathcal K_J^t$ is a convex cone.

Let us begin with several lemmas. First, let us determine the polytopic boundary of $\mathcal P_J$.

\begin{lemma}\label{negbdy}
Assume that $h_J^+(M)=b^-(M)+1$ and $\mathcal P_J\ne \emptyset$.  
\begin{enumerate}
\item If $b^-(M)>1$, all the boundary hyperplanes are determined by negative curves.
\item If $b^-(M)=1$, all the boundary hyperplanes are determined by non-positive curves.
\item If $b^-(M)=0$, then $\mathcal P_J$ is a single ray.
\end{enumerate}
\end{lemma}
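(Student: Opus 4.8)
\emph{The plan.} The statement is essentially Lorentzian (light--cone) geometry once the right structural facts are in place. First I record that, under the hypothesis $h_J^+(M)=b^-(M)+1$ with $\mathcal P_J\ne\emptyset$, the cup product restricted to $H_J^+(M)$ is nondegenerate of signature $(1,b^-(M))$; this is where the hypothesis on $h_J^+$ is used, it is automatic when $b^+(M)=1$ since then $H_J^+(M)=H^2(M;\mathbb R)$, and in general it is part of the framework of \cite{DLZ, LZ}. Second, every class of a $J$-holomorphic subvariety lies in $H_J^+(M)$: if $C$ is $J$-holomorphic and $\beta$ is a closed $J$-anti-invariant $2$-form, then $\beta$ vanishes on the $J$-invariant $2$-plane $T_pC$ at each smooth point $p$ of $C$, hence $[\beta]\cdot[C]=\int_C\beta=0$; since $H^2(M;\mathbb R)=H_J^+(M)\oplus H_J^-(M)$ and this sum is orthogonal (the wedge of a $J$-invariant and a $J$-anti-invariant $2$-form vanishes pointwise), we get $[C]\in H_J^+(M)$. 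Thus $A_J(M)$, and with it $\overline A_J^{\vee,>0}(M)$ and $\mathcal P_J$, all live in the Lorentzian space $H_J^+(M)$, with $\mathcal P_J$ contained in the open forward light cone $\{x\in H_J^+(M):x^2>0,\ x\ \text{forward}\}$. In particular, when $b^-(M)=0$ the space $H_J^+(M)$ is one--dimensional and the forward cone is a single ray, which is part (3).

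Assume now $b^-(M)\ge 1$ and let $W$ be a boundary hyperplane of $\mathcal P_J$, i.e. a supporting hyperplane whose intersection $F:=W\cap\overline{\mathcal P_J}$ is a facet, $\dim F=h_J^+(M)-1=b^-(M)$. Let $e$ be a point of the relative interior of $F$ and let $\bar v\in H_J^+(M)$ be the inward normal, so $W=\bar v^{\perp}$ and $\mathcal P_J\subset\{x:x\cdot\bar v\ge 0\}$. I claim $e^2>0$, unless $b^-(M)=1$ and $e$ spans a null ray of $\mathcal P$. Indeed $F\subset\overline{\mathcal P}\cap H_J^+(M)=\{x^2\ge 0\}$, so if the relative-interior point $e$ had $e^2=0$ then $x\mapsto x^2$ would attain an interior minimum on $F$ at $e$; computing the gradient, this forces $e\perp W$, i.e. $e\in\mathbb R\bar v$, and then $e\cdot\bar v=0$ gives $\bar v^2=0$. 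But in signature $(1,b^-(M))$ a null hyperplane $\bar v^{\perp}$ carries a negative semidefinite form with one--dimensional kernel $\mathbb R\bar v$, so $\{x^2\ge 0\}\cap\bar v^{\perp}=\mathbb R\bar v$ and hence $F\subset\mathbb R\bar v$, forcing $b^-(M)=\dim F\le 1$. This proves the claim.

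When $e^2>0$ the argument finishes quickly. Then $e\in\mathcal P$ while $e\in\partial\mathcal P_J$, so $e\notin A_J^{\vee,>0}(M)$; since $e\in\overline{A_J^{\vee,>0}(M)}$ we have $e\cdot[C']\ge 0$ for every irreducible curve $C'$, with equality for at least one of them, say $C$. As $e$ is in the open forward cone of $H_J^+(M)$, the light cone lemma (Lemma~\ref{light}) applied in $H_J^+(M)$ shows that $e^{\perp}$ is negative definite, so any nonzero $x\in H_J^+(M)$ with $x^2\ge 0$ has $e\cdot x\ne 0$; since $0\ne[C]\in H_J^+(M)$ and $e\cdot[C]=0$, this forces $[C]^2<0$, i.e. $C$ is a negative curve. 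Finally $[C]^{\perp}\cap H_J^+(M)$ is a supporting hyperplane of $\mathcal P_J$ through the relative-interior point $e$ of the facet $F$, and such a supporting hyperplane is unique; therefore $W=[C]^{\perp}\cap H_J^+(M)$, giving part (1) and the generic case of part (2). In the remaining case of (2), $b^-(M)=1$ with $e$ on a null ray of $\mathcal P$, one has $W=e^{\perp}=\mathbb R e$; if this ray is genuinely cut out by $\overline A_J^{\vee,>0}(M)$ (and not merely inherited from the round boundary of $\mathcal P$), the dual description produces an irreducible curve $C$ with $[C]$ proportional to $e$, hence $[C]^2=0$, a non-positive curve.

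The only place the argument is more than bookkeeping — and hence the expected obstacle — is the implication "$e\in\partial\mathcal P_J$ with $e^2>0$ $\Rightarrow$ $e\cdot[C]=0$ for an \emph{honest} irreducible curve $C$" and its $b^-(M)=1$ null-ray variant: one must know that a facet of $\mathcal P_J$ is realized by the orthogonal complement of a genuine $J$-holomorphic curve class, not merely by a limit of such. For $b^-(M)\ge 2$ this comes out of $e\notin A_J^{\vee,>0}(M)$ together with $e\in\overline{A_J^{\vee,>0}(M)}$ as above; the delicate point is the treatment of the null directions in the $b^-(M)=1$ case, which is precisely what forces the weaker ``non-positive'' conclusion there and should be spelled out carefully. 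The structural inputs of the first paragraph — the signature of $H_J^+(M)$ and the containment $A_J(M)\subset H_J^+(M)$ — I would cite from \cite{DLZ, LZ} rather than reprove.
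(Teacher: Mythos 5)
Your proof is correct and rests on the same engine as the paper's -- the light cone lemma applied inside the Lorentzian space $H_J^+(M)$ of signature $(1,b^-(M))$ -- but it runs the argument in the opposite direction. The paper classifies which curves \emph{can} cut out a boundary hyperplane: if $C^2>0$ then $e\cdot[C]>0$ for every nonzero $e\in\overline{\mathcal P_J}$, so $[C]^{\perp}$ meets $\overline{\mathcal P_J}$ only at the origin; if $C^2=0$ then $[C]^{\perp}\cap\overline{\mathcal P_J}$ is at most the ray $\mathbb R^+[C]$, which is a hyperplane only when $b^-(M)=1$; hence only negative (resp.\ non-positive) curves remain. You instead start from a facet, show a relative-interior point $e$ has $e^2>0$ except in the degenerate $b^-(M)=1$ null-ray case, extract an honest irreducible curve $C$ with $e\cdot[C]=0$ from $e\in\overline{A_J^{\vee,>0}}\setminus A_J^{\vee,>0}$, and conclude $[C]^2<0$ because $e^{\perp}$ is negative definite. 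What your route buys is an explicit treatment of a step the paper leaves implicit: that every polytopic boundary hyperplane is realized by the orthogonal complement of a genuine curve class and not merely by a limit of such hyperplanes. What it costs is the extra Lorentzian bookkeeping (the interior-minimum argument for $e^2>0$) and, as you yourself flag, a residual delicacy at $e\notin A_J^{\vee,>0}(M)$: since $A_J^{\vee,>0}(M)$ is an intersection of possibly infinitely many open half-spaces it need not be open, so a boundary point of $\mathcal P_J$ could in principle still lie in $A_J^{\vee,>0}(M)$ when the hyperplanes $[C_n]^{\perp}$ accumulate; this is exactly the point where your argument (and the paper's) relies on the polytopic structure of the boundary, and it deserves one more line. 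Both proofs deliver the same statement; yours is the more self-contained of the two.
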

\begin{proof}
Recall $\mathcal P_J\subset  H_J^+(M)$, The third item is self-evident. Now we can assume $b^-(M)>0$.

By the light cone lemma, if $A$, $B$ are classes in $H_J^+$ with $A^2>0$, $B^2\ge 0$, then we have $A\cdot B>0$. This implies any positive curves cannot determine a polytopic boundary of $\mathcal P_J$.

If $A$, $B$ are classes in $H_J^+$ with $A^2, B^2\ge 0$, then we have $A\cdot B\ge 0$. And the equality holds if and only if $A$ is proportional to $B$. This implies any square $0$ curve classes will contribute a ray in the polygonal boundary of $\mathcal P_J$. It is a boundary hyperplane only when the cone $\mathcal P_J$ has dimension $2$, {\it i.e.} $b^-(M)=1$.

These give the proof of the first two facts.
\end{proof}

The next one is on the geometric property of a general $\mathcal P_J$. 

\begin{lemma}\label{closed}
Let $C_i$'s be the irreducible $J$-holomorphic curves in $A_J(M)$. If $C_i^2<0$, for any class $A\in  \mathcal P_J$, and any $0< \epsilon < \dfrac{A\cdot [C_i]}{-C_i^2}$, the class $(A+\epsilon [C_i])\in \mathcal P_J$. If $C_i^2\ge 0$, $A+\epsilon [C_i]\in  \mathcal P_J$ for any $\epsilon>0$.
\end{lemma}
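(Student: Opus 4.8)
The statement is purely a convexity-plus-light-cone computation, and the cleanest route is to verify the two defining conditions of $\mathcal{P}_J=A_J^{\vee,>0}(M)\cap\mathcal{P}$ separately for the class $A+\epsilon[C_i]$: namely (a) positivity of its self-intersection, and (b) positivity of its pairing with every curve class in $A_J(M)$. I would first record the elementary identity
\[
(A+\epsilon[C_i])^2 = A^2 + 2\epsilon\,A\cdot[C_i] + \epsilon^2 C_i^2 .
\]
Since $A\in\mathcal{P}_J$ we have $A^2>0$ and $A\cdot[C_i]>0$ (the latter because $[C_i]\in A_J(M)$ and $A$ lies in the positive dual). When $C_i^2\ge 0$ every term is non-negative and the first is strictly positive, so $(A+\epsilon[C_i])^2>0$ for all $\epsilon>0$; when $C_i^2<0$ the right-hand side is a downward parabola in $\epsilon$ which is positive at $\epsilon=0$, and a direct check shows it stays positive on the interval $0\le\epsilon<\tfrac{A\cdot[C_i]}{-C_i^2}$ — indeed at the endpoint $\epsilon_0=\tfrac{A\cdot[C_i]}{-C_i^2}$ one computes $(A+\epsilon_0[C_i])^2 = A^2 + \epsilon_0 A\cdot[C_i] > 0$, and on $[0,\epsilon_0]$ the function is concave with positive endpoint values, hence positive throughout (one should be slightly careful that $\epsilon_0$ may not be the larger root, but concavity plus positivity at both $0$ and $\epsilon_0$ suffices). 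This handles (a).

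For (b), let $D$ be any irreducible $J$-holomorphic curve class. If $D$ is not proportional to $[C_i]$, then by local positivity of intersections $[C_i]\cdot D\ge 0$, so $(A+\epsilon[C_i])\cdot D = A\cdot D + \epsilon\,[C_i]\cdot D \ge A\cdot D > 0$ since $A\in A_J^{\vee,>0}(M)$. If $D$ is proportional to $[C_i]$, then $(A+\epsilon[C_i])\cdot D$ is a positive multiple of $(A+\epsilon[C_i])\cdot[C_i] = A\cdot[C_i] + \epsilon C_i^2$; when $C_i^2\ge 0$ this is clearly positive, and when $C_i^2<0$ the constraint $\epsilon<\tfrac{A\cdot[C_i]}{-C_i^2}$ is exactly what forces $A\cdot[C_i]+\epsilon C_i^2 > 0$. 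Taking positive combinations, $(A+\epsilon[C_i])$ pairs positively with all of $A_J(M)$, so $A+\epsilon[C_i]\in A_J^{\vee,>0}(M)$.

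Combining the two verifications, $A+\epsilon[C_i]\in A_J^{\vee,>0}(M)\cap\mathcal{P}=\mathcal{P}_J$ in the stated range of $\epsilon$, which is the claim. The one genuinely substantive point — really the only place any care is needed — is the passage from ``positive on every irreducible curve class'' to ``positive on the whole cone $A_J(M)$'', together with the observation that the positive-dual condition is with respect to pairing against $A_J(M)$ and that $\mathcal{P}_J$ is by definition the intersection with $\mathcal{P}$; once the bookkeeping of these definitions is straight, the estimate itself is the two-line parabola argument above. I do not anticipate a real obstacle here: the lemma is essentially the ``inflation stays in the positive dual cone'' bookkeeping that underlies the formal $J$-inflation procedure in this section.
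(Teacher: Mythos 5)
Your proof is correct and follows essentially the same route as the paper: verify directly that $A+\epsilon[C_i]$ pairs positively with $[C_i]$ itself, with every other irreducible curve class, and has positive square. The only cosmetic difference is in the last step, where the paper reuses the already-established positivity of $(A+\epsilon[C_i])\cdot[C_i]$ via the identity $(A+\epsilon[C_i])^2=A^2+\epsilon A\cdot[C_i]+\epsilon(A+\epsilon[C_i])\cdot[C_i]$, while you run the equivalent concave-parabola/endpoint argument.
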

\begin{proof}
First, $(A+\epsilon [C_i]) \cdot [C_i]= A \cdot [C_i](1+\epsilon \dfrac{C_i^2}{A\cdot [C_i]})>0$. 

When $i\neq j$, $(A+\epsilon [C_i]) \cdot [C_j]= A\cdot [C_j]+\epsilon [C_i]\cdot [C_j]>0$ because both terms are positive. 

Finally, $(A+\epsilon [C_i])^2= A^2+ \epsilon A\cdot [C_i] +(A+\epsilon [C_i]) \cdot [C_i]>0$.
\end{proof}

Among the $3$ operations mentioned above for constructing $J$-tamed symplectic form, the $J$-tamed inflation is the most important one. One of the most effective tools to determine the symplectic cone of a $4$-manifold is the (positive) symplectic inflation process introduced by Lalonde and McDuff in \cite{LaMc} along symplectic curves with non-negative self-intersection.  In \cite{LU}, this construction is extended to the case of negative self-intersection curves. 
 There is also a corresponding $J$-tamed version of it. McDuff, in \cite{Mc2}, proved the following result regarding the existence of (embedded) $J$-holomorphic curves with non-negative self-intersection.

\begin{theorem}[McDuff]\label{mc}
Let $J$ be a $\tau_0$-tame almost complex structure on a symplectic $4$-manifold $(M,\tau_0)$ that admits an embedded $J$-holomorphic curve $Z$ with $Z\cdot Z\ge 0$. Then there is a family $\tau_{\lambda}$, $\lambda\ge 0$, of symplectic forms that all tame $J$ and have cohomology class $[\tau_{\lambda}]=[\tau_0]+\lambda [Z]$. \end{theorem}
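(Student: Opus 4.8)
The plan is to reduce the statement to a local problem near $Z$. It suffices to produce a closed $2$-form $\mu$ on $M$, supported in an arbitrarily small neighbourhood of $Z$, with $[\mu]$ equal to the Poincar\'e dual $\mathrm{PD}[Z]$, and satisfying $\mu(v,Jv)\ge 0$ for every tangent vector $v$. Granting this, one sets $\tau_\lambda:=\tau_0+\lambda\mu$; this is closed with $[\tau_\lambda]=[\tau_0]+\lambda[Z]$, and $\tau_\lambda(v,Jv)=\tau_0(v,Jv)+\lambda\,\mu(v,Jv)\ge\tau_0(v,Jv)>0$ for all $v\ne 0$, so $\tau_\lambda$ tames $J$; in dimension $4$ a $J$-taming $2$-form is automatically nondegenerate (if $\iota_v\tau_\lambda=0$ then $\tau_\lambda(v,Jv)=0$), hence symplectic. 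So the entire content lies in constructing $\mu$.

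To construct $\mu$, I would first use that $Z$ embedded and $J$-holomorphic makes $\tau_0|_Z$ an area form on $Z$ and the normal bundle $N\to Z$ a complex line bundle of degree $Z\cdot Z=k\ge 0$. Choose a Hermitian metric and a compatible connection on $N$ and a tubular-neighbourhood identification of a disk bundle $U_R\subset N$ with a neighbourhood of $Z$ in $M$, so that $J$ coincides at $r=0$ (the fibrewise radial coordinate) with the split model $J_0=j_Z\oplus i_{\mathrm{fib}}$ and is $C^0$-close to $J_0$ on $U_R$ once $R$ is small. With $\alpha$ the connection form on the circle bundle ($d\alpha=-\pi^*F$, $F$ the curvature) and $g(r)$ a cutoff, $g\equiv 1$ near $0$, $g\equiv 0$ near $R$, $g'\le 0$, put $\mu:=-\frac{1}{2\pi}\,d(g(r)\,\alpha)=-\frac{1}{2\pi}g'(r)\,dr\wedge\alpha+\frac{1}{2\pi}g(r)\,\pi^*F$, extended by $0$. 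This $\mu$ is closed, supported in $U_R$, and has fibre integral $1$, so $[\mu]=\mathrm{PD}[Z]$. The first term is a non-negative multiple of the fibrewise area form and the second is a pullback from $Z$; when $k>0$ one may take $F$ to be a positive area form on $Z$, making $\mu$ a non-negative $(1,1)$-form for $J_0$, and then $\mu(v,Jv)\ge 0$ follows provided $R$ is small enough and the profile $g$ is chosen so that the fibre-positive and curvature-positive parts dominate the $C^0$-error $J-J_0$ throughout $U_R$.

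The hard part is the case $Z\cdot Z=0$ (together with the bookkeeping in the transition annulus in general). When $k=0$ the normal bundle is flat, one must take $F\equiv 0$, and the candidate $\mu$ then degenerates exactly along the horizontal $TZ$-directions; an arbitrarily small perturbation of $J_0$ can make $\mu(v,Jv)<0$ for vectors $v$ that are nearly horizontal, so the crude estimate fails. This is precisely where McDuff's argument does real work: one must exploit the flexibility in the choice of tubular neighbourhood and radial profile to keep the fibre-positive part of $\mu$ overwhelmingly dominant on an arbitrarily thin disk bundle, or else first deform $J$ through $\tau_0$-tamed almost complex structures to one that is integrable near $Z$, run the construction in that normal form, and carry the resulting symplectic forms back. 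I would organize the proof so that $k>0$ is disposed of by the computation above and $k=0$ is reduced to such a normal form (or to the positive case after a small deformation), and I expect essentially all of the technical difficulty to sit in that reduction.
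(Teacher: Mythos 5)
The paper does not prove this statement: it is quoted from McDuff \cite{Mc2} as a known result, so there is no in-paper argument to compare yours against. Your reduction is the correct one --- it suffices to exhibit a closed $2$-form $\mu$ in the class $\mathrm{PD}[Z]$, supported near $Z$, with $\mu(v,Jv)\ge 0$ for all $v$ --- and the Thom-form ansatz $\mu=-\tfrac{1}{2\pi}d(g(r)\alpha)$ is the standard candidate. The gap is in the positivity verification, and it is not confined to the case $Z\cdot Z=0$.

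Concretely, the claim that for $k=Z\cdot Z>0$ the inequality $\mu(v,Jv)\ge 0$ ``follows provided $R$ is small enough'' is false for the form as written. Take a point $x$ at distance $r>0$ from $Z$ in the region where $g\equiv 1$ (so $\mu=\tfrac{1}{2\pi}\pi^*F$ there), and a vector $v=\epsilon v_h+v_f$ with $v_f$ vertical of unit length and $v_h$ horizontal of unit length. Then $\mu(v,J_0v)=\tfrac{1}{2\pi}\epsilon^2F(\pi_*v_h,j\pi_*v_h)=O(\epsilon^2)$, while the error contains the cross term $\tfrac{1}{2\pi}\,\epsilon\, F\bigl(\pi_*v_h,\pi_*((J-J_0)v)\bigr)$, which has magnitude of order $\epsilon r$ and either sign; for $\epsilon\ll r$ the sum is negative. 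Shrinking $R$ does not help, since this occurs at every $r>0$. Repairing it by taking $g'<0$ on all of $(0,R)$ (so the fibre term is strictly positive off $Z$) runs into the complementary problem at the outer edge of the tube: positive semidefiniteness of the resulting quadratic form in $(|v_f|,|v_h|)$ forces a uniform bound on $-g'/g$, which is incompatible with $g$ vanishing near $r=R$. So no choice of radius and profile closes a purely $C^0$ estimate on $J-J_0$; one must first put the tubular neighbourhood into a $J$-adapted normal form --- e.g.\ fibre a neighbourhood of $Z$ by $J$-holomorphic discs transverse to $Z$, so that the vertical distribution is $J$-invariant everywhere and both pieces of the Thom form become exactly, not merely approximately, $J$-semi-positive. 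That normalization is the real content of McDuff's lemma, and once it is in place the case $k=0$ comes along for free (the fibre term alone is then honestly semi-positive). Finally, your fallback of deforming $J$ to a structure $J'$ integrable near $Z$ and ``carrying the forms back'' does not yield the full family: if $\tau_\lambda$ tames $J'$, then $\tau_\lambda(v,Jv)\ge\tau_\lambda(v,J'v)-\|\tau_\lambda\|\,\|J-J'\|\,|v|^2$, and since $\|\tau_\lambda\|$ grows linearly in $\lambda$ while the taming lower bound in the directions where $\mu$ degenerates is only the fixed $\tau_0(v,J'v)$, the estimate fails once $\lambda$ is large.
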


More recently, Buse (in \cite{Bus}) provided the corresponding version when $J$-holomorphic curves  with negative self-intersection are in presence. 

\begin{theorem}[Buse]\label{buse}
Fix a symplectic $4$-manifold $(M^4, J, \tau_0)$ such that $J$ is any $\tau_0$-tame almost complex structure. Assume that $M$ admits an embedded $J$-holomorphic curve $u:(\Sigma,j)\rightarrow (M^4,J)$ in a homology class $Z$ with $Z^2=-m$. For all $\epsilon>0$ there exist a family of symplectic forms $\tau_{\mu}$ all tame $J$ which satisfy
$$[\tau_{\mu}]=[\tau_0]+\mu Z$$
for all $0\le \mu \le \frac{\tau_0(Z)}{m}-\epsilon$. 
\end{theorem}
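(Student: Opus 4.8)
The plan is to obtain $\tau_\mu$ as $\tau_0+\mu\rho$ for a suitable closed $2$-form $\rho$ which is Poincar\'e dual to $Z$ and supported in a thin tubular neighborhood of $C:=u(\Sigma)$; this is the inflation construction of Lalonde--McDuff \cite{LaMc}, in the form used by McDuff \cite{Mc2} and Li--Usher \cite{LU}, carried out here for a curve of negative normal degree. We may assume $m>0$, since the case $Z^2\ge 0$ is Theorem \ref{mc}; note that the upper bound $\frac{\tau_0(Z)}{m}-\epsilon$ is positive because, $J$ being $\tau_0$-tame and $C$ being $J$-holomorphic, $\tau_0(Z)=\int_C\tau_0>0$. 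Since $u$ is an embedding, $C$ is a smoothly embedded surface; let $\nu\to C$ be its normal bundle, which has $\deg\nu=Z^2=-m$, and fix a Hermitian metric and connection on $\nu$. Identify a neighborhood of $C$ with the radius-$\delta$ disk bundle $N_\delta$, with fiber polar coordinates $(r,\theta)$ and connection $1$-form $\alpha$ on the unit circle bundle. One writes $\rho=d\big(\chi(r)\,\alpha\big)$, corrected by a horizontal curvature term and a vertical bump, normalized so that: (i) $\rho$ extends by $0$ to a closed form on $M$ representing $\mathrm{PD}(Z)$; (ii) restricted to $TC$ along the zero section, $\rho|_{TC}=-\frac{m}{\tau_0(Z)}\,u^*\tau_0$, a negative multiple of a taming area form on $C$; (iii) on each normal fiber $\rho$ is a nonnegative $2$-form. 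Then $d\tau_\mu=0$ and $[\tau_\mu]=[\tau_0]+\mu Z$ are immediate, and the entire content is the taming inequality $\tau_\mu(v,Jv)>0$ for all $0\ne v\in TM$.

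Outside $N_\delta$ we have $\tau_\mu=\tau_0$, which tames $J$, so it suffices to verify the inequality on $N_\delta$. Using the connection, split $v\in T_pM$ (for $p\in N_\delta$) as $v=v^{\top}+v^{\perp}$ into a horizontal part (mapping isomorphically to $T_{\pi(p)}C$) and a vertical (fiber) part. The form $\tau_0(\cdot,J\cdot)$ is positive definite; by (iii) the vertical directions satisfy $\rho(v^{\perp},Jv^{\perp})\ge 0$, so they only help. The dangerous directions are the (nearly) horizontal ones: by (ii), for $v$ horizontal at a zero-section point,
$$\tau_\mu(v,Jv)=\tau_0(v,Jv)+\mu\,\rho(v,Jv)=\Big(1-\mu\,\frac{m}{\tau_0(Z)}\Big)\,\tau_0(v,Jv),$$
which is $\ge \frac{m\epsilon}{\tau_0(Z)}\,\tau_0(v,Jv)>0$ whenever $\mu\le\frac{\tau_0(Z)}{m}-\epsilon$, and slightly off the zero section the same estimate holds up to an $O(\delta)\,|v|^2$ error. (At $\mu=\frac{\tau_0(Z)}{m}$ this term vanishes on $TC$, which is precisely why the $-\epsilon$ is forced.)

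The main obstacle is the cross terms. Away from the zero section $J$ need not preserve the horizontal/vertical splitting, and one must bound $\tau_0(v^{\top},Jv^{\perp})$, $\rho(v^{\top},Jv^{\perp})$ and the analogous contributions so that the indefinite mixed part is dominated by the positive-definite pieces above, \emph{uniformly} in $v$ on the unit sphere bundle of $TM|_{N_\delta}$, uniformly over the compact surface $C$, and uniformly for $\mu$ in the closed interval $[0,\frac{\tau_0(Z)}{m}-\epsilon]$. I would package this as follows: at $\delta=0$ the quadratic form $q_\mu(v)=\tau_0(v,Jv)+\mu\,\rho(v,Jv)$ is positive definite on all of $T_pM$ for $p\in C$ and $\mu$ in that interval --- this is exactly what the two cases above show, using that the splitting is $J$-invariant along $C$ because $C$ is $J$-holomorphic --- and positive definiteness is an open condition, so by continuity and compactness of $C\times\{|v|=1\}\times[0,\frac{\tau_0(Z)}{m}-\epsilon]$ it persists for all sufficiently small $\delta$. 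Choosing $\delta$ accordingly produces the family $\tau_\mu$, $0\le\mu\le\frac{\tau_0(Z)}{m}-\epsilon$, all taming $J$; nondegeneracy of $\tau_\mu$ as a $2$-form on the $4$-manifold is automatic once it tames an almost complex structure, and closedness and the cohomology class were noted above. This completes the argument.
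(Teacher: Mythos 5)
This theorem is not proved in the paper at all: it is quoted verbatim from Buse's paper \cite{Bus} (building on McDuff's tamed inflation \cite{Mc2}), so there is no in-paper argument to compare against. Your proposal is a reconstruction of the standard negative-inflation proof, and its overall architecture --- a Thom form $\rho=d(\chi(r)\alpha)$ supported in a tubular neighbourhood, normalized so that its fibre part is nonnegative and its restriction to $TC$ is the negative multiple $-\frac{m}{\tau_0(Z)}u^*\tau_0$ of the taming area form, followed by a compactness/openness argument to absorb the off-zero-section errors --- is the right one and matches the cited source in outline.

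There is one step that, as written, does not follow. You assert that positivity of $q_\mu(v)=\tau_0(v,Jv)+\mu\rho(v,Jv)$ on the horizontal summand $T_pC$ and on the vertical summand $\nu_p$ separately, together with $J$-invariance of the splitting, gives positive definiteness of $q_\mu$ on all of $T_pM$ for $p\in C$. That inference is false in general: a quadratic form can be positive on two complementary subspaces and still fail to be positive definite, because of cross terms. Here the cross terms of $\rho$ do vanish at the zero section (your $\rho$ has no mixed horizontal--vertical component there), but the cross terms of $\tau_0(\cdot,J\cdot)$ --- equivalently of the taming metric $g(u,w)=\tfrac12\bigl(\tau_0(u,Jw)+\tau_0(w,Ju)\bigr)$ --- need not. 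If $g(v^\top,v^\perp)$ is sufficiently negative one can have $g(v,v)<(1-\epsilon')\,g(v^\top,v^\top)$, which destroys your estimate $q_\mu(v)\ge g(v,v)-(1-\epsilon')g(v^\top,v^\top)$. The fix is easy but must be said: since $g$ is $J$-invariant and $T_pC$ is $J$-invariant, take $\nu_p=(T_pC)^{\perp_g}$ (e.g.\ build the tubular neighbourhood with the exponential map of $g$). Then the splitting is simultaneously $J$-invariant and $g$-orthogonal, both $g$ and $\rho$ have vanishing cross terms along $C$, and $q_\mu(v)\ge \epsilon'\,g(v^\top,v^\top)+g(v^\perp,v^\perp)>0$ as you intend; the rest of your continuity-and-compactness argument then goes through. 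With that amendment the proof is complete modulo the routine verifications you have already indicated.
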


For the convenience of discussion, let us introduce the notion of the {\em formal $J$-inflation}.

\begin{definition}
An operation on a class $A$ is called a formal $J$-inflation along the cohomology class $C \in H_J^+(M)$ with $A\cdot C \ge 0$ and $C^2<0$, 
if $A$ is transformed to $A+\epsilon C$ with $0< \epsilon \le\dfrac{A\cdot C}{-C^2}$. When  $\epsilon=\dfrac{A\cdot C}{-C^2}$, we call it a maximal formal $J$-inflation.
\end{definition}

A self-evident fact for this definition is that a class obtained from formal $J$-inflation could be approximated by genuine $J$-tamed symplectic inflations if the class $A\in \mathcal K_J^t$ and $C$ is the class of an embedded $J$-holomorphic curve with $C^2<0$. This fact will be used frequently in this section along with the main result in \cite{LZ} that $\mathcal K_J^t\cap H_J^+(M)=\mathcal K_J^c$.

Lemma \ref{closed} demonstrates that the closure of the dual cone $ \mathcal P_J$ is closed under the operation of formal $J$-inflation. Because $\mathcal P_J$ is a convex cone, it is also closed under summing and rescaling. Thus $\overline{\mathcal P_J}$ is closed under all the three operations. Moreover, after the three operations, the class will still stay in the closure of the same connected component of $\mathcal K_J^t$ as beginning.

\begin{lemma}\label{lightcone} Suppose $h_J^+(M)=b^-(M)+1$. 
Let $C_1$ and $C_2$ be two smooth $J$-holomorphic curves with negative self-intersection, which provide two hyperplane pieces $\mathcal C_1$ and $\mathcal C_2$ of the boundary respectively. If the intersection $\mathcal C_1\cap \mathcal C_2\cap \overline{\mathcal P_J}\ne \emptyset$, then $([C_1]\cdot [C_2])^2\le C_1^2\cdot C_2^2$. Moreover, the equality holds if and only if there is a unique ray in the above intersection which is spanned by $[C_1]-\frac{[C_1]\cdot [C_2]}{C_2^2}[C_2]$.
\end{lemma}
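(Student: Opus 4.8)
The plan is to reduce the whole statement to linear algebra in $H_J^+(M)$. Under the standing hypothesis $h_J^+(M)=b^-(M)+1$ (together with $\mathcal P_J\neq\emptyset$), the intersection form on $H_J^+(M)$ is nondegenerate of Lorentzian signature $(1,b^-(M))$, so the desired inequality $([C_1]\cdot[C_2])^2\le C_1^2C_2^2$ is simply a reverse Cauchy--Schwarz for null/timelike directions, in the same circle of ideas as the light cone lemma (Lemma \ref{light}) that already powers Lemma \ref{negbdy}. Write $D_i=[C_i]\in H_J^+(M)$, so $D_i^2=C_i^2<0$ and $\mathcal C_i=D_i^\perp\cap H_J^+(M)$; since $\mathcal C_1$ and $\mathcal C_2$ are two distinct boundary hyperplane pieces, $D_1$ and $D_2$ are linearly independent, and $V:=\mathrm{Span}(D_1,D_2)$ is a $2$-plane whose Gram determinant is $\det G=C_1^2C_2^2-(D_1\cdot D_2)^2$.

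First I would take a nonzero $A\in\mathcal C_1\cap\mathcal C_2\cap\overline{\mathcal P_J}$, which exists by hypothesis; then $A^2\ge 0$ (as $\overline{\mathcal P_J}\subseteq\overline{\mathcal P}$) and $A\perp V$. The key observation is that $V$ must be negative semidefinite: if some $B\in V$ had $B^2>0$, then $B^\perp$ would be negative definite by the Lorentzian signature, contradicting $0\neq A\in B^\perp$ with $A^2\ge0$. A negative semidefinite symmetric form on a $2$-plane has $\det G\ge0$, which is precisely the inequality.

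For the equality case, $\det G=0$ means $V$ is degenerate with one–dimensional radical, and a direct computation identifies that radical as spanned by $r:=D_1-\frac{D_1\cdot D_2}{C_2^2}D_2$: one checks $r\cdot D_2=0$ identically, and $r^2=r\cdot D_1=\det G/C_2^2$. Since $\mathcal C_1\cap\mathcal C_2=V^\perp$, I would then show the intersection form restricted to $V^\perp$ is negative semidefinite with radical exactly $V^\perp\cap V=\mathbb R r$ — it has no positive vector, because such a vector's orthogonal complement is negative definite and hence cannot contain the null class $r$; and its radical is $V^\perp\cap(V^\perp)^\perp=V^\perp\cap V$ since the ambient form is nondegenerate. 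Consequently every $A\in V^\perp$ with $A^2\ge0$ has $A^2=0$ and lies in $\mathbb R r$, so $\mathcal C_1\cap\mathcal C_2\cap\overline{\mathcal P_J}\subseteq\mathbb R r$, and being nonempty inside the pointed cone $\overline{\mathcal P_J}$ (which lies in the closure of a single component of $\mathcal P$) it is exactly the ray through $r$. Conversely, if that intersection is the single ray through $r$, then $r^2\ge0$, and $r^2=\det G/C_2^2$ with $C_2^2<0$ forces $\det G\le0$, hence $\det G=0$ by the first part.

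I do not anticipate a genuine obstacle: the argument is pure Lorentzian linear algebra, structurally parallel to the proof of Lemma \ref{negbdy}. The one point that needs care is the equality case, where one must verify that the common face of the two facets collapses to a single ray rather than a higher–dimensional face; this is handled exactly by the computation of the signature and radical of the intersection form restricted to $V^\perp$ sketched above.
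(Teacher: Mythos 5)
Your proof is correct and is essentially the paper's argument: both reduce to Lorentzian linear algebra in $H_J^+(M)$ via the light cone lemma, the paper by exhibiting the explicit vector $[C_2]-\frac{[C_1]\cdot[C_2]}{C_1^2}[C_1]$ of positive square orthogonal to $A$ when the inequality fails, you by observing that $\mathrm{Span}([C_1],[C_2])$ must be negative semidefinite so its Gram determinant is nonnegative — two phrasings of the same fact. Your treatment of the equality case is slightly more complete (you identify the radical and prove the converse implication, which the paper leaves implicit), but no new idea is involved.
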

\begin{proof}
Let us assume $[C_1]\cdot [C_2]\neq 0$ from now on. 
Suppose $([C_1]\cdot [C_2])^2> C_1^2\cdot C_2^2$ and there is a class $A\in \mathcal C_1\cap \mathcal C_2\cap \overline{\mathcal P_J}$. 

Hence we can construct a class $[C_2]-\frac{C_1\cdot C_2}{C_1^2}[C_1]$. Notice this class pairs non-negatively with all the curve classes. Moreover,
$$([C_2]-\frac{[C_1]\cdot [C_2]}{C_1^2}[C_1])^2=C_2^2-\frac{([C_1]\cdot [C_2])^2}{C_1^2}>0.$$

Since $A\cdot ([C_2]-\frac{[C_1]\cdot [C_2]}{C_1^2}[C_1])=0$ and $\mathcal P_J\subset H_J^+(M)$, we have $A^2<0$ by applying the light cone lemma to the $(1, b^-)$ space $H_J^+(M)$. This is a contradiction. 

The equality case goes similarly, except for the last step we have $A$ is proportional to $[C_2]-\frac{[C_1]\cdot [C_2]}{C_1^2}[C_1]$. Notice $[C_2]-\frac{[C_1]\cdot [C_2]}{C_1^2}[C_1]$ and $[C_1]-\frac{[C_1]\cdot [C_2]}{C_2^2}[C_2]$ span the same ray when the equality holds.
\end{proof}

\begin{example}
Take $[C_1]=E_1$ and $[C_2]=H-E_1-E_2$, then the intersection $\mathcal C_1 \cap \mathcal C_2$ is the ray of $H-E_2$.

On the other hand, when $[C_1]=E_8$ and $[C_2]=6H-3E_1-2E_2-\cdots -2E_8$, then there is no class $A$ in $\mathcal C_1 \cap \mathcal C_2\cap \overline{\mathcal P_J}$.
\end{example}

Next lemma describes what could be obtained if we only use the $J$-inflation along two negative curves alternatively. In the following, we say a class in $\overline{\mathcal P_J}$ is {\it achieved} by formal inflations, summing and rescaling if this class could be arbitrarily approximated by these three operations starting from a class in $\overline{\mathcal P_J}$.

\begin{lemma}\label{facetoedge}Suppose $h_J^+(M)=b^-(M)+1$. 
Let $C_1$ and $C_2$ be two smooth $J$-holomorphic curves with negative self-intersection, and denote the boundary of $\mathcal P_J$ determined by them as $\mathcal C_1$ and $\mathcal C_2$ respectively. If $\mathcal C_1\cap \mathcal C_2\cap \overline{\mathcal P_J}\ne \emptyset$, then starting from any class $A\in \overline{\mathcal P_J}$, we will achieve a class in $\mathcal C_1\cap \mathcal C_2\cap \overline{\mathcal P_J}$ by taking formal inflations along $C_1$ and $C_2$ alternatively. 
\end{lemma}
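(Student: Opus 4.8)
The plan is to realize the two operations ``maximal formal $J$-inflation along $C_1$'' and ``along $C_2$'' as linear projections on $H_J^+(M)$ and to study the dynamics of their alternating composition. Write $a=-C_1^2>0$, $b=-C_2^2>0$ and $p=[C_1]\cdot[C_2]$, noting $p\ge 0$ since $C_1\ne C_2$ are distinct irreducible curves, and that $[C_1],[C_2]$ are linearly independent in $H_J^+(M)$ (if $[C_1]=\lambda[C_2]$ with $\lambda>0$ then $[C_1]\cdot[C_2]=\lambda C_2^2<0$, impossible). For $i=1,2$ set $P_i(X)=X+\frac{X\cdot[C_i]}{-C_i^2}[C_i]$; this is exactly the maximal formal $J$-inflation along $C_i$ whenever $X\cdot[C_i]\ge 0$, it is linear, and $P_i(X)\cdot[C_i]=0$, so $P_i$ projects $H_J^+(M)$ onto the hyperplane $\mathcal C_i=\{e:e\cdot[C_i]=0\}$ along $[C_i]$. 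Every class of $\overline{\mathcal P_J}$ pairs non-negatively with every curve class, and $\overline{\mathcal P_J}$ is preserved by formal $J$-inflations along negative curves by Lemma \ref{closed}; hence the alternating sequence
\[
A^{(0)}=A,\qquad A^{(1)}=P_1A^{(0)},\qquad A^{(2)}=P_2A^{(1)},\qquad A^{(3)}=P_1A^{(2)},\ \dots
\]
is well defined and stays in $\overline{\mathcal P_J}$.

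The engine of the argument is that two consecutive steps contract the relevant intersection number by the fixed factor $r:=([C_1]\cdot[C_2])^2/(C_1^2C_2^2)$. Indeed, if $X\in\mathcal C_2$ and $t=X\cdot[C_1]\ge 0$, then $P_1X=X+\tfrac{t}{a}[C_1]$ has $(P_1X)\cdot[C_2]=\tfrac{tp}{a}\ge 0$, and $P_2P_1X=X+\tfrac{t}{a}[C_1]+\tfrac{tp}{ab}[C_2]$ satisfies $(P_2P_1X)\cdot[C_1]=\tfrac{tp^2}{ab}=rt$; symmetrically, one step later the pairing with $[C_2]$ is multiplied by $r$. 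The hypothesis $\mathcal C_1\cap\mathcal C_2\cap\overline{\mathcal P_J}\ne\emptyset$ feeds Lemma \ref{lightcone}, giving $([C_1]\cdot[C_2])^2\le C_1^2C_2^2$, i.e. $r\le 1$. If some $A^{(k)}$ already lies in $\mathcal C_1\cap\mathcal C_2$ there is nothing to prove, so assume not; then all the pairings $A^{(k)}\cdot[C_i]$ occurring above are strictly positive, and from a definite index on the pairing $A^{(k)}\cdot[C_i]$ (with $i$ alternating) equals a constant times $r^{\lfloor k/2\rfloor}$ times its value at that index.

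When $r<1$, the increments $A^{(k+1)}-A^{(k)}$, which are multiples of $[C_1]$ or $[C_2]$ with coefficient a fixed multiple of $A^{(k)}\cdot[C_i]$, decay geometrically; so $\sum_k\|A^{(k+1)}-A^{(k)}\|<\infty$, the sequence is Cauchy in the finite-dimensional space $H_J^+(M)$, and $A^{(k)}\to A^{(\infty)}$. Since $A^{(k)}\cdot[C_1]\to 0$ and $A^{(k)}\cdot[C_2]\to 0$, and $\overline{\mathcal P_J}$ is closed, $A^{(\infty)}\in\mathcal C_1\cap\mathcal C_2\cap\overline{\mathcal P_J}$, achieved from $A$ by alternating formal inflations. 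When $r=1$, Lemma \ref{lightcone} identifies $\mathcal C_1\cap\mathcal C_2\cap\overline{\mathcal P_J}$ as the single ray spanned by $v:=[C_1]-\tfrac{[C_1]\cdot[C_2]}{C_2^2}[C_2]$; the computation above shows $A^{(k)}\cdot[C_1]$ eventually stabilizes at some constant $t_0>0$ and that each double step adds exactly $\tfrac{t_0}{a}v$, so $A^{(k_0+2j)}=A^{(k_0)}+j\tfrac{t_0}{a}v$. Hence $A^{(k_0+2j)}/\|A^{(k_0+2j)}\|\to v/\|v\|$; since rescaling is a permitted operation and $\overline{\mathcal P_J}$ is a closed cone, $v/\|v\|\in\mathcal C_1\cap\mathcal C_2\cap\overline{\mathcal P_J}$ is achieved.

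I expect the borderline case $r=1$ (the equality case of Lemma \ref{lightcone}) to be the main obstacle: there the alternating inflations do \emph{not} converge, the sequence of classes drifting off to infinity along $v$, so one cannot simply invoke Cauchyness and must make the linear drift explicit and then apply one final rescaling to land on the unique (ideal) ray of $\mathcal C_1\cap\mathcal C_2\cap\overline{\mathcal P_J}$. A secondary point needing care is the bookkeeping of the first few steps, so that the ``eventually geometric/constant'' behaviour of $A^{(k)}\cdot[C_i]$ holds from a definite index, together with checking at each stage that the intermediate classes keep pairing non-negatively with $[C_1]$ and $[C_2]$, so that every formal inflation performed is legitimate.
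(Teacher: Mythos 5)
Your proposal is correct and follows essentially the same route as the paper: alternating maximal formal inflations, the two-step contraction ratio $([C_1]\cdot[C_2])^2/(C_1^2C_2^2)\le 1$ supplied by Lemma \ref{lightcone}, convergence of the classes when the ratio is strictly less than $1$, and convergence of the rays (after rescaling) to the unique ray $\mathbb R^+\bigl([C_1]-\tfrac{[C_1]\cdot[C_2]}{C_2^2}[C_2]\bigr)$ in the equality case. Your explicit treatment of the linear drift and the legitimacy of each inflation step is a slightly more careful write-up of exactly the computation in the paper.
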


\begin{proof}
We may also assume the given class $A \in \mathcal C_1$, otherwise taking a maximal formal $J$-inflation along $C_1$.

We take maximal formal $J$-inflations along $C_1$ and $C_2$ alternatively.  Namely, we suppose $A_0=A$ and when $k\ge 0$, $$A_{2k+1}=A_{2k}+l_{2k+1}[C_2], \quad l_{2k+1}=\dfrac{A_{2k}\cdot [C_2]}{-C_2^2};$$
$$A_{2k+2}=A_{2k+1}+l_{2k+2}[C_1], \quad l_{2k+2}=\dfrac{A_{2k+1}\cdot [C_1]}{-C_1^2}.$$

By calculating the coefficients $l_k$ inductively, 
$$l_1=\dfrac{A\cdot [C_2]}{-C_2^2};$$
$$l_{2k+1}=l_1\cdot (\dfrac{([C_1]\cdot [C_2])^2}{C_1^2\cdot C_2^2})^k;$$
$$l_{2k}=l_1 \cdot \dfrac{[C_1]\cdot [C_2]}{-C_1^2} \cdot (\dfrac{([C_1]\cdot [C_2])^2}{C_1^2\cdot C_2^2})^{k-1}.$$

By Lemma \ref{lightcone}, we have $([C_1]\cdot [C_2])^2\le C_1^2\cdot C_2^2$. 

First let us assume $([C_1]\cdot [C_2])^2=C_1^2\cdot C_2^2$. To consider the convergence of the classes $A_k$ is indeed to consider the convergence of the corresponding rays of $A_k$. Simple calculation shows that $A_k$ approaches the ray of $[C_2]-\frac{[C_1]\cdot [C_2]}{C_1^2}[C_1]$, which is the (unique) intersection of $\mathcal C_1\cap \mathcal C_2$ in $\mathcal P$.

If we have $([C_1]\cdot [C_2])^2< C_1^2\cdot C_2^2$, we have the limit of $A_k$, whose value is $$\lim_{k\rightarrow \infty}A_k=A+\frac{l_1}{1-x}([C_2]-\frac{C_1\cdot C_2}{C_1^2}[C_1]),$$ where $x=\frac{(C_1\cdot C_2)^2}{C_1^2\cdot C_2^2}<1$. When we vary the class $A$, we get different limiting classes. It is straightforward to check that the pairing with $[C_2]$ is $$A\cdot [C_2]+\frac{l_1}{1-x}(C_2^2-\frac{([C_1]\cdot [C_2])^2}{C_1^2})=A\cdot [C_2]-\frac{A\cdot [C_2]}{1-x}(1-x)=0.$$ Similarly the paring with $C_1$ is zero as well. Since the formal inflation keeps our class in $\overline{\mathcal P_J}$, our conclusion follows.
\end{proof}

There is a better viewpoint to see the above calculations: we are actually doing formal inflation along the ray determined by $[C_2']=[C_2]-\frac{[C_1]\cdot [C_2]}{C_1^2}[C_1]$. Notice $[C_2']\cdot [C_1]=0$. Hence when we do inflation along the class $[C_2']$, the new class will keep orthogonality with $C_1$. When $C_2'^2=0$, we effectively inflate infinitely far along the ray through $C_2'$. When $C_2'^2<0$, the coefficient $\frac{l_1}{1-x}$ is nothing but the maximal inflation coefficient $\frac{A\cdot [C_2']}{-C_2'^2}$ by simple calculation.

\begin{lemma}\label{vertex}
Suppose $h_J^+(M)=b^-(M)+1$. Let $C_1$, $C_2$, $\cdots$, $C_n$ be smooth $J$-holomorphic curves with negative self-intersection, and denote the boundary of $\mathcal P_J$ determined by them as $\mathcal C_i$. Moreover, we assume  $\cap_i\mathcal C_i\cap \overline{\mathcal P_J}$ is a ray spanned by the class $B$. Then given any class $A\in  \overline{\mathcal P_J}$, one could achieve the class $B$ by taking formal $J$-inflations along $C_i$ (as well as summing and rescaling).
\end{lemma}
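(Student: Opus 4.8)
The plan is to reduce the general case to iterated application of Lemma \ref{facetoedge} by a Gram--Schmidt-type orthogonalization argument, carried out inductively on the number $n$ of curves. The base case $n=1$ is immediate: starting from $A\in\overline{\mathcal P_J}$ one takes a maximal formal $J$-inflation along $C_1$ to land on $\mathcal C_1$, and since $\mathcal C_1\cap\overline{\mathcal P_J}$ is by hypothesis a single ray, that ray is $\mathbb R^+ B$ and we are done. For the inductive step, assume the statement for any collection of fewer than $n$ curves. The key device, already flagged in the remark after Lemma \ref{facetoedge}, is to replace the last curve class $[C_n]$ by its orthogonal projection off the span of $[C_1],\dots,[C_{n-1}]$: set $[C_n']=[C_n]-\sum_{j<n}\lambda_j[C_j]$ so that $[C_n']\cdot[C_j]=0$ for all $j<n$, where the $\lambda_j$ are determined by the Gram matrix of $[C_1],\dots,[C_{n-1}]$. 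One checks that on $H_J^+(M)$, which carries a form of signature $(1,b^-)$, the subspace spanned by $[C_1],\dots,[C_{n-1}]$ is negative definite (each $C_i^2<0$ and, since $\cap_i\mathcal C_i\cap\overline{\mathcal P_J}$ is nonempty, their span does not contain a nonzero isotropic or positive vector), so the Gram matrix is invertible and negative definite; hence $C_n'^2<0$ as well. The point of this substitution is that formal $J$-inflation along the ray $\mathbb R^+[C_n']$ preserves orthogonality to each $C_j$, $j<n$: if $A'\cdot[C_j]=0$ then $(A'+\epsilon[C_n'])\cdot[C_j]=0$.

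The induction then runs as follows. Given $A\in\overline{\mathcal P_J}$, first apply the inductive hypothesis to $C_1,\dots,C_{n-1}$ — more precisely to the face structure they cut out inside the ``smaller'' cone — to achieve a class $A'$ lying in $\mathcal C_1\cap\dots\cap\mathcal C_{n-1}\cap\overline{\mathcal P_J}$; call this intersection $\mathcal E$, an edge (or higher-dimensional face) whose closure still meets $\overline{\mathcal P_J}$ because $B$ lies in it. Now I want to move $A'$ along $\mathcal E$ until it also meets $\mathcal C_n$. This is exactly the situation of Lemma \ref{facetoedge} applied not to two single curves but to the pair $(C_n',\,\mathcal E)$: taking a maximal formal inflation along $[C_n']$ sends $A'$ to a class still in $\mathcal E$ (orthogonality to $C_1,\dots,C_{n-1}$ is preserved) and now pairing $0$ with $[C_n']$, hence pairing $0$ with $[C_n]$ since $[C_n]-[C_n']\in\Span([C_1],\dots,[C_{n-1}])$ and $A'$ is orthogonal to that span. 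Thus after this single additional inflation the class lies in $\mathcal C_1\cap\dots\cap\mathcal C_n\cap\overline{\mathcal P_J}=\mathbb R^+B$, which is what we wanted. A small subtlety: the inflation coefficient $\dfrac{A'\cdot[C_n']}{-C_n'^2}$ must be nonnegative, i.e. $A'\cdot[C_n']\ge 0$; this follows because $A'\in\overline{\mathcal P_J}$ and $[C_n']$ is a nonnegative combination — wait, it need not be — so instead one argues directly: $A'\cdot[C_n']=A'\cdot[C_n]\ge 0$ since $A'\in\overline{\mathcal P_J}$ and $[C_n]\in A_J(M)$, using again that $A'$ kills the correction term $\sum_{j<n}\lambda_j[C_j]$.

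Two things need care and I expect the orthogonalization bookkeeping to be the main obstacle rather than anything deep. First, one must verify that the iterated formal inflations genuinely converge to a point on $\mathbb R^+ B$ and not merely approach the affine flat $\mathcal E\cap\{x\cdot[C_n]=0\}$ from outside $\overline{\mathcal P_J}$; here Lemma \ref{closed} guarantees each step stays in $\overline{\mathcal P_J}$, and the geometric convergence is governed, as in Lemma \ref{facetoedge}, by the ratio $\dfrac{([C_i]\cdot[C_j])^2}{C_i^2 C_j^2}\le 1$ from Lemma \ref{lightcone}, with the equality case producing exact convergence to the unique ray and the strict case producing a convergent geometric series — so ``achieved'' in the sense of the definition (arbitrarily well approximated) is exactly what one gets. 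Second, one should confirm the inductive hypothesis is applicable: the curves $C_1,\dots,C_{n-1}$ still have pairwise structure making $\cap_{j<n}\mathcal C_j\cap\overline{\mathcal P_J}$ a face of the required form, which holds because $B$ lies in it and $\mathcal P_J$ has no round boundary (the standing hypothesis of the section), so every face of $\overline{\mathcal P_J}$ is itself cut out by a sub-collection of the defining negative curves. Assembling these observations gives the result; since all operations used are formal $J$-inflations along the $C_i$ together with summing and rescaling, and each such formal inflation along an embedded negative $J$-holomorphic curve is approximable by genuine $J$-tamed symplectic inflations by Theorem \ref{buse}, the conclusion is exactly as stated.
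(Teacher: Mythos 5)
Your proposal is correct and follows essentially the same route as the paper: induction on $n$ via a Gram--Schmidt orthogonalization of the $[C_i]$, using that formal inflation along the orthogonalized class preserves orthogonality to the earlier curves and reduces everything to Lemma \ref{facetoedge}. The only presentational difference is that the paper explicitly realizes the single inflation along the auxiliary class $[C_n']$ as the limit of an alternating (period-$k$) sequence of maximal formal inflations along the genuine curves $C_i$ — which is needed since the allowed operations are inflations along the $C_i$ themselves — whereas you invoke this identification more briefly.
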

\begin{proof}
If there are two $C_i$'s, say $C_1$ and $C_2$, satisfy $([C_1]\cdot [C_2])^2=C_1^2\cdot C_2^2$, then by Lemma \ref{lightcone}, $\mathbb R^+B$ is the intersection $\mathcal C_1\cap \mathcal C_2\cap \overline{\mathcal P_J}$. Thus, maximal formal $J$-inflations along $C_1$ and $C_2$ would approach the ray $B$ as the limit as argued in Lemma \ref{facetoedge}. Hence we assume $([C_i]\cdot [C_j])^2<C_i^2\cdot C_j^2$ for $i\neq j$. We do induction for the $n$. When $n=2$, it is Lemma \ref{facetoedge}. 

Let us now show it for $n=3$, whose argument suggests the general induction step. We use the viewpoint after Lemma \ref{facetoedge}. We first find a class in $\mathcal C_1\cap \mathcal C_2$ by formally doing inflations for an arbitrary class $A\in \mathcal C_1\cap \mathcal P_J$ along  $[C_2']=[C_2]-\frac{C_1\cdot C_2}{C_1^2}[C_1]$ which is orthogonal to $[C_1]$. For the new class $A_1$, we apply formal inflations along $[C_3']=[C_3]-\frac{C_1\cdot C_3}{C_1^2}[C_1]$ which is also orthogonal to $[C_1]$. We thus obtain a class $A_2\in \mathcal C_1\cap \mathcal C_3$. Then we repeat this period-$2$ process. Notice $A_k\cdot C_1=0$ for all $k$. Hence, we are doing formal inflations along $[C_2']$ and $[C_3']$ alternatively. By the calculation as in Lemma \ref{facetoedge}, $A_k$ converges to $$A_1+\frac{l_1'}{1-x'}([C_3']-\frac{C_2'\cdot C_3'}{C_2'^2}[C_2'])$$ where $l'=\frac{A_1\cdot C_3'}{-C_3'^2}$ and $x'=\frac{(C_2'\cdot C_3')^2}{C_2'^2\cdot C_3'^2}$. By the viewpoint after Lemma \ref{facetoedge}, we are doing formal inflations along a single class $$[C_3'']=[C_3']-\frac{C_2'\cdot C_3'}{C_2'^2}[C_2'],$$ which is orthogonal to both $[C_1]$ and $[C_2']$, so that formal inflation along it preserves membership in $\mathcal C_1\cap \mathcal C_2$.

In general, given $C_1, \cdots, C_k$, we can find the classes $[C_2'], \cdots, [C_k']$ which are orthogonal to $C_1$ as above. Then we  start with $A\in \mathcal C_1\cap \mathcal P_J$, and use the inductive hypothesis to inflate along these classes $[C_i'], 2\le i\le k$ to get a class in $\cap_{i=2}^k\mathcal C_i'\cap \overline{\mathcal P_J}$. Since $[C_2'], \cdots, [C_k']$  are orthogonal to $C_1$ and $A\in \mathcal C_1\cap \mathcal P_J$, the limit class will remain in $\mathcal C_1$. Hence, the limit class is actually in $\cap_{i=1}^k\mathcal C_i\cap \overline{\mathcal P_J}$.
\end{proof}
%
%

\begin{example}
Suppose $M=\mathbb CP^2\#3\overline{\mathbb CP^2}$. Let the negative curves be $E_3$, $E_1-E_2$, $H-E_1-E_2-E_3$ and $E_2-E_3$. Then the first three classes will determine a intersection $2H-E_1-E_2$. Actually, we have $[C_1]=E_3$, $[C_2']=E_1-E_2$, $[C_3']=[C_3'']=H-E_1-E_2$. They are orthogonal to each other. Then our intersection ray is spanned by $$A+(A\cdot E_3) E_3+ \frac{A\cdot (E_1-E_2)}{2} (E_1-E_2)+A\cdot (H-E_1-E_2) (H-E_1-E_2).$$ 
We start with different classes in the closure of $\mathcal P_J$, we will get the same ray spanned by $2H-E_1-E_2$. 

For example, if we start with $H$, we get $2H-E_1-E_2$. If we start with $H-E_1$, we get $$H-E_1+\frac{1}{2}(E_1-E_2)=\frac{1}{2}(2H-E_1-E_2).$$
\end{example}

Now we are ready to prove Theorems \ref{noround} and \ref{rr}.
\begin{proof}(of Theorem \ref{noround})
As there is no round boundary, each connected component is a polytope (with perhaps infinitely many faces). We denote the known almost K\"ahler form by $\omega$. We want to prove that $\mathcal K_J^c$ is the connected component of $\mathcal P_J$ containing $[\omega]$, say $\mathcal P_J^{[\omega]}$. 


We first prove that  $\overline{\mathcal K_J^t}$ contains the closure of  $\mathcal P_J^{[\omega]}$. For each extremal ray, we could find boundary hyperplanes $\mathcal C_i$'s such that each $\mathcal C_i$ is determined by a smooth negative curve $C_i$. Thanks to Lemma \ref{vertex}, we could achieve this extremal ray by formal $J$-inflations along $C_i$'s, starting from the class $[\omega]$. After achieving each extremal ray, we could use summing and scaling to achieve  the closure of  $\mathcal P_J^{[\omega]}$.  Since any formal $J$-inflations could be infinitesimally approximated by $J$-tamed symplectic inflation processes, the cone generated by the classes of operating formal inflations is the closure of the the one such generated by $J$-tamed inflation. This implies $\overline{\mathcal K_J^t}$ contains the closure of  $\mathcal P_J^{[\omega]}$.

Since $\mathcal K_J^t\cap H_J^+(M)=\mathcal K_J^c$ and $\mathcal P_J\subset H_J^+(M)$, we have $\overline{\mathcal K_J^c}$ contains the closure of  $\mathcal P_J^{[\omega]}$. We notice both $\mathcal K_J^c$ and $\mathcal P_J^{[\omega]}$ are open convex cones in $H_J^+(M)$. For any element $x\in \mathcal P_J^{[\omega]}$, there is an open neighborhood of $x$ in it which is also contained in $\overline{\mathcal K_J^c}$. In particular, $x$ could be written as the sum of $\dim H_J^+$ linearly independent vectors in $\overline{\mathcal K_J^c}$. Since these $\dim H_J^+$ linearly independent vectors will span an open convex subcone $\mathcal A\subset \mathcal K_J^c$, we know  $x\in\mathcal A\subset \mathcal K_J^c$. This shows $\mathcal K_J^c$ contains $\mathcal P_J^{[\omega]}$.




Finally, it is easy to see $\mathcal K_J^c\subset \mathcal P_J$ and $\mathcal K_J^c$ is connected since it is a convex cone. Hence $\mathcal K_J^c$ is the connected component of $\mathcal P_J$ containing $[\omega]$.
\end{proof}

There are many tamed almost complex manifolds whose $\mathcal P_J$ has a round boundary.  For example, a generic almost complex structure on manifolds with $b^+>1$ will do since there are not enough curves. 
It is also true that a generic tamed almost complex structure on  $\mathbb CP^2\#k\overline{\mathbb CP^2}$ for $k>9$ also has a round boundary $\mathcal P_J$. First, by Lemma \ref{negbdy}, the boundary hyperplanes of $\mathcal P_J$ are determined by negative curves. On the other hand, only negative curves with non-trivial Seiberg-Witten are the classes of $-1$ rational curves since $2e^2=\dim_{SW}(e)+2g_J(e)-2\ge -2$. 
Hence, generically the only negative curves are $-1$ rational curves. However, the condition $e\cdot E>0$ for all $E\in \mathcal E_K$ does not guarantee that $e^2\ge 0$ for $k>9$, by taking $e=-K$ for instance. More precisely, there is an open subset of $\partial\mathcal P$ whose all elements pairing positively with $\mathcal E_K$  as in Remark \ref{k>9round}. See the new edition of \cite{McS} for more discussions.


On the other hand, Theorem \ref{rr} shows in some interesting cases, the assumptions of Theorem \ref{noround} are satisfied.
When $b^+=1$, there is yet another cone which is relevant to Question \ref{dualC}, the $K-$symplectic cone $\mathcal C_{M, K}$ introduced as \eqref{Kcone}.

\begin{proof}(of Theorem \ref{rr})
First a rational or ruled surface has $b^+=1$, hence $h_J^+=b^-+1$ holds. 
It is known that for rational surfaces $M=\mathbb CP^2\# k\overline{\mathbb CP^2}$ with $k<9$ or for minimal ruled surfaces, $\mathcal C_{M, K}$ has no round boundary. Actually, it is a polytope whose extremal rays are Cremona equivalent to $H$ or $H-E_1$ \cite{LBL}. 
Thus, $\mathcal P_J$ has no round boundary, since it is included in and cut along by more hyperplanes from the polytope $\mathcal C_{M, K}$. Moreover, it is connected. When $M=\mathbb CP^2\# k\overline{\mathbb CP^2}$ and $1<k<9$, the boundary of  $\mathcal P_J$ is constituted of hyperplanes determined by curves with negative self-intersection because of Lemma \ref{negbdy}. By Proposition \ref{extreme}, all these curves are smooth rational curves. By applying Theorem  \ref{noround}, we have $\mathcal K_J^c=\mathcal P_J$. Since $\{e|e\cdot E>0, \forall E\in \mathcal E_K\}\subset \mathcal P$ in this case (since $\mathcal C_{M, K}$ has no round boundary), we know $A_J(M)$ is a closed cone and $\mathcal P_J=A_J^{\vee, >0}(M)$.

For minimal ruled surfaces (and $\mathbb CP^2\# \overline{\mathbb CP^2}$), the boundary of (the closure of) the curve cone is constituted of two rays. One is the fiber class $T$. By a result of \cite{Mc2}, all the $J$-holomorphic curves in the fiber class $T$ are embedded. If there is a negative curve by assumption, then it is unique by Theorem \ref{ruled}. Hence the other ray is spanned by  an irreducible curve class $C$ with self-intersection $-n<0$. Moreover, by Theorem \ref{ruled}, the class $C\cdot T=1$. Hence for the boundary of $\mathcal P_J$, one of the ray is the fiber class $T$ and the other is $A$ with $A\cdot C=0$ and $A^2=n$.

As observed in \cite{Bus}, the class $C$ is also represented by an embedded curve. 
We do positive inflation along $T$ and negative inflation along $C$ (which determines the boundary A), $B+kT+lC$ with any $k>0$ and $0<l<\frac{B\cdot C}{n}$ spans $\mathcal P_J$.

Hence we have constructed $J$-tamed symplectic form in any any class of $\mathcal P_J$. Since $b^+=1$, we have $\mathcal K_J^c=\mathcal P_J$. By the above discussion, the boundary of $\overline{A}_J^{\vee, >0}$ contains two rays, one is the fiber class and the other is a class of non-negative square. Hence $\overline{A}_J^{\vee, >0}\subset \mathcal P$ and thus $\mathcal P_J=\overline{A}_J^{\vee, >0}$.

For the case of  $M=\mathbb CP^2\# 9\overline{\mathbb CP^2}$, we know $\mathcal P_J$ is almost polytopic in the sense that only the class $-K$ is possibly on the round boundary. In other words, any class in the interior of  $\mathcal P_J$ could be expressed as positive linear combinations of extremal rays. Moreover, $-K$ is the only possible curve class with non-positive self intersection which is not a rational curve. However, it does not contribute the vertices to  $\mathcal P_J$. Then by Lemma \ref{vertex}, we can achieve these extremal rays by formal $J$-inflations along smooth rational curves. Since formal $J$-inflations could be infinitesimally approximated by $J$-tamed inflations, we have shown $\overline{\mathcal K_J^c}$ contains the cone generated by non-negative linear combinations of extremal rays of $\mathcal P_J$. In particular $\overline{\mathcal K_J^c}\supset \mathcal P_J$. Since both $\mathcal K_J^c$ and $\mathcal P_J$ are open, we have $\mathcal K_J^c\supset\mathcal P_J$. Thus $\mathcal K_J^c=\mathcal P_J$, since the other direction of inclusion is trivial. Since $\{e|e\cdot E>0, \forall E\in \mathcal E_K\}\subset \overline{\mathcal P}$ in this case, we know $\mathcal P_J=\overline{A}_J^{\vee, >0}(M)$.

Finally, the statement $\mathcal K_J^c=\mathcal K_J^t$ follows from \cite{LZ}.
\end{proof}

In the case of minimal ruled surfaces, the curve cone does not necessarily be closed if the other extremal ray has square $0$. However, we have irreducible positive curve classes $A_n$ arbitrarily close to the boundary ray $\mathbb R^+ C$. The author does not know how to show there is always an embedded one.
If there is such one in each $A_n$,
by Theorem \ref{mc}, given any class $B$ in $\mathcal P_J$, we do (positive) inflations along $A_n$ and $F$ to obtain $B+k_1F+k_2A_n$ which spans all $\mathcal P_J$.

We remark that actually Theorem \ref{noround} has more applications. One important case is 
when we have a smooth representation of the anti-canonical class. In this case, all the negative curves are smooth rational curves with self-intersection $-1$ or $-2$. In fact, $\mathcal P_J$ is a polytope bounded by the hyperplanes determined by those rational curves and the curve in $-K$.

On a general four dimensional symplectic manifold, we do not usually have enough embedded $J$-holomorphic curves, although a generic almost complex structure on manifolds with $b^+=1$ does have so. Thus we have Theorem \ref{generic}.

\begin{proof}(of Theorem \ref{generic}) Now let us prove $\mathcal K_J^t=\mathcal C_{M,K}$  when $J$ is in a residual set of tamed almost complex structures. 

We first pick up any symplectic form $\omega$ with integral cohomology class. Let $e$ be an integral class in $\mathcal C_{M,K}$. There is an integer $L$ such that for all the integers $l>L$, $le-[\omega]$ and $le-[\omega]-K$ are both in $\mathcal C_{M,K}$. By Lemma 3.4 of \cite{LL}, $SW(le-[\omega])$ is nontrivial. Then $le-[\omega]$ could be represented by an embedded $J$-holomorphic curve for a generic $J$ tamed by $\omega$. We take the union of these residual subsets of $\omega$-tame almost complex structures and denote it by $\mathcal J_{e,\omega}$. By Theorem \ref{mc}, we know that $le=[\omega]+(le-[\omega])$ is represented by a $J$ tamed symplectic form for $J\in \mathcal J_{e,\omega}$. Take intersection of $\mathcal J_{e,\omega}$ for all integral class $e$, we get another generic subset $\mathcal J_{\omega}$ in all $\omega$-taming almost complex structures, since there are only countably many integral cohomology classes. Hence we have shown that for $J\in \mathcal J_{\omega}$, $\mathcal K_J^t=\mathcal C_{M,K}$. 

Because the set of symplectic forms that can be rescaled to be integral classes is dense in the space of symplectic forms, any tamed almost complex structure is tamed by a symplectic form with integral cohomology class. Taking union of $\mathcal J_{\omega}$ for all symplectic forms with integral cohomology class, we achieve our final residual subset $\mathcal J$ in all tamed almost complex structures. Hence $\mathcal K_J^t=\mathcal  C_{M, K}=\mathcal P_J$ for a generic tamed $J$.

Finally, by \cite{T2009} we have $\mathcal K_J^c\ne \emptyset$ for generic tamed $J$. Hence for all such $J$, $\mathcal K_J^t=\mathcal K_J^c$ by \cite{LZ}. Thus the proof of Theorem \ref{generic} completes.
\end{proof}

Notice the above proof is a renaissance of the argument in \cite{LL}. There is an alternative way to construct the residual set $\mathcal J$ using the strategy in \cite{T2009}. 

We endeavour to prove Question \ref{dualC} for all tamed $J$ rather than a residual subset. However, we may not have enough embedded $J$-holomorphic curves to apply the $J$-inflation even if we always have sufficient irreducible curves to play with the formal $J$-inflation.

\end{document}